\def\AA{{\mathcal A}}
\def\CC{{\mathcal C}}
\def\II{{\mathcal I}}
\def\JJ{{\mathcal J}}
\def\KK{{\mathcal K}}
\def\OO{{\mathcal O}}
\def\WW{{\mathcal W}}
\def\IN{{\mathbb{N}}}
\def\IQ{{\mathbb{Q}}}
\def\IR{{\mathbb{R}}}
\def\IS{{\mathbb{S}}}
\def\TO{\Longrightarrow}
\def\In{\subseteq}
\def\into{\hookrightarrow}
\def\prefix{\sqsubseteq}
\def\mto{\rightrightarrows}
\def\id{{\rm id}}
\def\dom{{\rm dom}}
\def\diam{{\rm diam}}
\def\mind{{\rm mind}}
\def\Baire{{\IN^\IN}}
\def\Tr{{\rm Tr}}
\def\LPO{\text{\rm\sffamily LPO}}
\def\LLPO{\text{\rm\sffamily LLPO}}
\def\WKL{\text{\rm\sffamily WKL}}
\def\RCA{\text{\rm\sffamily RCA}}
\def\ACA{\text{\rm\sffamily ACA}}
\def\BCT{\text{\rm\sffamily BCT}}
\def\IVT{\text{\rm\sffamily IVT}}
\def\BWT{\text{\rm\sffamily BWT}}
\def\BFT{\mbox{\rm\sffamily BFT}}
\def\B{\text{\rm\sffamily B}}
\def\I{\text{\rm\sffamily I}}
\def\C{\mbox{\rm\sffamily C}}
\def\ConC{\mbox{\rm\sffamily CC}}
\def\UC{\mbox{\rm\sffamily UC}}
\def\ConC{\mbox{\rm\sffamily CC}}
\def\XC{\mbox{\rm\sffamily XC}}
\def\AoUC{\mbox{\rm\sffamily AoUC}}
\def\ACC{\mbox{\rm\sffamily ACC}}
\def\LPO{\mbox{\rm\sffamily LPO}}
\def\LLPO{\mbox{\rm\sffamily LLPO}}
\def\MLPO{\mbox{\rm\sffamily MLPO}}
\def\MP{\text{\rm\sffamily MP}}
\def\MCT{\text{\rm\sffamily MCT}}
\def\K{\text{\rm\sffamily K}}
\def\Low{\text{\rm\sffamily L}}
\def\CL{\text{\rm\sffamily CL}}
\def\KL{\text{\rm\sffamily KL}}
\def\A{\text{\rm\sffamily A}}
\def\R{\text{\rm\sffamily R}}
\def\J{\text{\rm\sffamily J}}
\def\G{\text{\rm\sffamily G}}
\def\Z{\text{\rm\sffamily Z}}
\def\DNC{\text{\rm\sffamily DNC}}
\def\ACC{\text{\rm\sffamily ACC}}
\def\CFC{\text{\rm\sffamily CFC}}
\def\MLR{\text{\rm\sffamily MLR}}
\def\WWKL{\text{\rm\sffamily WWKL}}
\def\GEN{\text{\rm\sffamily GEN}}
\def\PC{\text{\rm\sffamily PC}}
\def\COH{\text{\rm\sffamily COH}}
\def\PA{\text{\rm\sffamily PA}}
\def\NASH{\text{\rm\sffamily NASH}}
\def\RDIV{\text{\rm\sffamily RDIV}}
\def\EC{\text{\rm\sffamily EC}}
\def\ATR{\text{\rm\sffamily ATR}}
\def\PWCC{\text{\rm\sffamily PWCC}}
\def\BIM{\text{\rm\sffamily BIM}}
\def\MAX{\text{\rm\sffamily MAX}}
\def\HBC{\text{\rm\sffamily HBC}}
\def\KL{\text{\rm\sffamily KL}}
\def\RT{\text{\rm\sffamily RT}}
\def\VCT{\text{\rm\sffamily VCT}}
\def\Int{\text{\rm\sffamily Int}}
\def\BGK{\text{\rm\sffamily BGK}}
\def\leqT{\mathop{\leq_{\mathrm{T}}}}
\newcommand{\leqW}{\mathop{\leq_{\mathrm{W}}}}
\def\equivW{\mathop{\equiv_{\mathrm{W}}}}
\def\leqSW{\mathop{\leq_{\mathrm{sW}}}}
\def\equivSW{\mathop{\equiv_{\mathrm{sW}}}}
\def\nleqW{\mathop{\not\leq_{\mathrm{W}}}}
\def\lW{\mathop{<_{\mathrm{W}}}}
\def\lSW{\mathop{<_{\mathrm{sW}}}}
\def\bigtimes{\mathop{\mathsf{X}}}
\def\stars{*_{\rm s}\;\!}
\def\r{{\rm r}}
\newcommand{\dash}{\mbox{-}}
\begin{document}

\title*{Weihrauch Complexity in Computable Analysis}
\titlerunning{Weihrauch Complexity in Computable Analysis} 
\author{Vasco Brattka, Guido Gherardi and Arno Pauly}
\institute{Vasco Brattka \at Faculty of Computer Science, Universit\"at der Bundeswehr M\"unchen, Germany and\\ Department of Mathematics \& Applied Mathematics, University of Cape Town, South Africa\\ \email{Vasco.Brattka@cca-net.de}
\and Guido Gherardi \at Dipartimento di Filosofia e Comunicazione, Universit\`{a} di Bologna, Italy\\ \email{Guido.Gherardi@unibo.it}
\and Arno Pauly \at Department of Computer Science, Swansea University, United Kingdom\\ \email{a.m.pauly@swansea.ac.uk}
}

%
%
\maketitle

\abstract*{We provide a self-contained introduction into Weihrauch complexity and its applications to computable analysis. 
                This includes a survey on some classification results and a discussion of the relation to other approaches.}
\abstract{We provide a self-contained introduction into Weihrauch complexity and its applications to computable analysis. 
                This includes a survey on some classification results and a discussion of the relation to other approaches.}

\section{The Algebra of Problems}
\label{sec:problems}

The Weihrauch lattice offers a framework to classify the uniform computational content of problems and theorems
from analysis and other areas of mathematics. 
This framework can be seen as an attempt to create a calculus of mathematical problems,
very much in spirit of Kolmogorov's interpretation of intuitionistic logic \cite{Kol32}.

We express mathematical problems with the help of partial multi-valued functions $f:\In X\mto Y$,
which are just relations $f\In X\times Y$. It has turned out to be fruitful for our approach to think of these relations as
input-output oriented multi-valued functions $f:\In X\mto Y$.
We consider $\dom(f)=\{x\in X:f(x)\not=\emptyset\}$ as the set of admissible instances $x$ of the problem $f$, 
and we consider the corresponding set of function values $f(x)\In Y$ as the set of possible results.
In the case of single-valued $f$ we identify $f(x)$ with the corresponding singleton. 
An example of a mathematical problem that the reader can have in mind as a prototypical case is the zero problem.
Obviously, many problems in mathematics can be expressed in terms
of solutions of equations of type $f(x)=0$ with a continuous $f:X\to\IR$. We formalize this problem.

\begin{example}[Zero problem]
\label{ex:zero}
Let $X$ be a topological space and let $\CC(X)$ denote the set of continuous $f:X\to\IR$.
The {\em zero problem} $\Z_X:\In\CC(X)\mto X,f\mapsto f^{-1}\{0\}$ is
the problem to find a solution $x\in X$ of an equation of type $f(x)=0$,
given a continuous function $f:X\to\IR$. 
The set $\dom(\Z_X)$ of admissible
instances of this problem is the
set of all continuous functions $f$ with a non-empty zero set $f^{-1}\{0\}$. 
The set $\Z_X(f)=f^{-1}\{0\}$ of solutions is the set of all zeros of $f$.
\end{example}

Mathematical problems can be combined in various natural ways to 
obtain new problems. The following definition lists a number of some
typical algebraic operations that we are going to use.
By $X\sqcup Y:=(\{0\}\times X)\cup(\{1\}\times Y)$ we denote the {\em disjoint union}.
By $X^*:=\bigcup_{i=0}^\infty (\{i\}\times X^i)$ we denote the {\em set of words} over $X$, where $X^i:=\bigtimes_{j=1}^i X$ stands for
the $i$--fold {\em Cartesian product} of $X$ with itself with $X^0:=\{()\}$. Here $()$ stands for the empty tuple or word.
By $\overline{X}:=X\cup\{\bot\}$ we denote the {\em completion} of $X$, where $\bot\not\in X$.
We use the set {\em natural numbers} $\IN=\{0,1,2,...\}$.

\begin{definition}[Algebraic operations]
\label{def:algebraic-operations}
Let $f:\In X\mto Y$, $g:\In Z\mto W$ and\linebreak 
$h:\In Y\mto Z$ be multi-valued functions. We define
the following operations (for exactly those inputs given by the specified domains):
\begin{enumerate}
\item $h\circ f:\In X\mto Z$, $(h\circ f)(x):=\{z\in Z:(\exists y\in f(x))\;z\in h(y)\}$ and\\ $\dom(h\circ f):=\{x\in \dom(f):f(x)\In\dom(h)\}$ \hfill (composition)\item $f\times g:\In X\times Z\mto Y\times W, (f\times g)(x,z):=f(x)\times g(z)$ and\\
$\dom(f\times g):=\dom(f)\times\dom(g)$ \hfill (product)
\item $f\sqcup g:\In X\sqcup Z\mto Y\sqcup W$, $(f\sqcup g)(0,x):=\{0\}\times f(x)$, $(f\sqcup g)(1,z):=\{1\}\times g(z)$ and
$\dom(f\sqcup g):=\dom(f)\sqcup\dom(g)$\hfill (coproduct)
\item $f\boxplus g:\In X\sqcup Z\mto\overline{Y}\times\overline{W}$, $(f\boxplus g)(0,x):=f(x)\times\overline{W}$, $(f\boxplus g)(1,z):=\overline{Y}\times g(z)$ and \\ 
$\dom(f\boxplus g):=\dom(f)\sqcup\dom(g)$ \hfill (box sum)
\item $f\sqcap g:\In X\times Z\mto Y\sqcup W, (f\sqcap g)(x,z):=f(x)\sqcup g(z)$ and\\
$\dom(f\sqcap g):=\dom(f)\times\dom(g)$ \hfill (meet)
\item $f+g:\In X\times Z\mto \overline{Y}\times \overline{W}, (f+g)(x,z):=(f(x)\times\overline{W})\cup(\overline{Y}\times g(z))$ and\\
$\dom(f+g):=\dom(f)\times\dom(g)$ \hfill (sum)
\item $f^*:\In X^*\mto Y^*,f^*(i,x):=\{i\}\times f^i(x)$ and\\
$\dom(f^*):=\dom(f)^*$ \hfill (finite parallelization)
\item $\widehat{f}:\In X^\IN\mto Y^\IN,\widehat{f}(x_n)_n:=\bigtimes_{i\in\IN} f(x_i)$ and\\
$\dom(\widehat{f}):=\dom(f)^\IN$ \hfill (parallelization)
\end{enumerate}
\end{definition}

Here $f^i:=\bigtimes_{j=1}^i f$ denotes the $i$--fold product of $f$
with itself, where $f^0=\id_{X^0}$. 
It is important to point out that the appropriate definition of the domain of $h\circ f$ is crucial. If $x\in\dom(h\circ f)$, then we require that all
possible results $y\in f(x)$ of $f$ upon input of $x$ are supported by $h$,
i.e., $f(x)\In\dom(h)$. This definition of composition corresponds to
our understanding of multi-valued functions as computational problems.\footnote{The way we define composition
turns the multi-valued functions into morphisms of a specific category~\cite{Pau17} that is not identical to the usual category of relations.}
We often write for short $hf$ for the composition $h\circ f$.

The reader might notice some relations between the resource oriented
interpretation of linear logic and the way we combine mathematical
problems (see section \ref{subsec:linear-logic}). Indeed, the following intuitive interpretation of some of our algebraic
operations is useful:

\begin{enumerate}
\item The composition $h\circ f$ applies both problems consecutively, first $f$ and then $h$.
\item The product $f\times g$ provides both problems $f$ and $g$ in parallel. For each instance  one obtains solutions of both $f$ and $g$.
\item The coproduct $f\sqcup g$ provides both problems $f$ and $g$ alternatively. For each instance one can select to obtain either a solution of $f$ or of $g$.
\item The meet $f\sqcap g$ provides either $f$ or $g$.  For each instance one either obtains a solution for $f$ or for $g$;
        one learns a posteriori which one it is, but one cannot control in advance which one it will be.
\item The sum $f+ g$ provides two potential solutions for given instances of $f$ and $g$, at least one of which has to be correct. 
\item The finite parallelization $f^*$ allows arbitrarily many finite applications of $f$ in parallel, and with each instance one can select how many applications are to be used in parallel.
\item The parallelization $\widehat{f}$ allows countably many applications of $f$ in parallel.
\end{enumerate}

Given the above list of operations we can derive other algebraic operations.

\begin{definition}[Juxtaposition]
For $f:\In X\mto Y$ and $g:\In X\mto Z$ we denote by $(f,g):\In X\mto Y\times Z$ the 
{\em juxtaposition} of $f$ and $g$, which is defined by $(f,g):=(f\times g)\circ\Delta_X$,
where $\Delta_X:X\mto X\times X,x\mapsto (x,x)$ denotes the {\em diagonal} of $X$.
\end{definition}

Given two problems $f$ and $g$ we want to express what it means that $f$ solves $g$.

\begin{definition}[Solutions]
\label{def:solutions}
Let $f,g:\In X\mto Y$ be multi-valued functions. We define
$f\prefix g:\iff \dom(g)\In\dom(f)\mbox{ and }(\forall x\in\dom(g))\;f(x)\In g(x)$.
In this situation we say that $f$ {\em solves} $g$, $f$ is a {\em strengthening} of $g$ and $g$ is a {\em weakening} of $f$.
\end{definition}

Intuitively, $f\prefix g$ means that all instances of $g$ are also instances of $f$, and on all these common instances
$f$ yields a possible solution of $g$. It is clear that the relation $\prefix$ yields a preorder, i.e., it is reflexive and transitive.

Many theorems give rise to mathematical problems.
In general, a theorem of the logical form 
\[(\forall x\in X)(x\in D\TO (\exists y\in Y)P(x,y))\]
translates into the problem
\[F:\In X\mto Y,x\mapsto\{y\in Y:P(x,y)\}\mbox{ with }\dom(F):=D.\]
That is, $F$ plays the r\^ole of a multi-valued Skolem function for the statement of the theorem.
The problem $F$ measures the difficulty of finding a suitable $y$, given $x$, whereas the condition
encapsulated in $D$ is a purely classical promise that is not meant to bear any constructive content. 
As an example we mention the intermediate value theorem.

\begin{example}[Intermediate value theorem]
$\IVT:\In\CC[0,1]\mto[0,1],f\mapsto f^{-1}\{0\}$, where $\dom(\IVT)$ 
contains all $f\in\CC[0,1]$ with $f(0)\cdot f(1)<0$, is called the {\em intermediate value theorem}.
It is easy to see that $\Z_{[0,1]}\prefix\IVT$ holds.
\end{example}

\subsubsection*{Bibliographic Remarks}

\begin{petit}
Algebraic operations on multi-valued functions have been used frequently in computable analysis.
For instance, composition in the way defined here, product, juxtaposition and parallelization have been used by Brattka~\cite{Bra99b,Bra03}.
The coproduct operation and finite parallelization was introduced by Pauly in~\cite{Pau10}.
The meet operation was introduced by Brattka and Gherardi~\cite{BG11}, and the box sum was introduced by Dzhafarov~\cite{Dzh18}.
Inspired by the definition of the box sum, the definition of the sum from Brattka, Gherardi and H\"olzl~\cite{BGH15a} appears here
for the first time in a modified version that has better properties.
The category of multi-valued functions was studied by Pauly~\cite{Pau17}.
\end{petit}

\section{Represented Spaces}
\label{sec:represented}

In this section we want to provide the data types that we will use for problems $f:\In X\mto Y$.
For a purely topological development of our theory it would be sufficient to consider topological spaces $X$ and $Y$.
However, since we want to discuss computability properties too, we need slightly more structure on the spaces $X$ and $Y$,
and this structure is provided by representations.

\begin{definition}[Represented spaces]
A {\em represented space} $(X,\delta)$ is a set $X$ together with a surjective 
partial function $\delta:\In\IN^\IN\to X$.
\end{definition}

If $\delta(p)=x$ then we call $p$ a {\em name} for $x$, and we reserve the word {\em representation} for the map $\delta$ itself.
We endow Baire space $\IN^\IN$ with its usual product topology of the discrete topology on $\IN$ and
we always assume that a represented space $(X,\delta_X)$ is endowed with the final topology $\OO(X)$ induced by $\delta_X$ on $X$, which
is the largest topology on $X$ that turns $\delta_X$ into a continuous map.
In this situation $\delta_X$ is automatically a quotient map.
Typically, we will deal with {\em admissible} representations\footnote{See the chapter ``\emph{Admissibly Represented Spaces and QCB-Spaces}'' by Schr\"oder in this book for more details.} 
$\delta_X$ that are not just quotient maps but they are even closer linked to the
topology $\OO(X)$.
In the following we will often just write for short $X$ for a represented space
if the representation is clear from the context or not needed explicitly. 
We can now formally define problems.

\begin{definition}[Problems]
We call partial multi-valued functions $f:\In X\mto Y$ on represented spaces $X,Y$
for short {\em problems}.
\end{definition}

Properties of problems such as computability and continuity can easily be introduced
via realizers.

\begin{definition}[Realizer]
Given represented spaces $(X,\delta_X)$, $(Y,\delta_Y)$, a problem $f:\In X\mto Y$
and a function $F:\In\IN^\IN\to\IN^\IN$, we define
$F\vdash f:\iff\delta_YF\prefix f\delta_X$.
In this situation  we say that $F$ is a {\em realizer} of $f$.
\end{definition}

In other words, $F$ is a realizer of $f$ if $\delta_YF$ solves $f\delta_X$. 
Obviously, this concept depends on the underlying represented spaces and the
notation $F\vdash f$ is only justified when these are clear from the context.
%

On Baire space $\IN^\IN$ it is clear what a continuous function $F:\In\IN^\IN\to\IN^\IN$ is. 
Computability of such functions can be defined via Turing machines in a well-known way.
Such properties can now easily be transfered to problems via realizers.

\begin{definition}[Computability and continuity]
\label{def:computability}
A problem $f$ is called {\em computable} ({\em continuous}), if it has a computable (continuous) realizer.
\end{definition}

We warn the reader that the resulting notion of continuity for single-valued functions is not automatically the topological notion of continuity that is induced by the final topologies of the representations. However, 
every total single-valued function $f:X\to Y$ on represented spaces that is continuous in our sense is also continuous in the usual
topological sense with respect to the final topologies, and in all
our applications we will use admissible representations for which these two notions even coincide.

Two representations $\delta_1,\delta_2$ of the same set $X$ are called {\em equivalent} if the identity $\id:(X,\delta_1)\to(X,\delta_2)$
and its inverse are computable. It is easy to see that equivalent representations yield the same notion of computability and continuity.

By $\CC(X,Y)$ we denote the set of continuous functions $f:X\to Y$ in terms of Definition~\ref{def:computability}.
The category of represented spaces is Cartesian closed, and the same holds for the category of admissibly represented spaces. In particular, we have
canonical ways of defining product and function space representations. 

In order to define those, we use pairing functions. We define a pairing function $\langle,\rangle:\IN^\IN\times\IN^\IN\to\IN^\IN$
by $\langle p,q\rangle(2n):=p(n)$ and $\langle p,q\rangle(2n+1):=q(n)$ for $p,q\in\IN^\IN$ and $n\in\IN$.
We define a pairing function of type $\langle,\rangle:(\IN^\IN)^\IN\to\IN^\IN$ by
$\langle p_0,p_1,p_2,...\rangle\langle n,k\rangle:=p_n(k)$ for all $p_i\in\IN^\IN$ and $n,k\in\IN$,
where $\langle n,k\rangle$ is the standard Cantor pairing defined by $\langle n,k\rangle:=\frac{1}{2}(n+k+1)(n+k)+k$. 
Finally, we note that by $np$ we denote the concatenation of a number $n\in\IN$ with a sequence $p\in\IN^\IN$.

We assume that we have some standard representation $\Phi$ of (a sufficiently large class\footnote{It suffices to consider all continuous functions $f:\In\IN^\IN\to\IN^\IN$
with $G_\delta$--domain since any continuous function can be extended to such a function.}) of continuous
functions, i.e., for any such function $f:\In\IN^\IN\to\IN^\IN$ there is a $p\in\IN^\IN$ with $f=\Phi_p$. For total functions this representation yields the exponential in the category of (admissibly) represented spaces and satisfies natural versions of the utm- and smn-theorems.
For computable $p$ one obtains the computable functions $\Phi_p$ with natural domains (see \cite{Wei00} for details). 
For $p\in\IN^\IN$ we denote by $p-1\in\IN^\IN\cup\IN^*$ the sequence or word that is formed as concatenation of 
$p(0)-1$, $p(1)-1$, $p(2)-1$,... with the understanding that $-1=()$ is the empty word.

\begin{definition}[Constructions on representation]
\label{def:operations-representations}
Let $(X,\delta_X)$ and $(Y,\delta_Y)$ be represented spaces. We define
\begin{enumerate}
\item $\delta_{X\times Y}:\In\IN^\IN\to X\times Y$, $\delta_{X\times Y}\langle p,q\rangle:=(\delta_X(p),\delta_Y(q))$
\item $\delta_{X\sqcup Y}:\In\IN^\IN\to X\sqcup Y$, $\delta_{X\sqcup Y}(0p):=(0,\delta_X(p))$ and $\delta_{X\sqcup Y}(1p):=(1,\delta_Y(p))$
\item $\delta_{X^*}:\In\IN^\IN\to X^*$, $\delta_{X^*}(n\langle p_1,p_2,...,p_n\rangle):=(n,(\delta_X(p_1),\delta_X(p_2),...,\delta_X(p_n)))$
\item $\delta_{X^\IN}:\In\IN^\IN\to X^\IN$, $\delta_{X^\IN}\langle p_0,p_1,p_2,...\rangle:=(\delta_X(p_n))_{n\in\IN}$ 
\item $\delta_{\CC(X,Y)}:\In\IN^\IN\to\CC(X,Y)$ by $\delta_{\CC(X,Y)}(p)=f:\iff \Phi_p\vdash f$ 
\item $\delta_{\overline{X}}:\IN^\IN\to\overline{X}$, $\delta_{\overline{X}}(p):=\delta_X(p-1)$ if $p-1\in\dom(\delta_X)$ and $\delta_{\overline{X}}(p):=\bot$ otherwise.
\end{enumerate}
\end{definition}

Many spaces that occur in analysis are actually computable metric spaces.
For the definition we assume that the reader knows the notion of a computable (double) sequence of real numbers.

\begin{definition}[Computable metric spaces and Cauchy representations]
\begin{enumerate}
\item
A {\em computable metric space} $(X,d,\alpha)$ is a separable metric space $(X,d)$ with metric $d:X\times X\to\IR$ and a dense sequence $\alpha:\IN\to X$
such that $d\circ(\alpha\times\alpha):\IN^2\to \IR$ is a computable double sequence of real numbers.
\item
We define the {\em Cauchy representation} $\delta_X:\In\IN^\IN\to X$  by $\delta_X(p):=\lim_{n\to\infty}\alpha p(n)$
and $\dom(\delta_X)=\{p\in\IN^\IN:(\forall i>j)\;d(\alpha p(i),\alpha p(j))<2^{-j}\}$.
\end{enumerate}
\end{definition}

A standard numbering of the rational numbers $\IQ$ and the Euclidean metric yields the standard Cauchy representation $\delta_\IR$ 
of real numbers. Cauchy representations are examples of admissible representations, and for such representations continuity in the usual
topological sense and continuity defined via realizers coincides.
In particular, $\CC(\IR):=\CC(\IR,\IR)$ is the usual set of continuous functions. 
In the following we consider $\IN,\IR,[0,1],2^\IN,\IN^\IN$ and similar spaces often as computable metric spaces in the
straightforward sense without further mentioning this fact.
A {\em computable Banach space} is just a computable metric space that is additionally a Banach space
and such that the linear operations are computable. If the space is additionally a Hilbert space, then it is called a {\em computable Hilbert space}.

A non-metrizable space that we occasionally need is Sierpi\'nski space $\IS=\{0,1\}$, which is endowed with the topology $\OO(\IS)=\{\emptyset,\IS,\{1\}\}$.
By $\widehat{n}\in\IN^\IN$ we denote the constant sequence with value $n\in\IN$.

\begin{definition}[Sierpi\'nski space]
Let $\delta_\IS:\IN^\IN\to\IS$ be defined by $\delta_\IS(\widehat{0})=0$ and $\delta_\IS(p)=1$ for all $p\not=\widehat{0}$.
\end{definition}

We close this section with a discussion of computability properties of subsets.
The most important notion for us is that of a {\em co-c.e.\ closed set}.
Given a computable metric space $(X,d,\alpha)$ we denote by $B(x,r):=\{y\in X:d(x,y)<r\}$ the open ball
with center $x\in X$ and radius $r\geq0$. More specifically, we denote by $B_{\langle n,\langle i,k\rangle\rangle}:=B(\alpha(n),\frac{i}{k+1})$
a basic open ball.

\begin{definition}[Co-c.e.\ closed subsets]
Let $X$ be a computable metric space. Then $A\In X$ is called {\em co-c.e.\ closed},
if $X\setminus A=\bigcup_{n\in\IN}B_{p(n)}$ for some computable $p\in\IN^\IN$.
\end{definition}

For $X=\IN^\IN$ the co-c.e.\ closed subsets are also known as {\em $\mathrm\Pi^0_1$--classes}.
By $\AA(X)$ we denote the set of closed subsets of a topological space $X$.
The definition of co-c.e.\ closed subsets of computable metric spaces $X$ 
directly leads to a representation $\psi_-$ of the set $\AA(X)$ 
defined by $\psi_-(p):=X\setminus \bigcup_{n=0}^\infty B_{p(n)}$.
We denote the represented space $(\AA(X),\psi_-)$ for short by $\AA_-(X)$. 
We now formulate two equivalent characterizations of co-c.e.\ closed sets.
For every set $A\In X$ we denote its {\em characteristic function} by
$\chi_{A}:X\to\IS$, and it is defined by $\chi_A(x)=1:\iff x\in A$.

\begin{proposition}[Co-c.e.\ closed sets]
\label{prop:closed}
Let $X$ be a computable metric space and let $A\In X$. Then the following are equivalent:
\begin{enumerate}
\item $A$ is co-c.e.\ closed, 
\item $A=f^{-1}\{0\}$ for some computable $f:X\to\IR$,
\item $\chi_{X\setminus A}:X\to\IS$ is computable.
\end{enumerate}
These equivalences are uniform, i.e., the maps $\AA_-(X)\to\CC(X,\IS),A\mapsto\chi_{X\setminus A}$
and $\CC(X)\to\AA_-(X),f\mapsto f^{-1}\{0\}$ are computable and admit (in the second case multi-valued) computable right inverses.
\end{proposition}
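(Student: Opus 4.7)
My plan is to prove the cyclic chain $(1)\Rightarrow(2)\Rightarrow(3)\Rightarrow(1)$, arranging each implication so that it simultaneously produces the corresponding uniform computable map: the multi-valued right inverse of $f\mapsto f^{-1}\{0\}$ will come from $(1)\Rightarrow(2)$, the forward map $A\mapsto\chi_{X\setminus A}$ from combining $(1)\Rightarrow(2)$ with $(2)\Rightarrow(3)$, and the right inverse of this second map from $(3)\Rightarrow(1)$.

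For $(1)\Rightarrow(2)$, given a $\psi_-$-name of $A$ that lists basic balls $B(x_n,r_n)$ whose union equals $X\setminus A$, I define the bump functions $g_n(x):=\max\{0,r_n-d(x_n,x)\}$, which are uniformly computable from the name and satisfy $g_n(x)>0 \Leftrightarrow x\in B(x_n,r_n)$. Setting
\[
f(x) := \sum_{n=0}^\infty 2^{-n}\min\{1,g_n(x)\},
\]
uniform absolute convergence makes $f$ a computable element of $\CC(X)$ with $f^{-1}\{0\}=A$, so $A\mapsto f$ is the required multi-valued computable right inverse of $f\mapsto f^{-1}\{0\}$.

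For $(2)\Rightarrow(3)$, the map $\chi_{\IR\setminus\{0\}}:\IR\to\IS$ is computable because $\IR\setminus\{0\}$ is a c.e.\ open subset of $\IR$, and hence $\chi_{X\setminus A}=\chi_{\IR\setminus\{0\}}\circ f$ is computable uniformly from $f$. For $(3)\Rightarrow(1)$, I invoke the uniform preimage lemma for admissibly represented spaces: if $h:X\to Y$ is computable and $U\In Y$ is c.e.\ open, then $h^{-1}(U)\In X$ is c.e.\ open, uniformly in names of $h$ and $U$. Applied to $h=\chi_{X\setminus A}$ and the basic open set $U=\{1\}\in\OO(\IS)$, this yields a $\psi_-$-name of $A=X\setminus h^{-1}\{1\}$ computed from any name of $\chi_{X\setminus A}$, giving the remaining right inverse and closing the cycle.

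The main obstacle is the preimage lemma used in $(3)\Rightarrow(1)$: concretely, for Cauchy-represented metric spaces one must enumerate basic balls $B_m\In X$ together with witnesses that a given realizer $F$ of $h$ sends every $\delta_X$-name compatible with $B_m$ into a $\delta_Y$-name inside $U$. Once the admissibility of Cauchy representations is granted, this is a routine compactness-style argument on finite prefixes of names, and it is the one place in the proof where the topology of the representations is genuinely used; the rest reduces to the elementary arithmetic of the bump sums.
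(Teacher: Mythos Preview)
Your proof is correct and follows the standard route for this result. The paper itself does not prove the proposition at all: it simply refers to \cite[Theorem~3.10, Corollary~3.14]{BP03}, so there is no ``paper's own proof'' to compare against beyond noting that the cited source proceeds along the same lines you do (bump-function sums for $(1)\Rightarrow(2)$, composition with $\chi_{\IR\setminus\{0\}}$ for $(2)\Rightarrow(3)$, and the effective-openness-of-preimages argument for $(3)\Rightarrow(1)$).

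Two minor remarks. First, you do not explicitly list the forward map $\CC(X)\to\AA_-(X),\,f\mapsto f^{-1}\{0\}$ among the uniform maps you extract, but it is of course covered by your chain $(2)\Rightarrow(3)\Rightarrow(1)$. Second, your description of the $(3)\Rightarrow(1)$ step as ``enumerate basic balls $B_m$ together with witnesses that a given realizer $F$ sends every $\delta_X$-name compatible with $B_m$ into a $\delta_Y$-name inside $U$'' is slightly imprecise as stated: what one actually enumerates are valid Cauchy prefixes $w$ on which the realizer already commits to a nonzero output, and then basic balls $B_m$ small enough that every point of $B_m$ admits a Cauchy name extending $w$ (which amounts to a decidable metric inequality between $B_m$ and the balls $\overline{B}(\alpha(w(j)),2^{-j})$). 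You clearly have the right picture, and rightly flag this as the one genuinely topological step; just be aware that ``name compatible with $B_m$'' needs to be read in the second sense, not the first.
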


The third characterization has the advantage that it is the most general of these three, and it works even for
arbitrary represented spaces $X$. Hence, for such spaces we define $\psi_-$ by
$\psi_-(p)=A:\iff \delta_{\CC(X,\IS)}(p)=\chi_{X\setminus A}$.
We denote the corresponding represented space $(\AA(X),\psi_-)$ also by $\AA_-(X)$.
Due to Proposition~\ref{prop:closed} this notation is consistent with the special definition for computable metric spaces $X$ above.
Besides the notion of a co-c.e.\ closed subset we also need the notion of a co-c.e.\ compact subset.

\begin{definition}[Computable compact subsets]
Let $X$ be a computable metric space and let $K\In X$ be compact. 
\begin{enumerate}
\item $K$ is called {\em co-c.e.\ compact}, if $\{\langle\langle n_1,...,n_k\rangle,k\rangle\in\IN:K\In\bigcup_{i=1}^kB_{n_i}\}$ is c.e.
\item $K$ is called {\em computably compact}, if $K$ is co-c.e.\ compact and there exists a computable sequence that is dense in $K$.
\end{enumerate}
\end{definition}

Obviously, a computable metric space is computably compact if and only if it is co-c.e.\ compact.
Similarly as in the case of closed sets we can derive a representation $\kappa_-$ of the set $\KK(X)$ of compact subsets
that is based on (1) and a representation $\kappa$ of compact sets that is based on (2).
By $\KK_-(X)$ we denote the represented space $(\KK(X),\kappa_-)$
Once again there is a more general representation that works for arbitrary represented spaces,
but we will not formalize this representation here. 


\subsubsection*{Bibliographic Remarks}

\begin{petit}
The theory of representations and of computable functions on represented spaces was developed by Kreitz and Weihrauch~\cite{KW85,KW87,WK87,Wei87},
who also introduced the notion on an admissible representation. Admissible representation in a more general sense have been further studied by Schr\"oder~\cite{Sch02},
who also recognized the relevance of Sierpi\'nski space in this context. Computable metric spaces were first introduced by Lacombe~\cite{Lac59}.
Represented spaces are used as a basic framework for computable analysis~\cite{Wei00,BHW08,Pau16}.
Computability properties of subsets of computable metric spaces were studied since Lacombe~\cite{Lac55,Lac57},
and discussions of corresponding representations can be found in~\cite{KW87,WK87,BW99,BP03,Pau16}.
Proposition~\ref{prop:closed} for metric spaces is taken from Brattka and Presser~\cite[Theorem~3.10, Corollary~3.14]{BP03}. The mentioned representations for compact subsets of metric spaces are also studied in~\cite{BP03}.
Representations of subsets for general represented spaces have been studied by Pauly~\cite{Pau16}.
\end{petit}

\section{The Weihrauch Lattice}
\label{sec:Weihrauch}

We now want to define Weihrauch reducibility as a way to compare problems with each other.
The goal is that $f\leqW g$ expresses the fact that $f$ can be computed by a single application of $g$.
We will need two variants of such a reducibility. 
By ${\id:\IN^\IN\to\IN^\IN}$ we denote the {\em identity} of Baire space. 
For other sets $X$ we usually add an index $X$ and write the {\em identity} as $\id_X:X\to X$.
For $F,G:\In\IN^\IN\to\IN^\IN$ we define $\langle F,G\rangle(p):=\langle F(p),G(p)\rangle$.

\begin{definition}[Weihrauch reducibility]
Let $f$ and $g$ be problems. We define:
\begin{enumerate}
\item $f\leqW g:\iff (\exists$ computable $H,K:\In\IN^\IN\to\IN^\IN)(\forall G\vdash g)\; H\langle \id,GK\rangle\vdash f$.
\item $f\leqSW g:\iff (\exists$ computable $H,K:\In\IN^\IN\to\IN^\IN)(\forall G\vdash g)\;HGK\vdash f$.
\end{enumerate}
We say that $f$ is {\em (strongly) Weihrauch reducible} to $g$, if $f\leqW g$ ($f\leqSW g$) holds. 
\end{definition}

The diagram in Figure~\ref{fig:Weihrauch} illustrates Weihrauch reducibility and its strong counterpart.
It is easy to see that $f\leqSW g$ implies $f\leqW g$. It is also easy to see that $\leqW$ and $\leqSW$ are preorders, i.e., they are reflexive and transitive. 
We denote the corresponding equivalences by $\equivW$ and $\equivSW$, respectively,
and we use  the symbols $\lW$ and $\lSW$ for strict reducibilities, respectively. 
Similar reducibilities can be defined if the notion of computability is replaced by continuity or other suitable categories.
A more categorical characterization of Weihrauch reducibility that does neither mention realizers nor Baire space is given by the following proposition.

\begin{proposition}
\label{prop:GM09}
Let $f:\In X\mto Y$ and $g:\In Z\mto W$ be problems. Then:
\begin{enumerate}
\item $f\leqW g$ if and only if there are computable $h:\In V\times W\mto Y$ and $k:\In X\mto V\times Z$ for some represented space $V$ such that $h\circ (\id_V\times g)\circ k\prefix f$.
\item $f\leqSW g$ if and only if there are computable $h:\In W\mto Y$ and $k:\In X\mto Z$ such that $h\circ g\circ k\prefix f$.
\end{enumerate}
\end{proposition}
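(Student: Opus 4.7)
The plan is to prove both directions of (1) and (2), noting that (2) is essentially the same argument as (1) but without the $V$-component bookkeeping. The backward implication is the easy direction; the forward direction requires choosing $V = \IN^\IN$ equipped with the identity representation to carry the $\id$-factor through the categorical formulation.

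For the backward direction of (1), assume we have computable $h : \subseteq V \times W \mto Y$ and $k : \subseteq X \mto V \times Z$ with $h \circ (\id_V \times g) \circ k \prefix f$. Pick computable realizers $H_0 \vdash h$ and $K_0 \vdash k$. Since $k$ lands in a product, we can write $K_0(p) = \langle K_1(p), K_2(p)\rangle$ using the pairing on Baire space. Set $K := K_2$ and $H\langle p, q\rangle := H_0\langle K_1(p), q\rangle$; both are computable. For any realizer $G \vdash g$ and any name $p$ of some $x \in \dom(f)$, unwinding gives $H\langle p, GK(p)\rangle = H_0\langle K_1(p), G K_2(p)\rangle$, which realizes $h \circ (\id_V \times g) \circ k$ at $x$ and hence a solution of $f(x)$. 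The strong version (2) is the same argument with the $V$-component erased: set $H := H_0$, $K := K_0$.

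For the forward direction of (1), assume computable $H,K$ witness $f \leqW g$, and take $V := \IN^\IN$ with the identity representation. Define
\begin{align*}
k &: \subseteq X \mto \IN^\IN \times Z,\quad k(x) := \{(p, \delta_Z(K(p))) : p \in \delta_X^{-1}\{x\}\},\\
h &: \subseteq \IN^\IN \times W \mto Y,\quad h(p,w) := \{\delta_Y(H\langle p,q\rangle) : q \in \delta_W^{-1}\{w\}\},
\end{align*}
with domains inherited so that all composed expressions are defined. The maps $p \mapsto \langle p, K(p)\rangle$ and $\langle p,q\rangle \mapsto H\langle p,q\rangle$ are computable realizers of $k$ and $h$, respectively. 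To verify $h \circ (\id_V \times g) \circ k \prefix f$, I would first check the domain condition: for $x \in \dom(f)$ and $p \in \delta_X^{-1}\{x\}$, the assumption $H\langle\id, GK\rangle \vdash f$ (for every realizer $G$ of $g$) forces $K(p)$ to be a valid name of some element of $\dom(g)$, and it forces $H\langle p, q\rangle$ to be a valid name whenever $q$ is a name of some $w \in g(\delta_Z K(p))$. The key step is to note that any such individual $q$ can be extended to a full realizer $G \vdash g$ by sending $K(p) \mapsto q$ and choosing arbitrary names on the rest of $\dom(g)$; this is where some form of choice enters, but it is routine in this setting. The strong case (2) proceeds identically with $V$ suppressed: define $k(x) := \{\delta_Z(K(p)) : p \in \delta_X^{-1}\{x\}\}$ and $h(w) := \{\delta_Y(H(q)) : q \in \delta_W^{-1}\{w\}\}$.

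The main conceptual obstacle, rather than a technical one, is the extension step: turning an individual name $q$ into a realizer $G$ of $g$ so as to invoke the hypothesis of Weihrauch reducibility. Once that is in place, the rest is a direct unwinding of realizers through the pairing function, and the domain bookkeeping follows from the containment built into the definition of $\vdash$.
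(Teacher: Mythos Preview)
Your proposal is correct and follows essentially the same route as the paper: the same choice $V=\IN^\IN$ with the identity representation, the same definitions $k=(\id,\delta_ZK)\circ\delta_X^{-1}$ and $h=\delta_Y\circ H\circ\langle\id\times\delta_W^{-1}\rangle$ (written out pointwise in your version), and the same backward argument splitting $K_0=\langle K_1,K_2\rangle$. You also correctly isolate the one nontrivial step---extending a single name $q\in\delta_W^{-1}g\delta_Z K(p)$ to a full realizer $G\vdash g$ with $G(K(p))=q$---which is exactly where the paper invokes choice.
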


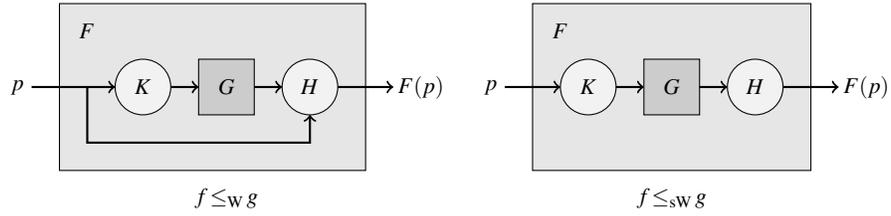
\begin{figure}[tb]
\begin{center}
\begin{tikzpicture}[scale=.37,auto=left]

\draw[style={fill=black!10}] (-4,6) rectangle (7,0);
\draw[style={fill=black!20}]  (1,4) rectangle (3,2);
\draw[style={fill=black!5}]  (-1,3) ellipse (1 and 1);
\draw[style={fill=black!5}]  (5,3) ellipse (1 and 1);
 
\node at (-1,3) {$K$};
\node at (5,3) {$H$};
\node at (2,3) {$G$};
\node at (-3,5) {$F$};
\node at (-5.5,3) {$p$};
\node at (9,3) {$F(p)$};
\node at (2,-1) {$f\leqW g$};

\draw[->,thick] (0,3) -- (1,3);
\draw[->,thick] (-5,3) -- (-2,3);
\draw[->,thick] (3,3) -- (4,3) ;
\draw[->,thick] (6,3) -- (8,3);
\draw[->,thick] (-3,3) -- (-3,1) -- (5,1) -- (5,2);

\draw[style={fill=black!10}] (13,6) rectangle (23,0);
\draw[style={fill=black!20}]  (17,4) rectangle (19,2);
\draw[style={fill=black!5}]  (15,3) ellipse (1 and 1);
\draw[style={fill=black!5}]  (21,3) ellipse (1 and 1);
 
\node at (15,3) {$K$};
\node at (21,3) {$H$};
\node at (18,3) {$G$};
\node at (14,5) {$F$};
\node at (11.5,3) {$p$};
\node at (25,3) {$F(p)$};
\node at (18,-1) {$f\leqSW g$};

\draw[->,thick] (16,3) -- (17,3);
\draw[->,thick] (12,3) -- (14,3);
\draw[->,thick] (19,3) -- (20,3) ;
\draw[->,thick] (22,3) -- (24,3);

\end{tikzpicture}
\end{center}
\ \\[-1cm]
\caption{Weihrauch reducibility and strong Weihrauch reducibility}
\label{fig:Weihrauch}
\end{figure}

Even though the proof of Proposition~\ref{prop:GM09} is elementary, there is a subtle point in it.
Namely the proof requires a version of the axiom of choice.
In fact, we are freely using the axiom of choice, and mostly we invoke the following version.

\begin{svgraybox}
The {\em axiom of choice for Baire space}: every problem $f$ has a realizer $F$.
\end{svgraybox}

The fact that Weihrauch reducibility captures the idea of using $g$ exactly once in the course
of the computation is stated in the following theorem that we only formulate in intuitive terms here:

\begin{theorem}[Generalized Turing oracles]
\label{thm:Tavana-Weihrauch}
$f\leqW g$ holds if and only if $f$ can be computed on a (generalized) Turing machine that uses
exactly one application of $g$ in the course of its computation.
\end{theorem}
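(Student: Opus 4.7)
The plan is first to make the informal notion of a ``generalized Turing machine with one oracle call'' precise, and then verify that the two directions of the equivalence follow directly from the definition of $\leqW$. By a generalized Turing machine I mean a Type-2 machine operating on Baire space that, given an input $p\in\IN^\IN$ on its input tape, is allowed during its run to write some infinite string $q\in\IN^\IN$ onto a dedicated query tape, make a single oracle call to some realizer $G$ of $g$ (receiving $G(q)$ on an answer tape), and then continue its computation, having simultaneous access to $p$ and to the oracle response, producing an output $F(p)\in\IN^\IN$. The requirement is that this works for \emph{every} realizer $G\vdash g$, and that $F\vdash f$ in each case.

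For the direction $f\leqW g \Rightarrow$ machine exists, I would take witnesses $H,K$ of the reduction and build $M$ as follows: on input $p$, the machine first runs $K$ on $p$ to produce (in the limit) an infinite oracle query $K(p)$; it transmits this query and receives $G(K(p))$ from the oracle; it then runs $H$ on the pairing $\langle p, G(K(p))\rangle$ and outputs the result. Computability of $H$ and $K$ ensures this is a genuine Turing machine computation, and by the definition of $f\leqW g$ the resulting total map $F(p) := H\langle p, GK(p)\rangle$ is a realizer of $f$ for every $G\vdash g$.

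For the converse, suppose a machine $M$ computes $f$ using exactly one oracle call. Let $K:\In\IN^\IN\to\IN^\IN$ be the partial computable function that, on input $p$, simulates $M$ up to the moment of the oracle call and returns the query string written so far (continuing indefinitely, so that $K(p)$ is the infinite string that would have been fed to the oracle). Let $H:\In\IN^\IN\to\IN^\IN$ be the partial computable function that, given a pair $\langle p, r\rangle$, simulates $M$ on input $p$ but intercepts the oracle call and substitutes $r$ for the oracle's response, then returns $M$'s eventual output. Both $H$ and $K$ are computable, and by construction $H\langle p, G K(p)\rangle$ is precisely $F(p)$, where $F$ is the realizer of $f$ determined by the machine using oracle $G$; hence $H\langle \id, GK\rangle \vdash f$ for every $G\vdash g$, witnessing $f\leqW g$.

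The main obstacle is really the formalization in the first paragraph rather than either implication: one must specify the oracle-call semantics carefully enough that (i) the query $K(p)$ is produced by a computation that does not itself invoke $g$, (ii) the single call faithfully encodes an arbitrary multi-valued choice of $G\vdash g$ (this is where the axiom of choice for Baire space enters, guaranteeing realizers exist in the first place), and (iii) access to $p$ remains available after the oracle call, so that $H$ can depend on both $p$ and the response — this last point is what distinguishes $\leqW$ from $\leqSW$, and it corresponds exactly to the use of the identity component in $H\langle\id, GK\rangle$. Once the model is fixed, everything else is a direct unfolding of definitions.
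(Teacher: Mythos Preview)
The paper does not actually prove this theorem: its ``proof'' consists solely of the citation ``See~\cite{TW11}'', and earlier in the text it explicitly remarks that the result ``is due to Tavana and Weihrauch~\cite{TW11}, and we refer the readers to the reference for a precise formulation.'' So there is nothing to compare your argument against within this paper.

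That said, your sketch is the natural one and is correct in outline. The two implications are, as you say, direct unfoldings once the machine model is fixed: the reduction functions $K$ and $H$ correspond exactly to the pre-oracle and post-oracle phases of the computation. You have also correctly identified where the genuine content lies, namely in pinning down the oracle-call semantics for Type-2 machines. The delicate point you allude to is that the query $K(p)$ is an infinite string, so the machine cannot literally ``finish writing the query, then call the oracle, then resume''; rather, query writing and answer reading must be interleaved via dedicated tapes, with the oracle response being revealed monotonically as more of the query is written (this is what makes the oracle behave like a continuous functional). Ensuring that this interleaving still decomposes cleanly into a computable $K$ (query formation, independent of the answer) followed by a computable $H$ (post-processing with access to both $p$ and the answer) is exactly what \cite{TW11} makes precise. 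Your point (iii), that access to $p$ must persist after the call, is the right diagnosis of why this characterizes $\leqW$ rather than $\leqSW$.
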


We emphasize that $f\leqW g$ actually {\em requires} that the oracle $g$ is used once in the course of the computation of $f$.
Hence, using the oracle $g$ can actually be an obstacle if the domain of $g$ contains only complicated points.

We note that a characterization of strong Weihrauch reducibility analogous to Theorem~\ref{thm:Tavana-Weihrauch} would
require discarding all results that were obtained in the course of the computation other than the result of the application of the oracle $g$.
This would rather be an unnatural way of using oracles, and it indicates why ordinary Weihrauch reducibility is a more appropriate
concept from this perspective.

The relation between strong and ordinary Weihrauch reducibility is similar to the relation between one-one and many-one reducibility
in classical computability theory, and it can be expressed using the notion of 
a cylinder.

\begin{definition}[Cylinder]
A problem $f$ is called a {\em cylinder} if $\id\times f\leqSW f$.
\end{definition}

It is clear that $f\leqSW\id\times f$ and $\id\times f\equivW f$ hold for all problems $f$,
whereas $\id\times f\leqSW f$ is a specific property of $f$ that allows to ``feed the input through to $f$.''

\begin{proposition}[Cylinder]
\label{prop:cylinder}
A problem $f$ is a cylinder if and only if for all problems $g$ the following holds: $g\leqW f\iff g\leqSW f$.
\end{proposition}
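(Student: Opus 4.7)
The plan is to reduce the proposition to a single auxiliary fact: for all problems $f$ and $g$, one has $g \leqW f \iff g \leqSW (\id\times f)$. Granting this equivalence, both directions of the proposition follow almost immediately from transitivity of $\leqSW$.

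To establish the auxiliary equivalence I would argue in both directions using the explicit definitions of realizer-based reducibility. For $(\Rightarrow)$, suppose $g \leqW f$ is witnessed by computable $H,K$. I would set $K''(p):=\langle p,K(p)\rangle$ and $H'':=H$. Because the representation $\delta_{\IN^\IN}$ is the identity on Baire space and $\id\times f$ is represented via the pairing $\langle\cdot,\cdot\rangle$ (Definition~\ref{def:operations-representations}(1)), any realizer $G'\vdash \id\times f$ must send $\langle p,q\rangle$ to $\langle p,G(q)\rangle$ for some $G\vdash f$. Hence $H'' G' K''(p)=H\langle p,GK(p)\rangle=H\langle\id,GK\rangle(p)$ realizes $g$, which gives $g\leqSW \id\times f$. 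For $(\Leftarrow)$, given computable $H'',K''$ witnessing $g\leqSW\id\times f$, decompose $K''(p)=\langle a(p),b(p)\rangle$ with computable $a,b$ via the standard projections, and set $K:=b$ and $H\langle p,r\rangle:=H''\langle a(p),r\rangle$. For any $G\vdash f$ the function $G'\langle p,q\rangle:=\langle p,G(q)\rangle$ realizes $\id\times f$, and $H\langle\id,GK\rangle(p)=H''\langle a(p),Gb(p)\rangle=H''G'K''(p)$ realizes $g$, so $g\leqW f$.

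With the auxiliary equivalence in hand, the proposition is short. For the forward direction, assume $\id\times f\leqSW f$; the implication $g\leqSW f\Rightarrow g\leqW f$ is immediate from the definitions, and conversely $g\leqW f$ yields $g\leqSW\id\times f\leqSW f$ by the lemma, the cylinder hypothesis, and transitivity. For the backward direction, assume the biconditional holds for every $g$ and specialize to $g:=\id\times f$; since $\id\times f\leqW f$ (indeed $\equivW f$) is recorded in the excerpt, the hypothesis upgrades this to $\id\times f\leqSW f$, which is exactly the cylinder condition.

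The only delicate step is the structural description of realizers of $\id\times f$ used in the forward direction of the lemma, namely that the first coordinate of the output is literally $p$. This rests on the fact that $\delta_{\IN^\IN}$ is the identity, so the identity function has only the identity as a realizer, combined with the pairing-based definition of the product representation. Once that observation is made, everything else is routine bookkeeping with the Weihrauch definitions.
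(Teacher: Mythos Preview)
Your proof is correct and follows essentially the same approach as the paper, which cites \cite{BG11} (Proposition~3.5 and Corollary~3.6) for this result: the auxiliary equivalence $g\leqW f\iff g\leqSW\id\times f$ is precisely the cylindrification lemma proved there, and the cylinder characterization is then derived from it by the same transitivity argument you give. One small remark: in the forward direction of the lemma your phrasing ``for some $G\vdash f$'' is slightly loose, since the second coordinate of $G'\langle p,q\rangle$ may depend on both $p$ and $q$; but this causes no problem, because for each fixed $p$ one can extend the assignment $K(p)\mapsto s$ to a full realizer of $f$ (using the axiom of choice for Baire space), and the universal quantifier over realizers in the definition of $\leqW$ then gives exactly what is needed pointwise.
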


It is important to mention that the definitions of $\leqW$ and $\leqSW$ are invariant under
the replacement of represented spaces by equivalent ones \cite[Lemma~2.1]{BG11}.
The equivalence classes induced by $\equivW$ and $\equivSW$ are called {\em Weihrauch degrees} and {\em strong Weihrauch degrees}, respectively.
The reducibilities $\leqW$ and $\leqSW$ naturally extend to these degrees.

Most algebraic operations defined in Definition~\ref{def:algebraic-operations} are monotone with 
respect to (strong) Weihrauch reducibility. We say that a binary operation $\Box$ on problems is {\em monotone} with respect to $\leqW$,
if for all problems $f_0,f_1,g_0$ and $g_1$ condition 1.\ holds, and a unary operation ${\;}^\Box$ on problems is called a {\em closure operator} with respect to $\leqW$,
if for all problems $f,g$ condition 2.\ holds:
\begin{enumerate}
\item ($f_0\leqW f_1\mbox{ and }g_0\leqW g_1)\TO f_0\Box g_0\leqW f_1\Box g_1$\hfill (monotone)
\item $f\leqW f^\Box, f^{\Box\Box}\leqW f^\Box\mbox{ and }(f\leqW g\TO f^\Box\leqW g^\Box)$\hfill (closure operator)
\end{enumerate}
Analogously to monotone, we define {\em antitone} with a reversed order on one side. 
Monotonicity and closure operators with respect to $\leqSW$ are defined analogously.

\begin{proposition}[Monotonicity and closure operators]
\label{prop:mon-closure}
We obtain:
\begin{enumerate}
\item The binary operations $\times$, $\sqcup$, $\sqcap$, $\boxplus$ and $+$ are all monotone with respect to $\leqW$ and $\leqSW$.
\item The unary operation $\;^*$ is a closure operator with respect to $\leqW$ and monotone with respect to $\leqSW$.
\item The unary operation $\ \widehat{\;}\ $ is a closure operator with respect to $\leqW$ and $\leqSW$.
\end{enumerate}
In particular, all the mentioned operations extend to operations on degrees.
\end{proposition}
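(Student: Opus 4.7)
The plan is to establish each claim by constructing explicit witnesses, using the categorical characterization of Proposition~\ref{prop:GM09} so we can argue with computable pre- and post-processors instead of Baire-space realizers. Throughout, I fix witnesses $f_0 \leqW f_1$ given by computable $k^f: X_0 \mto V^f \times X_1$ and $h^f: V^f \times Y_1 \mto Y_0$ with $h^f \circ (\id \times f_1) \circ k^f \prefix f_0$, and analogously $(k^g, h^g)$ for $g_0 \leqW g_1$. For the strong variants the auxiliary spaces $V^f, V^g$ can be taken trivial, so that $k^f: X_0 \mto X_1$ and $h^f: Y_1 \mto Y_0$.

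For part (1), each binary operation is handled by combining these witnesses structurally. For $\times$, the pre-processor sends $(x,z)$ to $((v^f,v^g),(x',z'))$ after a reordering of coordinates, and the post-processor applies $h^f$ and $h^g$ componentwise. For $\sqcup$, pre- and post-processors branch on the input/output tag $0/1$ and invoke the corresponding reduction. For $\sqcap$, the pre-processor is essentially $(k^f, k^g)$ applied componentwise to $(x,z)$, while the post-processor inspects the tag of the $Y_1 \sqcup W_1$-output and applies $h^f$ or $h^g$ with the appropriate $V$-auxiliary. For $\boxplus$ and $+$, outputs live in completions $\overline{Y_1} \times \overline{W_1}$, so I extend $h^f, h^g$ to respect $\bot$; checking that $h \circ (\id \times (f_1 \Box g_1)) \circ k \prefix f_0 \Box g_0$ is then routine. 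The strong versions of all of these are obtained simply by omitting the $V$-coordinates.

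For (3), the closure operator properties of $\widehat{\;}$ under both reducibilities come from three ingredients. The inclusion $f \leqSW \widehat{f}$ uses the constant-sequence embedding of $x$ and projection to the first coordinate. The idempotency $\widehat{\widehat{f}} \leqSW \widehat{f}$ uses the computable bijection $(X^\IN)^\IN \cong X^{\IN \times \IN} \cong X^\IN$ induced by a computable bijection $\IN \cong \IN \times \IN$. Monotonicity applies $(k^f,h^f)$ pointwise in each coordinate of the input sequence. For (2), the analogous constructions yield $f \leqSW f^*$ and monotonicity of $^*$ under both reducibilities.

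The main obstacle, and the reason $^*$ is only a closure operator under $\leqW$, concerns $f^{**} \leqW f^*$. An element of $X^{**}$ carries nested length information $(i,(j_1,\dots,j_i),(\text{tuples}))$, and the natural reduction flattens this into a single tuple of length $\sum_k j_k$ in $X^*$ before applying $f^*$ once, then regroups the output. The regrouping step requires knowing $i$ and the $j_k$: under $\leqW$ this information travels through the auxiliary $V$-coordinate to the post-processor and the regrouping is computable, whereas under $\leqSW$ the post-processor has no access to it, which is why only monotonicity (and not idempotency) can be asserted in the strong case.
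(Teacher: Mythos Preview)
Your proposal is correct and follows the same overall strategy as the paper's proof: build explicit witnesses for each operation. The paper in fact mostly defers to references (\cite{BG11,Pau10a,Dzh18}) and only spells out the case of $+$, so your account is more self-contained.

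One point deserves sharpening. For $\boxplus$ and $+$ you write that you ``extend $h^f,h^g$ to respect $\bot$''. This is the right idea, but it hides the only non-routine step: one cannot computably test whether a $\delta_{\overline{Y_1}}$-name encodes $\bot$ or a genuine element, so the extension cannot simply be a case split. What is actually needed is the precompleteness of $\delta_{\overline{Y}}$ (Lemma~\ref{lem:precompleteness}(2)): every partial computable realizer $H$ extends to a total computable $\overline{H}$ with $\delta_{\overline{Y}}H(p)=\delta_{\overline{Y}}\overline{H}(p)$ on $\dom(H)$. The paper's explicit proof for $+$ works at the realizer level precisely to invoke this. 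Your categorical phrasing via Proposition~\ref{prop:GM09} is fine, but at this step you must drop down to realizers (or equivalently observe that the embedding $Y_0\hookrightarrow\overline{Y_0}$ composed with any computable $h^f$ admits a total realizer into $\overline{Y_0}$).

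Your explanation of why $f^{**}\leqW f^*$ holds but the strong version is not claimed is exactly right: the regrouping data $(i,j_1,\dots,j_i)$ must reach the post-processor, and only the ordinary reduction provides the auxiliary channel $V$ for this.
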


It is an obvious question whether there is any least and any greatest Weihrauch degree.
The first question is easy to answer.

\begin{definition}[Special Weihrauch degrees]
By ${\mathbf 0}$ we denote the (strong) Weihrauch degree of the nowhere defined problems, and by ${\mathbf 1}$ we 
denote the Weihrauch degree of the identity $\id$.
\end{definition}

It is easy to see that ${\mathbf 0}$ is exactly the class of all nowhere defined problems, and it is
the least (strong) Weihrauch degree. The class $\mathbf{1}$ characterizes the computable problems in the sense
that $f\leqW\mathbf{1}$ holds if and only if $f$ is computable. 
In many respects $\mathbf{0}$ and $\mathbf{1}$ behave algebraically like the numerical constants $0$ and $1$.

The question whether there is a greatest Weihrauch degree is less straightforward to answer. 
If we do not accept the axiom of choice for Baire space, then the class of problems without realizer
form a natural top element. Since we are accepting the axiom of choice, this natural top element is not
available, and we can only add an additional top element to the Weihrauch degrees.\footnote{See Brattka and Pauly~\cite{BP18} for a more detailed discussion.}

If one is not interested in classifying specific problems with general types $X, Y$ as they appear in analysis,
but if one rather wants to study the structure of Weihrauch degrees as such,
then it is sufficient to consider problems of type $f:\In \IN^\IN\mto\IN^\IN$ on Baire space.
We make this slightly more precise.

\begin{lemma}[Realizer version]
\label{lem:realizer}
Let $(X,\delta_X)$ and $(Y,\delta_Y)$ be represented spaces and let $f:\In X\mto Y$ be a problem.
Then the {\em realizer version} $f^{\rm r}:\In\IN^\IN\mto\IN^\IN$ of $f$ is defined by
$f^\r:=\delta_Y^{-1}\circ f\circ\delta_X$. We have $f^\r\equivSW f$.
\end{lemma}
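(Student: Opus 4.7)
The plan is to verify both directions $f^\r \leqSW f$ and $f \leqSW f^\r$ by taking $H = K = \id$ as the inner and outer computable maps. The key observation I intend to establish is that a function $F:\IN^\IN \to \IN^\IN$ realizes $f$ (viewed as a problem on the represented spaces $X,Y$) if and only if it realizes $f^\r$ (viewed as a problem on Baire space). Once this coincidence of realizer-sets is proved, both strong Weihrauch reductions are witnessed trivially by the identity.

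First I would unpack $F \vdash f^\r$. Because the representation of $\IN^\IN$ is $\id$, this reduces to the relation $F \prefix f^\r$, i.e., $\dom(f^\r) \In \dom(F)$ together with $F(p) \in f^\r(p)$ for every $p \in \dom(f^\r)$. Substituting $f^\r = \delta_Y^{-1} \circ f \circ \delta_X$, and using that $\delta_Y$ is surjective (so $f^\r(p)$ is nonempty whenever $f(\delta_X(p))$ is), I obtain $\dom(f^\r) = \delta_X^{-1}(\dom(f))$, while the membership $F(p) \in f^\r(p)$ translates to $F(p) \in \dom(\delta_Y)$ together with $\delta_Y(F(p)) \in f(\delta_X(p))$.

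Next I would compare this with the definition $F \vdash f$, namely $\delta_Y F \prefix f\delta_X$. Unwinding Definition~\ref{def:algebraic-operations}(1), this requires $\dom(f\delta_X) \In \dom(\delta_Y F)$ and $\delta_Y(F(p)) \in f(\delta_X(p))$ for $p$ in the left-hand domain. But $\dom(f\delta_X) = \delta_X^{-1}(\dom(f)) = \dom(f^\r)$, and the inclusion $\dom(f\delta_X) \In \dom(\delta_Y F)$ is precisely the requirement $F(p) \in \dom(\delta_Y)$ for such $p$, so the two sets of conditions coincide verbatim. Hence $F \vdash f$ if and only if $F \vdash f^\r$.

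Concluding, with $H = K = \id$, every realizer $G$ of $f$ is also a realizer of $f^\r$, so $HGK = G \vdash f^\r$, giving $f^\r \leqSW f$; the symmetric argument yields $f \leqSW f^\r$, and therefore $f^\r \equivSW f$. There is no serious obstacle here: the only care needed is in tracking the implicit constraint $F(p) \in \dom(\delta_Y)$ hidden inside the composition $\delta_Y F$, and in noting that the surjectivity of $\delta_Y$ prevents $f^\r$ from losing admissible instances. The content of the lemma is essentially that passing from $f$ to $f^\r$ is a purely syntactic relabelling that preserves the realizer relation.
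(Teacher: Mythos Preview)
Your proof is correct and follows exactly the approach of the paper: the paper's proof consists of the single observation that $f$ and $f^\r$ share precisely the same realizers $F:\In\IN^\IN\to\IN^\IN$, which is the key claim you carefully unpack. Your additional detail (tracking domains, the hidden constraint $F(p)\in\dom(\delta_Y)$, and the role of surjectivity of $\delta_Y$) is a faithful elaboration of that one-line argument.
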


This means that every (strong) Weihrauch degree has a representative of type $f^\r:\In\IN^\IN\mto\IN^\IN$.
By $\WW$ and $\WW_{\rm s}$ we denote the set of Weihrauch degrees and strong Weihrauch degrees, respectively, 
both restricted to problems on Baire space.\footnote{We use the restriction to Baire space for our formal definition of $\WW$ and $\WW_{\rm s}$, 
since the class of all (strong) Weihrauch degrees of problems with arbitrary type does not form a set.}

\begin{theorem}[Weihrauch lattice]
\label{thm:lattice}
The Weihrauch degrees $(\WW,\leqW)$ form a distributive lattice with supremum operation $\sqcup$, infimum operation $\sqcap$ and bottom element ${\mathbf 0}$.
\end{theorem}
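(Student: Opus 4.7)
My plan is to verify, in turn, that (a)~$\botW$ is the least element, (b)~$\sqcup$ realises the join, (c)~$\sqcap$ realises the meet, and (d)~the distributive law holds. The operations descend to well-defined operations on degrees by the monotonicity statement in Proposition~\ref{prop:mon-closure}. Step (a) is immediate: any nowhere defined $f$ has $\dom(f\delta_X)=\emptyset$, so the universal condition defining $f\leqW g$ is vacuously satisfied for every $g$.

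For (b), the injections $f,g\leqW f\sqcup g$ are witnessed by prefixing the input with the tag $0$ (resp.~$1$) and stripping the returned tag. Conversely, if $f\leqW h$ via $(H_1,K_1)$ and $g\leqW h$ via $(H_2,K_2)$, the reduction $f\sqcup g\leqW h$ reads the leading tag $i$ of the input, forwards $K_i$ of the payload as the single $h$-query, and post-processes with $H_i$ while re-prepending $i$.

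For (c), the inequalities $f\sqcap g\leqW f$ and $f\sqcap g\leqW g$ hold because either projection suffices: on input $\langle p,q\rangle$ naming $(x,z)$, querying $f$ on $p$ alone yields $y\in f(x)$, and $(0,y)$ then lies in $(f\sqcap g)(x,z)=f(x)\sqcup g(z)$. The universal property is the delicate step: if $h\leqW f$ via $(H_1,K_1)$ and $h\leqW g$ via $(H_2,K_2)$, one reduces $h\leqW f\sqcap g$ by querying the oracle on $\langle K_1(p),K_2(p)\rangle$, receiving a tagged response $(i,r)$, and applying $H_i$ to $\langle p,r\rangle$. The main obstacle here is that we cannot predict the tag $i$ before the oracle call; it is precisely the ``$\id$-slot'' of ordinary Weihrauch reducibility that lets us keep the original input $p$ available for post-processing by either $H_1$ or $H_2$ once $i$ becomes known.

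For (d), the inclusion $(f\sqcap g)\sqcup(f\sqcap h)\leqW f\sqcap(g\sqcup h)$ is formal: it follows from (b) and (c) via the generic lattice identity $(a\wedge b)\vee(a\wedge c)\leq a\wedge(b\vee c)$. The non-trivial direction $f\sqcap(g\sqcup h)\leqW(f\sqcap g)\sqcup(f\sqcap h)$ is a tag-bookkeeping argument: on input $(x,(i,t))$ send $(i,(x,t))$ to the right-hand side; the response is outer-tagged by the same $i$ and carries an inner tag $j\in\{0,1\}$ with payload $r$. If $j=0$ then $r\in f(x)$ and we output $(0,r)$; if $j=1$ then $r$ lies in $g(t)$ or $h(t)$ according to $i$, and we output $(1,(i,r))$. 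No new ideas beyond the case analyses used in (b) and (c) are required.
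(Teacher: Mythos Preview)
Your argument is correct and constitutes the standard direct verification. Two minor remarks: in step~(c) you tacitly use the axiom of choice for Baire space (given any valid name $r$ of an element of $f(\delta_X K_1(p))$ there exists a realizer $G_1\vdash f$ with $G_1(K_1(p))=r$, so $H_1\langle p,r\rangle$ is guaranteed to be correct); and in step~(d) the claim that the outer tag of the oracle's response coincides with the input tag $i$ follows from the very definition of the coproduct, $(\alpha\sqcup\beta)(i,u)=\{i\}\times\dotsb$. Both are routine but worth making explicit.

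The paper's own proof is not self-contained: it simply assembles citations, taking the infimum property of $\sqcap$ from \cite[Theorem~3.14]{BG11} and the supremum property of $\sqcup$ together with distributivity from \cite[Corollary~4.7, Theorem~4.23]{Pau10a} (the latter being stated for continuous Weihrauch reducibility, with the remark that the proof carries over to the computable case). What you have written is essentially the content of those cited arguments spelled out in one place, so there is no genuine methodological difference---you have supplied the details that the survey defers to the literature.
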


Also the strong Weihrauch degrees form a lattice structure, albeit a non-distributive one with a different supremum operation.

\begin{theorem}[Strong Weihrauch lattice]
\label{thm:strong-lattice}
The strong Weihrauch degrees $(\WW_{\rm s},\leqSW)$ form a non-distributive lattice with supremum $\boxplus$, infimum operation $\sqcap$ and bottom element ${\mathbf 0}$.
\end{theorem}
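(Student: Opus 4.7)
The plan is to verify each lattice property separately: (i) $\mathbf{0}$ is the least element, (ii) $\sqcap$ yields infima, (iii) $\boxplus$ yields suprema, and (iv) distributivity fails. Claim (i) is immediate: a nowhere defined problem $f$ has $\dom(f) = \emptyset$, so the realizer condition $\delta_Y F \prefix f\delta_X$ is vacuously satisfied by every $F$, and any computable $K, H$ witness $f \leqSW g$ for every $g$.

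For (ii), using the categorical characterization in Proposition~\ref{prop:GM09}(2), $f \sqcap g \leqSW f$ is witnessed by the projection $k : X \times Z \to X$, $(x, z) \mapsto x$ together with the left injection $\ell : Y \to Y \sqcup W$, $y \mapsto (0, y)$, since $\ell(f(k(x, z))) = \{0\} \times f(x) \subseteq (f \sqcap g)(x, z)$; the case $f \sqcap g \leqSW g$ is symmetric. For the universal property, given $h \leqSW f$ via $(k_0, \ell_0)$ and $h \leqSW g$ via $(k_1, \ell_1)$, I would set $k(x) := (k_0(x), k_1(x))$ and define $\ell$ by case split on the coproduct tag, $\ell(0, y) := \ell_0(y)$ and $\ell(1, w) := \ell_1(w)$, which yields $\ell \circ (f \sqcap g) \circ k\,(x) = \ell_0(f(k_0(x))) \cup \ell_1(g(k_1(x))) \subseteq h(x)$, as required.

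For (iii) the supremum property of $\boxplus$ is the most delicate step, and it is where the completions $\overline{Y}, \overline{W}$ in the definition of $\boxplus$ play an essential r\^ole. The injections $f, g \leqSW f \boxplus g$ use $k(x) := (0, x)$ (respectively $k(z) := (1, z)$) together with projection onto the appropriate component of the pair; this projection lands in $Y$ (respectively $W$) on the image, because by the definition of $\boxplus$ the relevant entry is guaranteed to lie there. For universality, suppose $f \leqSW h$ via realizers $(K_0, H_0)$ and $g \leqSW h$ via $(K_1, H_1)$. I would take $K(ip) := K_i(p)$ as the pre-processor, dispatching on the leading symbol, and let the post-processor $H$ output the pair $\langle \widetilde{H_0(q)}, \widetilde{H_1(q)} \rangle$, where $\widetilde{r}$ denotes a computable lift of any sequence $r \in \IN^\IN$ to a valid name in the completion: of whatever $r$ names, if $r$ is itself a valid name, and of $\bot$ otherwise (technically one streams $r(n) + 1$ symbols as they become available, padding with $0$s while waiting, so that the $p - 1$ operation recovers $r$ when $H_i(q)$ converges and yields a word not in the domain otherwise). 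On input $0p$ the first component of $H G K(0p)$ correctly names an element of $f(x) \subseteq \overline{Y}$ via $H_0 G K_0 \vdash f$, while the second component lies somewhere in $\overline{W}$, possibly $\bot$; the pair is therefore a valid element of $f(x) \times \overline{W} = (f \boxplus g)(0, x)$, and case $1p$ is symmetric. Without the completions there would be no way for $H$ to fill in the uncommitted component, which is precisely the information-theoretic reason that $\sqcup$ fails to give the supremum in the strong lattice.

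Non-distributivity is the step I expect to be the main obstacle. The easy direction $(a \sqcap b) \boxplus (a \sqcap c) \leqSW a \sqcap (b \boxplus c)$ follows formally from the universal properties of $\sqcap$ and $\boxplus$ just established: monotonicity gives $a \sqcap b \leqSW a \sqcap (b \boxplus c)$ and $a \sqcap c \leqSW a \sqcap (b \boxplus c)$, and the universal property of $\boxplus$ then yields the claim. Strictness requires concrete degrees witnessing failure of the reverse reduction; unlike the routine lattice-axiom checks above, this needs a genuine lower-bound argument, typically by selecting pairwise incomparable strong Weihrauch degrees (for instance coming from suitable variants of choice principles) and showing directly that $a \sqcap (b \boxplus c) \leqSW (a \sqcap b) \boxplus (a \sqcap c)$ would collapse the assumed incomparability.
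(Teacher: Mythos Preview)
Your arguments for (i)--(iii) are correct and track the approach of Dzhafarov~\cite{Dzh18}, which is what the paper cites for this theorem. In particular, your tilde construction in (iii) is exactly the totalization $F\mapsto\overline{F}$ of Lemma~\ref{lem:precompleteness}, and your explanation of why the completions are indispensable for $\boxplus$ to be a supremum under $\leqSW$ is on the mark. One small point of hygiene in (iii): for the injections $f\leqSW f\boxplus g$ you project the first component out of $\overline{Y}\times\overline{W}$, but the name you obtain is a $\delta_{\overline{Y}}$--name, not a $\delta_Y$--name; you need the computable partial inverse $G$ from Lemma~\ref{lem:precompleteness}(1)(b) to convert it back. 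This is routine, but should be said.

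The real gap is (iv). You correctly isolate that one distributive inequality follows formally from the universal properties, and that the converse needs concrete witnesses, but you do not supply them. Saying that a hypothetical reduction ``would collapse the assumed incomparability'' is not a proof: not every triple of pairwise $\leqSW$--incomparable degrees refutes distributivity, so one must exhibit a specific triple $(a,b,c)$ and argue directly that $a\sqcap(b\boxplus c)\nleqSW(a\sqcap b)\boxplus(a\sqcap c)$. Dzhafarov constructs such a triple explicitly (from problems with carefully chosen domains and ranges on Baire space) and proves the non-reduction by analyzing what information the post-processor $H$ can extract from a $\boxplus$--output; this is a genuine lower-bound argument, not a formal consequence of the lattice axioms. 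Without that construction or an equivalent one, the non-distributivity claim---and hence the theorem as stated---is not established by your proposal.
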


\subsubsection*{Bibliographic Remarks}

\begin{petit}
The reducibility that is now called {\em Weihrauch reducibility} was introduced by Klaus Weihrauch in the late 1980s and 
appeared in two unpublished technical reports \cite{Wei92a,Wei92c}.
He supervised six PhD and MSc projects on this topic (by von Stein \cite{Ste89}, Mylatz \cite{Myl92,Myl06},
Brattka \cite{Bra93}, Hertling \cite{Her96d}, Pauly \cite{Pau07}). 
Most of this material remained unpublished, and the concept featured only in a few early publications by Brattka, Weihrauch and Gherardi~\cite{Bra99,Wei00,Bra05,Ghe06a,BG09}.
In a more abstract categorical setting a concept related to Weihrauch reducibility has independently been studied by Hirsch~\cite{Hir90}.
Theorem~\ref{thm:Tavana-Weihrauch} is due to Tavana and Weihrauch~\cite{TW11}, and we refer the readers
to the reference for a precise formulation.
The subject took a turn when Gherardi and Marcone~\cite{GM09} defined Weihrauch reducibility in its full generality for problems, and
they promoted Weihrauch complexity as a uniform version of reverse mathematics.
Brattka and Gherardi~\cite{BG11,BG11a} continued to study the subject and discovered that the structure is a lower semilattice,
whereas Pauly~\cite{Pau10a} independently provided the coproduct operation and discovered that the structure is a distributive upper semilattice.
The notion of a cylinder is taken from Brattka and Gherardi~\cite{BG11}.
Several authors continued to investigate the subject from the perspective of reverse mathematics, among them
Dorais, Dzhafarov, Hirst, Mileti and Shafer \cite{DDH+16,Dzh15,Dzh18}  and Hirschfeldt and Jockusch \cite{Hir15,HJ16}.
Hirschfeldt and Jockusch also introduced a generalized version of Weihrauch reducibility that has a built-in closure under composition.
Dzhafarov provided the box sum operation $\boxplus$ and proved that the strong Weihrauch degrees form a non-distributive lattice~\cite{Dzh18}.
Independently, a polynomial-time version of Weihrauch complexity was used by Kawamura and Cook to classify
the uniform computational complexity of problems in analysis \cite{Kaw10,KC10}.
\end{petit}

\section{Algebraic and Topological Properties}
\label{sec:algebraic}

In this section we discuss a number of algebraic and topological notions and their interactions that turned out to be 
fruitful for the study of the Weihrauch lattice.
We mention that while $f\leqW{\mathbf 1}$ characterizes the computable problems $f$, also
the relation ${\mathbf 1}\leqW f$ bears some meaning.

\begin{definition}[Pointedness]
We call a problem $f$ {\em pointed}, if $\id\leqW f$ holds.
Analogously, we can define {\em strong pointedness} with the help of $\leqSW$ instead of $\leqW$.
\end{definition}

It is easy to see that the pointed problems are exactly those with a computable point in their domain.
By definition $f^*$ is always pointed since $f^0=\id_{\{()\}}$.
We introduce some further terminology that can be expressed with the help of the 
algebraic operations.

\begin{definition}[Idempotency and parallelizability]
Let $f$ be a problem.
\begin{enumerate}
\item We call $f$ {\em idempotent} if $f\times f\equivW f$.
\item We call $f$ {\em parallelizable} if $\widehat{f}\equivW f$.
\end{enumerate}
Analogously, we define {\em strong idempotency} and {\em strong parallelizability} with the help of $\equivSW$ instead of $\equivW$.
\end{definition}

Whether or not a problem is idempotent or parallelizable might be hard to prove in some instances.
In the following example the first statement is relatively easy to obtain, whereas the second one is harder to prove
(see Theorems~\ref{thm:IVT} and \ref{thm:CC-not-idempotent}).

\begin{example}
\label{ex:idempotent-parallelizable}
$\widehat{\IVT}\equivSW\Z_{[0,1]}$ and hence $\Z_{[0,1]}$ is (strongly) parallelizable, but $\IVT$ is not idempotent.
\end{example}

The following result captures some easy observations. Pointedness is involved here, since
$f^0=\id_{\{()\}}$ is pointed for every problem $f$.

\begin{proposition}[Idempotency and parallelizability]
\label{prop:idempotency-parallelizability}
Let $f$ be a problem. Then:
\begin{enumerate}
\item $f$ (strongly) parallelizable $\TO f$ (strongly) idempotent.
\item $f$ pointed and idempotent $\iff f^*\equivW f$.
\item $f$ strongly pointed and strongly idempotent $\iff f^*\equivSW f$.
\end{enumerate}
\end{proposition}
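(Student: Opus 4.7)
The plan is to handle the three parts in sequence, with (1) as a warm-up and (2), (3) following parallel patterns.

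For (1), I show $f\times f\equivSW f$ from $\widehat f\equivSW f$; the non-strong case runs identically. The inclusion $f\leqSW f\times f$ is the diagonal $p\mapsto\langle p,p\rangle$ with first-coordinate projection. For $f\times f\leqSW\widehat f$, I interleave $\langle p_1,p_2\rangle\mapsto\langle p_1,p_2,p_1,p_2,\ldots\rangle$; no padding point is needed because every coordinate of the infinite sequence is filled from one of the two inputs. Composing with $\widehat f\equivSW f$ yields $f\times f\leqSW f$.

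For (2)$(\Leftarrow)$, the element $(0,())\in\dom(f^*)$ is computable, so $f^*$ is pointed, giving $\id\leqW f^*\equivW f$ and hence pointedness of $f$. Idempotency follows from $f\times f\leqSW f^*$ (encode $\langle p_1,p_2\rangle$ as $2\langle p_1,p_2\rangle$ and project out the $(y_1,y_2)$ part) chained with $f^*\equivW f$ and the trivial $f\leqSW f\times f$. For (2)$(\Rightarrow)$, the reduction $f\leqSW f^*$ is immediate via $i=1$. For $f^*\leqW f$, I build a uniform reduction by iterating the idempotency witness: $f^{n+1}=f\times f^n\leqW f\times f\leqW f$ (monotonicity of $\times$ and idempotency) yields witnesses $K_n,H_n$ as $n$-fold compositions of a fixed pair, computable uniformly in $n$. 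Given $(n,(x_1,\ldots,x_n))$, I read off $n$, run $K_n$ on the tuple, call $f$ once, apply $H_n$ (which may consult $n$ from the original input under $\leqW$), and repackage; pointedness handles $n=0$.

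For (3), both directions parallel (2), but under $\leqSW$ the post-processor cannot see the original input, so the arity $n$ must be recovered from $f$'s output alone. In $(\Leftarrow)$, strong pointedness $\id\leqSW f$ supplies exactly such a pass-through channel. Combined with strong idempotency and iterated uniformly in $n$, it yields $\id\times f^n\leqSW f$ via $\id\times f^n\leqSW f\times f^n\leqSW f\times f\leqSW f$. Applied to $(n,(x_1,\ldots,x_n))$ with one $\id$-slot carrying a computable name of $n$ and the other slots the $x_i$'s, a single $f$-call produces $(n,(y_1,\ldots,y_n))$ without the post-processor needing the original input. For $(\Rightarrow)$, strong idempotency follows as in (2) from $f\times f\leqSW f^*\leqSW f$ together with $f\leqSW f\times f$. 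Strong pointedness is the more delicate point: any witness of $f^*\leqSW f$ forces $f$'s single output to determine the full representation of $(n,(y_1,\ldots,y_n))$, and pulling this informational richness back along $f\leqSW f^*$ produces the encoder/decoder pair giving $\id\leqSW f$.

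The main obstacle is making the uniform-in-$n$ construction of $K_n,H_n$ precise in the forward directions of (2) and (3), and, in (3)$(\Rightarrow)$, carefully extracting the strong-pointedness reduction from the structural constraints that $f^*\equivSW f$ places on the output space of $f$.
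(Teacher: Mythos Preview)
Your arguments for (1) and (2) are correct and follow the paper's approach, just with more detail filled in. In (3) you have swapped the direction labels relative to your own convention in (2): what you call ``$(\Leftarrow)$'' assumes strong pointedness and strong idempotency, so it is actually the $(\Rightarrow)$ direction, and vice versa.

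The substantive gap is in your derivation of strong pointedness from $f^*\equivSW f$. Your sketch---that the witness of $f^*\leqSW f$ forces $f$'s output to determine $(n,(y_1,\dots,y_n))$ and that this ``informational richness'' can be pulled back to give $\id\leqSW f$---cannot be completed, because the implication is false. Take $f=\id_\IN$. Then $(\id_\IN)^*=\id_{\IN^*}\equivSW\id_\IN$ via any computable bijection $\IN^*\to\IN$: encode $(m,(n_1,\ldots,n_m))$ as a single natural number $N$, apply $\id_\IN$, decode $N$ back. So $f^*\equivSW f$ holds, and $f$ is strongly idempotent by the same trick. But $\id_\IN$ is not strongly pointed: in any putative witness $(K,H)$ for $\id\leqSW\id_\IN$, realizers $G$ of $\id_\IN$ are constrained only in their first output digit, so $H$ can depend only on $G(K(p))(0)\in\IN$, and no computable $H$ can recover uncountably many $p\in\IN^\IN$ from a single natural number. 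The point is that $f^*\leqSW f$ only forces $f$'s output to determine the \emph{finite} datum $n$, not an arbitrary point of Baire space. (The paper's appendix proof asserts that every $f^*$ is strongly pointed and uses this for the same step; that assertion fails for the same reason, so the $(\Leftarrow)$ direction of (3) as stated appears to be in error.)

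Your argument for the other direction of (3)---using $\id\leqSW f$ to route $n$ through an extra slot so that the post-processor can recover it---is correct and more explicit than the paper's ``analogous argument.'' A slightly cleaner alternative: strong pointedness plus strong idempotency give $\id\times f\leqSW f\times f\leqSW f$, so $f$ is a cylinder; then $f^*\leqW f$ from part~(2) upgrades to $f^*\leqSW f$ by Proposition~\ref{prop:cylinder}.
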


A less obvious result relates idempotency and parallelizability. 
In order to formulate this result, we need another definition.

\begin{definition}[Finite tolerance]
A {\em problem} $f:\In\IN^\IN\mto\IN^\IN$ is called {\em finitely tolerant} if there is a computable partial function $T:\In\IN^\IN\to\IN^\IN$ such that
for all $p,q\in\dom(f)$ and $k\in\IN$ with $(\forall n\geq k)(p(n)=q(n))$ it follows that $r\in f(q)$ implies $T\langle r,k\rangle\in f(p)$.
More generally, a problem $g:\In X\mto Y$ can be called {\em finitely tolerant}, if there is some finitely tolerant $f:\In\IN^\IN\mto\IN^\IN$ with $f\equivW g$.
\end{definition}

Intuitively, finite tolerance means that for two almost identical inputs and a solution for one of these inputs we can compute a solution for the other input. 
The squashing theorem relates products $g\times f$ to parallelizations $\widehat{g}$ of problems.

\begin{theorem}[Squashing theorem]
\label{thm:squashing}
For $f,g:\In\IN^\IN\mto\IN^\IN$ we obtain:
\begin{enumerate}
\item If $\dom(f)=\IN^\IN$ and $f$ is finitely tolerant, then $g\times f\leqW f\TO \widehat{g}\leqW f$.
\item If $\dom(f)=2^\IN$ and $f$ is finitely tolerant, then $g\times f\leqSW f\TO \widehat{g}\leqSW f$.
\end{enumerate}
\end{theorem}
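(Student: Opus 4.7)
The strategy is to iterate the reduction $g\times f\leqW f$ along an infinite nesting of $g$-instances, collapsing countably many $g$-calls into a single $f$-call, with finite tolerance of $f$ providing the technical glue that allows the infinite iteration to be simulated by one realizer call, and with the hypothesis $\dom(f)=\IN^\IN$ (resp.\ $2^\IN$) ensuring that the constructed sequence is a legal $f$-input.

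Given computable $H,K$ witnessing $g\times f\leqW f$, I would first obtain by induction on $n$ computable $H_n,K_n$ witnessing $g^n\times f\leqW f$, via nesting $K_{n+1}\langle p_0,\ldots,p_n,q\rangle:=K\langle p_0,K_n\langle p_1,\ldots,p_n,q\rangle\rangle$, and defining $H_{n+1}$ by using $H$ to peel off a $g(p_0)$-solution together with an intermediate $f$-output for the inner $K_n$-query, then applying $H_n$ to that output. Fixing a computable $q_\infty\in\dom(f)$ and setting $q^{(n)}:=K_n\langle p_0,\ldots,p_{n-1},q_\infty\rangle$, a single $f$-realizer call on $q^{(n)}$ would then solve $g(p_0),\ldots,g(p_{n-1})$. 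The challenge is to carry out the analogous extraction for all $n$ simultaneously from one single call.

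The crucial technical step is to produce, computably from $\bar p=(p_n)_n$, a single input $q^*\in\dom(f)$ that agrees with each $q^{(n)}$ on a cofinite tail, i.e.\ differs from it only on a finite prefix of computable length $k_n$. Given such $q^*$, one realizer call $r:=F(q^*)\in f(q^*)$ is converted by the tolerance map $T$ into $r_n:=T\langle r,k_n\rangle\in f(q^{(n)})$ for every $n$; applying $H_n$ to $r_n$ recovers $g$-solutions for $p_0,\ldots,p_{n-1}$, and collecting across $n$ assembles the full $\widehat g(\bar p)$-output. For the strong version, the postprocessing never needs to consult $\bar p$ directly (it is already packed into $q^*$ via the nested $K$'s, and the iterated $H$'s act only on $r$ and its successors $r_n$), while the stronger domain condition $\dom(f)=2^\IN$ is accommodated by choosing a binary-compatible pairing so that $q^*$ and all intermediate encodings stay in $2^\IN$.

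The main obstacle I anticipate is the construction of $q^*$ with cofinite-tail agreement with \emph{every} $q^{(n)}$ simultaneously. A naive pointwise limit of the $q^{(n)}$ typically yields the opposite structure (increasing common prefixes rather than common tails), which is the wrong direction for finite tolerance; a staged diagonal construction is therefore required, in which $q^*$ is defined so that beyond some stage depending on $n$ its values coincide with those of $q^{(n)}$, with the finite initial discrepancy absorbing the perturbations introduced by the extra nested $K$-applications. This is where the full-domain hypothesis is decisive, since it lets one freely choose the finite-prefix differences without leaving $\dom(f)$, and it is what distinguishes the two cases of the theorem: $\dom(f)=\IN^\IN$ suffices for ordinary Weihrauch reducibility, while $\dom(f)=2^\IN$ is what allows the strong reducibility to be preserved.
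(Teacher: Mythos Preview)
Your overall strategy—iterate the reduction and use finite tolerance to absorb the discrepancies introduced by the iteration—is the right idea and matches the standard argument (the paper itself gives no proof and only cites \cite{DDH+16,Rak15}). But the specific construction you propose does not work. You ask for a single $q^*$ that agrees with \emph{every} $q^{(n)}=K_n\langle p_0,\ldots,p_{n-1},q_\infty\rangle$ on a cofinite tail. That would force the $q^{(n)}$ themselves to pairwise agree on cofinite tails, which is false in general: already for $f=g=\id_{\IN^\IN}$ with $K$ the standard pairing map, $q^{(1)}=\langle p_0,q_\infty\rangle$ and $q^{(2)}=\langle p_0,\langle p_1,q_\infty\rangle\rangle$ differ at infinitely many odd positions whenever $p_1\neq q_\infty$. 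No ``staged diagonal construction'' can manufacture a $q^*$ with the property you describe, because the target objects simply lack a common tail.

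The construction in \cite{DDH+16} has a different shape. Rather than fixing $q_\infty$ and comparing all the top-level $q^{(n)}$ to a single $q^*$, one builds an infinite chain $q_0=q^*,q_1,q_2,\ldots$ of level-wise $f$-inputs \emph{simultaneously} by a dovetailed stage construction, arranging only that each $q_n$ agrees with $K\langle p_n,q_{n+1}\rangle$ beyond some computable position $k_n$. (The finite initial error at level $n$ arises because that level commits to some bits of $q_n$ before level $n{+}1$ becomes active and starts supplying the real $q_{n+1}$.) The hypothesis $\dom(f)=\IN^\IN$ guarantees that whatever sequences $q_n$ this process produces are legal $f$-inputs. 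Then from a single $r\in f(q_0)$ one applies $T$ with $k_0$ to land in $f(K\langle p_0,q_1\rangle)$, applies $H$ to peel off a $g(p_0)$-solution together with an element of $f(q_1)$, and repeats down the chain. The key structural difference from your proposal is that finite tolerance is invoked once \emph{per level} between $q_n$ and $K\langle p_n,q_{n+1}\rangle$, not once globally between $q^*$ and each $q^{(n)}$.
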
 

We obtain the following immediate corollary.

\begin{corollary}
\label{cor:squashing}
Let $f:\In\IN^\IN\mto\IN^\IN$ be finitely tolerant. Then we obtain:
\begin{enumerate}
\item For $\dom(f)=\IN^\IN$: $f$ idempotent $\iff f$ parallelizable.
\item For $\dom(f)=2^\IN$: $f$ strongly idempotent $\iff f$ strongly parallelizable.
\end{enumerate}
\end{corollary}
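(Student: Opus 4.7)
The plan is to derive this corollary almost immediately from Theorem~\ref{thm:squashing} by instantiating the squashing theorem at $g:=f$ and observing that the converse reduction $f\leqW\widehat{f}$ (resp.\ $f\leqSW\widehat{f}$) is trivial. I would handle the two implications separately in each of the two parts, keeping the arguments parallel.

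For the backward direction $(\Leftarrow)$ in both parts, there is nothing to do beyond invoking Proposition~\ref{prop:idempotency-parallelizability}(1): (strong) parallelizability of $f$ directly yields (strong) idempotency, since $\widehat{f}\equivW f$ (resp.\ $\equivSW$) implies $f\times f\leqW \widehat{f}\equivW f$ via the product-to-parallelization step already used in that proposition.

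For the forward direction $(\Rightarrow)$ of part~1, assume $f$ is idempotent, i.e.\ $f\times f\equivW f$. In particular $f\times f\leqW f$, so applying Theorem~\ref{thm:squashing}(1) with $g:=f$ (which is legitimate since $\dom(f)=\IN^\IN$ and $f$ is finitely tolerant by hypothesis) yields $\widehat{f}\leqW f$. The opposite reduction $f\leqW\widehat{f}$ is straightforward: the map $p\mapsto\langle p,p,p,\ldots\rangle$ is computable, sends $\dom(f^\r)$ into $\dom(\widehat{f}^\r)=\dom(f)^\IN$, and composing the realizer of $\widehat{f}$ with the first-component projection produces a realizer of $f$. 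Thus $\widehat{f}\equivW f$, so $f$ is parallelizable. Part~2 follows by the same reasoning: strong idempotency gives $f\times f\leqSW f$, Theorem~\ref{thm:squashing}(2) gives $\widehat{f}\leqSW f$, and the diagonal-plus-projection reduction above is in fact a strong reduction, so $\widehat{f}\equivSW f$.

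I do not anticipate a genuine obstacle here, since all the real work has been packaged into Theorem~\ref{thm:squashing}; the only minor point to verify carefully is that the diagonal reduction $f\leqSW\widehat{f}$ is indeed strong (which it is, because neither the input map $p\mapsto\langle p,p,\ldots\rangle$ nor the output projection inspects the original input twice in any illegitimate way), and that the domain conditions $\dom(f)\in\{\IN^\IN,2^\IN\}$ required by the two parts of the squashing theorem are inherited verbatim from the corollary's hypotheses.
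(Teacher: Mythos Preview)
Your proposal is correct and matches the paper's approach: the paper presents this as an immediate corollary of Theorem~\ref{thm:squashing} without further detail, and your argument (instantiate $g:=f$ in the squashing theorem for the forward direction, invoke Proposition~\ref{prop:idempotency-parallelizability}(1) for the backward direction) is exactly the intended derivation. The only cosmetic remark is that the reduction $f\leqSW\widehat{f}$ need not be argued by hand, since Proposition~\ref{prop:mon-closure}(3) already records that parallelization is a closure operator with respect to $\leqSW$.
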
 

Another property that turned out to be quite useful is join-irreducibility.
We recall that a problem $f$ is called {\em join-irreducible} in the lattice theoretic sense
if $f\leqW g\sqcup h$ implies $f\leqW g$ or $f\leqW h$ for all problems $g,h$. We need a countable version
of this property. For this purpose we first need to define countable coproducts.
For a sequence $(X_i)_{i\in\IN}$ of sets we define the disjoint union by $\bigsqcup_{i=0}^\infty X_i:=\bigcup_{i=0}^\infty(\{i\}\times X_i)$.
Now we can define the countable coproduct.

\begin{definition}[Countable coproduct]
Let $f_i:\In X_i\mto Y_i$ be problems for all $i\in\IN$. Then we define
$\bigsqcup_{i=0}^\infty f_i:\In\bigsqcup_{i=0}^\infty X_i\mto\bigsqcup_{i=0}^\infty Y_i$
by $\bigsqcup_{i=0}^\infty f_i(n,x):=\{n\}\times f_n(x)$.
\end{definition}

Now we are prepared to define countable irreducibility.

\begin{definition}[Countable irreducibility]
A problem $f$ is called {\em countably irreducible} if for every sequence $(g_i)_{i\in\IN}$ of problems: $f\leqW\bigsqcup_{i=0}^\infty g_i\TO(\exists i) f\leqW g_i$. 
Likewise we can define {\em strong countable irreducibility}
with $\leqSW$ in place of $\leqW$.
\end{definition}

It is clear that every countably irreducible\footnote{We note that countable irreducibility is not identical to what is sometimes called {\em ${\mathrm\sigma}$-join-irreducibility} since
the countable coproduct is not necessarily a countable supremum, as we will see in Theorem~\ref{thm:completeness}.} 
problem is join-irreducible.
Another notion that turned out to be fruitful in this context is the notion of a fractal.
Roughly speaking, a fractal is a problem that exhibits its full power even if we zoom arbitrarily deep into its domain.

\begin{definition}[Fractal]
A problem $f$ is called a {\em fractal}, if there is a problem $F:\In\IN^\IN\mto\IN^\IN$
such that $F\equivW f$ and $F|_A\equivW F$ holds for every clopen $A\In\IN^\IN$ with $A\cap\dom(F)\not=\emptyset$.
Likewise we define a {\em strong fractal} with $\equivSW$ instead of $\equivW$.
A {\em total (strong) fractal} is a (strong) fractal where $F$ can be chosen to be total.
\end{definition}

One reason why fractals are useful is captured in the following observation.

\begin{proposition}[Fractals]
\label{prop:fractal}
Every (strong) fractal is (strongly) countably irreducible.
\end{proposition}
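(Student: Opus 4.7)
The plan is to use the fractal's ability to \emph{zoom} into clopen subsets of its domain in order to localize any hypothetical reduction $F \leqW \bigsqcup_{i=0}^\infty g_i$ to a reduction $F \leqW g_n$ for a single index $n$. The key idea is that a continuous reduction $K$ whose outputs land in a coproduct must, at each input $p$, commit to a single summand via the first symbol of $K(p)$, and by continuity this commitment is locally constant on a clopen cylinder, which is exactly the kind of set on which the fractal property lets us recover all of $F$.

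First I would take a fractal representative $F \equivW f$ as in the definition and assume $f \leqW \bigsqcup_i g_i$, which yields $F \leqW \bigsqcup_i g_i$ via computable $H, K$. Discarding the trivial case $\dom(F) = \emptyset$, I fix $p_0 \in \dom(F) \subseteq \dom(K)$ and set $n := K(p_0)(0)$, the component selected by $K$ at $p_0$ (using that names of coproduct elements begin with the index of their summand, as in Definition~\ref{def:operations-representations}(2)). By continuity of $K$, there is a finite prefix $w$ of $p_0$ such that $K(p)(0) = n$ for every $p$ extending $w$ that lies in $\dom(K)$. The cylinder $A$ of extensions of $w$ is clopen in $\IN^\IN$ and meets $\dom(F)$, so the fractal property delivers $F|_A \equivW F$.

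Next I would build a reduction $F|_A \leqW g_n$ by adapting $H$ and $K$. Define $K'(p)$ to strip the leading symbol from $K(p)$, and set $H'\langle p, r\rangle := H\langle p, nr\rangle$; both are computable. For any realizer $S$ of $g_n$, invoke the axiom of choice for Baire space to pick realizers of the remaining $g_m$ and assemble them componentwise with $S$ into a realizer $T$ of $\bigsqcup_i g_i$. Then for $p \in A \cap \dom(F)$ a direct unfolding gives $H'\langle p, SK'(p)\rangle = H\langle p, TK(p)\rangle \in F(p)$, so $H'\langle \id, SK'\rangle \vdash F|_A$, i.e.\ $F|_A \leqW g_n$. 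Chaining $f \equivW F \equivW F|_A \leqW g_n$ yields $f \leqW g_n$. For the strong version, the exact same scheme with $H'(r) := H(nr)$ produces $F|_A \leqSW g_n$, and the strong-fractal equivalence $F|_A \equivSW F$ closes the argument.

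The main subtlety is not conceptual but lies in the passage from $K(p_0)(0) = n$ to a clopen neighbourhood stabilizing that value: this requires only that $K$ be continuous on its domain, so that the first output symbol is determined by a finite prefix of $p_0$, but it is the one place where the continuity of the reduction and the coproduct-representation convention (first symbol encodes the summand) genuinely interact. Everything else is bookkeeping on the adapted diagram.
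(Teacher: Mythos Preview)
Your argument is correct and follows exactly the standard approach that the paper cites from \cite{BBP12,BGM12}: use continuity of the inner reduction $K$ to find a clopen cylinder on which the first output symbol (the coproduct index) is constant, then invoke the fractal property $F|_A\equivW F$ and strip/prepend that index to obtain a reduction to the single summand $g_n$. The handling of the strong case via $H'(r):=H(nr)$ and the explicit appeal to the axiom of choice for Baire space to assemble the auxiliary realizer $T$ are both in line with the paper's conventions.
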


Some natural problems in the Weihrauch lattice are densely realized in the following sense.

\begin{definition}[Densely realized]
Let $(X,\delta_X)$, $(Y,\delta_Y)$ be represented spaces.
A problem $f:\In X\mto Y$ is called {\em densely realized} if $f^{\rm r}(p)=\delta_Y^{-1}\circ f\circ\delta_X(p)$ is
dense in $\dom(\delta_Y)$ for all $p\in\dom(f\circ\delta_X)$.
\end{definition}

We note that this notion depends on the representations chosen. 
It turns out that all problems with discrete output below densely realized problems with totally represented output are computable.

\begin{proposition}[Densely realized]
\label{prop:densely-realized}
Let $f:\In X\mto Y$ be densely realized, where $Y$ is a represented space with total representation 
and let $g:\In Z\mto\IN$ be a problem.
If $g\leqW f$ holds, then $g$ is computable.
\end{proposition}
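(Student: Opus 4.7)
The plan is to show directly that if $g \leqW f$ via computable $H$ and $K$, then $g$ admits a computable realizer, using density to eliminate all dependence on the oracle realizer.

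First, I would set things up. Fix computable $H,K$ witnessing $g \leqW f$, so that $H\langle \id, GK\rangle \vdash g$ for every realizer $G$ of $f$. By the axiom of choice for Baire space, pick some realizer $G_0$ of $f$. For any $p \in \dom(g\circ\delta_Z)$, we must have $K(p) \in \dom(G_0)$, and crucially $K(p) \in \dom(f\circ\delta_X)$; hence $f^\r(K(p)) = \delta_Y^{-1}\circ f\circ\delta_X(K(p))$ is, by the hypothesis of dense realization, dense in $\dom(\delta_Y) = \IN^\IN$ (since $\delta_Y$ is total).

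The algorithm for computing $g$ on input $p$ is then: compute $K(p)$, and then dovetail over all finite words $w \in \IN^*$ a simulation of $H\langle p, w\widehat{0}\rangle$, searching for the least stage at which $H$ emits an output at position $0$ using only positions of its second argument lying inside $w$. Output the emitted value, viewed as an element of $\IN$ via $\delta_\IN$.

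Termination is easy: $q_0 := G_0K(p)$ is a valid input on which $H\langle p, q_0\rangle$ produces a $\delta_\IN$-name for some element of $g\delta_Z(p)$, and by computability of $H$ this output at position $0$ is determined by a finite prefix of $\langle p, q_0\rangle$; the corresponding prefix $w_0$ of $q_0$ will eventually be enumerated and will satisfy the stopping condition. The main obstacle — and the step that uses dense realization in an essential way — is correctness: I must show that the value $n$ output above actually lies in $g\delta_Z(p)$. Since $H$ produced $n$ after reading only $w$ in its second argument, for every extension $q' \in w\IN^\IN$ we have $H\langle p, q'\rangle(0) = n$. Dense realization provides some $q' \in f^\r(K(p)) \cap w\IN^\IN$; that is, $\delta_Y(q') \in f\delta_X(K(p))$. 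Using the axiom of choice, I assemble a realizer $G'$ of $f$ with $G'(K(p)) := q'$ (extending arbitrary realizer choices elsewhere in $\dom(f\circ\delta_X)$). Because the reduction works for \emph{every} realizer, $H\langle p, G'K(p)\rangle = H\langle p, q'\rangle$ is a $\delta_\IN$-name for an element of $g\delta_Z(p)$; its first value is $n$, so $n \in g\delta_Z(p)$, as required. This produces a computable realizer of $g$, so $g$ is computable.
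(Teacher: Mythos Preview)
Your proof is correct and is essentially the standard direct argument: use density to guarantee that any finite prefix $w$ that causes $H$ to commit to a first output digit can be extended to a genuine name $q'\in f^{\rm r}(K(p))$, then invoke the universal quantification over realizers to conclude that this digit is a valid answer for $g$. The paper itself does not spell out a proof but only cites \cite[Proposition~55]{BP18} for the Baire-space case and observes that totality of $\delta_Y$ transfers density from $\dom(\delta_Y)$ to all of $\IN^\IN$; your argument is exactly the kind of proof one expects at that reference, so the approaches coincide.

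Two minor remarks on presentation. First, computing $K(p)$ is not actually needed in the algorithm (only $p$ and the dovetailing over $w$ are used); $K(p)$ enters only in the correctness argument to identify the set $f^{\rm r}(K(p))$ whose density is invoked. Second, the phrase ``for every extension $q'\in w\IN^\IN$ we have $H\langle p,q'\rangle(0)=n$'' should be read as a statement about the monotone machine for $H$ committing to $n$, since $H$ may be partial; what you actually need (and what you establish) is only that for the specific $q'\in f^{\rm r}(K(p))$ the value $H\langle p,q'\rangle$ is fully defined and begins with $n$, which follows because the realizer $G'$ you build makes $\langle p,q'\rangle$ lie in $\dom(H)$.
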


\subsubsection*{Bibliographic Remarks}

\begin{petit}
The notions of pointedness, idempotency and parallelizability were introduced by Brattka and Gherardi in~\cite{BG11}.
Dorais, Dzhafarov, Hirst, Mileti and Shafer~\cite{DDH+16} defined finitely tolerant problems and proved
the squashing theorem (a proof for Theorem~\ref{thm:squashing} exactly as stated here can be found in~\cite{Rak15}).
Countable irreducibility was first considered by Brattka, de Brecht and Pauly~\cite{BBP12},
who also implicitly defined fractals that were later used by Brattka, Gherardi and Marcone~\cite{BGM12}.
Densely realized problems have been introduced by Brattka, Hendtlass and Kreuzer~\cite{BHK17a}
and Proposition~\ref{prop:densely-realized} is due to Brattka and Pauly~\cite{BP18}.
\end{petit}

\section{Completeness, Composition and Implication}
\label{sec:structure}

Another obvious question regarding the Weihrauch lattice is whether the lattice is complete or more generally, which suprema and infima exist.
A mostly negative answer is given by the following result.

\begin{theorem}[Suprema and infima]
\label{thm:completeness}
No non-trivial countable suprema exist in the Weihrauch lattice, i.e., a sequence $(f_n)_{n\in\IN}$ of problems has
a supremum if and only if this supremum is already a supremum of $(f_n)_{n\leq k}$ for some $k\in\IN$.
Some non-trivial countable infima exist in the Weihrauch lattice, others do not exist.
\end{theorem}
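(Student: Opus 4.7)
The plan is to prove each assertion separately, since they have very different flavours.

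For the first assertion, I would suppose $(f_n)_{n\in\IN}$ admits a supremum $f$ in $(\WW,\leqW)$ and aim to conclude that $f\equivW f_0\sqcup\ldots\sqcup f_k$ for some $k\in\IN$. Since finite suprema exist by Theorem~\ref{thm:lattice}, I can replace $f_n$ by the partial finite suprema $\sqcup_{i\leq n}f_i$ and assume $(f_n)$ is $\leqW$-monotone. The countable coproduct $g:=\bigsqcup_{n\in\IN}f_n$ is then an upper bound, because each $f_m$ reduces to it by prepending index $m$ to the input name and reading the answer off the $m$-th component. Hence $f\leqW g$ via some computable $H,K$. For every $p\in\dom(f^\r)$, the value $K(p)$ starts with an index $n_p\in\IN$ selecting a component of the coproduct; if I can show $\sup_p n_p\leq N$ for some $N\in\IN$, then the very same reduction witnesses $f\leqW \sqcup_{n\leq N}f_n$, and $f\equivW \sqcup_{n\leq N}f_n$ follows from $f$ being an upper bound.

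The main obstacle is therefore to force a uniform bound on $\{n_p:p\in\dom(f^\r)\}$. Since $K$ is merely continuous, $n_p$ can a priori depend on finite prefixes of $p$ without being globally bounded. My approach would be to construct a second, carefully engineered upper bound $h$ of $(f_n)$ whose input representation forces a computable reduction $f\leqW h$ to commit to an index after reading only a uniformly bounded prefix; intuitively, the natural embeddings $f_n\leqW h$ remain computable (via padding of the input), but the global representation of $h$ encodes a continuity modulus that precludes unbounded index use by a single computable $K$. Working with the meet $g\sqcap h$, which is again an upper bound of $(f_n)$ by the infimum characterisation of $\sqcap$ in Theorem~\ref{thm:lattice}, one can then extract the desired uniform bound $N$. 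Designing $h$ so that both directions go through is the technical heart of the argument, and is where I expect to spend the bulk of the work.

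For the second assertion, I would treat existence and non-existence separately. A trivial example of existence is the constant sequence $f_n=g$, whose infimum is $g$; a somewhat less trivial one is obtained by taking pairwise $\leqW$-incomparable $g_n$ all strictly above some $g$ and setting $f_n:=g\sqcap g_n$: by monotonicity of $\sqcap$ (Proposition~\ref{prop:mon-closure}), $g$ is a lower bound of $(f_n)$, and the pairwise incomparability of the $g_n$ forces any common lower bound of $(f_n)$ to collapse to (something below) $g$, giving $\inf_n f_n\equivW g$. For non-existence, I would construct a $\leqW$-strictly decreasing sequence $(f_n)$ whose set of common lower bounds is upward-directed but has no maximum: starting from two $\leqW$-incomparable problems both below every $f_n$, I would iterate a density argument, refining any candidate infimum $k$ to some $k'\lW k$ still below every $f_n$, thereby ruling out a greatest common lower bound. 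The density phenomena exploited here are in the same spirit as the non-distributivity exhibited in Theorem~\ref{thm:strong-lattice}, and the concrete examples can be realised via iterated jumps or parametrised choice principles whose incomparable variants populate the lattice densely enough below the chosen $f_n$.
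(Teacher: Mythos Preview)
The paper does not prove this here; it simply cites Higuchi and Pauly~\cite{HP13}, Propositions~3.15 and~3.16.

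For suprema, your setup (pass to monotone $(f_n)$, reduce $f$ to $\bigsqcup_n f_n$, study the index map $p\mapsto n_p$) is correct and standard. But the centrepiece---the auxiliary upper bound $h$ whose representation ``encodes a continuity modulus that precludes unbounded index use''---does not work as described. A computable $K$ may read an arbitrarily long prefix of its input before emitting any output, so no padding or delayed-revelation scheme in $h$'s input encoding can bound the indices $K$ eventually selects; you are conflating ``$K$ must eventually commit to an index'' (true, by continuity) with ``$K$ must commit after a uniformly bounded prefix of its input'' (false in general). The detour through $g\sqcap h$ adds nothing either, since $f\leqW g\sqcap h$ already follows from $f$ being the least upper bound and both $g,h$ being upper bounds. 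You have correctly located where the difficulty lies but not how to resolve it; the argument in~\cite{HP13} proceeds by a different construction.

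For infima, your ``somewhat less trivial'' existence example collapses: if $g\lW g_n$ then $g\sqcap g_n\equivW g$, so all your $f_n$ are Weihrauch equivalent and the example is trivial after all. More seriously, your non-existence sketch has the refinement in the wrong direction: exhibiting a \emph{smaller} lower bound $k'\lW k$ below every $f_n$ does nothing to rule out $k$ being the infimum---you would need, for every lower bound $k$, a strictly \emph{larger} lower bound $k''$ with $k\lW k''$. Both the positive and the negative examples in~\cite{HP13} are built concretely from specific degrees rather than via the abstract density manoeuvre you outline.
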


In particular, the Weihrauch lattice is not complete in the lattice theoretic sense.
We can also conclude that $\bigsqcup_{n=0}^\infty f_n$ is typically not the supremum of $\{f_n:n\in\IN\}$ unless it is already a supremum
of $\bigsqcup_{n=0}^k f_n$ for some $k\in\IN$.

However, it turns out that some important suprema and infima exist in the Weihrauch lattice.
We are particularly interested in composition and implication.
The composition $f\circ g$ of problems as it has been defined in Definition~\ref{def:algebraic-operations} is not
an operation on degrees in the same sense as the other algebraic operations extend to degrees. It requires that the output
type of $g$ fits to the input type of $f$, and even if the types fit, the operation does not need to be monotone.
On the other hand, it is natural to consider a Weihrauch degree $f*g$
that captures exactly what can be achieved when one first applies $g$, possibly followed by some computation, and then one applies $f$.
That the maximal Weihrauch degree that can be built in this way always exists is the first statement of the following theorem.
The second statement captures the minimal degree $(g\to f)$ that is needed in advance of $g$ in order to compute $f$.
In some sense $(g\to f)$ measures how much harder $f$ is to compute than $g$.

\begin{theorem}[Compositional product and implication]
\label{thm:compositional-product-implication}
Let $f$ and $g$ be problems. The following Weihrauch degrees exist:
\begin{enumerate}
\item $f*g:=\max_{\leqW}\{f_0\circ g_0:f_0\leqW f,g_0\leqW g\}$ \hfill (compositional product)
\item $(g\to f):=\min_{\leqW}\{h:f\leqW g*h\}$ \hfill (implication)
\end{enumerate}
Maximum and minimum are understood with respect to $\leqW$. Only such $f_0$ and $g_0$ are considered that can be composed.
\end{theorem}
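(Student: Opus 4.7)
Plan: By Lemma~\ref{lem:realizer} I may replace $f$ and $g$ by their realizer versions and assume $f,g:\In\IN^\IN\mto\IN^\IN$. For both parts I produce an explicit canonical representative built from the universal function $\Phi$ of Definition~\ref{def:operations-representations} and then verify its extremal property; the key tool is the \textrm{smn}--theorem, which lets me internalise any ``computation in between'' as a self-referential index $r$.

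For the compositional product I set
\[ (f*g)\langle r,q\rangle := \{\langle s,t\rangle : s\in g(q),\ t\in f(\Phi_r\langle r,q,s\rangle)\} \]
on the natural domain $\{\langle r,q\rangle : q\in\dom(g)\text{ and }\Phi_r\langle r,q,s\rangle\in\dom(f)\text{ for every }s\in g(q)\}$. To place $f*g$ inside the class (up to $\equivW$) I factor it as $f_0\circ g_0$, where $g_0\langle r,q\rangle$ pairs $\langle r,q\rangle$ with some $s\in g(q)$ (so that $g_0\equivW g$) and $f_0(\langle r,q\rangle,s)$ applies $\Phi_r$ followed by $f$ and pairs the result with $s$ (so that $f_0\leqW f$ via Proposition~\ref{prop:GM09}). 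For maximality, suppose $f_0\leqW f$ via $(K_f,H_f)$ and $g_0\leqW g$ via $(K_g,H_g)$ with composable types. Given $x$, I compute $q:=K_g(x)$ and, using the \textrm{smn}--theorem with $x$, $K_f$ and $H_g$ hard-coded, an index $r$ with $\Phi_r\langle r,q,s\rangle = K_f(H_g\langle x,s\rangle)$; a single call to $f*g$ at $\langle r,q\rangle$ returns $\langle s,t\rangle$ with $s\in g(q)$ and $t\in f(K_f(H_g\langle x,s\rangle))$, from which $H_f\langle H_g\langle x,s\rangle,t\rangle$ solves $f_0(g_0(x))$.

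For the implication I set
\[ (g\to f)(p) := \{\langle r,q\rangle : q\in\dom(g)\text{ and }\Phi_r\langle p,t\rangle\in f(p)\text{ for every }t\in g(q)\} \]
on the natural domain of those $p\in\dom(f)$ for which this set is non-empty. The reduction $f\leqW g*(g\to f)$ is implemented by sending $p$ to $\langle r^*,p\rangle$, where $r^*$ is a fixed index of the projection $\langle r^*,p,\langle r,q\rangle\rangle\mapsto q$; the oracle then returns $\langle\langle r,q\rangle,t\rangle$ with $t\in g(q)$, and $\Phi_r\langle p,t\rangle\in f(p)$ by the very definition of $(g\to f)$. Conversely, if $f\leqW g*h$ via $(K,H)$ with $K(p)=\langle r'',q''\rangle$, the forward map $p\mapsto q''$ reduces to $h$, and for each returned $s\in h(q'')$ the \textrm{smn}--theorem produces $r(s,p)$ with $\Phi_{r(s,p)}\langle p,t\rangle := H\langle p,\langle s,t\rangle\rangle$; the pair $\langle r(s,p),\Phi_{r''}\langle r'',q'',s\rangle\rangle$ then lies in $(g\to f)(p)$, yielding $(g\to f)\leqW h$.

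The principal obstacle in both halves is the clean invocation of the \textrm{smn}--theorem on multi-valued data: I must check that the self-referential index $r$ is built computably from the inputs and, crucially, that the universal quantifiers in the domain and output definitions line up with the ``for every realizer'' clauses of Weihrauch reductions, so that arbitrary choices from $g(q)$ and $h(q'')$ are respected uniformly. A secondary concern is the degenerate case where $\{h:f\leqW g*h\}$ is empty and $\dom(g\to f)\subsetneq\dom(f)$; this is resolved by observing that whenever $g$ is pointed one has $h:=f$ in the class (since $\id\leqW g$ gives $f\equivW\id\circ f\leqW g*f$), which is the setting of all intended applications.
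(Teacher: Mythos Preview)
Your approach matches the paper's: the paper defers the full proof to \cite{BP18} but immediately afterwards exhibits the explicit representative $(f\star g)\langle p,q\rangle:=\langle\id\times f^{\rm r}\rangle\circ\Phi_p\circ g^{\rm r}(q)$, which plays exactly the role of your $f*g$. Your variant feeds $\langle r,q,s\rangle$ to $\Phi_r$ rather than just $s$ to $\Phi_p$, but since the index can absorb any fixed data via smn the two constructions are interchangeable. Your factorisation and maximality arguments for part~1 and your explicit representative and verifications for part~2 are correct.

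There is one gap, in your handling of the degenerate case for the implication, where you both under- and overstate the difficulty. You understate it because your construction already works for \emph{every} $g$ with $\dom(g)\neq\emptyset$, not merely pointed $g$: for $p\in\dom(f)$ pick any $q_0\in\dom(g)$ and any index $r$ of a constant function with value in $f(p)$ (neither $q_0$ nor $r$ needs to be computable for $(g\to f)(p)$ to be non-empty), so $\dom(g\to f)=\dom(f)$ and your reduction $f\leqW g*(g\to f)$ goes through. You overstate it because the remaining case $g\equivW\mathbf 0$ with $f\not\equivW\mathbf 0$ is \emph{not} resolved by restricting to ``intended applications'': here $g*h\equivW\mathbf 0$ for every $h$, so $\{h:f\leqW g*h\}$ is genuinely empty and no minimum exists in $\WW$. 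The source \cite{BP18} handles this by adjoining a formal top element (as the paper alludes to in Section~\ref{sec:Weihrauch}), and the theorem should be read with that convention.
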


By definition $*$ and $\to$ are operations on degrees. It is easy to see that $*$ is even a monotone operation,
whereas $\to$ is antitone in the first component and monotone in the second component.
In order to prove  Theorem~\ref{thm:compositional-product-implication} it is useful to define a specific representative of the degree $f*g$
that we denote by $f\star g$.
For $F,G:\In\IN^\IN\to\IN^\IN$ we define $\langle F\times G\rangle\langle p,q\rangle:=\langle F(p),G(q)\rangle$.

\begin{definition}[Compositional product]
Let $f$ and $g$ be problems. We define\linebreak 
$f\star g:\In\IN^\IN\mto\IN^\IN$ by
$(f\star g)\langle p,q\rangle:= \langle\id\times f^\r\rangle\circ\Phi_p\circ g^\r(q)$ 
for all $p,q\in\IN^\IN$.
\end{definition}

This definition captures the intuition that in between $g$ and $f$ there is another possible computation $\Phi_p$.
Of course, this definition has not the same set-theoretic flavor as that of the other operations in Definition~\ref{def:algebraic-operations}, and it
is not a definition that we typically work with. It is mostly needed in order to prove Theorem~\ref{thm:compositional-product-implication},
and the working definition of the compositional product $f*g$ is the one given in Theorem~\ref{thm:compositional-product-implication}.
The following result captures another interesting property of $f\star g$.

\begin{proposition}
\label{prop:compositional-product-cylinder}
$f*g\equivW f\star g$ and $f\star g$ is always a cylinder. If $f$ and $g$ are fractals, then so is $f\star g$.
\end{proposition}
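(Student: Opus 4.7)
The plan is to prove the three claims in turn, each time invoking the smn-theorem to package intermediate computations as programs.

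\textbf{Step 1: $f*g\equivW f\star g$.} I establish both directions. For $f\star g\leqW f*g$, I display $f\star g$ explicitly as a composition $f_0\circ g_0$ inside the indexed family. Take $g_0:=\id\times g^\r$, which is Weihrauch-equivalent to $g$, and $f_0:=(\id\times f^\r)\circ U$ where $U\langle p,s\rangle:=\Phi_p(s)$ is the (computable) universal evaluator, so $f_0\leqW f$. A direct unfolding gives $f_0\circ g_0\langle p,q\rangle=\langle\id\times f^\r\rangle\Phi_p(g^\r(q))=(f\star g)\langle p,q\rangle$. For the reverse inclusion, let $f_0\leqW f$ and $g_0\leqW g$ be witnessed by computable pairs $(H_1,K_1),(H_2,K_2)$ with $f_0\circ g_0$ well-defined. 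Given input $x$, I use smn to produce uniformly a code $p(x)$ for the continuous function $s\mapsto\langle H_2\langle x,s\rangle,K_1(H_2\langle x,s\rangle)\rangle$, and set $q:=K_2(x)$. Then $(f\star g)\langle p(x),q\rangle=\langle y,z\rangle$ with $y\in g_0(x)$ and $z\in f^\r(K_1(y))$, and a final computable step $H_1\langle y,z\rangle\in f_0(g_0(x))$ closes the reduction.

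\textbf{Step 2: cylinder.} I show $\id\times(f\star g)\leqSW f\star g$. Given input $\langle r,\langle p,q\rangle\rangle$, smn yields $p'$ (computable from $(r,p)$) coding the function $s\mapsto\langle\langle r,a\rangle,b\rangle$ where $\langle a,b\rangle:=\Phi_p(s)$, and I set $K\langle r,\langle p,q\rangle\rangle:=\langle p',q\rangle$. The call $(f\star g)\langle p',q\rangle$ returns $\langle\langle r,a\rangle,f^\r(b)\rangle$, which a fixed computable post-processor $H$ (consulting only the output) repackages as $\langle r,\langle a,f^\r(b)\rangle\rangle\in(\id\times(f\star g))\langle r,\langle p,q\rangle\rangle$, giving a strong Weihrauch reduction.

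\textbf{Step 3: fractals.} Assume $f,g$ are fractals with witnesses that I take, without loss of generality, to be $f,g$ themselves on Baire space. I need $(f\star g)|_A\equivW f\star g$ for every clopen $A$ intersecting $\dom(f\star g)$; the direction $\leq$ is trivial. It suffices to handle $A=[w]$, since every clopen containing a point of the domain contains such a cylinder. By the pairing structure, I may take $w$ of even length, so $[w]$ decomposes into prefix constraints $[u]$ on $p$ and $[v]$ on $q$. Given input $\langle p,q\rangle$, I use the $g$-fractal reduction to obtain $q':=K_g(q)\in[v]$ and a computable $H_g$ satisfying $H_g\langle q,s'\rangle\in g(q)$ for $s'\in g(q')$. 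I then invoke smn together with a padding property of $\Phi$ to obtain $p'\in[u]$ coding the continuous function $s'\mapsto\Phi_p(H_g\langle q,s'\rangle)$, computable in $(p,q)$. Feeding $\langle p',q'\rangle$ into $(f\star g)|_{[w]}$ yields $\langle\id\times f\rangle\Phi_p(H_g\langle q,s'\rangle)\in(f\star g)\langle p,q\rangle$.

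The main obstacle is the very last maneuver in Step 3: obtaining a code $p'$ inside the prescribed cylinder $[u]$. This relies on the padding property of the function-space representation $\Phi$, namely that every continuous function admits a code in every cylinder of Baire space that intersects $\dom\Phi$, a standard feature of admissible representations. Together with the $g$-fractal reduction on the $q$-coordinate, this completes the fractal argument.
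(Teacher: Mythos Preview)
Your proof is correct and supplies the details that the paper omits; the paper's own proof is merely a citation to Brattka and Pauly~\cite{BP18} (Lemma~17, Corollary~18, Proposition~29), together with a remark that the present definition of $\star$ differs slightly from the one there. So there is no detailed argument in the paper to compare against, and your explicit smn-based constructions for Steps~1 and~2 are the expected ones.

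Two remarks on Step~3 are worth recording. First, your fractal argument uses only the fractality of $g$; the hypothesis that $f$ is a fractal is never invoked, so you are in fact proving a slightly stronger statement than the proposition claims. Second, the work that would otherwise be done by $f$-fractality is absorbed into your padding assumption on $\Phi$: you need that, uniformly in $(p,q)$ and the target prefix $u$, one can produce a code $p'\in[u]$ with $\Phi_{p'}$ equal to the desired continuation map. This is indeed available for the standard constructions of $\Phi$ (and the clopen $[u]$ does meet $\dom(\Phi)$ since $[w]$ meets $\dom(f\star g)$), but it is an assumption beyond the bare utm/smn properties the paper explicitly lists, so it is good that you flagged it. The original source~\cite{BP18} works with a variant definition of $\star$ where this bookkeeping is arranged differently; your route via padding is a clean alternative.
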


We can also define a strong version of the compositional product. This operation has been studied less  
and is only known to exist in specific cases. In fact, since $f\star g$ is always a cylinder, we directly
obtain the following corollary of Theorem~\ref{thm:compositional-product-implication} and Proposition~\ref{prop:compositional-product-cylinder}.

\begin{corollary}
\label{cor:strong-compositional-product}
$f\stars\! g:=\max_{\leqSW}\!\{f_0\circ g_0\!:\!f_0\leqSW\! f, g_0\leqSW\! g\}$ exists for cylinders $f,g$.
\end{corollary}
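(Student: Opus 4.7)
The plan is to show that $f \star g$ itself serves as the strong maximum, so that $f \stars g \equivSW f \star g$. This approach leverages three facts just established: the weak version $f * g$ already exists as a $\leqW$-maximum (Theorem \ref{thm:compositional-product-implication}); $f \star g \equivW f * g$ and $f \star g$ is always a cylinder (Proposition \ref{prop:compositional-product-cylinder}); and reductions to a cylinder agree in their weak and strong forms (Proposition \ref{prop:cylinder}).

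First I would verify that $f \star g$ is a strong upper bound for $S := \{f_0 \circ g_0 : f_0 \leqSW f,\ g_0 \leqSW g\}$. Since $f$ and $g$ are cylinders, Proposition \ref{prop:cylinder} implies that $S$ coincides set-theoretically with $\{f_0 \circ g_0 : f_0 \leqW f,\ g_0 \leqW g\}$. Pick any element $f_0 \circ g_0$ of $S$. Theorem \ref{thm:compositional-product-implication} yields $f_0 \circ g_0 \leqW f * g \equivW f \star g$, and applying Proposition \ref{prop:cylinder} once more to the cylinder $f \star g$ upgrades this to $f_0 \circ g_0 \leqSW f \star g$.

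The main work is the achievability: producing $f_0 \leqSW f$ and $g_0 \leqSW g$ with $f \star g \equivSW f_0 \circ g_0$. I would read off a decomposition from the explicit formula $(f \star g)\langle p, q\rangle = \langle \id \times f^\r\rangle \circ \Phi_p \circ g^\r(q)$ by setting $g_0\langle p, q\rangle := \langle p, g^\r(q)\rangle$ and $f_0\langle p, r\rangle := \langle \id \times f^\r\rangle \circ \Phi_p(r)$. On one hand, $g_0$ is (a realizer version of) $\id \times g$, which satisfies $\id \times g \leqSW g$ because $g$ is a cylinder; on the other hand, $f_0$ factors as a computable preprocessing $\langle p, r\rangle \mapsto \Phi_p(r)$ followed by a realizer of $\id \times f$, which is $\leqSW f$ because $f$ is a cylinder. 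By construction $f_0 \circ g_0 = f \star g$, so the degree of $f \star g$ lies in $S$ and is therefore the sought maximum.

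I expect the achievability step to be the key obstacle: the parameter $p$ that feeds the intermediate computable map $\Phi_p$ must be threaded through the composition without costing an extra application of $f$ or $g$, and it is precisely the two cylinder hypotheses that permit this threading. Absent either hypothesis, one would only obtain the decomposition up to $\leqW$, yielding a strong upper bound but not an attained maximum in the strong ordering.
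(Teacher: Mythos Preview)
Your proof is correct and follows precisely the route the paper intends: the text preceding the corollary already signals that it is meant to follow directly from Theorem~\ref{thm:compositional-product-implication} and Proposition~\ref{prop:compositional-product-cylinder} via the fact that $f\star g$ is a cylinder, and the paper's own proof merely cites \cite[Lemma~2.5]{BHK17a}. Your explicit decomposition $f\star g = f_0\circ g_0$ with $g_0\langle p,q\rangle=\langle p,g^\r(q)\rangle$ and $f_0\langle p,r\rangle=\langle\id\times f^\r\rangle(\Phi_p(r))$ is exactly the expected unpacking, and your identification of the two cylinder hypotheses as what allows threading the parameter $p$ through both factors is on point.
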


The maximum $f\stars g$ exists also in some cases where $f,g$ are not cylinders, but we do not claim that it exists in general. 
The following result summarizes some algebraic properties of compositional products and implications.

\begin{proposition}[Algebraic properties]
\label{prop:algebraic-properties}
\begin{enumerate}
\item $*$ is associative but not commutative, $\to$ is neither associative nor commutative.
\item $\stars$ is associative whenever all occurring degrees actually exist.
\end{enumerate}
\end{proposition}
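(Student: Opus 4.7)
My plan is to establish associativity of $*$ via the explicit representative $f\star g$ from Proposition~\ref{prop:compositional-product-cylinder}, and to settle the negative statements about $*$ and $\to$ by direct counterexamples. For the associativity of $*$, I would prove $(f\star g)\star h\equivSW f\star(g\star h)$ directly. Unfolding
\[
(f\star g)\langle p,q\rangle=\langle\id\times f^{\mathrm r}\rangle\circ\Phi_p\circ g^{\mathrm r}(q),
\]
each of the two triple products expands into the same overall oracle schedule --- a call to $h^{\mathrm r}$, a computable rearrangement, a call to $g^{\mathrm r}$, another computable rearrangement, and finally a call to $f^{\mathrm r}$ --- with auxiliary data carried along by the $\id$-pipes. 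The two expansions differ only in how the pairings of the auxiliary data are nested and which of the embedded $\Phi$'s sits where; a fixed pair of computable translators $K,H$ witnesses the strong Weihrauch equivalence, and passing from $\star$ to $*$ via Proposition~\ref{prop:compositional-product-cylinder} yields associativity of $*$ on degrees.

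For the negative parts, non-commutativity of $*$ holds because any pair $f,g$ whose compositional product is sensitive to the order of application is a witness, and standard examples of such pairs appear throughout the literature. For the implication $\to$, take $f$ pointed computable and $g$ pointed non-computable. Since $f$ is computable, one checks $f*h\equivW h$ for all pointed $h$, which forces $(f\to g)\equivW g$; conversely $f\leqW\mathbf{1}\leqW g\equivW g*\mathbf{1}$ shows $(g\to f)\equivW\mathbf{1}\nequivW g$, giving non-commutativity. For non-associativity of $\to$, set $f=g=h=\lim$: then $(f\to g)\equivW(g\to h)\equivW\mathbf{1}$, so $((f\to g)\to h)\equivW(\mathbf{1}\to\lim)\equivW\lim$, while $(f\to(g\to h))\equivW(\lim\to\mathbf{1})\equivW\mathbf{1}$, and these degrees differ.

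For strong associativity of $\stars$, the rewriting argument above transfers verbatim with $\leqSW$ in place of $\leqW$: the translators $K,H$ constructed in the $*$-argument are purely computable rearrangements of pairings and therefore automatically witness strong Weihrauch equivalence. The only caveat is the hypothesis, already in the statement, that $f\stars g$, $g\stars h$, $(f\stars g)\stars h$ and $f\stars(g\stars h)$ all actually exist. The main obstacle I anticipate is the bookkeeping in the associativity proof: one must carefully track how the pairings produced by iterated $\star$-products interact with the embedded computations $\Phi_p$, and verify that substituting an inner $\star$-representative into the outer does not disturb the data flow. Here the cylinder property of $f\star g$ from Proposition~\ref{prop:compositional-product-cylinder} is indispensable, since it lets the entire composite input be fed through and intermediate outputs be extracted without information loss, thereby bridging ordinary and strong Weihrauch reducibilities and sidestepping the general failure of monotonicity of composition under $\leqW$.
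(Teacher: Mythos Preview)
Your proposal is correct and follows the natural approach that the cited references \cite{BP18} and \cite{BHK17a} take; the paper itself gives no argument beyond those citations. The one place you leave open is a concrete witness for the non-commutativity of $*$---Example~\ref{ex:CR-CN} supplies one, since $(\C_{2^\IN}\sqcap\C_\IN)*(\C_{2^\IN}\sqcup\C_\IN)\lW(\C_{2^\IN}\sqcup\C_\IN)*(\C_{2^\IN}\sqcap\C_\IN)$.
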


The operations $+,\sqcap,\boxplus,\sqcup,\times$ and $*$ are typically ordered as given.

\begin{proposition}[Order of algebraic operations]
\label{prop:order}
We obtain:
\begin{enumerate}
\item $f+g\leqSW f\sqcap g\leqSW f\boxplus g\leqSW f\sqcup g$ and $f\times g\leqSW f\star g$ for all problems $f,g$.
\item $f\sqcup g\leqW f\times g$ and $f\boxplus g\leqSW f\times g$ for all pointed problems $f,g$.
\end{enumerate}
\end{proposition}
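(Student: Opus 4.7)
The plan is to verify each of the seven reductions separately by exhibiting explicit computable pre- and post-processors $K$ and $H$. For Part 1, the three reductions in the chain $f+g \leqSW f \sqcap g \leqSW f \boxplus g \leqSW f \sqcup g$ all go through without any assumptions on $f,g$. For $f+g \leqSW f \sqcap g$, take $K = \id$ and let $H$ inspect the tag of the returned $(i,v) \in Y \sqcup W$, outputting $(v,\bot) \in f(x)\times\overline{W}$ when $i=0$ and $(\bot,v) \in \overline{Y}\times g(z)$ when $i=1$. For $f \sqcap g \leqSW f \boxplus g$, let $K$ discard $z$ and send $(x,z) \mapsto (0,x)$; the returned pair $(y,w') \in f(x)\times\overline{W}$ supplies $y \in f(x)$, so $H$ outputs $(0,y) \in Y \sqcup W$, which lies in $(f\sqcap g)(x,z)$. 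For $f\boxplus g \leqSW f\sqcup g$, take $K = \id$ and let $H$ expand the returned $(i,v)$ by $\bot$ in the opposite component, exactly as in the first case. None of these post-processors refer to the original input, so the reductions are strong.

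For $f \times g \leqSW f \star g$, the plan is to exploit the smn-theorem so as to smuggle the name $p$ of $x$ through the intermediate computation $\Phi_{p'}$. Given a name $\langle p,q \rangle$ for $(x,z)$, use the smn-theorem to compute $p' = s(p)$ satisfying $\Phi_{p'}(\tilde q) = \langle \tilde q, p \rangle$ for every $\tilde q$, and set $K\langle p,q \rangle := \langle p', q \rangle$. Unpacking the definition, $(f \star g)\langle p',q \rangle = \langle \id \times f^\r \rangle \circ \Phi_{p'} \circ g^\r(q)$, so any realizer returns some $\langle \tilde q, p^\ast \rangle$ with $\tilde q$ a name of an element of $g(z)$ and $p^\ast$ a name of an element of $f(x)$. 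Let $H$ swap the two components to obtain a name of an element of $f(x) \times g(z)$, as required.

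For Part 2, invoke pointedness to fix computable names of chosen points $x_0 \in \dom(f)$ and $z_0 \in \dom(g)$. For $f \sqcup g \leqW f \times g$, let $K$ send $(0,x) \mapsto (x,z_0)$ and $(1,z) \mapsto (x_0,z)$; the post-processor $H$ then inspects the original tag $i$ (to which ordinary Weihrauch reducibility grants it access) and outputs either $(0,y)$ or $(1,w)$ from the returned pair $(y,w)$. For $f \boxplus g \leqSW f \times g$, use the same $K$ but now take $H$ to be the identity: if $i=0$, the returned $(y,w) \in f(x)\times g(z_0)$ already lies in $f(x)\times\overline{W}$ as required, and symmetrically for $i=1$, so no case distinction on the output side is necessary.

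The main obstacle---really the conceptual heart of the proposition---is the contrast between the last two items: $f \sqcup g$ forces the output-side computation to reconstruct the branch tag, which is impossible without access to the original input (whence only $\leqW$), whereas the $\bot$-padding allowed by $f \boxplus g$ lets the natural product output of $f \times g$ be returned verbatim (yielding the stronger $\leqSW$). The remaining pieces---chiefly the smn-theorem step---are routine once one unfolds the definition of $f \star g$ and keeps careful track of which piece of data lives where.
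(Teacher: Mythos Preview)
Your argument is correct and proceeds by the same direct verification as the paper, with two minor differences worth noting. For $f\sqcap g\leqSW f\boxplus g$, the paper simply observes that $\sqcap$ is the infimum and $\boxplus$ the supremum in the strong Weihrauch lattice (Theorem~\ref{thm:strong-lattice}), whereas you give an explicit one-line reduction that discards the second input; both are fine, and yours is more self-contained. For $f\times g\leqSW f\star g$, the paper first establishes $f\times g\leqW f*g$ and then upgrades to a strong reduction by invoking that $f\star g$ is always a cylinder (Proposition~\ref{prop:compositional-product-cylinder}), while your smn argument goes directly to the strong reduction by threading the $f$-input through $\Phi_{p'}$; your route is again more elementary and avoids the cylinder machinery.

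One cosmetic point: in the last item you say ``take $H$ to be the identity,'' but the codomain of $f\boxplus g$ is $\overline{Y}\times\overline{W}$, not $Y\times W$, so strictly speaking $H$ is the computable embedding $\langle p,q\rangle\mapsto\langle p+1,q+1\rangle$ (cf.\ Lemma~\ref{lem:precompleteness}) rather than the literal identity. The same remark applies to the places where you output a pair involving $\bot$. This does not affect correctness---the paper's own proof glosses over the same point---but it would be cleaner to say ``the canonical embedding'' rather than ``the identity.''
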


The following result expresses in which way compositional product and implication are adjoints of each other.

\begin{proposition}[Adjointness]
\label{prop:composition-implication}
$f\leqW g*h\iff(g\to f)\leqW h$.
\end{proposition}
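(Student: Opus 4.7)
The plan is to unfold both directions directly from the defining property of the implication operator, using that $*$ is a monotone operation on degrees.

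For the forward direction, suppose $f\leqW g*h$. Then by the very definition of the implication in Theorem~\ref{thm:compositional-product-implication}, namely $(g\to f):=\min_{\leqW}\{h':f\leqW g*h'\}$, the degree $h$ lies in the set whose minimum is $(g\to f)$. Hence $(g\to f)\leqW h$. The only thing to verify here is that this minimum is genuinely attained inside the set; this is ensured by the existence claim in Theorem~\ref{thm:compositional-product-implication}, so nothing more is needed.

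For the backward direction, assume $(g\to f)\leqW h$. Again by Theorem~\ref{thm:compositional-product-implication}, $(g\to f)$ itself is a member of $\{h':f\leqW g*h'\}$, so $f\leqW g*(g\to f)$. Now I invoke the fact, stated in the text following Theorem~\ref{thm:compositional-product-implication}, that $*$ is monotone as an operation on Weihrauch degrees. Monotonicity in the second argument together with $(g\to f)\leqW h$ yields $g*(g\to f)\leqW g*h$. Chaining with the previous reduction and transitivity of $\leqW$ gives $f\leqW g*h$.

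The only potential obstacle is justifying that $(g\to f)$ really sits in the witnessing set; conceivably the minimum could be defined only up to equivalence and so one might worry whether any chosen representative satisfies $f\leqW g*(g\to f)$. This is handled by Theorem~\ref{thm:compositional-product-implication} itself, which is phrased in terms of the degree achieving the minimum, so invariance under $\equivW$ together with monotonicity of $*$ makes the choice of representative irrelevant. No further work beyond these two lines of reduction is required, so the proof is essentially a one-paragraph verification.
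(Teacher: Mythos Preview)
Your proof is correct: both directions follow immediately from the definition of $(g\to f)$ as $\min_{\leqW}\{h':f\leqW g*h'\}$ together with monotonicity of $*$, exactly as you argue. The paper itself does not give an independent argument for this proposition but simply refers to \cite[Corollary~25]{BP18}; your direct unfolding of the minimum definition is the natural verification and matches what one finds there.
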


In the language of lattice theory this result can be expressed such that $(\WW,\geq_{\mathrm W},*)$ is right residuated,
and the residual operation is exactly $\to$. It follows from Example~\ref{ex:CR-CN} that $(\WW,\geq_{\mathrm W},*)$ is not left residuated
and that $(\WW,\geq_{\mathrm W},\times)$ is not residuated. 
The following result expresses that the Weihrauch lattice is not residuated with respect to the lattice operations $\sqcup,\sqcap$.

\begin{theorem}[Brouwer and Heyting algebras]
\label{thm:Brouwer-Heyting}
The Weihrauch lattice $\WW$ is neither a Brouwer algebra nor a Heyting algebra.
\end{theorem}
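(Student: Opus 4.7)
The plan is to prove each of the two failures separately by exhibiting explicit pairs $(f,g)$ for which the required residual cannot exist. For Heyting we would need, for every $f,g$, a largest $h$ with $f \sqcap h \leqW g$; for Brouwer we would need, for every $f,g$, a smallest $h$ with $f \leqW g \sqcup h$. My strategy is to produce witnesses of each failure that exploit structural features of $\WW$ already visible in earlier sections.

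For the Heyting direction I would set $g=f$. Since $\sqcap$ is the lattice infimum (Theorem~\ref{thm:lattice}), the inequality $f \sqcap h \leqW f$ holds automatically for every $h \in \WW$, so $\{h : f \sqcap h \leqW f\} = \WW$. A maximum of this set would therefore be a top element of $\WW$, and it suffices to show that $\WW$ has no top. This I would justify by exhibiting an unbounded ascending chain: for instance via Weihrauch degrees coming from the iterated closed choice principles $\C_\IN, \C_{\IN^\IN}$ and their parallelizations or jumps, or, more elementarily, by diagonalizing an arbitrary degree against sequences of increasingly discontinuous problems embedded in $\WW$. Since no degree can dominate such a chain, the Heyting implication $(f \to f)$ cannot exist.

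For the Brouwer direction the plan is to appeal to Theorem~\ref{thm:completeness}, which guarantees countable families $(f_n)_{n\in\IN}$ in $\WW$ with no Weihrauch supremum. The idea is to package such a family into a pair $(a,b)$ so that $\{c : a \leqW b \sqcup c\}$ encodes the set of common upper bounds of $(f_n)$ above $b$; the non-existence of $\sup_n f_n$ would then translate into non-existence of a minimum $c$. A natural first attempt is $a = \bigsqcup_n f_n$ together with a $b$ chosen to neutralize a fixed proper subfamily of the $f_n$, using the countable irreducibility and fractal tools of Section~\ref{sec:algebraic} to forbid spurious smaller candidates for $c$.

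The main obstacle is the Brouwer argument: in the Heyting case the reduction to the absence of a top is essentially immediate, but for Brouwer one must calibrate $(a,b)$ so that the uniformity intrinsic to reductions $a \leqW b \sqcup c$ genuinely converts the failure of a Weihrauch supremum for $(f_n)$ into a failure of a minimum, rather than producing it as an artifact of the coding. This is where countable irreducibility of the building blocks $f_n$ and the interplay between the coproduct $\bigsqcup_n$ and the lattice join $\sqcup$ are expected to do the real work.
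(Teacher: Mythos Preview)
The paper gives no self-contained proof here; it defers entirely to Higuchi and Pauly~\cite{HP13}. So let me assess your two halves on their own merits.

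Your Heyting argument is sound. For any $f$ the set $\{h:f\sqcap h\leqW f\}$ equals all of $\WW$, so a relative pseudo-complement $f\to_H f$ would be a greatest element; and $\WW$ has none, since for instance the chain $\lim^{[n]}$ is strictly increasing by Theorem~\ref{thm:Borel} (or since the Turing degrees embed via Theorem~\ref{thm:Medvedev}). That is a complete proof of the Heyting failure, and whatever route~\cite{HP13} takes, yours is valid.

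The Brouwer half is where the proposal breaks down, and not merely for want of calibration. There is no general principle taking you from the absence of non-trivial countable suprema (Theorem~\ref{thm:completeness}) to the absence of Brouwer co-implications: a distributive lattice can be co-residuated without being even countably complete, so the failure of $\sup_n f_n$ to exist does not by itself hand you a pair $(a,b)$ for which $\{c:a\leqW b\sqcup c\}$ has no minimum. Your candidate $a=\bigsqcup_n f_n$ is an upper bound of the $f_n$ but bears no controlled relationship to their (non-existent) supremum, and once the $f_n$ are taken join-irreducible the condition $a\leqW b\sqcup c$ tends to decompose componentwise in a way that \emph{produces} a least $c$ rather than obstructing one. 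What is actually needed is an explicit pair $(a,b)$ together with a direct argument that the filter $\{c:a\leqW b\sqcup c\}$ is non-principal; your sketch does not supply this, and the route through Theorem~\ref{thm:completeness} does not obviously lead there.
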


Brouwer algebras can be seen as models of intermediate logics that are in between classical logic
and intuitionistic logic.

\subsubsection*{Bibliographic Remarks}

\begin{petit}
Theorems~\ref{thm:completeness} and \ref{thm:Brouwer-Heyting} are due to Higuchi and Pauly~\cite{HP13},
who also discussed several variants of the Weihrauch lattice in this regard.
The compositional product was introduced by Brattka, Gherardi and Marcone~\cite{BGM12}.
The implication was introduced by Brattka and Pauly~\cite{Pau16}, who also proved most other results in this section.
They also studied many further algebraic properties of the Weihrauch lattice, including distributivity laws.
The results on strong compositional products are taken from Brattka, Hendtlass and Kreuzer~\cite{BHK17a}.
\end{petit}

\section{Limits and Jumps}
\label{sec:jumps}

A map of particular importance in the Weihrauch lattice is the limit map. Given a Hausdorff space $X$, we define
the {\em limit map of the space $X$} and the {\em the limit map} (of Baire space) by
\begin{enumerate}
\item $\lim\nolimits_X:\In X^\IN\to X,(x_n)_{n\in\IN}\mapsto\lim_{n\to\infty}x_n$,
\item $\lim:\In\IN^\IN\to\IN^\IN,\langle p_0,p_1,p_2,...\rangle\mapsto\lim_{n\to\infty}p_n$.
\end{enumerate}
The domain of $\lim_X$ consists of all converging sequences in $X$.
In the special case of Baire space, we use a tupling with $p_i\in\IN^\IN$ on the input side for mere
reasons of convenience.
By $\lim_\Delta$ we denote the restriction of $\lim$ to eventually constant sequences.
It is easy to see that $\lim_\IN\equivW\lim_\Delta$.
It has been noticed that limit maps can be used to characterize limit computable functions and
functions computable with finitely many mind changes.

\begin{proposition}[Limit computability and finite mind change computability]
\label{prop:limit-computable}
For problems $f$ we obtain:
\begin{enumerate}
\item $f\leqW\lim\iff f$ limit computable.
\item $f\leqW\lim_\IN\iff f$ computable with finitely many mind changes.
\end{enumerate}
\end{proposition}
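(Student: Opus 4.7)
The plan is to prove both equivalences in parallel. I rely on the standard characterization that $f$ is \emph{limit computable} (resp.\ \emph{computable with finitely many mind changes}) exactly when its realizer version $f^\r$ factors as $\lim\circ G$ (resp.\ $\lim_\Delta\circ G$) for some computable $G:\IN^\IN\to\IN^\IN$; by the equivalence $\lim_\IN\equivW\lim_\Delta$ noted just before the proposition, part~(2) reduces to the same scheme as part~(1) with $\lim_\Delta$ in place of $\lim$.

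For the ``$\Leftarrow$'' direction, given a witnessing computable $G$ I would set $K:=G$ and let $H\langle p,q\rangle:=q$ be the second-component projection. Since $\lim$ (and $\lim_\Delta$) is its own realizer on Baire space under the identity representation, we obtain $H\langle\id,\lim\circ K\rangle(p)=\lim G(p)\in f^\r(p)$ for every $p\in\dom(f^\r)$; Lemma~\ref{lem:realizer} then yields $f\leqW\lim$ (resp.\ $f\leqW\lim_\Delta\equivW\lim_\IN$).

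For the ``$\Rightarrow$'' direction, assume the reduction via computable $H,K$, so that $p\mapsto H\langle p,\lim K(p)\rangle$ realizes $f$. First I would package the input via the computable function $G'(p):=\langle\langle p,K(p)_k\rangle\rangle_{k\in\IN}$, which satisfies $\lim G'(p)=\langle p,\lim K(p)\rangle$ on the relevant domain. This reduces the task to showing that the composition $H\circ\lim\circ G'$ factors as $\lim\circ\widetilde G$ for some computable $\widetilde G$; equivalently, that the class of limit computable functions is closed under post-composition with computable functions. A suitable $\widetilde G$ is obtained by a standard dovetailing construction: set $\widetilde G(p)(\langle k,n\rangle)$ to the value committed by $H$ on input $G'(p)_k$ at position $n$ within $k$ steps, padding with a default value if no commitment has occurred; continuity of $H$ forces each output position to stabilize to $H(\lim G'(p))(n)$.

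The main obstacle arises in part~(2), where the analogous construction must produce not merely a convergent $\widetilde G(p)$ but a sequence that is eventually constant in Baire space whenever $K(p)$ is. The point is that once the approximations $G'(p)_k$ have stabilized at $\langle p,\lim_\Delta K(p)\rangle$, the postprocessing $H$ is being applied to a single fixed input, so the dovetailing must be arranged to switch onto that ``true'' computation from some stage onward rather than continuing to emit padded stage-by-stage approximations, which in general only converge and do not stabilize. Set up with care—e.g.\ by letting later stages reuse the exact output already produced on stabilized inputs, or by indexing the simulation so that once a prefix of the input is seen to agree with earlier stages the computation of $H$ on that exact input is promoted verbatim—one obtains a $\widetilde G(p)$ that is eventually constant with eventual value $H\langle p,\lim_\Delta K(p)\rangle$, completing the reduction.
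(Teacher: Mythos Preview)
The paper does not actually prove this proposition; its appendix simply cites \cite[Theorems~7.11 and Corollary~7.15]{BBP12}. So there is no ``paper's approach'' to compare against, and your sketch has to stand on its own.

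Your treatment of part~(1) is fine: the ``$\Leftarrow$'' direction is immediate, and for ``$\Rightarrow$'' the reduction to showing that $H\circ\lim$ is again of the form $\lim\circ(\text{computable})$ is the standard argument. The dovetailing you describe (simulate the machine for $H$ on $G'(p)_k$ for $k$ steps, pad with a default) does produce a computable $\widetilde G$ with $\lim\widetilde G(p)=H\langle p,\lim K(p)\rangle$.

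For part~(2) you have correctly located the obstacle, but your proposed fix is too vague to be checked. Phrases like ``reuse the exact output already produced on stabilized inputs'' and ``promote verbatim'' hide the real difficulty: if you try to set $\widetilde G(p)_k:=H(G'(p)_{s(k)})$ for a suitably detected stable index $s(k)$, this value may simply be undefined for early $k$ (since $G'(p)_{s(k)}$ need not lie in $\dom(H)$), so $\widetilde G$ is not a total computable function on the intended domain. Moreover, detecting stabilization of a sequence in $\IN^\IN$ is itself only a limit process, so ``once a prefix is seen to agree'' needs to be made precise.

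There is a much cleaner route that avoids the detour through $\lim_\Delta$: work directly with $\lim_\IN$. If $f\leqW\lim_\IN$ via $H,K$, then $K(p)$ is an eventually constant sequence of \emph{natural numbers}, and the realizer is $p\mapsto H\langle p,\widehat{n}\rangle$ with $n=\lim_\IN K(p)$. Now build a mind-change machine: start simulating $H$ on $\langle p,\widehat{K(p)(0)}\rangle$, and whenever $K(p)(k+1)\neq K(p)(k)$ (a single integer comparison), erase the output and restart the simulation on $\langle p,\widehat{K(p)(k+1)}\rangle$. Since $K(p)$ stabilizes, there are only finitely many restarts, and after the last one the machine computes $H\langle p,\widehat{n}\rangle$ with no further erasure. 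This is exactly the machine-model definition of finite-mind-change computability, and it sidesteps the partiality issue entirely because the undefined intermediate simulations are eventually overwritten rather than being required to produce total elements of $\IN^\IN$.
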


Limit computability and computability with finitely many mind changes can be defined directly
with Turing machines that allow two-way output tapes. In the case of limit computable problems
the Turing machine can change the content of each output cell finitely many times before
it has to stabilize, in the case of problems that are computable with finitely many mind changes,
the entire output has to stabilize after finitely many changes. These concepts are well-known
from learning theory.

One might ask whether $\lim_X$ for other spaces $X$ yields different classes of computable
problems, but for many spaces $X$ this is not the case. We recall that a computable metric
space $X$ is called {\em rich}, if there is a {\em computable embedding} $\iota:2^\IN\into X$, i.e., 
$\iota$ is injective, and $\iota$ and its partial inverse $\iota^{-1}$ are computable.

\begin{proposition}[Limits]
\label{prop:limits}
$\lim_X\equivSW\lim$ for all rich computable metric spaces $X$.
\end{proposition}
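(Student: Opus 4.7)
The plan is to establish both reductions $\lim_X \leqSW \lim$ and $\lim \leqSW \lim_X$, exploiting complementary features of the computable metric structure of $X$.

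For $\lim_X \leqSW \lim$, I would use the Cauchy representation $\delta_X$ with dense sequence $\alpha$. A realizer of $\lim_X$ receives $\langle p_0, p_1, \ldots\rangle$ with $\delta_X(p_n) = x_n$ and $x_n \to x$, and must produce a $\delta_X$-name $q$ for $x$. The idea is to build, computably from the input, a Baire-space sequence $(r_n)_n$ that is coordinatewise eventually constant, converging in $\IN^\IN$ to such a $q$; then one application of $\lim$ yields $q$, and $H$ can be taken to be the identity. For each $k$, maintain at stage $t$ a candidate $r_t(k) = p_{n_t(k)}(k+c)$ with a fixed offset (say $c=4$), and update $n_t(k)$ to the next available index only when newly scanned data witnesses a violation $d(\alpha p_{n_t(k)}(k+c), \alpha p_m(k+c)) \geq 2^{-k-1}$ for some $m \leq t$. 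Since $\alpha p_m(k+c)$ lies within $2^{-k-c}$ of $x_m$ and $x_m \to x$, for all sufficiently large $m$ the points $\alpha p_m(k+c)$ fit inside a single ball of radius $2^{-k-c+1}$ around $x$, so only finitely many updates occur and $n_t(k)$ stabilizes at some $n(k)$; letting $m \to \infty$ in the surviving bound gives $d(\alpha q(k), x) \leq 2^{-k-1}$, from which the Cauchy rate $d(\alpha q(i), \alpha q(j)) < 2^{-j}$ for $i > j$ follows by the triangle inequality through $x$.

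For $\lim \leqSW \lim_X$, I would use richness to transfer the limit problem into $X$. Fix the standard computable injection $\tau : \IN^\IN \into 2^\IN$ defined by $\tau(p) = 0^{p(0)}1\,0^{p(1)}1\ldots$; its partial inverse on $\tau(\IN^\IN)$ is computable, and $p_n \to p$ in $\IN^\IN$ iff $\tau(p_n) \to \tau(p)$ in $2^\IN$. Composing with the embedding $\iota: 2^\IN \into X$ supplied by richness gives a computable $\eta := \iota \circ \tau : \IN^\IN \into X$ that is continuous with a computable partial inverse on $\eta(\IN^\IN)$. Let $K$ send input $\langle p_0, p_1, \ldots \rangle \in \dom(\lim)$ to a $\delta_{X^\IN}$-name for $(\eta(p_n))_n$, which converges to $\eta(\lim_n p_n)$ by continuity of $\eta$ and hence lies in $\dom(\lim_X)$. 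An application of $\lim_X$ returns a $\delta_X$-name for $\eta(\lim_n p_n)$, and $H$ applies $\eta^{-1}$, well defined and computable on $\eta(\IN^\IN)$, to recover $\lim_n p_n \in \IN^\IN$.

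The main technical content sits in the first direction, where one has to verify simultaneously that every coordinate $r_n(k)$ stabilizes, so that $(r_n) \in \dom(\lim)$, and that the resulting pointwise limit is a bona fide $\delta_X$-name of $x$; both depend on the offset $c$ being large enough, with the Cauchy rate of $q$ obtained by inserting $x$ between $\alpha q(i)$ and $\alpha q(j)$ in the triangle inequality. The second direction is then routine once $\eta$ is in place, the only subtlety being that the output of $\lim_X$ lies in the image of $\eta$, which is immediate from continuity of $\eta$.
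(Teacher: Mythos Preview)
Your argument is correct and supplies the content that the paper omits: the paper's own proof is merely a pointer to \cite[Proposition~9.1]{Bra05}, so there is nothing to compare at the level of strategy. Your two directions are the standard ones --- the Cauchy representation makes $\lim_X\leqSW\lim$ hold for \emph{every} computable metric space, while richness is used only to pull $\lim$ back through an embedding for $\lim\leqSW\lim_X$.

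One point of care in the first direction: as written, your update rule checks the current candidate against \emph{all} $m\leq t$, and your stabilisation argument only controls $\alpha p_m(k+c)$ for large $m$. A fixed early $m$ with $x_m$ far from $x$ would then keep triggering updates forever. The intended (and working) rule is to test only $m\geq n_t(k)$: once $n_t(k)$ has passed the threshold beyond which all $\alpha p_m(k+c)$ sit in a $2^{-k-2}$--ball around $x$, no further violation with threshold $2^{-k-1}$ can occur, and stabilisation follows. Relatedly, the comparison $d(\cdot,\cdot)\geq 2^{-k-1}$ between computable reals is not decidable; you should phrase the trigger as the semi-decidable $d>2^{-k-1}$, which yields the non-strict bound $d(\alpha q(k),\alpha p_m(k+c))\leq 2^{-k-1}$ on the stable value. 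Your triangle-inequality computation then still gives $d(\alpha q(i),\alpha q(j))\leq \tfrac{27}{32}\cdot 2^{-j}<2^{-j}$ with $c=4$, so the Cauchy condition for $q$ is met. The second direction is clean as stated.
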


Examples of rich computable metric spaces are $2^\IN,\IN^\IN,\IR,\IR^\IN,[0,1],[0,1]^\IN$, etc.
This justifies also the more generic notation $\lim$ for the limit operation on Baire space.
An interesting property of $\lim_X$ is its behavior under composition.
The following result on $\lim_\IN$ can be proved with the help of Theorem~\ref{thm:independent-choice},
but is also easy to see directly. 

\begin{proposition}[Composition]
\label{prop:limN-closure}
$\lim_\IN*\lim_\IN\equivW\lim_\IN$, i.e., problems that are computable with finitely many mind changes are closed under composition.
\end{proposition}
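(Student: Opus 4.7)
The plan is to verify both inequalities. For $\lim_\IN \leqW \lim_\IN * \lim_\IN$, I appeal to Theorem~\ref{thm:compositional-product-implication}: taking $f_0 := \lim_\IN$ and $g_0 := \id$ (so $g_0 \leqW \lim_\IN$ since $\id$ is computable), the composition $f_0 \circ g_0 = \lim_\IN$ is a specific witness below $\lim_\IN * \lim_\IN$. For the reverse inequality $\lim_\IN * \lim_\IN \leqW \lim_\IN$, by the same theorem it suffices to show that every composable pair $f_0, g_0$ with $f_0, g_0 \leqW \lim_\IN$ satisfies $f_0 \circ g_0 \leqW \lim_\IN$; equivalently, by Proposition~\ref{prop:limit-computable}(2), the class of problems computable with finitely many mind changes is closed under composition.

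To establish this closure, I would use the characterization $\lim_\IN \equivW \lim_\Delta$ and formulate the two reductions via $\lim_\Delta$. Unfolding the hypotheses, there are computable (partial) maps $M_g$ and $M_f$ such that, for $p$ a valid name of $x \in \dom(g_0)$, the sequence $M_g(p) = \langle p_0, p_1, p_2, \ldots\rangle$ is eventually constant at a name $q$ of some $y \in g_0(x)$, and $M_f(q) = \langle r_{q,0}, r_{q,1}, \ldots\rangle$ is eventually constant at a name $r$ of some $z \in f_0(y)$. Given input $p$, I construct a new computable map $M$ diagonally: at stage $n$, read enough of $p_n$ from $M_g(p)$ and run a time-bounded simulation of $M_f$ on $p_n$ for $n$ steps, recording its $n$-th component as $r_{n,n}$ (using a default placeholder if nothing has been produced yet). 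The map $p \mapsto \langle r_{0,0}, r_{1,1}, r_{2,2}, \ldots\rangle$, followed by one application of $\lim_\Delta$, will be the desired realizer, yielding $f_0 \circ g_0 \leqW \lim_\Delta \equivW \lim_\IN$.

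The crux is verifying that $(r_{n,n})_n$ is eventually constant at a valid name of an element of $f_0(g_0(x))$. Choose $N_g$ with $p_n = q$ for all $n \geq N_g$ and $N_f$ with $r_{q,j} = r$ for all $j \geq N_f$. Then for $n \geq \max(N_g, N_f)$, we have $p_n = q$ and consequently $r_{n,n} = r_{q,n} = r$, so the diagonal sequence stabilizes at the valid output name $r$. The main technical obstacle is that for $n < N_g$ the intermediate input $p_n$ fed into $M_f$ may lie outside the ``nice'' domain, so $M_f(p_n)$ need not be eventually constant or even everywhere defined. This is sidestepped by the time-bounded simulation together with the placeholder convention, which makes the composite map $M$ total computable without affecting the tail behaviour used by $\lim_\Delta$. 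Since the whole construction is uniform in $M_g$ and $M_f$, this produces a genuine Weihrauch reduction using a single application of $\lim_\IN$, which completes the proof.
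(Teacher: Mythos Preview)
Your overall strategy—arguing directly that finite-mind-change computations compose—is exactly the ``easy to see directly'' route the paper alludes to just before the statement. The paper itself instead invokes later machinery: it identifies $\lim_\IN\equivW\C_\IN$ (Theorem~\ref{thm:CN-strong}) and applies the Independent Choice Theorem~\ref{thm:independent-choice}, which yields $\C_\IN*\C_\IN\leqW\C_{\IN\times\IN}\equivW\C_\IN$ in one stroke through the non-determinism picture (Corollary~\ref{cor:choice-composition}). Your route is more elementary and self-contained; the paper's is shorter once the choice framework is available.

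There is, however, a real technical slip in your diagonal construction. You set $s_n:=r_{p_n,n}$ by simulating $M_f$ on $p_n$ for $n$ steps, using a placeholder if nothing is produced. But each $s_n$ is an element of $\IN^\IN$, so ``$s_n=r$'' means $s_n(k)=r(k)$ for \emph{every} $k$. Even after $p_n$ has stabilised at $q$, the output position $\langle n,k\rangle$ of $M_f(q)$ grows without bound as $k\to\infty$, and a simulation budget of $n$ steps (or any bound depending only on $n$) will miss infinitely many coordinates; those coordinates of $s_n$ become placeholders. Hence $(s_n)_n$ need not be eventually constant and $\lim_\Delta$ cannot be applied. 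The claim that the time bound ``does not affect the tail behaviour'' is precisely where the argument breaks.

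The repair is straightforward and keeps your idea intact: do not time-bound the simulation of $M_f$. To compute $s_n(k)$, run $M_f$ on $p_n$ unboundedly while in parallel searching for evidence that $p_n$ is transient (some $n'>n$ and $j$ with $p_{n'}(j)\neq p_n(j)$); output whichever arrives first—the value $M_f(p_n)(\langle n,k\rangle)$ or a placeholder upon finding such evidence. For $n\geq N_g$ the instability search never fires and $M_f(q)$ is total, so $s_n=r_{q,n}=r$ once $n\geq\max(N_g,N_f)$; for $n<N_g$ either $p_n=q$ (and $M_f$ produces output) or $p_n\neq q=p_{N_g}$ (and the search fires), so the map is total. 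With this modification your proof goes through.
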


The situation for iterations of $\lim$ is very different. 
For a problem $f$ we denote by $f^{[n]}$ the $n$--fold iteration of the compositional product of $f$ with itself, i.e.,
$f^{[0]}\equivW\id$, $f^{[1]}\equivW f$, $f^{[2]}\equivW f* f$, etc. 
By iterations of $\lim$ one climbs up the Borel hierarchy
with every further application of a limit. By ${\mathrm\Sigma^0_n}$ we denote the corresponding Borel class of subsets
of $\IN^\IN$, i.e., ${\mathrm\Sigma^0_1}$ is the class of open subsets, ${\mathrm\Sigma^0_2}$ is the class of $F_\sigma$--subsets and so forth.
A function $F:\In\IN^\IN\to\IN^\IN$ is called {\em ${\mathrm\Sigma^0_n}$--measurable}, if preimages $F^{-1}(U)$ of open sets $U$
are ${\mathrm\Sigma^0_n}$--sets relative to $\dom(F)$. Analogously, {\em effective ${\mathrm\Sigma^0_n}$--measurable} is defined
if the preimage can be uniformly computed from a description of $U$. The ${\mathrm\Sigma^0_1}$--measurable functions $F$ are exactly
the continuous ones, and the effectively ${\mathrm\Sigma^0_1}$--measurable functions $F$ are exactly the computable ones. 
We can transfer concepts of measurability to problems via realizers.

\begin{definition}[Effective Borel measurability] Let $n\geq1$.
A problem $f$ is called {\em (effectively) ${\mathrm\Sigma^0_{n}}$--measurable} if it has
a realizer with the same property.
\end{definition}

It can be proved that for computable metric spaces $X$ and $Y$ and total functions $f:X\to Y$ this yields just
the usual (effectively) ${\mathrm\Sigma^0_n}$--measurable functions as they are known in descriptive set theory~\cite{Bra05}.
The measurable problems can also easily be characterized in the Weihrauch lattice.

\begin{theorem}[Effective Borel measurability]
\label{thm:Borel}
$f\leqW\lim^{[n]}\iff f$ is effectively ${\mathrm\Sigma^0_{n+1}}$--measurable,
for all problems $f$ and $n\in\IN$.
\end{theorem}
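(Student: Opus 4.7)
The plan is to prove the equivalence by induction on $n$, using the compositional product to iteratively climb the Borel hierarchy, and leveraging Proposition~\ref{prop:limit-computable} together with the observation that $\lim$ itself is effectively $\mathrm\Sigma^0_2$--measurable.

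For the base case $n=0$, we have $\lim^{[0]}\equivW\id$, so $f\leqW\lim^{[0]}$ iff $f$ is computable, which coincides with effective $\mathrm\Sigma^0_1$--measurability because the effectively open preimage of an effectively open set is precisely the notion of computability on Baire space via realizers. For the inductive step, I would use the identity $\lim^{[n+1]}\equivW\lim*\lim^{[n]}$, which follows from the definition of iteration of the compositional product together with Proposition~\ref{prop:algebraic-properties}(1) (associativity of $*$).

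For the forward direction of the inductive step, assume $f\leqW\lim^{[n+1]}\equivW\lim*\lim^{[n]}$. By Theorem~\ref{thm:compositional-product-implication}, $f$ is bounded by $f_0\circ g_0$ for some $f_0\leqW\lim$ and $g_0\leqW\lim^{[n]}$ whose types compose. By Proposition~\ref{prop:limit-computable} $f_0$ is limit computable, hence effectively $\mathrm\Sigma^0_2$--measurable, while by the induction hypothesis $g_0$ is effectively $\mathrm\Sigma^0_{n+1}$--measurable. A standard computation with the effective Borel hierarchy (preimages of $\mathrm\Sigma^0_2$ sets under effectively $\mathrm\Sigma^0_{n+1}$--measurable functions are effectively $\mathrm\Sigma^0_{n+2}$) then shows that any realizer of $f$ is effectively $\mathrm\Sigma^0_{n+2}$--measurable. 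For the backward direction, assume $f$ is effectively $\mathrm\Sigma^0_{n+2}$--measurable. The core step is the constructive Lebesgue--Hausdorff--Kuratowski style decomposition: any effectively $\mathrm\Sigma^0_{n+2}$--measurable $F:\In\IN^\IN\to\IN^\IN$ can be written uniformly as $F(p)=\lim_{k\to\infty}G(p,k)$ for some effectively $\mathrm\Sigma^0_{n+1}$--measurable $G$. Applying the induction hypothesis to $G$ yields $G\leqW\lim^{[n]}$, whence a single further application of $\lim$ on top gives $f\leqW\lim*\lim^{[n]}\equivW\lim^{[n+1]}$.

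The main obstacle is the backward direction, specifically establishing the effective decomposition lemma with enough uniformity: one must ensure that an effective $\mathrm\Sigma^0_{n+2}$ description of the preimage function is converted uniformly into a limit representation by an effectively $\mathrm\Sigma^0_{n+1}$--measurable approximating sequence, and that this transformation lifts through realizers so that the resulting Weihrauch reduction is witnessed by computable $H,K$. This is precisely the content of the effective Borel hierarchy results from~\cite{Bra05}, so I would appeal to those characterisations rather than reproving them from scratch; the remaining work is then the purely formal translation between ``effective $\mathrm\Sigma^0_k$--measurability of a realizer'' and Weihrauch reducibility to $\lim^{[k-1]}$.
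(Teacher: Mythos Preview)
Your proposal is correct and aligns with the paper's approach: the paper's proof consists solely of the citation ``This follows from \cite[Proposition~9.1]{Bra05}; see the proof of \cite[Fact~2.2]{BR17} for a more detailed reasoning,'' and the effective Lebesgue--Hausdorff decomposition you invoke for the backward direction is precisely the content of that cited proposition. Your inductive framing via the compositional product is exactly the reasoning that \cite[Fact~2.2]{BR17} spells out, so there is no meaningful difference in strategy---you have simply unpacked what the paper leaves to the references.
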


This theorem can be relativized. We write $f\leq_{\rm W}^p g$ if $f$ is Weihrauch reducible to $g$
with respect to some oracle $p\in\IN^\IN$, which means that the reduction functions $H,K$ are computable relative to $p$.
Then $f\leq_{\rm W}^p\lim^{[n]}$ holds for some $p\in\IN^\IN$ if and only if $f$ is ${\mathrm\Sigma^0_{n+1}}$--measurable.
Theorem~\ref{thm:Borel} shows that the Weihrauch lattice yields a refinement of the effective Borel hierarchy very much in the same
way as many-one reducibility yields a refinement of the Kleene hierarchy.
We summarize some of the obvious algebraic properties of $\lim$.

\begin{proposition}
\label{prop:algebraic-lim}
$\lim$ is a cylinder, strongly parallelizable, strongly idempotent, finitely tolerant, a strong fractal and (strongly) countably irreducible. 
\end{proposition}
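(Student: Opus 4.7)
The plan is to handle the six properties in three layers: direct constructions for the cylinder and strong parallelizability properties; elementary perturbation arguments for finite tolerance and the strong fractal property; and then deduce strong idempotency and (strong) countable irreducibility from propositions already proved in the excerpt.

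For the cylinder property $\id\times\lim\leqSW\lim$, on input $\langle p,\langle q_0,q_1,\ldots\rangle\rangle$ I would let the preprocessor $K$ output $\langle\langle p,q_0\rangle,\langle p,q_1\rangle,\ldots\rangle$. Each coordinate of this new tupling converges, so it lies in $\dom(\lim)$ with limit $\langle p,\lim_n q_n\rangle$, which is precisely the desired output, and $H=\id$ completes the reduction. Strong parallelizability $\widehat{\lim}\leqSW\lim$ is handled by a transposition: given $\langle r_0,r_1,\ldots\rangle$ with each $r_i=\langle r_{i,0},r_{i,1},\ldots\rangle$ convergent, $K$ produces the sequence $(s_n)_{n\in\IN}$ defined by $s_n:=\langle r_{0,n},r_{1,n},r_{2,n},\ldots\rangle$. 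Coordinate-wise convergence of the $r_i$ is equivalent to convergence of $(s_n)_n$, and $\lim_n s_n=\langle\lim r_0,\lim r_1,\ldots\rangle$; again $H=\id$ suffices. The reverse inequality $\lim\leqSW\widehat{\lim}$ is immediate from Proposition~\ref{prop:mon-closure}(3).

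Finite tolerance reduces to the observation that $\lim$ is invariant under changes at finitely many positions of the input: if $p,q\in\dom(\lim)$ agree above index $k$, then for every coordinate $j$ the entries $p\langle i,j\rangle$ and $q\langle i,j\rangle$ coincide for all sufficiently large $i$ (namely whenever $\langle i,j\rangle\geq k$), hence coordinate-wise limits match and $\lim p=\lim q$; thus $T\langle r,k\rangle:=r$ witnesses finite tolerance. For the strong fractal property I take $F:=\lim$ itself. Given a clopen $A\In\IN^\IN$ meeting $\dom(\lim)$, openness of $A$ lets me pick a basic cylinder $[w]\In A$ with $[w]\cap\dom(\lim)\ne\emptyset$, and I reduce $\lim$ to $\lim|_{[w]}$ by prepending $w$: set $K(p)(m):=w(m)$ for $m<|w|$ and $K(p)(m):=p(m)$ otherwise. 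This modifies $p$ at only finitely many positions, so $K(p)\in[w]\cap\dom(\lim)$ and $\lim K(p)=\lim p$, so $H=\id$ works. Combined with the trivial reduction $\lim|_{[w]}\leqSW\lim|_A$ (identity reductions, since $[w]\In A$) this yields $\lim|_A\equivSW\lim$.

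Finally, strong idempotency $\lim\times\lim\equivSW\lim$ is immediate from strong parallelizability by Proposition~\ref{prop:idempotency-parallelizability}(1), and (strong) countable irreducibility of $\lim$ follows from its being a (strong) fractal by Proposition~\ref{prop:fractal}. The only mildly subtle point is the strong fractal step: clopen subsets of Baire space need not be finite disjoint unions of basic cylinders, but only the openness of $A$ is used to extract a suitable $[w]$, so the infinite case creates no difficulty.
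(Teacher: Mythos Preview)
Your proof is correct. The approach differs from the paper's in one notable way: the paper derives strong parallelizability from the external equivalence $\lim\equivSW\widehat{\LPO}$ (Theorem~\ref{thm:lim}), since $\widehat{\;}$ is a closure operator, and then obtains the cylinder property as a consequence of strong idempotency via $\id\times\lim\leqSW\lim\times\lim\leqSW\lim$. You instead give direct, self-contained constructions for both the cylinder and strong parallelizability properties (the interleaving and the transposition), which avoids invoking Theorem~\ref{thm:lim}. Your route is more elementary and keeps the proposition logically independent of that characterization; the paper's route is terser but imports a nontrivial equivalence. For finite tolerance and the strong fractal property the paper merely calls them ``obvious''; you supply the explicit prefix-insensitivity and cylinder-prepending arguments, which are exactly the intended content of that remark. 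The final deductions (strong idempotency from Proposition~\ref{prop:idempotency-parallelizability}, countable irreducibility from Proposition~\ref{prop:fractal}) match the paper.
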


It is useful to know that there are many problems that are equivalent to $\lim$. We mention only a few.
By $\J:\IN^\IN\to\IN^\IN,p\mapsto p'$ we denote the Turing jump operation, which is injective as a function on Baire space. 
By $\EC:\IN^\IN\to 2^\IN$ we denote
the function that translates enumerations of sets into their characteristic functions, and by $\LPO:\IN^\IN\to\{0,1\}$ we denote the
{\em limited principle of omniscience}\footnote{$\LPO$ is also Weihrauch equivalent to the identity $\id:\IS\to\{0,1\}$.}:
\[\EC(p)(n):=\left\{\begin{array}{ll}
   1 & \mbox{if $(\exists k\in\IN)\;p(k)=n+1$}\\
   0 & \mbox{otherwise}
\end{array}\right.\mbox{ and }
\LPO(p):=\left\{\begin{array}{ll}
1 & \mbox{if $(\exists k)\; p(k)=0$}\\
0 & \mbox{otherwise}
\end{array}\right.\]

We also use $\inf,\sup:\In\IR^\IN\to\IR$.
We obtain the following result that lists some important members of the equivalence class of $\lim$.

\begin{theorem}[Limit]
\label{thm:lim}
$\lim\equivSW\inf\equivSW\sup\equivSW\J\equivSW\EC\equivSW\widehat{\LPO}\equivSW\widehat{\lim\nolimits_\IN}$.
\end{theorem}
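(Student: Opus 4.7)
My plan is to establish the seven-way $\equivSW$-cluster with $\lim$ as the hub, exploiting Proposition~\ref{prop:limits} for the real-valued side.

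First, I would handle $\lim\equivSW\J$ via the uniform Shoenfield limit lemma: for $\J\leqSW\lim$ let $K$ send $p$ to the sequence $(p'_s)_{s\in\IN}$ where $p'_s(e)=1$ iff $\Phi^p_e(e)$ halts within $s$ steps (pointwise-convergent to $p'$), with $H$ the identity; for $\lim\leqSW\J$, given a name $q$ of a convergent Baire-space sequence $\langle p_n\rangle$, the limit is uniformly $\Delta^0_1(q')$, so $K$ is the identity and $H$ implements this decoding.

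Next I would treat the combinatorial cluster. The reduction $\LPO\leqSW\lim_\IN$ sends $p$ to the monotone $0$--$1$ sequence $q_k=1$ iff $\exists i\le k\ p(i)=0$ (limit is $\LPO(p)$). For $\lim_\IN\leqSW\widehat{\LPO}$, given $q$ eventually constant with limit $m$, I would pass, for each $(n,k)\in\IN^2$, the $\LPO$-instance $u_{n,k}$ defined by $u_{n,k}(j)=1$ if $q(k+j)=n$ and $0$ otherwise; then $\LPO(u_{n,k})=0$ iff $q$ is constantly $n$ from position $k$ onward, so $H$ searches the $\widehat{\LPO}$-answers for any zero bit and outputs the corresponding $n$, which must be $m$. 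Parallelizing and using that $\widehat{(\cdot)}$ is a closure operator for $\leqSW$ (Proposition~\ref{prop:mon-closure}) yields $\widehat{\LPO}\equivSW\widehat{\lim_\IN}$. Further, $\lim\equivSW\widehat{\lim_\IN}$: a pointwise-convergent Baire-space sequence is exactly a tuple of eventually constant $\IN$-valued columns, and vice versa. For $\EC\leqSW\widehat{\LPO}$ I would ask, for each $n$, whether $n+1$ occurs in the enumeration; and for $\widehat{\LPO}\leqSW\EC$ I would enumerate $n+1$ each time the $n$-th $\LPO$-input exhibits a zero (padding with $0$s), and $\EC$ then recovers the whole bit-vector uniformly.

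Finally, for $\inf$ and $\sup$, the easy directions $\inf,\sup\leqSW\lim$ use running minima and maxima, producing computable monotone real sequences converging to $\inf$ and $\sup$; combining with $\lim_\IR\equivSW\lim$ closes these. The subtle step, and my expected main obstacle, is $\lim\leqSW\inf$ (the $\sup$ case being symmetric), since strong reduction forbids the post-processor $H$ from reading the input, so the entire limit must be encoded into the single real output of $\inf$. My plan here is to route through $\widehat{\lim_\IN}\leqSW\inf$: send the countably many eventually-constant $\IN$-valued sequences into a single sequence of reals whose terms live in disjoint binary windows of the output real, arranged so that the joint infimum stabilizes each window at the corresponding natural-number limit; then $H$ reads the tuple of limits off the bit pattern of the $\inf$-value. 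Setting up this packing so that $K$ is computable and $\inf$ really does encode every coordinate is the delicate point; once done, the seven problems collapse into a single strong Weihrauch degree.
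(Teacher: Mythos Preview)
Your reductions among $\lim$, $\J$, $\EC$, $\widehat{\LPO}$, and $\widehat{\lim_\IN}$ are correct and self-contained; the paper instead cites \cite{BG11,BG11a,BBP12} for these and upgrades $\equivW$ to $\equivSW$ by noting the problems are cylinders. Your direct constructions are equally valid and arguably more transparent.

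The genuine divergence is in handling $\inf$ and $\sup$. The paper does \emph{not} build an encoding; it argues that $\sup$ (and analogously $\inf$) is a cylinder---because $\sup_{2^\IN}$ with the lexicographic order is a cylinder and $\sup_{2^\IN}\leqSW\sup\leqSW\lim_\IR\leqW\sup_{2^\IN}$---and then invokes Proposition~\ref{prop:cylinder} to upgrade the known $\sup\equivW\EC$ to $\equivSW$. That route avoids all packing subtleties.

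Your direct route is workable but the sketch has a gap: you propose $\widehat{\lim_\IN}\leqSW\inf$ via ``disjoint binary windows'', yet the limits $m_i$ are unbounded natural numbers, so fixed-width windows cannot hold them, and variable-width windows make the positions of later coordinates depend on earlier limits you do not yet know---so a monotone sequence of reals with the right infimum is not obviously producible. The clean fix is to route instead through $\EC$ (or $\widehat{\LPO}$), which you have already placed in the cluster and which outputs \emph{bits}: given an enumeration $p$ of $A\subseteq\IN$, set $r_s:=1-\sum_{n\in A_s}3^{-n-1}$ where $A_s$ is the stage-$s$ approximation; the sequence $(r_s)_s$ is computable and decreasing with $\inf_s r_s=1-\sum_{n\in A}3^{-n-1}$, and since the ternary digits lie in $\{0,1\}$ the expansion is gap-separated and $\chi_A$ is computably recoverable from a Cauchy name of the infimum. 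This yields $\EC\leqSW\inf$ strongly with $H$ independent of the input, and $\sup$ is symmetric.
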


We now use this limit operation to define the jump of a represented space.

\begin{definition}[Jump of a represented spaces]
Let $(X,\delta)$ be a represented space. Then we define its {\em jump} $(X',\delta')$ by $X':=X$
and $\delta':=\delta\circ\lim$. Likewise, $(X^{(n)},\delta^{(n)})$ denotes the $n$--fold jump.
\end{definition}

That is, in the new represented space $X'$ a name of $x$ with respect to $\delta'$ is a 
sequence that converges to a name in the sense of $\delta$. Hence, names in $(X',\delta')$
typically carry less computably accessible information than names in $(X,\delta)$.
Now the jump of a problem is just the same problem but with the input space
replaced by its jump.

\begin{definition}[Jump of a problem]
Let $f:\In X\mto Y$ be a problem. Then its {\em jump} $f':\In X'\mto Y$ is defined
to be the same problem with the modified input space $X'$.
Likewise $f^{(n)}:\In X^{(n)}\mto Y$ denotes the $n$--fold jump for $n\in\IN$ with $f^{(0)}:=f$.
\end{definition}

Since the jump $f'$ has to work with a weaker type of input information, it is typically
harder to compute $f'$ than $f$. 
The study of jumps provides one reason why it is important to keep track of strong Weihrauch reductions.
Jumps are monotone with respect to strong Weihrauch reductions, but not with respect to ordinary Weihrauch reductions in general.

\begin{proposition}[Monotonicity]
\label{prop:jump-monotone}
For all problems $f,g$ it both hold: $f\leqSW f'$ and also $f\leqSW g\TO f'\leqSW g'$.
\end{proposition}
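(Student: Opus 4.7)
The plan is to handle the two statements in turn, both by exhibiting explicit computable pre- and post-processors for $\leqSW$. Throughout recall that $\delta'(q) = \delta(\lim q)$, so a name in the jump space $X'$ is precisely a sequence in Baire space converging to an ordinary $\delta_X$-name.

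For the first claim $f \leqSW f'$, take the post-processor $H := \id$ and the pre-processor $K(p) := \langle p, p, p, \ldots \rangle$, the constant sequence of $p$'s under the standard tupling $(\IN^\IN)^\IN \to \IN^\IN$. This $K$ is computable, and since $\lim \langle p, p, p, \ldots \rangle = p$, we have $\delta'_X(K(p)) = \delta_X(p)$ on $\dom(\delta_X)$. Hence, for $p \in \dom(f \delta_X)$ and any realizer $G' \vdash f'$, the output $G'(K(p))$ is a $\delta_Y$-name of some element of $f'(\delta_X(p)) = f(\delta_X(p))$, so $H G' K \vdash f$.

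For the second claim, assume $f \leqSW g$ via computable $H, K$. Keep the same $H$ and lift $K$ to a new pre-processor $K'$ by ``applying $K$ componentwise'' to the convergent sequence of approximations: formally, $K'(\langle p_0, p_1, \ldots \rangle)$ is defined to be the sequence $\langle K(p_0), K(p_1), \ldots \rangle$, computed by the standard delayed simulation of a Turing functional in which each output bit is committed only after a sufficient prefix of the corresponding input has been witnessed. The continuity of $K$ on $\dom(K)$ together with this delaying ensures that whenever $p_n \to p$ in Baire space and $p \in \dom(K)$, the output sequence $K'(\langle p_0, p_1, \ldots \rangle)$ converges to $K(p)$, even when some of the $p_n$ lie outside $\dom(K)$. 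Consequently, for $q = \langle p_0, p_1, \ldots \rangle \in \dom(f' \delta'_X)$ with $\lim p_n = p$ a $\delta_X$-name of $x \in \dom(f)$, the value $K'(q)$ is a $\delta'_Z$-name of $z := \delta_Z(K(p)) \in \dom(g)$.

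To verify $H G' K' \vdash f'$ for an arbitrary realizer $G' \vdash g'$, observe that $r := G'(K'(q))$ is a $\delta_W$-name of some element of $g(z)$. Invoking the axiom of choice for Baire space, pick any realizer $G \vdash g$ that satisfies $G(K(p)) = r$; such a $G$ exists because $r$ is a valid $\delta_W$-name of an element of $g \delta_Z (K(p))$, and its values elsewhere on $\dom(g \delta_Z)$ may be chosen arbitrarily by choice. The assumption $H G K \vdash f$ then yields $\delta_Y H G K(p) \in f \delta_X(p) = f'(x)$, and since $G K(p) = G' K'(q)$, we conclude $\delta_Y H G' K'(q) \in f'(x)$, as required. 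The one subtle step is the construction of the lifted $K'$ as a genuinely computable function: the naive componentwise definition must be refined into the standard delayed-reading simulation so as not to get stuck on inputs $p_n$ outside $\dom(K)$, after which the remainder is bookkeeping around the definitions of $\delta'$ and $\leqSW$.
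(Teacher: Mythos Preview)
Your overall strategy is the standard one (the paper itself merely cites \cite[Proposition~5.6]{BGM12}): embed names as constant sequences for $f \leqSW f'$, and for monotonicity keep the outer $H$ while lifting $K$ to a $K'$ satisfying $\lim \circ K' = K \circ \lim$ on the relevant domain. Your verification that $HG'K' \vdash f'$ via the axiom of choice (picking a realizer $G \vdash g$ with $G(K(p)) = G'(K'(q))$) is also correct.

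The gap is in your construction of $K'$. Setting $K'(\langle p_0, p_1, \ldots \rangle) := \langle K(p_0), K(p_1), \ldots \rangle$ fails whenever some $p_n \notin \dom(K)$: then $K(p_n)$ is undefined, the $n$-th component is not a total element of $\IN^\IN$, and $K'(q)$ does not land in $\IN^\IN$ at all. Your ``delayed simulation in which each output bit is committed only after a sufficient prefix of the input has been witnessed'' is simply a description of how any Turing functional already operates; it does nothing to make the undefined components total. The correct fix is \emph{time-bounded} simulation with padding: let $r_n$ be the finite output produced by the machine for $K$ on input $p_n$ after $n$ computation steps, extended by $0^\omega$. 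Each $r_n$ is then total and uniformly computable from $q$, and a routine continuity argument (for each $k$, the machine on input $p$ outputs $K(p)(k)$ after reading some prefix $p|_\ell$ in $t$ steps, so for $n$ large enough that $p_n|_\ell = p|_\ell$ and $n \geq t$ we get $r_n(k) = K(p)(k)$) shows $r_n \to K(p)$ whenever $p_n \to p \in \dom(K)$. This gives $\lim K'(q) = K(\lim q)$ as required.
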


Trivial examples such as a constant function show that $f\equivSW f'$ can happen.
For $f\leqW g$ all possible reductions between $f'$ and $g'$ can occur, the order can even be reversed, i.e., $g'\lW f'$ can happen~\cite[Figure~2]{BHK17a}.
Surprisingly, there is also a certain inverse of Proposition~\ref{prop:jump-monotone} that one can prove. 
We recall that by $p'$ we denote the Turing jump of $p$.
Using this concept we can phrase the following theorem.

\begin{theorem}[Inverting jumps]
\label{thm:jump-inversion}
$f'\leq_{\rm W}^p g'\TO f\leq_{\rm W}^{p'} g$ holds for all problems $f,g$ and $p\in\IN^\IN$. 
An analogous statement holds if Weihrauch reducibility is replaced by strong Weihrauch reducibility in both occurrences.
\end{theorem}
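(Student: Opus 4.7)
The plan is to use the constant sequence map $c \colon q \mapsto \tilde q := \langle q, q, q, \ldots\rangle$, which is computable and sends a $\delta_X$-name of $x$ to a $\delta_{X'}$-name of $x$, since $\lim \tilde q = q$ and hence $\delta_{X'}(\tilde q) = \delta_X \lim \tilde q = \delta_X q$. In the opposite direction, converting back from a $\delta_{Z'}$-name to a $\delta_Z$-name requires one application of $\lim$, which by Theorem~\ref{thm:lim} is strongly equivalent to the Turing jump $\J$. This is the heuristic reason why the jump on the oracle is expected to compensate exactly for removing the jumps on both $f$ and $g$.

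Concretely, fix $p$-computable $H, K$ witnessing $f' \leq_{\rm W}^p g'$, so $H\langle \id, GK\rangle \vdash f'$ for every $G \vdash g'$. Given any $G' \vdash g$, observe that the (typically non-computable) function $G := G' \circ \lim$ is a realizer of $g'$: for $r \in \dom(\delta_Z \circ \lim)$, $\delta_W G(r) = \delta_W G'(\lim r) \in g(\delta_Z \lim r) = g(\delta_{Z'} r)$. Applying the hypothesis with this $G$ to the input $\tilde q = c(q)$ for $q \in \dom(f\delta_X)$ yields
\[
\delta_Y H\langle \tilde q, G'(\lim K(\tilde q))\rangle \;=\; \delta_Y H\langle \tilde q, GK(\tilde q)\rangle \;\in\; f'\delta_{X'}(\tilde q) \;=\; f\delta_X(q).
\]
Setting $K'(q) := \lim K(\tilde q)$ and $H'\langle q, v\rangle := H\langle \tilde q, v\rangle$ therefore gives $H'\langle \id, G' K'\rangle \vdash f$ for every $G' \vdash g$, witnessing $f \leq_{\rm W} g$ modulo the computability of $H', K'$.

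It remains to verify that $H'$ and $K'$ are $p'$-computable. The map $H'$ is the pre-composition of the $p$-computable $H$ with the computable map $(q, v) \mapsto (\tilde q, v)$, so is $p$-computable and hence $p'$-computable. For $K'$, the sequence $K(\tilde q)$ is uniformly $p$-computable from $q$, and the pointwise limit map $\lim$ is Weihrauch-equivalent to $\J$ by Theorem~\ref{thm:lim}, whose effect on a $p$-oracle computation is captured exactly by $p'$. The main obstacle is making this last step precise: a priori the joint dependence of $K(\tilde q)$ on both $p$ and $q$ places its pointwise limit at degree $(p\oplus q)'$, and one must argue carefully—using the uniformity of $K$ as a fixed $p$-oracle procedure on the constant-sequence input $\tilde q$, together with the Weihrauch equivalence $\lim \equivSW \J$—that this limit can be organised as a $p'$-oracle computation taking $q$ as input.

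The strong Weihrauch analogue follows by the same construction with $H' := H$ and the same $K'$, since in a strong reduction $H G K \vdash f'$ the function $H$ is applied only to $GK(\tilde q)$ and does not require $\tilde q$ as a separate argument.
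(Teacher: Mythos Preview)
Your overall strategy—embedding $q$ as the constant sequence $\tilde q$, and turning a realizer $G'$ of $g$ into the realizer $G'\circ\lim$ of $g'$—is the natural first move, and the displayed equation is correct. The difficulty is exactly where you locate it: showing that $K'(q):=\lim K(\tilde q)$ is $p'$-computable. Unfortunately the hand-wave you offer (``the uniformity of $K$ as a fixed $p$-oracle procedure \ldots\ together with $\lim\equivSW\J$'') does not go through, and in fact the statement it gestures at is \emph{false} for arbitrary $p$-computable $K$. Take $p=\emptyset$ and let $B$ be a computable function with $\lim\circ B=\J$ (such $B$ exists by $\J\leqSW\lim$). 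Define $K(r):=B(\pi_0(r))$, which is computable. Then $K(\tilde q)=B(q)$ and $\lim K(\tilde q)=q'$, so your $K'$ would be the Turing jump operator $q\mapsto q'$, which is certainly not $\emptyset'$-computable (else $\emptyset''\leqT\emptyset'$). Thus one cannot argue, from the $p$-computability of $K$ alone, that $\lim K(\tilde q)$ is $p'$-computable in $q$: for general $K$ the limit genuinely sits at $(p\oplus q)'$.

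What saves the theorem is that $K$ is not arbitrary—it is the input half of a valid Weihrauch reduction $f'\leqW^p g'$, and the reduction must succeed for \emph{all} $\delta_{X'}$-names $r$ of the given point, not just $\tilde q$. The actual proof (in \cite{BHK17}) exploits this extra quantification and rests on a jump control theorem of Brattka, Hendtlass and Kreuzer \cite{BHK17a}; this is precisely the nontrivial ingredient that lets one replace the unavailable $(p\oplus q)'$ by $p'\oplus q$. Your write-up stops just before this step and does not supply the required construction, so as it stands it is not a proof: the obstacle you flag is real, your proposed $K'$ need not be $p'$-computable, and closing the gap is the substantive content of the theorem.
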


The property that $f\leq_{\rm W}^p g$ holds for some oracle $p$ is equivalent to the continuous
version of Weihrauch reducibility, where the two reduction functions $H,K$ just need to be continuous.
By Theorem~\ref{thm:jump-inversion} continuous separations are particularly useful, since
they automatically carry over to jumps. 

The following result summarizes some algebraic properties of the jump.

\begin{proposition}[Algebraic properties]
\label{prop:jump-algebraic}
We obtain
$f'\times g'\equivSW(f\times g)'$, $\widehat{f'\ }\equivSW\widehat{f}\,'$,\linebreak
${f'\sqcap g'}\equivSW{(f\sqcap g)'}$, $f'\sqcup g'\leqSW(f\sqcup g)'$ and ${f'}^*\leqSW {f^*}'$
for all problems $f,g$.
\end{proposition}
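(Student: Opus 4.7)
The plan is to trace through the definitions: since the jump replaces the input representation $\delta_X$ by $\delta_X\circ\lim$, each statement reduces to comparing how $\lim$ interacts with the input-space construction associated with the operation. For product-like constructions (the inputs to $\times$, $\widehat{\phantom{f}}$, and $\sqcap$ are products or countable products), $\lim$ commutes with pairing, yielding the claimed equivalences; for coproduct-like constructions (the inputs to $\sqcup$ and $\;^*$ carry a tag), passing the tag through $\lim$ can only be done one way, yielding just $\leqSW$.

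For $(f\times g)'\equivSW f'\times g'$: Given a name of $(x,y)\in (X\times Z)'$, i.e.\ $\langle r_0,r_1,\ldots\rangle$ with $r_i\to\langle p,q\rangle$ where $(p,q)$ names $(x,y)$, split each $r_i=\langle r_i^{\mathrm L},r_i^{\mathrm R}\rangle$ by computable rearrangement; coordinatewise convergence gives $r_i^{\mathrm L}\to p$ and $r_i^{\mathrm R}\to q$, which encode names in $X'$ and $Z'$. Conversely, given names $\langle p_0,p_1,\ldots\rangle$ and $\langle q_0,q_1,\ldots\rangle$ in $X'$ and $Z'$, the sequence $r_i:=\langle p_i,q_i\rangle$ names the input in $(X\times Z)'$. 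In both directions the output spaces $Y\times W$ are unchanged, so the reduction is strong. The meet case is identical because $\sqcap$ has the same input space as $\times$. For parallelization, the same idea uses $\widehat{\lim}\equivSW\lim$ from Proposition~\ref{prop:algebraic-lim}: a name of $(x_i)_{i\in\IN}\in (X^\IN)'$ is a sequence converging to $\langle p_0,p_1,\ldots\rangle$, and componentwise splitting/merging via pairing on $\IN^\IN$ gives equivalence with a sequence of names in $X'$, each itself a limit sequence.

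For the one-sided reductions, the strategy is to push the tag through the limit trivially. For $f'\sqcup g'\leqSW(f\sqcup g)'$: a name of $(0,x)$ in the left-hand side has the form $0\langle p_0,p_1,\ldots\rangle$ with $p_i$ converging to a name of $x$; map it computably to $\langle 0p_0,0p_1,\ldots\rangle$, which converges to $0p$ in $X\sqcup Z$ and hence is a valid name for $(0,x)$ in $(X\sqcup Z)'$. The case $(1,z)$ is symmetric, and the output of $(f\sqcup g)'$ is verbatim the required output. For ${f'}^*\leqSW{f^*}'$: a name of $(n,(x_1,\ldots,x_n))$ in the finite-parallelized jump has the shape $n\langle P_1,\ldots,P_n\rangle$ with $P_j=\langle p_j^{(0)},p_j^{(1)},\ldots\rangle$ converging to a name $p_j$ of $x_j$; define $q_k:=n\langle p_1^{(k)},\ldots,p_n^{(k)}\rangle$. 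Then $q_k\to n\langle p_1,\ldots,p_n\rangle$ in Baire space (the arity $n$ is constant, and each coordinate stabilizes), so $\langle q_0,q_1,\ldots\rangle$ names the input in ${f^*}'$, and the output of ${f^*}'$ is directly the output of ${f'}^*$.

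The work is essentially bookkeeping; the only real subtlety is verifying that the strong Weihrauch bound holds, which requires that the output transformations are the identity (which they are, since all our manipulations take place on the input side and the output spaces agree up to identity of sets). A secondary subtlety worth checking is that the equivalences for $\times$, $\sqcap$, and $\widehat{\phantom{f}}$ really hold at the level of input representations and not merely up to Weihrauch equivalence of $\lim\times\lim\equivSW\lim$ and $\widehat{\lim}\equivSW\lim$; but since we only need computable translations of names to establish $\equivSW$ of the jumped problems, invoking strong parallelizability of $\lim$ (Proposition~\ref{prop:algebraic-lim}) suffices throughout. No deeper structural theorem is needed.
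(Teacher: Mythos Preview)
Your proof is correct and is precisely the standard argument: since the jump only modifies the input representation by precomposing with $\lim$, each claim reduces to a computable translation between names, and you have carried this out correctly for each operation (splitting/merging convergent sequences for the product-type inputs, and prepending a fixed tag for the coproduct-type inputs). The paper itself does not give a proof but defers to \cite[Proposition~5.7]{BGM12}, where essentially the argument you wrote appears; so your approach matches the intended one.
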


One can see that coproducts do not commute with jumps in general, since jumps are join-irreducible.

\begin{proposition}[Finite tolerance and fractality]
\label{prop:tolerance-fractality}
$f'$ is finitely tolerant, a strong fractal and (strongly) countably irreducible for every problem $f$.
\end{proposition}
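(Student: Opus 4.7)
The plan is to work with the realizer version $f'^{\mathrm r}\colon\subseteq\IN^\IN\mto\IN^\IN$ throughout, which by Lemma~\ref{lem:realizer} satisfies $f'^{\mathrm r}\equivSW f'$. The central observation is that a name $p=\langle p_0,p_1,\ldots\rangle$ of a point in $X'$ carries information only via $\lim_n p_n\in\dom(\delta_X)$, so modifications of $p$ that preserve this limit do not change the represented point, and hence leave $f'^{\mathrm r}(p)$ unchanged.

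For finite tolerance, I would argue as follows. Suppose $p,q\in\dom(f'^{\mathrm r})$ agree on all positions $\geq k$ (as elements of $\IN^\IN$). Then the decoded sequences $(p_n)_n$ and $(q_n)_n$ differ in only finitely many coordinates and only in finitely many positions within each coordinate. Hence $\lim_n p_n=\lim_n q_n$ in Baire space, so $\delta'(p)=\delta'(q)$ and therefore $f'^{\mathrm r}(p)=f'^{\mathrm r}(q)$. Thus the computable function $T\langle r,k\rangle:=r$ witnesses finite tolerance.

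For the strong fractal property, let $A\subseteq\IN^\IN$ be clopen with $A\cap\dom(f'^{\mathrm r})\neq\emptyset$. The nontrivial direction is $f'^{\mathrm r}\leqSW f'^{\mathrm r}|_A$. Pick any $p^\ast\in A\cap\dom(f'^{\mathrm r})$; since $A$ is open there is a word $w\in\IN^\ast$ with $p^\ast\in[w]\subseteq A$. Choose $N$ so large that every pair $\langle n,k\rangle<|w|$ has $n<N$. Define $K\colon\IN^\IN\to\IN^\IN$ by setting $K(p)$ equal to $w$ on the first $|w|$ positions, filling the remaining positions of indices $n<N$ arbitrarily (say by $0$), and for $n\geq N$ putting the $n$-th decoded block of $K(p)$ equal to $p_{n-N}$. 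Then $K(p)\in[w]\subseteq A$, and since shifting a convergent sequence by finitely many terms does not alter its limit, $\lim K(p)=\lim p$, so $\delta'(K(p))=\delta'(p)$ and $K(p)\in\dom(f'^{\mathrm r})$ with $f'^{\mathrm r}(K(p))=f'^{\mathrm r}(p)$. Hence the identity $H=\id$ together with this $K$ yields $f'^{\mathrm r}\leqSW f'^{\mathrm r}|_A$; the converse reduction is trivial. This shows $f'$ is a strong fractal.

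Finally, both (strong) countable irreducibility follow immediately from Proposition~\ref{prop:fractal}, since a strong fractal is in particular a fractal. The only mildly delicate point in the whole argument is the bookkeeping for the pairing function to ensure that prescribing a finite prefix of $K(p)$ only touches finitely many of the decoded coordinates $(K(p))_n$ and does not disturb their limit — the choice of $N$ above handles this.
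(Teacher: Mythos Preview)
Your proof is correct and follows essentially the same route as the paper's (implicit) argument: the paper merely asserts that finite tolerance of $f'$ is obvious and cites \cite[Proposition~5.8]{BGM12} for the strong fractal property, whereas you spell out exactly the shift-and-pad construction on the realizer version that preserves $\lim_n p_n$ (and hence $\delta'(p)$), which is precisely the standard proof one finds in the cited source. The deduction of (strong) countable irreducibility from Proposition~\ref{prop:fractal} is also how the paper proceeds.
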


Sometimes it is useful to have the following characterizations of the jump.

\begin{proposition}[Cylinder]
\label{prop:jump-cylinder}
$f'\equivSW f\stars\lim$, and if $f$ is a cylinder, then $f'$ is a cylinder and $f'\equivW f'\times\lim\equivW f*\lim$.
\end{proposition}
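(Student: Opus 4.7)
The proof decomposes into three claims: the equivalence $f'\equivSW f\stars\lim$ (A), the cylinder preservation ``$f$ cylinder implies $f'$ cylinder'' (B), and the Weihrauch equivalences $f'\equivW f'\times\lim\equivW f*\lim$ under the cylinder hypothesis (C). For (A), I would first unfold the realizer version: $(f')^\r=\delta_Y^{-1}\circ f\circ\delta\circ\lim=f^\r\circ\lim$, so by Lemma~\ref{lem:realizer} we have $f'\equivSW f^\r\circ\lim$, and choosing $f_0=f^\r\leqSW f$ and $g_0=\lim\leqSW\lim$ gives $f'\leqSW f\stars\lim$. For the converse, take arbitrary $f_0\leqSW f$ and $g_0\leqSW\lim$ witnessed by computable $H_f,K_f,H_g,K_g$; the realizer of $f_0\circ g_0$ is $p\mapsto H_f(F(K_f H_g(\lim(K_g(p)))))$ for any realizer $F$ of $f$. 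The key observation is that the computable map $\phi:=K_f\circ H_g$, which sits \emph{after} $\lim$, can be moved \emph{before} $\lim$: since $\phi$ is continuous, $\phi(\lim_n r_n)=\lim_n\phi(r_n)$, and the sequence $(\phi(r_n))_n$ is computable from $(r_n)_n$. Taking $K(p)$ to be $\phi$ applied termwise to $K_g(p)$ and $H:=H_f$, the composition $H\circ(F\lim)\circ K$ realizes $f_0\circ g_0$ via a single call to the $f'$-realizer $F\lim$, giving $f_0\circ g_0\leqSW f'$.

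For (B), let $H_c,K_c$ witness the cylinder reduction $\id\times f\leqSW f$. Given $\langle q,p\rangle$ with $p$ a $\delta'$-name, compute the termwise sequence $(K_c\langle q,p_n\rangle)_n$; by continuity of $K_c$ this converges to $K_c\langle q,\lim p\rangle$, a valid $\delta'$-name. A single application of $F\lim$ followed by $H_c$ recovers $\langle q,y\rangle$ with $y$ a $\delta_Y$-name of an element of $f(\delta(\lim p))$, so $\id\times f'\leqSW f'$ and $f'$ is a cylinder. The same template handles $f'\times\lim\leqSW f'$ in (C): feed $(K_c\langle q_n,p_n\rangle)_n$ to the oracle; by continuity of $K_c$ this converges to $K_c\langle\lim q,\lim p\rangle$, so one $F\lim$-call together with $H_c$-postprocessing (and a trivial swap) yields $\langle F(\lim p),\lim q\rangle$, the desired output. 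The reverse reduction $f'\leqW f'\times\lim$ is immediate by pairing with a computable point of $\dom(\lim)$, such as a constant sequence.

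Finally, $f'\equivW f*\lim$ follows from (A) by noting that $\lim$ is a cylinder by Proposition~\ref{prop:algebraic-lim}; together with $f$ being a cylinder, Proposition~\ref{prop:cylinder} yields $f_0\leqW f\iff f_0\leqSW f$ and $g_0\leqW\lim\iff g_0\leqSW\lim$, so the defining sets for $f*\lim$ and $f\stars\lim$ coincide, giving $f*\lim\equivW f\stars\lim\equivSW f'$. The main obstacle lies in (A): one must track that the preprocessed sequence $(K_f H_g(K_g(p)_n))_n$ has its $\lim$ inside $\dom(F)$ so that the single invocation of $f$'s realizer is legitimate. This follows from the composability constraint built into $f_0\circ g_0$ (the output of the realizer of $g_0$ lies in the preimage under $K_f$ of $\dom(F)$), but a careful formalization of the domain bookkeeping is needed. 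Once it is in place, (B) and (C) are direct applications of the same continuity-commutation principle with the cylinder maps $H_c,K_c$ substituted for $H_f,F,K_f,H_g$.
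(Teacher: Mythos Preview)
Your overall decomposition into (A), (B), (C) and the final deduction of $f'\equivW f*\lim$ from (A) via the cylinder property of $f$ and of $\lim$ are correct and match the standard route (the paper itself just cites \cite{BGM12}). However, there is a recurring technical gap in the ``commute past $\lim$'' step that you invoke in all three parts. You write ``since $\phi$ is continuous, $\phi(\lim_n r_n)=\lim_n\phi(r_n)$'' and propose to compute the \emph{termwise} sequence $(\phi(r_n))_n$, respectively $(K_c\langle q,p_n\rangle)_n$ and $(K_c\langle q_n,p_n\rangle)_n$. This fails: the partial computable function $\phi=K_f\circ H_g$ (and likewise $K_c$) is only guaranteed to be defined at the limit point; nothing forces any individual $r_n$ to lie in $\dom(\phi)$, so $\phi(r_n)$ may diverge for every $n$, and the ``termwise sequence'' need not exist at all. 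Sequential continuity of partial functions on $\IN^\IN$ says nothing about values outside the domain.

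The repair is standard but essential: replace literal termwise evaluation by finite-stage simulation. Fix a monotone machine $M$ computing $\phi$, and given $(r_n)_n$ define $s_n$ to be the finite output of $M$ on input $r_n$ after $n$ steps, padded with $0$'s to an infinite sequence. Then $\psi:\langle r_0,r_1,\ldots\rangle\mapsto\langle s_0,s_1,\ldots\rangle$ is total computable, and whenever $r:=\lim_n r_n\in\dom(\phi)$ one checks (using that for each $m$ eventually $r_n|_m=r|_m$, together with monotonicity of $M$) that $\lim_n s_n=\phi(r)$. This gives the factorization $\phi\circ\lim=\lim\circ\psi$ on the relevant domain without ever requiring any $\phi(r_n)$ to exist. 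With this correction applied uniformly to your uses of $K_f\circ H_g$ in (A) and of $K_c$ in (B) and (C), the arguments go through as you outlined; in particular your domain-bookkeeping concern in (A) is then handled automatically, since $\psi$ is total and $\lim\circ\psi$ lands in $\dom(F)$ precisely because $\phi\circ\lim$ does.
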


In particular, $f\stars\lim$ always exists.
We continue with a discussion of some invariant properties. 
We call a class $P$ of problems {\em invariant}, if $f\leqW g$ and $g\in P$ implies $f\in P$.
Likewise, we define {\em strong invariance}. 
We list a number of examples of (strongly) invariant properties that easily follow
from results of this section.

\begin{corollary}[Invariance]
\label{cor:invariance}
The following properties of problems are (strongly) invariant:
continuity, computability, limit computability, (effective) ${\mathrm\Sigma^0_n}$--measurability, computability with finitely many mind changes,
non-uniform computability (i.e., the class of problems that have some computable output for every computable input in the domain).
\end{corollary}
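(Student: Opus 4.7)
The plan is to exploit the fact that almost every property on the list admits a characterization of the form ``$f\in P$ if and only if $f\leqW u$'' for a fixed ``universal'' problem $u$ (possibly with the relativized reducibility $\leq_{\rm W}^p$ in place of $\leqW$). Invariance then drops out of the transitivity of (relativized) Weihrauch reducibility in a single line. Since $\leqSW$ refines $\leqW$, invariance with respect to $\leqW$ automatically implies strong invariance, so it suffices to establish the former.

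For computability the universal problem is $\mathbf{1}$: by the characterization of the degree $\mathbf{1}$ given in Section~\ref{sec:Weihrauch}, $f$ is computable iff $f\leqW\mathbf{1}$. Analogously, Proposition~\ref{prop:limit-computable} gives $f$ limit computable iff $f\leqW\lim$ and $f$ computable with finitely many mind changes iff $f\leqW\lim_\IN$, and Theorem~\ref{thm:Borel} gives $f$ effectively ${\mathrm\Sigma^0_{n+1}}$--measurable iff $f\leqW\lim^{[n]}$. In each of these cases, if $g$ has the stated property and $f\leqW g$, transitivity of $\leqW$ produces the same reduction for $f$, so $f$ inherits the property.

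For the non-effective versions, namely continuity and general ${\mathrm\Sigma^0_{n+1}}$--measurability, the analogous characterizations run through the relativized reducibility, as explained after Theorems~\ref{thm:Borel} and~\ref{thm:jump-inversion}: $f$ is continuous iff $f\leq_{\rm W}^p\mathbf{1}$ for some $p$, and $f$ is ${\mathrm\Sigma^0_{n+1}}$--measurable iff $f\leq_{\rm W}^p\lim^{[n]}$ for some $p$. Since $f\leqW g$ trivially implies $f\leq_{\rm W}^p g$ for every oracle $p$, the same transitivity argument closes these cases as well.

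The only property that does not plug into this scheme is non-uniform computability, for which a short direct argument is required. Suppose $f\leqW g$ via computable $H,K$ and $g$ is non-uniformly computable. Given a computable name $p$ of some $x\in\dom(f)$, the sequence $K(p)$ is computable, and by the reduction $\delta_Z K(p)\in\dom(g)$. Non-uniform computability of $g$ on this computable input supplies a computable $q$ with $\delta_W(q)\in g(\delta_Z K(p))$. Using the axiom of choice for Baire space, fix an arbitrary realizer $G$ of $g$ and redefine it to take the value $q$ at the single point $K(p)$; this is still a realizer of $g$ because $q$ names a valid output. The reduction then yields that $H\langle p,q\rangle=H\langle p,GK(p)\rangle$ is computable and names an element of $f(x)$. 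This pointwise modification of a realizer of $g$ is the only step in the whole corollary that requires genuine argument; everything else reduces to transitivity.
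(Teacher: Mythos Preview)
Your proof is correct and follows essentially the same approach as the paper: the paper's own proof observes that all properties except non-uniform computability are lower cones of a complete problem (relative to an oracle for the topological variants), whence invariance is immediate by transitivity, and cites \cite[Lemma~4.4]{BG11a} for the remaining case of non-uniform computability. Your argument for the non-uniform case, via modifying a realizer of $g$ at the single point $K(p)$, is exactly the standard one underlying that citation.
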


Sometimes it is also useful to use numerical quantities that are preserved
by Weihrauch reducibility. Besides the level $n$ of (effective) ${\mathrm\Sigma^0_n}$--measurability,
we can also use the number of mind changes that are required to compute a problem.
Let $\mind(f)$ denote the minimal number $n\in\IN$ that a Turing machine with two-way output needs 
in order to compute $f$ with at most $n$ mind changes on all inputs (if such a number exists).
This property is invariant in the following sense.

\begin{proposition}[Mind changes]
\label{prop:mind}
$f\leqW g\TO\mind(f)\leqW\mind(g)$ for all problems $f,g$ for which $\mind(f),\mind(g)$ exist.
\end{proposition}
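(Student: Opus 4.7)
The strategy is to transfer a mind-change realizer of $g$ to a mind-change realizer of $f$ through the Weihrauch reduction witnesses, arguing that composition with the computable reduction functions introduces no additional mind changes.

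First I would unfold $f\leqW g$ to fix computable $H,K:\In\IN^\IN\to\IN^\IN$ with $H\langle\id,GK\rangle\vdash f$ for every $G\vdash g$, and fix a two-way-output Turing machine $G$ realizing $g$ that performs at most $n:=\mind(g)$ output changes on every input in $\dom(g^{\r})$.

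Next I would construct a two-way-output Turing machine $M$ realizing $f$ as follows. On input $p$ the machine $M$ first computes $q:=K(p)$ internally using $K$, writing nothing to its output tape. It then starts a simulation of $G$ on $q$ in the background; let $r_t$ denote $G$'s current output content at time $t$. In parallel, $M$ runs $H$ on $\langle p,r_t\rangle$ and copies the incremental result to its own output tape. Whenever $G$ updates its output from $r_t$ to $r_{t+1}$, the machine $M$ abandons the present $H$-simulation and restarts it with the new input $\langle p,r_{t+1}\rangle$, overwriting the relevant cells of its own output tape.

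Finally I would bound $M$'s mind changes. Between two consecutive updates of $G$'s output, $M$ is merely running the ordinary computable function $H$ on a fixed input, which extends its output monotonically and never rewrites a written cell; the initial call $K$ likewise does not contribute output changes. Therefore every mind change of $M$ coincides with a mind change of $G$, of which there are at most $n$, so $\mind(f)\leq n=\mind(g)$.

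The main obstacle lies in ensuring that a single update of $G$'s output tape produces at most one mind change of $M$, even though the fresh $H$-simulation triggered by that update may cascade through many cells previously written by the preceding $H$-simulation. This is resolved by adopting the convention, quoted in the paper, that mind changes measure entire-output stabilizations rather than individual cell rewrites: the output of $M$ settles to a new value precisely at the moment $G$'s guess is revised, so each of the at most $n+1$ stable output phases of $G$ contributes at most one mind change to $M$. Formally one may buffer the running $H$-computation and only commit its current finite prefix to the output tape at the moment $G$'s output is about to change, thereby making the one-to-one correspondence between $G$'s updates and $M$'s updates explicit.
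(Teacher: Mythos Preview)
Your core argument is correct and is the standard one (the paper itself gives no proof here, only a pointer to \cite[Lemma~4.4]{BG11a}): pre-compose with $K$, run the mind-change realizer $G$, post-compose with $H$; since $K$ and $H$ are ordinary one-way Type-2 computations they contribute no mind changes, and each mind change of $G$ forces exactly one erase-and-restart of the $H$-simulation, hence at most one mind change of $M$.

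One point needs fixing. The buffering variant in your last sentence does not work as written: if $M$ commits $H$'s current prefix to the output tape only ``at the moment $G$'s output is about to change,'' then after $G$'s \emph{final} mind change $M$ will never commit again, and the machine produces only a finite (hence incorrect) output. You do not need this device. The worry that a single restart of $H$ ``cascades through many cells'' is already absorbed by the convention you invoke---one erase-and-rewrite of the entire current output counts as a single mind change regardless of how many cells are touched---so the direct protocol (write $H$'s output to the tape as it appears; when $G$ revises, erase everything and restart $H$) already yields $\mind(f)\leq n$. Simply drop the buffering sentence, and for clarity say that $M$ restarts $H$ precisely when $G$ performs a mind change rather than on every extension of $G$'s output.
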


Another numerical quantity that is useful as an invariant for strong Weihrauch reducibility
is the cardinality of a problem.

\begin{definition}[Cardinality]
For every problem $f:\In X\mto Y$ we denote by $\# f$ the maximal cardinality (if it exists) of a set $M\In\dom(f)$
such that $\{f(x):x\in M\}$ contains pairwise disjoint sets. 
\end{definition}

It is easy to see that the following holds.

\begin{proposition}[Cardinality]
\label{prop:cardinality}
$f\leqSW g\TO \#f\leq\#g$ for all problems $f,g$ with existing cardinality.
\end{proposition}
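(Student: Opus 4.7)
The plan is to exploit the categorical characterization of strong Weihrauch reducibility given by Proposition~\ref{prop:GM09}(2) and then chase a choice-selected element through the reduction diagram. Write $f:\In X\mto Y$ and $g:\In Z\mto W$. From $f\leqSW g$ I obtain problems $h:\In W\mto Y$ and $k:\In X\mto Z$ with $h\circ g\circ k\prefix f$; computability of $h,k$ is irrelevant for the cardinality bound, only the existence of such a factorization matters.

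Fix a witness $M\In\dom(f)$ for $\#f$, so that $|M|=\#f$ and the family $\{f(x):x\in M\}$ consists of pairwise disjoint sets. For each $x\in M$, invoke choice to pick some $z_x\in k(x)$. This is possible because $x\in\dom(f)\In\dom(h\circ g\circ k)$ forces $k(x)\neq\emptyset$, and the domain convention for composition from Definition~\ref{def:algebraic-operations}(1) forces moreover $z_x\in\dom(g)$ with $g(z_x)\In\dom(h)$. Set $N:=\{z_x : x\in M\}$. The key step is to show that $\{g(z_x):x\in M\}$ is a pairwise disjoint family. Suppose for contradiction that $x\neq x'$ in $M$ and $w\in g(z_x)\cap g(z_{x'})$; then $w\in\dom(h)$, so $h(w)\neq\emptyset$, and any $y\in h(w)$ lies in $(h\circ g\circ k)(x)\cap(h\circ g\circ k)(x')\In f(x)\cap f(x')$, contradicting disjointness of the chosen $f$-values. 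In particular the $z_x$ are themselves pairwise distinct (two equal $z_x$ would yield a single non-empty set $g(z_x)$ that fails to be disjoint from itself), hence $|N|=|M|=\#f$, and $N$ witnesses $\#g\geq\#f$.

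I expect no serious obstacle here; the one subtle point is recognising why $\leqSW$ is essential. Using instead the ordinary-$\leqW$ characterization with $k:\In X\mto V\times Z$ and $h:\In V\times W\mto Y$, each $x$ gives rise to a pair $(v_x,z_x)$, and a shared $w\in g(z_x)\cap g(z_{x'})$ could be fed to $h$ together with distinct auxiliary coordinates $v_x\neq v_{x'}$, producing disjoint $h$-outputs that no longer force $f(x)\cap f(x')\neq\emptyset$. Thus the linearity of the strong reduction diagram is precisely what transports disjointness of $f$-values back to disjointness of $g$-values, and this is what the proof above is designed to exploit.
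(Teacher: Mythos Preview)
Your argument is correct. The paper itself does not spell out a proof here but merely cites \cite[Proposition~3.6]{BGH15a}; your route via Proposition~\ref{prop:GM09}(2) is the natural one, and your tracing of an element $y\in h(w)$ through the composition to land in $f(x)\cap f(x')$ is exactly the mechanism that makes the cardinality bound work, so this is essentially the expected proof.
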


Every cylinder $f$ needs to satisfy $\#f\geq|\IN^\IN|$, since $\#\id=|\IN^\IN|$,
where $|X|$ denotes the {\em cardinality} of the set $X$.
For instance, it is easy to see that $\#\lim_\IN=|\IN|$ and $\#\lim_\Delta=|\IN^\IN|$.
We obtain the following.

\begin{example}
\label{ex:limit}
$\lim_\IN\equivW\lim_\Delta$ and $\lim_\IN\lSW\lim_\Delta$.
Moreover, $\lim_\Delta$ is a cylinder, whereas $\lim_\IN$ is not.
\end{example}

Next we mention that $\LPO$ is in a certain sense
the weakest discontinuous problem among all {\em single-valued} problems.

\begin{theorem}[Discontinuous single-valued problems]
\label{thm:LPO}
$\LPO\leq_{\rm W}^p f$ for some oracle $p\iff f$ is discontinuous, for $f:X\to Y$ on computable metric spaces $X,Y$.
\end{theorem}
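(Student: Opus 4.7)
The plan is to prove the two directions separately; the forward direction is an easy continuity argument, while the reverse direction is a classical ``trick-the-oracle'' construction that uses an appropriate oracle $p$ to hardcode witnesses of the discontinuity.

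For the forward direction ($\Rightarrow$), suppose $\LPO\leq_{\mathrm W}^p f$ via $p$-computable reduction functions $H,K$. Since $X,Y$ are computable metric spaces, their Cauchy representations are admissible, so a topologically continuous total single-valued $f:X\to Y$ would admit a continuous realizer $G_0$. Then $H\langle\id,G_0K\rangle$ is a composition of continuous functions and hence continuous, yet by assumption it realizes $\LPO$. This contradicts the fact that $\LPO$ has no continuous realizer (its preimage of $\{1\}$ is open but not clopen in $\IN^\IN$).

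For the reverse direction ($\Leftarrow$), assume $f$ is discontinuous at some $x_0\in X$. Extract a witness: an $\varepsilon>0$ and a sequence $(x_n)_{n\in\IN}$ in $X$ such that, after passing to a subsequence, $d_X(x_n,x_0)<2^{-n-2}$ and $d_Y(f(x_n),f(x_0))\geq\varepsilon$ for all $n$. Let $p$ be an oracle encoding fast Cauchy names of $x_0,x_1,x_2,\ldots$, a fast Cauchy name of $f(x_0)$, and a positive rational lower bound for $\varepsilon$. Define a $p$-computable $K:\IN^\IN\to\IN^\IN$ that, given an input $q$ (a candidate $\LPO$-input), outputs a name of the point
\[
\tilde x_q \;=\;
\begin{cases}
x_n & \text{if $n$ is the least index with $q(n)=0$},\\
x_0 & \text{if no such $n$ exists},
\end{cases}
\]
by emitting, at step $i$, a $2^{-i-2}$-approximation of either $x_0$ or of $x_n$ for the least $n\leq i$ with $q(n)=0$. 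The triangle inequality together with $d_X(x_n,x_0)<2^{-n-2}$ gives the fast-Cauchy property even at the transition step where the target changes from $x_0$ to $x_n$.

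Then define a $p$-computable $H$ that, on input $\langle q,r\rangle$ where $r$ is a name of $f(\tilde x_q)$, dovetails two processes: (i) search for the first $n$ with $q(n)=0$ and output $1$ if found; (ii) read successive $\varepsilon/3$-approximations of $f(\tilde x_q)$ from $r$ and output $0$ once such an approximation lies within $\varepsilon/3$ of the hardcoded name of $f(x_0)$. If $\LPO(q)=0$, then $f(\tilde x_q)=f(x_0)$ and process (ii) eventually triggers while (i) never does; if $\LPO(q)=1$, then $d_Y(f(\tilde x_q),f(x_0))\geq\varepsilon>2\varepsilon/3$, so process (ii) never triggers, while (i) eventually finds a zero. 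Thus $H\langle\id,GK\rangle\vdash\LPO$ for every realizer $G$ of $f$, which witnesses $\LPO\leq_{\mathrm W}^p f$.

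The main obstacle is the rate bookkeeping in $K$: we must ensure that the output is a valid fast Cauchy name of $\tilde x_q$ \emph{both} when we commit to $x_0$ forever and when we switch from $x_0$ to $x_n$ mid-sequence. Choosing the approximation rate $2^{-i-2}$ together with the separation $d_X(x_n,x_0)<2^{-n-2}$ is what makes the triangle-inequality estimate go through uniformly in $q$; the rest of the construction is routine dovetailing and uses only the single query to $f$ permitted by Weihrauch reducibility.
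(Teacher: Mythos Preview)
Your argument is correct and is precisely the classical proof that the paper defers to via its citations of Weihrauch~\cite{Wei92c} and Brattka--Gherardi~\cite{BG11}: the forward direction is the obvious continuity-preservation argument, and the reverse direction is the standard construction that feeds a realizer of $f$ a name converging to the point of discontinuity $x_0$ and diverts to a nearby witness $x_n$ as soon as a zero appears in the $\LPO$-input. Your bookkeeping with the rate $2^{-i-2}$ and the condition $d_X(x_n,x_0)<2^{-n-2}$ correctly ensures the output of $K$ is a valid fast Cauchy name in both cases, and the $\varepsilon/3$ dovetailing in $H$ is the usual separation trick; just be careful to actually work with the rational lower bound $\varepsilon'<\varepsilon$ stored in the oracle (or simply take $\varepsilon$ rational from the outset), so that the comparison $d(a_j,b_j)<\varepsilon'/3$ is a genuinely semidecidable test and the reverse triangle inequality gives the strict bound $d(a_j,b_j)>\varepsilon-2\varepsilon'/3>\varepsilon'/3$ in the $\LPO(q)=1$ case.
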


We close this section with the following result that shows that for (certain well-behaved) linear closed operators
there is a dichotomy: either they are bounded and computable or $\lim$ is reducible to them.

\begin{theorem}[Linear operators]
\label{thm:linear}
Let $T:\In X\to Y$ be a linear closed operator on computable Banach spaces $X,Y$.
Let $(e_n)_{n\in\IN}$ be a computable sequence in $\dom(T)$ whose linear span
is dense in $\dom(T)$ and such that $(T(e_n))_{n\in\IN}$ is computable.
\begin{enumerate}
\item If $T$ is bounded, then $T$ is computable.
\item If $T$ is unbounded, then $\lim\leqW T$.
\end{enumerate}
\end{theorem}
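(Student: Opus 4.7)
The two parts require quite different techniques; I treat them separately.

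\textbf{Part (1).} The plan is to view $T$ through its graph and invoke an effective closed graph / open mapping theorem. First I would show that $\dom(T) = \overline{\mathrm{span}(e_n)}$, hence a closed subspace of $X$, making it a computable Banach space via the dense computable sequence $(e_n)$. The inclusion $\dom(T)\supseteq\overline{\mathrm{span}(e_n)}$ follows because $T$ is bounded on $\mathrm{span}(e_n)$, so extends by continuity to the closure, and by closedness of $T$ this extension coincides with $T$; the reverse inclusion is given by hypothesis. Next, the graph $G=\{(x,T(x)):x\in\dom(T)\}$ is a closed linear subspace of $\dom(T)\times Y$, and the sequence $(e_n,T(e_n))$ is computable in $G$ with dense linear span; thus $G$ is a computable Banach space in its own right. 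The projection $\pi_X\colon G\to\dom(T)$, $(x,T(x))\mapsto x$, is a computable, bounded, linear bijection. The effective open mapping theorem for computable Banach spaces then yields that $\pi_X^{-1}$ is computable, so $T = \pi_Y\circ\pi_X^{-1}$ is computable as a composition of computable maps.

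\textbf{Part (2).} I would first extract effective witnesses to unboundedness. Since $T$ is closed and $\mathrm{span}(e_n)$ is dense in $\dom(T)$, if the restriction of $T$ to $\mathrm{span}(e_n)$ were bounded, it would extend by continuity to $\overline{\mathrm{span}(e_n)}$ and coincide with $T$ there, contradicting unboundedness. Hence the ratios $\|T(y)\|/\|y\|$ for $y$ ranging over rational combinations of $(e_n)$ are unbounded; by a computable search I can produce a computable sequence $(g_k)$ of such rational combinations with $\|g_k\|\le 2^{-k}$ and $\|T(g_k)\|\ge 2^k$. Using Theorem~\ref{thm:lim}, it suffices to reduce $\EC$ (or $\widehat{\LPO}$) to $T$. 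Given an enumeration of $A\subseteq\IN$, the plan is to build in a single computable pass an element $x_A\in X$ as a controlled series in the $g_k$'s whose activations are triggered by the enumeration, and to decode $\chi_A$ from $T(x_A)$ together with the enumeration itself (the second computable component of a Weihrauch reduction).

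The key subtlety, and the main obstacle, is ensuring $x_A\in\dom(T)$. Since $\|T(g_k)\|$ grows, a naive sum $\sum_{n\in A} g_{k_n}$ lies in $X$ but generally not in $\dom(T)$. I would instead arrange coefficients $c_n$ so that both $\sum c_n g_{k_n}$ converges in $X$ and $\sum c_n T(g_{k_n})$ converges in $Y$, and then invoke closedness of $T$ to place the limit in $\dom(T)$ with the prescribed image; different enumerations of the same $A$ yield the same $x_A$ modulo effective equivalence. The choice of $c_n$ must be delicate enough that the $T$-image still reveals membership of each $n$ in $A$, e.g., by pairing with an arithmetic cancellation scheme or ensuring the $T(g_{k_n})$ are sufficiently distinguishable. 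The other obstacle, in Part (1), is the effective open mapping theorem itself, which ultimately rests on an effective Baire category argument within the graph space; without it the rate of approximation $T(y_k)\to T(x)$ cannot be bounded since no computable upper bound on $\|T\|$ is directly available from the raw data $(e_n,T(e_n))$.
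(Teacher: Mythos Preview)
The paper's own proof is just a citation to \cite[Theorem~4.3]{Bra99}, so there is no in-paper argument to compare against line by line. I will therefore assess the two parts on their merits.

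\textbf{Part (1).} Your detour through the graph and the open mapping theorem can be made to work, but your final paragraph reveals a conceptual slip that makes the whole detour unnecessary. You write that ``no computable upper bound on $\|T\|$ is directly available from the raw data $(e_n,T(e_n))$'' and therefore an effective open mapping/Baire category argument is needed. But the theorem only asserts that $T$ \emph{is computable}, i.e., that \emph{some} computable realizer exists --- not that a realizer can be computed uniformly from $(e_n,T(e_n))$. Since $T$ is bounded, there exists an integer $M\ge\|T\|$; hard-code any such $M$ into the realizer. Given a $\delta_X$-name for $x\in\dom(T)$, search for rational combinations $y_k$ of the $e_n$ with $\|y_k-x\|<2^{-k}/M$ (this search terminates because, as you correctly argue, $\dom(T)=\overline{\mathrm{span}(e_n)}$), compute $T(y_k)$ as a rational combination of the computable points $T(e_n)$, and output the resulting fast Cauchy sequence for $T(x)$. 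No open mapping theorem is required. Note that if you insist on your route, ``effective open mapping theorem'' is a dangerous phrase here: as a \emph{problem} it is $\equivW\C_\IN$ (Theorem~\ref{thm:CN}), so invoking it uniformly would only give $T\leqW\C_\IN$; what you actually need is the non-uniform statement that a \emph{specific} computable linear bijection between computable Banach spaces has a computable inverse --- and that statement is proved by exactly the hard-coded-bound argument above, so your approach is essentially circular.

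\textbf{Part (2).} Your overall strategy --- extract witnesses $g_k\in\mathrm{span}_\IQ(e_n)$ with $\|g_k\|$ small and $\|T(g_k)\|$ large by effective search, then encode the input of $\EC$ (or $\widehat{\LPO}$) into a series $x=\sum c_k g_k$ and decode from $T(x)$ together with the original input --- matches the approach in~\cite{Bra99}. You also correctly isolate the central difficulty: scaling the $c_k$ small enough to force both $\sum c_k g_k$ and $\sum c_k T(g_k)$ to converge (so that closedness puts $x\in\dom(T)$) threatens to wash out the information needed for decoding. However, your proposed resolutions (``arithmetic cancellation scheme'', ``sufficiently distinguishable $T(g_{k_n})$'') are placeholders rather than an argument; in a general Banach space $Y$ you have no orthogonality and no a priori control over how the $T(g_k)$ interact. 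The missing idea is an explicit encoding that exploits the second component of the Weihrauch reduction: since the outer function $H$ receives the original enumeration $p$ alongside $T(x_p)$, you may let $p$ tell you \emph{which} indices $k$ were activated and with what weights, so that the decoding task reduces to distinguishing, for each $i$, between two \emph{specific} computable points of $Y$ (the value of $T(x)$ with and without the $i$-th block) rather than reading off infinitely many independent bits from $T(x)$ alone. Until you spell out such an encoding and verify that the two alternatives are computably separable in $Y$, Part~(2) remains a plan rather than a proof.
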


\subsubsection*{Bibliographic Remarks}

\begin{petit}
The relation between Weihrauch reducibility and the Borel hierarchy was established by Brattka, including 
Proposition~\ref{prop:limits} and Theorem~\ref{thm:Borel}~\cite{Bra05,Bra93} (see also~\cite{BR17}).
Limit computable functions and functions that are computable by finite mind changes were originally
introduced to computable analysis by Ziegler~\cite{Zie07,Zie07a}.
The relation of these classes to the Weihrauch lattice was studied by Brattka and Gherardi~\cite{BG11,BG11a}, including Corollary~\ref{cor:invariance}
and Proposition~\ref{prop:mind} and by Brattka, de Brecht and Pauly~\cite{BBP12},
including Proposition~\ref{prop:limN-closure}. Theorem~\ref{thm:lim} collects results from the last mentioned groups of authors.
The concept of a jump of a problem was introduced by Brattka, Gherardi and Marcone~\cite{BGM12}, who also 
proved most related results that are included here, except Theorem~\ref{thm:jump-inversion} that is due to
Brattka, H\"olzl and Kuyper~\cite{BHK17},
and the proof is essentially based on a jump control theorem by Brattka, Hendtlass and Kreuzer~\cite{BHK17a}.
The number of mind changes has a topological counter part, namely the level of a problem
that has been studied by Hertling~\cite{Her96d,Her96b}; it essentially measures the level in the Hausdorff difference hierarchy.
Proposition~\ref{prop:cardinality} is due to Brattka, Gherardi and H\"olzl~\cite[Proposition~3.6]{BGH15a}.
Theorem~\ref{thm:LPO} is due to Weihrauch~\cite[Theorem~3.7]{Wei92c}.
Theorem~\ref{thm:linear} is due to Brattka~\cite[Theorem~4.3]{Bra99} and can be seen as a uniform version of the first main theorem of Pour-El and Richards~\cite{PR89}.
\end{petit}

\section{Choice}
\label{sec:choice}

The choice problem $\C_X$ of a given space $X$ is the problem of finding a point in a given closed $A\In X$.
By choosing appropriate spaces $X$ one obtains several important Weihrauch degrees. 

\begin{definition}[Choice]\label{closedchoice}
The problem $\C_X:\In\AA_-(X)\mto X,A\mapsto A$ 
with $\dom(\C_X):=\{A:A\not=\emptyset\}$ is called the {\em choice problem} of the represented space $X$.
\end{definition}

Here the description $A\mapsto A$ of the map is to be read such that on the input side $A\in\AA_-(X)$ is a point of the input space, whereas on the output side it is a subset $A\In X$ of possible results.
Typically, $X$ will be a computable metric space, and the reader can think of closed
sets being represented by enumerations of balls whose union exhausts the complement, as described in the first item of Proposition~\ref{prop:closed}.

Likewise, one can use the second characterization of Proposition~\ref{prop:closed} to define a representation of closed sets via preimages of continuous functions $f:X\to\IR$.
The uniformity statement in Proposition~\ref{prop:closed} yields the conclusion that the choice problem is nothing but the zero problem
that we introduced in Example~\ref{ex:zero}:

\begin{corollary}
\label{cor:choice-zero}
$\C_X\equivSW\Z_X$ for every computable metric space $X$.
\end{corollary}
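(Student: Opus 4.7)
The plan is to read off the equivalence directly from the uniformity part of Proposition~\ref{prop:closed}, which already supplies the two computable maps needed in either direction and even tells us that the target points can be recovered verbatim (giving \emph{strong} reductions rather than only ordinary Weihrauch reductions).

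For the direction $\Z_X \leqSW \C_X$, I would take as $K$ the computable map $\CC(X)\to\AA_-(X)$, $f\mapsto f^{-1}\{0\}$ provided by Proposition~\ref{prop:closed}; note that this map sends $\dom(\Z_X)$ into $\dom(\C_X)$ since a continuous $f$ with a zero has non-empty zero set. Any realizer $G\vdash\C_X$ then produces, from a name of $f^{-1}\{0\}$, a name of some $x\in f^{-1}\{0\}$, which is already an element of $\Z_X(f)$. Hence we may take the output transformation $H$ to be the identity, witnessing the reduction in its strong form via Proposition~\ref{prop:GM09}(2).

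For the direction $\C_X \leqSW \Z_X$, I would use the computable multi-valued right inverse of $f\mapsto f^{-1}\{0\}$ that Proposition~\ref{prop:closed} guarantees: from a name of $A\in\AA_-(X)$ with $A\neq\emptyset$ one can compute (a name of) some $f\in\CC(X)$ with $f^{-1}\{0\}=A$, and in particular $f\in\dom(\Z_X)$. Plugging this into $\Z_X$ yields a point $x\in f^{-1}\{0\}=A$, which is exactly an element of $\C_X(A)$, so again the output transformation is the identity and we obtain a strong reduction.

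The only point that needs a moment of care is that in the second direction the right inverse is multi-valued: we must check that the reduction survives this. This is fine because the definition of strong Weihrauch reducibility via Proposition~\ref{prop:GM09}(2) allows $k$ to be a multi-valued computable function, and for \emph{any} choice of $f\in k(A)$ the zero set of $f$ equals $A$, so every allowed value of $\Z_X\circ k(A)$ lies in $\C_X(A)$. There is no real obstacle beyond unpacking the definitions; the work has already been done in Proposition~\ref{prop:closed}.
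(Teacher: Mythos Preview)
Your proposal is correct and follows essentially the same approach as the paper, which simply states (just before the corollary) that the uniformity statement in Proposition~\ref{prop:closed} yields the equivalence. You have merely made explicit the two directions that the paper leaves implicit, including the observation that the multi-valued right inverse is compatible with the characterization of $\leqSW$ in Proposition~\ref{prop:GM09}(2).
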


Certain relations between spaces transfer to the corresponding
choice problems. We mention two such properties in the following result.

\begin{proposition}[Subsets and surjections]
\label{prop:injection-surjection}
Let $X,Y$ be represented spaces.
\begin{enumerate}
\item If $A\In X$ is co-c.e.\ closed, then $\C_A\leqSW\C_X$.
\item If there is a computable surjection $s:X\to Y$, then $\C_Y\leqSW\C_X$. 
\end{enumerate}
\end{proposition}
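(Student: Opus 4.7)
My plan is to apply the categorical characterization of strong Weihrauch reducibility given by Proposition~\ref{prop:GM09}(2): in each case it suffices to exhibit computable $k$ (input side) and $h$ (output side) such that $h\circ\C_X\circ k\prefix\C_A$ (respectively $\C_Y$). Throughout I will use the general definition of $\AA_-$ via Sierpi\'nski-valued characteristic functions from Proposition~\ref{prop:closed}(3), and treat $A\In X$ via the subspace representation (a name of $x\in A$ in $A$ is just a name of $x$ in $X$).

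For part~(1), I would let $k:\AA_-(A)\to\AA_-(X)$ be the inclusion that sends $B$ (closed in $A$) to $B$ (closed in $X$), and take $h$ to be the identity on names. The substantive step is computability of $k$. Co-c.e.\ closedness of $A$ provides $\chi_{X\setminus A}:X\to\IS$ computable, and a $\psi_-$-name of $B\in\AA_-(A)$ provides $\chi_{A\setminus B}:A\to\IS$ computable, which under the subspace representation runs on any $X$-name and correctly reports $\chi_{A\setminus B}(x)$ whenever $x\in A$. Hence $\chi_{X\setminus B}=\chi_{X\setminus A}\lor\chi_{A\setminus B}$ is computable on $X$, using that disjunction on $\IS$ is computable. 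Since $B\In A$, any $x\in\C_X(k(B))$ lies in $A$, and under the subspace representation its $X$-name is simultaneously an $A$-name, so $h=\id$ suffices and $h\circ\C_X\circ k(B)\In B=\C_A(B)$.

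For part~(2), I would set $k(B):=s^{-1}(B)$ and $h:=s$. Computability of $k$ is immediate from $\chi_{X\setminus s^{-1}(B)}=\chi_{Y\setminus B}\circ s$, a composition of computable Sierpi\'nski-valued maps; equivalently, via Proposition~\ref{prop:closed}(2), if $B=f^{-1}\{0\}$ with $f:Y\to\IR$ computable, then $s^{-1}(B)=(f\circ s)^{-1}\{0\}$ with $f\circ s$ computable. Surjectivity of $s$ guarantees $s^{-1}(B)\neq\emptyset$ whenever $B\neq\emptyset$, so $\C_X(s^{-1}(B))$ is defined; any $x$ in it satisfies $s(x)\in B$, so $h\circ\C_X\circ k\prefix\C_Y$ as required.

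The only real obstacle is representation-theoretic and confined to part~(1): one must verify that the inclusion $\AA_-(A)\to\AA_-(X)$ is actually computable, and this is exactly the step where co-c.e.\ closedness of $A$ is used essentially. The Sierpi\'nski-characteristic-function argument above is the cleanest route; a direct argument from the metric definition (concatenating an enumeration of basic balls witnessing $X\setminus A$ with one witnessing $A\setminus B$ within $X$) also works but forces one to be careful about how basic balls in $A$ relate to those in $X$, which is why I prefer the general characterization from Proposition~\ref{prop:closed}.
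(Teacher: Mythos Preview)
Your proof is correct and follows the natural approach; the paper itself gives no argument beyond citing \cite[Proposition~3.7, Corollary~4.3]{BBP12}, and what you have written is essentially the standard proof one finds there.

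One small point of care in part~(1): when you write that the realizer of $\chi_{A\setminus B}$ ``runs on any $X$-name,'' this is not quite guaranteed---the realizer is a partial continuous function that need only be defined on $\delta_X^{-1}(A)$, and on names of points outside $A$ it may diverge. What makes the argument go through is that the Sierpi\'nski disjunction is computed by a \emph{parallel or}: dovetail the computation of $\chi_{X\setminus A}(x)$ with the (possibly non-terminating) attempt to compute $\chi_{A\setminus B}(x)$, and output $1$ as soon as either signals $1$. For $x\notin A$ the first computation eventually signals $1$, so divergence of the second is harmless; for $x\in A$ both computations are well-defined and the disjunction is correct. You clearly have this in mind, but the phrasing slightly understates the issue. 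Part~(2) is clean as written.
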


The choice problem has been studied in many variants that are typically restrictions to 
closed subsets with certain extra properties. We list a number of examples.

\begin{definition}[Variants of choice]
\begin{enumerate}
\item $\UC_X$ is $\C_X$ restricted to singletons \hfill ({\em unique choice})
\item $\ConC_X$ is $\C_X$ restricted to connected sets \hfill ({\em connected choice})
\item $\PWCC_X$ is $\C_X$ restricted to pathwise connected sets \hfill ({\em pathw.\ connected choice})
\item $\XC_X$ is $\C_X$ restricted to convex sets \hfill ({\em convex choice})
\item $\PC_X$ is $\C_X$ restricted to sets with positive measure \hfill ({\em positive choice})
\item $\AoUC_X$ is $\C_X$ restricted to sets of the form $\{x\}$ or $X$ \hfill ({\em all-or-unique choice})
\item $\ACC_X$ is $\C_X$ restricted to sets of the form $X\setminus\{x\}$ or $X$ \hfill ({\em all-or-co-unique choice})
\item $\CFC_X$ is $\C_X$ restricted to co-finite sets \hfill ({\em co-finite choice})
\end{enumerate}
\end{definition}

In some of these examples some additional structure is required on $X$.
For instance, for convex choice one would assume that $X$ is a vector space, and for positive choice
one would expect that $X$ is endowed with a fixed Borel measure.
In the case of $\IN^\IN$ and $\IR$ we assume that the product measure of the geometric probability measure on $\IN$ and the Lebesgue measure are used, respectively. 
In the case of $\UC_X$ and $\AoUC_X$ we assume that $X$ is a $T_1$--space, and in the case of $\ACC_X$ and $\CFC_X$ we assume that $X$ is endowed with a discrete topology.
The choice problem $\C_X$ is a fractal for many spaces $X$ and often a total fractal for compact $X$.

\begin{proposition}[Fractality]
\label{prop:choice-fractal}
\begin{enumerate}
\item $\C_\IN,\C_\IR,\PC_\IR$ and $\C_{\IN^\IN}$ are fractals, 
\item $\C_{2^\IN},\PC_{2^\IN},\ConC_{[0,1]}$ and $\XC_{[0,1]^{n+1}}$ are total fractals for all $n\in\IN$.
\end{enumerate}
In particular, all the mentioned problems are countably irreducible and hence join-irreducible. 
\end{proposition}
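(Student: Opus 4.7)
The plan is to prove each statement by constructing a problem $F:\In\IN^\IN\mto\IN^\IN$ that is (strongly) Weihrauch equivalent to the given choice problem and satisfies $F|_A\equivW F$ for every clopen $A\In\IN^\IN$ meeting $\dom(F)$. For item~(1) we take $F=\C_X^\r$ (respectively $\PC_\IR^\r$), which by Lemma~\ref{lem:realizer} is strongly Weihrauch equivalent to the original problem. For item~(2) we additionally need $F$ to be total; since the underlying spaces $2^\IN$, $[0,1]$ and $[0,1]^{n+1}$ are computably compact, non-emptiness of co-c.e.\ closed subsets is semi-decidable, and this enables us to modify the standard representation into a total one without altering the induced Weihrauch degree.

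The reduction $F|_A\leqW F$ is immediate, so the substance lies in $F\leqW F|_A$. It suffices to treat the case where $A=[w]\cdot\IN^\IN$ is a basic clopen set with $w\in\IN^*$, since every non-empty clopen subset of $\IN^\IN$ contains such a cylinder. The key technical device is that the standard enumeration-based representation of co-c.e.\ closed sets admits prefix padding: using a designated dummy code (for instance the index of an empty basic ball), one can compute from any name $p$ of a closed set $B$ a name $p'\in[w]$ describing the same closed set $B$. Applying $F|_A$ to $p'$ then yields a point of $B$ directly, so the outer reduction is the identity. This same padding procedure preserves the structural constraints present in the variants $\PC$, $\ConC$ and $\XC$, since those constraints depend only on the represented set, not on its encoding.

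The main obstacle lies in combining the totality requirement for item~(2) with the structural constraints. For $\C_{2^\IN}$ and $\PC_{2^\IN}$ one trims the enumeration of covering balls on the fly, aborting entries that would make the closed set empty or drive its measure below a positive threshold read off from the input. For $\ConC_{[0,1]}$ one uses that closed connected subsets of $[0,1]$ are precisely closed intervals, which can be represented totally by their pair of endpoints (projected into the region $a\leq b$). For $\XC_{[0,1]^{n+1}}$ one exploits that every co-c.e.\ closed subset of the compact convex body $[0,1]^{n+1}$ can be replaced computably by a non-empty closed convex subset of it, yielding a total representation. Once the fractal (resp.\ total fractal) property is established, the last sentence follows immediately from Proposition~\ref{prop:fractal}: every (strong) fractal is (strongly) countably irreducible, and countable irreducibility trivially implies join-irreducibility.
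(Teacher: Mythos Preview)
Your overall outline is sound --- take a realizer version $F$ of the choice problem, show $F\leqW F|_{[w]}$ for every basic clopen $[w]$ meeting $\dom(F)$, and invoke Proposition~\ref{prop:fractal} --- but the central technical step is wrong. You claim that from any $\psi_-$--name $p$ of a closed set $B$ one can compute a name $p'\in[w]$ of the \emph{same} set $B$ by inserting dummy ball indices. This fails: any name beginning with $w$ necessarily represents a subset of $C_w:=X\setminus\bigcup_{i<|w|}B_{w_i}$, so if $B\not\subseteq C_w$ (for instance if $w_0$ codes a non-empty ball that meets $B$) no such name exists at all. Dummy codes let you \emph{append} harmless symbols to a name, but they cannot force a name to begin with a prescribed string $w$ whose entries may themselves remove points of $X$.

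The argument actually used in the references the paper cites exploits self-similarity of the space $X$ rather than padding of names. The prefix $w$ determines the co-c.e.\ closed set $C_w\subseteq X$, and one observes that $C_w$ contains a computably embedded copy of $X$: a cofinite subset for $\IN$, a basic cylinder for $2^\IN$ and $\IN^\IN$, a non-degenerate closed interval for $\IR$ or $[0,1]$, a small cube for $[0,1]^{n+1}$. Given $B$, one pushes it forward along this embedding $\iota$ to obtain $B':=\iota(B)\subseteq C_w$, which \emph{does} have a name in $[w]$; the computable inverse $\iota^{-1}$ then recovers a point of $B$ from any point of $B'$. Choosing $\iota$ to be (a scalar multiple of) an isometry or an affine map makes it preserve positive measure, connectedness and convexity respectively, which is what carries the scheme over to $\PC$, $\ConC$ and $\XC$. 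Your totality discussion is broadly on the right track, but note that $\PC_{2^\IN}$ as defined carries no measure bound in its input; one first passes to the Weihrauch-equivalent version in which a rational lower bound is supplied before the trimming you describe can be made to work.
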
 

While it is easy to see that $\C_X$ is a cylinder for many spaces $X$,
it follows from the fact that there are only countably many pairwise different sets of
positive measure that $\#\PC_{2^\IN}=\#\PC_{\IR}=\#\PC_{\IN^\IN}=\#\C_\IN=|\IN|$,
and hence all the mentioned problems are not cylinders.
It requires more sophisticated arguments to show that $\ConC_{[0,1]}$ is not a cylinder
despite the fact that $\#\ConC_{[0,1]}=|\IN^\IN|$.

\begin{proposition}[Cylinders]
\label{prop:choice-cylinder}
\begin{enumerate}
\item $\C_{2^\IN},\C_\IR,\C_{\IN^\IN}$ are cylinders,
\item $\C_\IN,\PC_{2^\IN},\PC_\IR,\PC_{\IN^\IN}$ and $\ConC_{[0,1]}$ are not cylinders.
\end{enumerate}
\end{proposition}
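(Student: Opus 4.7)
The plan splits the proposition into its two halves. For part 1, the strategy in each case is to construct, uniformly from $(p, A) \in \IN^\IN \times \AA_-(X)$, a nonempty closed set $B_{p, A} \in \AA_-(X)$ together with a computable map reading $(p, a)$ off any single point $b \in B_{p, A}$ for some $a \in A$; this witnesses $\id \times \C_X \leqSW \C_X$ and hence that $\C_X$ is a cylinder. For $X = \IN^\IN$ I would take $B_{p, A} := \{p\} \times A$ under the pairing homeomorphism $\IN^\IN \cong (\IN^\IN)^2$: the singleton $\{p\}$ is uniformly co-c.e.\ closed, products of co-c.e.\ closed sets are co-c.e.\ closed, so $B_{p, A}$ is uniformly computable from $(p, A)$, and the decoding simply inverts the pairing. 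For $X = 2^\IN$ the same strategy works after computably embedding $\IN^\IN$ into $2^\IN$ (e.g.\ via unary encoding with separators) and using $2^\IN \cong (2^\IN)^2$. For $X = \IR$ there is no self-pairing homeomorphism, so a more delicate geometric encoding is required: one attaches to $p$ a discrete closed signature $\tau_p \In \IR$ uniformly determining $p$ (e.g.\ $\tau_p = \{n + p(n)/(p(n)+1) : n \in \IN\}$, the unique point of $\tau_p$ in each $[n, n+1)$ encoding $p(n)$) and stitches $\tau_p$ together with a rescaled, translated copy of $A$ placed in a region of $\IR$ disjoint from the support of $\tau_p$, so that from any single point of the resulting closed set one can recover both $p$ and some $a \in A$. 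This explicit construction for $\C_\IR$ is the principal technical obstacle of part 1.

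For part 2, the four cases $\C_\IN, \PC_{2^\IN}, \PC_\IR, \PC_{\IN^\IN}$ are dispatched by Proposition~\ref{prop:cardinality}. As already recorded in the excerpt, each of these $f$ satisfies $\# f = |\IN|$: pairwise disjoint nonempty subsets of $\IN$ form at most a countable family, and pairwise disjoint positive-measure subsets of a $\sigma$-finite measure space form at most a countable family by $\sigma$-additivity. On the other hand, $\#(\id \times f) \geq |\IN^\IN|$, since fixing a valid $x \in \dom(f)$ and varying $p \in \IN^\IN$ gives continuum-many pairwise disjoint outputs $\{p\} \times f(x)$. Hence $\id \times f \nleqSW f$ and $f$ is not a cylinder.

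For $\ConC_{[0, 1]}$ cardinality fails since $\#\ConC_{[0, 1]} = |\IN^\IN|$ via singleton inputs, so a topological argument is needed. Suppose for contradiction $\id \times \ConC_{[0, 1]} \leqSW \ConC_{[0, 1]}$ via computable $h, k$. For each $(p, a) \in \IN^\IN \times [0, 1]$ set $B_{p, a} := k(p, \{a\})$; this is a nonempty connected closed subset of $[0, 1]$, i.e.\ a closed subinterval. Since any realizer of $\ConC_{[0, 1]}$ may output any point of $B_{p, a}$, the reduction forces $h(x) = (p, a)$ for every $x \in B_{p, a}$, so distinct $(p, a) \neq (p', a')$ yield disjoint $B_{p, a}$ and $B_{p', a'}$. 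Because $[0, 1]$ admits at most countably many pairwise disjoint non-degenerate intervals, $B_{p, a} = \{s_{p, a}\}$ is a singleton for all but countably many pairs. Fix $p$; then $a \mapsto s_{p, a}$ is continuous and injective on a co-countable subset $D_p \In [0, 1]$, hence monotonic with non-degenerate image on each maximal open subinterval of $D_p$. Consequently $J_p := \{s_{p, a} : a \in D_p\}$ contains a non-degenerate subinterval for every $p \in \IN^\IN$, and the $J_p$'s are pairwise disjoint across $p$. Picking one non-degenerate subinterval of each $J_p$ produces continuum-many pairwise disjoint non-degenerate intervals in $[0, 1]$ --- contradiction. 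The main obstacles are therefore the explicit encoding for $\C_\IR$ in part 1 and the careful disjoint-interval accounting for $\ConC_{[0, 1]}$ in part 2.
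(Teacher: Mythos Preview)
Your treatment of $\C_{2^\IN}$, $\C_{\IN^\IN}$, and the cardinality-based non-cylinder cases ($\C_\IN$, $\PC_{2^\IN}$, $\PC_\IR$, $\PC_{\IN^\IN}$) is correct and matches the paper. Two places, however, do not go through as written.

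\textbf{The construction for $\C_\IR$.} Your proposed set is a disjoint union of the signature $\tau_p$ and a translated copy of $A$. A single point $b$ of such a union lies either in $\tau_p$ (giving you one value $p(n)$ but not all of $p$, and no element of $A$) or in the copy of $A$ (giving you some $a\in A$ but nothing about $p$). So the decoding step ``from any single point one can recover both $p$ and some $a\in A$'' fails outright. There is no obvious way to repair this by a direct geometric encoding in $\IR$, because $\IN^\IN$ does not embed as a closed subspace of any $\sigma$--compact space. The paper avoids the difficulty entirely: it invokes Theorem~\ref{thm:CR}, which gives $\C_\IR\equivSW\C_{2^\IN}\star\C_\IN$, and then uses Proposition~\ref{prop:compositional-product-cylinder}, which says that $f\star g$ is always a cylinder. (Equivalently one may argue $\id\times\C_\IR\leqSW\C_\IR\times\C_\IR\leqSW\C_{\IR^2}\equivSW\C_\IR$, using that $\id\leqSW\C_\IR$ via irrational singletons.)

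\textbf{The argument for $\ConC_{[0,1]}$.} Your key step asserts that $a\mapsto s_{p,a}$ is ``monotonic with non-degenerate image on each maximal open subinterval of $D_p$'', and from this you extract a non-degenerate interval inside each $J_p$. But $D_p$ is merely co-countable in $[0,1]$, and such a set may have empty interior (take $D_p=[0,1]\setminus\IQ$), in which case there are no open subintervals of $D_p$ at all and the argument produces nothing. Even when $D_p$ does contain intervals, a continuous injection on a disconnected co-countable set need not be globally monotonic (consider $D=[0,1]\setminus\{1/2\}$ with $\phi(x)=x$ on $[0,1/2)$ and $\phi(x)=3/2-x$ on $(1/2,1]$). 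So the disjoint-interval counting does not reach a contradiction. The paper does not give a self-contained proof here either; it explicitly flags that ``more sophisticated arguments'' are required and cites \cite[Theorem~9.5]{BLRMP18}, where the proof proceeds by a rather different combinatorial analysis of how connected sets can code information.
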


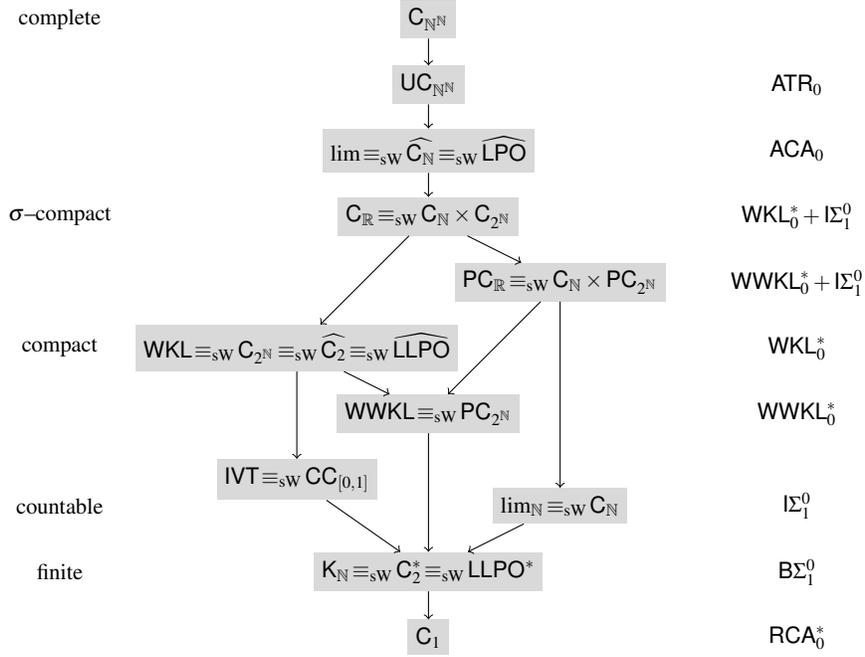
\begin{figure}[tb]
\begin{center}
\begin{tikzpicture}[scale=0.7,every node/.style={fill=black!15}]

\node (v6) at (6.5,4.5) {$\lim_\IN\equivSW\C_\IN$};
\node (v5) at (4,3.25) {$\K_\IN\equivSW\C_2^*\equivSW\LLPO^*$};
\node (v4) at (4,6.25) {$\WWKL\equivSW\PC_{2^\IN}$};
\node (v3) at (1.5,7.5) {$\WKL\equivSW\C_{2^\IN}\equivSW\widehat{\C_2}\equivSW\widehat{\LLPO}$};
\node (v30) at (1.5,5) {$\IVT\equivSW\ConC_{[0,1]}$};
\node (v2) at (4,10) {$\C_{\IR}\equivSW\C_\IN\times\C_{2^\IN}$};
\node (v20) at (6.5,8.75) {$\PC_{\IR}\equivSW\C_\IN\times\PC_{2^\IN}$};
\node (v0) at (4,11.25) {$\lim\equivSW\widehat{\C_\IN}\equivSW\widehat{\LPO}$};
\node (v00) at (4,13.75) {$\C_{\IN^\IN}$};
\node (v01) at (4,12.5) {$\UC_{\IN^\IN}$};
\node(v8) at (4,2) {$\C_1$};

\draw [->] (v2) edge (v3);
\draw [->] (v20) edge (v6);
\draw [->] (v2) edge (v20);
\draw [->] (v20) edge (v4);
\draw [->] (v6) edge (v5);
\draw [->]  (v5) edge (v8);
\draw [->] (v00) edge (v01);
\draw [->] (v01) edge (v0);
\draw [->] (v0) edge (v2);
\draw [->] (v3) edge (v4);
\draw [->] (v3) edge (v30);
\draw [->] (v30) edge (v5);
\draw [->] (v4) edge (v5);

\node[style={fill=blue!0}] at (11,2) {$\RCA_0^*$};
\node[style={fill=blue!0}] at (11,3.25) {$\B{\mathrm\Sigma^0_1}$};
\node[style={fill=blue!0}] at (11,4.5) {$\I{\mathrm\Sigma^0_1}$};
\node[style={fill=blue!0}] at (11,11.25) {$\ACA_0$};
\node[style={fill=blue!0}] at (11,12.5) {$\ATR_0$};
\node[style={fill=blue!0}] at (11,7.5) {$\WKL_0^*$};
\node[style={fill=blue!0}] at (11,10) {$\WKL_0^*+\I{\mathrm\Sigma^0_1}$};
\node[style={fill=blue!0}] at (11,8.75) {$\WWKL_0^*+\I{\mathrm\Sigma^0_1}$};
\node[style={fill=blue!0}] at (11,6.25) {$\WWKL_0^*$};

\node[style={fill=blue!0}] at (-3,7.5) {compact};
\node[style={fill=blue!0}] at (-3,10) {$\sigma$--compact};
\node[style={fill=blue!0}] at (-3,13.75) {complete};
\node[style={fill=blue!0}] at (-3,4.5) {countable};
\node[style={fill=blue!0}] at (-3,3.25) {finite};
\end{tikzpicture}
\end{center}
\ \\[-0.5cm]
\caption{Basic choice problems together with corresponding reverse mathematics systems (see subsection~\ref{subsec:reverse}) and topological properties (every arrow indicates a strong Weihrauch reduction; no additional ordinary Weihrauch reductions hold besides those that follow from transitivity. The arrows in the diagram are pointing into the direction of computations and implicit logical implications and hence in the inverse direction of the corresponding reductions.)}
\label{fig:choice}
\end{figure}

We note that there is also a choice problem $\K_X:\In\KK_-(X)\mto X,K\mapsto K$ that is called {\em compact choice}.
Unlike the other choice problems we do not just restrict $\C_X$ to compact sets here, but we also increase
the input information, i.e., the input set $K$ is actually described as a compact set.

With the help of the choice problem $\C_X$ for different spaces $X$ we obtain 
several important Weihrauch degrees. In the following result we indicate how
the most important choice problems appear naturally as upper bounds for
certain topological properties of the underlying space.
We call $X$ {\em computably countable} if there is a computable surjection $s:\IN\to X$,
and we say that a computable metric space is {\em computably $\sigma$--compact},
if there is a computable sequence $(K_i)_{i\in\IN}$ of compact sets $K_i\In X$ such that
$X=\bigcup_{i\in\IN}K_i$. 

\begin{proposition}[Spaces]
\label{prop:choice-spaces}
Let $X$ be a computable metric space.
\begin{enumerate}
\item $\C_X\leqSW\C_{\IN^\IN}$ if $X$ is complete,
\item $\C_X\leqSW\C_\IR$  if $X$ is computably $\sigma$--compact,
\item $\C_X\leqSW\C_{2^\IN}$ if $X$ is computably compact,
\item $\C_X\leqSW\C_\IN$ if $X$ is computably countable,
\item $\C_X\leqSW\K_\IN$ if $X$ is finite.
\end{enumerate}
\end{proposition}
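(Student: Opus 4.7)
The plan is to apply Proposition~\ref{prop:injection-surjection} in each of the five cases, either by producing a computable surjection from the target space onto $X$ or by realizing $X$ as a co-c.e.\ closed subspace of the target space. I would dispatch the two easy cases first, then handle the more delicate constructions.

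Part (4) is immediate: computable countability supplies a computable (hence continuous) surjection $s\colon\IN\to X$, and Proposition~\ref{prop:injection-surjection}(2) yields $\C_X\leqSW\C_\IN$. For part (3), I would invoke the standard fact that every nonempty computably compact metric space admits a computable surjection $s\colon 2^\IN\to X$: from the uniformly computable finite $2^{-n}$-covers by rational balls one builds a binary tree whose branches correspond to nested covers, and taking the limit gives a total computable map that is surjective by compactness. Proposition~\ref{prop:injection-surjection}(2) then gives $\C_X\leqSW\C_{2^\IN}$.

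For part (2), use the $\sigma$-compact decomposition $X=\bigcup_{n\in\IN}K_n$ with uniformly computably compact $K_n$. Applying the construction from~(3) uniformly in $n$ yields computable surjections $s_n\colon 2^\IN\to K_n$, hence a single computable surjection $s\colon\IN\times 2^\IN\to X$ defined by $s(n,p):=s_n(p)$. Since $\IN\times 2^\IN$ admits an obvious computable embedding as a co-c.e.\ closed subset of $\IR$ (for instance, by placing a scaled middle-thirds Cantor set into each interval $[n,n+\tfrac12]$), chaining both parts of Proposition~\ref{prop:injection-surjection} delivers $\C_X\leqSW\C_{\IN\times 2^\IN}\leqSW\C_\IR$.

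For part (1), the completeness of $X$ ensures that $X$ admits an equivalent total computable representation $\delta\colon\IN^\IN\to X$ (equivalently, a computable embedding into $\IN^\IN$ as a co-c.e.\ closed subspace): one can, for example, preprocess an arbitrary $p\in\IN^\IN$ by a computable ``Cauchy-repair'' map that freezes at the last legal value whenever the fast Cauchy condition would fail, and completeness guarantees convergence of the repaired sequence. Given a co-c.e.\ closed $A\In X$, the preimage $\delta^{-1}(A)$ is then co-c.e.\ closed in $\IN^\IN$, uniformly in the input; applying $\C_{\IN^\IN}$ picks a name $p\in\delta^{-1}(A)$, and $\delta(p)\in A$ is the desired output. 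Finally for part (5), identify $X$ with $\{0,\ldots,k-1\}\In\IN$. Given a $\psi_-$-name of $A\In X$ (an enumeration of $X\sm A$), at each stage enumerate every finite subset of $\{0,\ldots,k-1\}$ that still contains every element not yet ruled out; this produces a $\kappa_-$-name of $A$ as a compact subset of $\IN$, and a single application of $\K_\IN$ returns a point of $A$.

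The main obstacle is part (1), where the passage to a co-c.e.\ closed presentation really uses completeness — incomplete computable metric spaces can fail to embed as $\Pi^0_1$-classes in Baire space — and where one must verify that the constructed total representation is indeed equivalent to the Cauchy representation. A secondary technical point is tracking uniformity of the surjections $s_n$ in part~(2), which follows from the uniform version of the computable compactness construction used in~(3).
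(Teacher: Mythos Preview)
Your proposal is correct and follows essentially the same approach as the paper: the paper's proof simply cites \cite{BBP12} for parts (1)--(3), invokes Proposition~\ref{prop:injection-surjection} for part (4), and for part (5) factors through $\C_n$ via a computable surjection $\{0,\ldots,n-1\}\to X$; your constructions flesh out exactly these reductions. The only cosmetic difference is in part (5), where you convert the $\psi_-$-name directly to a $\kappa_-$-name rather than first passing through $\C_n$, but this amounts to the same argument.
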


We will discuss these cones in individual subsections below.
The diagram in Figure~\ref{fig:choice} displays several basic choice problems in the Weihrauch lattice.
The corresponding systems from reverse mathematics are discussed later in subsection~\ref{subsec:reverse}.

The reader who is mostly interested in classifications of theorems in analysis can continue reading 
in section~\ref{sec:classifications} from here on. 
In the remainder of this section we continue discussing systematically choice principles and their properties.

\subsection{Composition and Non-Determinism}

In this section we discuss a different perspective on choice that can be seen
as a type conversion and that is related to non-determinism.
Firstly, we define non-deterministic computability with some advice space $R$.

\begin{definition}[Non-deterministic computability]
\label{def:non-deterministic}
Let $(X,\delta_X)$, $(Y,\delta_Y)$ be represented spaces and $R\In\Baire$.
Then $f:\In X\mto Y$ is called {\em non-deterministically computable with advice space $R$},
if there exist computable functions
$F:\In \Baire\to\Baire$ and $S:\In\Baire\to\IS$ such that $\langle\dom(f\delta_X)\times R\rangle\In\dom(S)$ and
for each $p\in\dom(f\delta_X)$:
\begin{enumerate}
\item $R_p:=\{r\in R:S\langle p,r\rangle=0\}\not=\emptyset$,
\item $r\in R_p \TO\delta_YF\langle p,r\rangle\in f\delta_X(p)$.
\end{enumerate}
If $R=2^\IN$, then we say for short that $f$ is {\em non-deterministically} computable.
If we strengthen in this case the first condition to $\mu(R_p)>0$ with the uniform measure $\mu$,
then we say that $f$ is {\em Las Vegas computable}.
\end{definition}

Intuitively, the machine can access an arbitrary oracle $r\in R$ besides the input $p$.
Here $R_p$ is the set of successful oracles for input $p$. On input $p$ together with such successful oracles $r\in R_p$
the computable realizer $F$ produces  a correct output. On the other hand, the computable $S$ eventually rejects unsuccessful
oracles $r$, i.e, $S\langle p,r\rangle=1$ for such $r$.
The importance of non-determinism in our context is based on the following observation.

\begin{theorem}[Non-determinism]
\label{thm:non-deterministic}
$f\leqW\C_R\iff f$ is non-deterministically computable with advice space $R$, for every $R\In\IN^\IN$.
\end{theorem}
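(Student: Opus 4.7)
The plan is to convert between the realizer-level description of $f\leqW\C_R$ and the non-deterministic witnesses $(F,S)$ from Definition~\ref{def:non-deterministic}. The central observation, using the general representation $\psi_-$ of closed subsets via Sierpi\'nski-valued characteristic functions, is that a non-empty closed set $A\In R$ is represented by exactly the sort of continuous test procedure $R\to\IS$ that the advice-rejection map $S$ embodies. Throughout, $R$ is equipped with the subspace representation inherited from $\IN^\IN$, so a name of $r\in R$ is just $r$ itself.

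For the direction ($\Leftarrow$), assume $F,S$ witness that $f$ is non-deterministically computable with advice space $R$. I would first show that $p\mapsto R_p$ is computable as a map $\dom(f\delta_X)\to\AA_-(R)$: since $\chi_{R\sm R_p}(r)=S\langle p,r\rangle$, the computable $S$ directly produces, uniformly in $p$, a $\delta_{\CC(R,\IS)}$-name of $\chi_{R\sm R_p}$, hence a $\psi_-$-name of $R_p$. Let $K(p)$ be this name, and set $H\langle p,r\rangle:=F\langle p,r\rangle$. For any realizer $G\vdash\C_R$, the output $r:=GK(p)$ lies in $R_p$, so condition~(2) of Definition~\ref{def:non-deterministic} gives that $H\langle p,GK(p)\rangle=F\langle p,r\rangle$ realizes some element of $f\delta_X(p)$, i.e., $H\langle\id,GK\rangle\vdash f$.

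For the direction ($\Rightarrow$), assume computable $H,K$ witness $f\leqW\C_R$. Let $A_p:=\psi_-(K(p))\In R$, which is non-empty whenever $p\in\dom(f\delta_X)$, because the reduction forces $K$ to land in $\dom(\C_R)$. Define $S\langle p,r\rangle:=\chi_{R\sm A_p}(r)$, which is computable from $K(p)$ and $r$ by the definition of $\psi_-$ in terms of $\delta_{\CC(R,\IS)}$, and set $F\langle p,r\rangle:=H\langle p,r\rangle$. Then $R_p=A_p\neq\emptyset$, so condition~(1) holds. For condition~(2), given $r\in R_p$, invoke the axiom of choice for Baire space to produce a realizer $G_r\vdash\C_R$ with $G_r(K(p))=r$; this is possible because $r$ is its own name in $R$ and $r\in A_p$, so selecting $r$ at the input $K(p)$ is consistent with the specification of $\C_R$. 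The reduction hypothesis then yields $F\langle p,r\rangle=H\langle p,G_r(K(p))\rangle\in\delta_Y^{-1}f\delta_X(p)$, as required.

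The main obstacle is bookkeeping rather than mathematics: one must keep track of which representation of $R$ and of $\AA_-(R)$ is in force (the subspace and Sierpi\'nski-based ones, respectively), and one must be careful that the realizer of $\C_R$ used to justify condition~(2) is allowed to depend on $r$, which is exactly where the axiom of choice for Baire space enters. Once these points are settled, the formulation of $\psi_-$ via $\chi_{R\sm A}$ makes the equivalence between \emph{closed subset of $R$ represented as the zero set of a computable $\IS$-valued test} and \emph{successful-advice predicate $S$} essentially definitional, which is why the proof is ultimately short.
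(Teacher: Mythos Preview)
Your argument is correct and is precisely the natural conversion between the two formulations. The paper itself does not give a proof here; it only cites \cite[Theorem~7.2]{BBP12}, so there is nothing substantive to compare against beyond noting that your approach is the standard one.

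Two small points worth tightening. In the direction $(\Leftarrow)$, the passage from the computable $S$ to a computable $K$ with $\psi_-(K(p))=R_p$ is an application of the smn-theorem (currying $S$ to produce, from $p$, a $\delta_{\CC(R,\IS)}$-name of $r\mapsto S\langle p,r\rangle$); you use it implicitly, and it would not hurt to say so. In the direction $(\Rightarrow)$, the claim that $K(p)$ must land in $\dom(\C_R\circ\psi_-)$ deserves one sentence of justification: if it did not, one could take any realizer $G\vdash\C_R$ and restrict it to $\dom(\C_R\circ\psi_-)$, obtaining a realizer undefined at $K(p)$ and hence violating $H\langle\id,GK\rangle\vdash f$. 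With these remarks your write-up is complete.
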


In the case of $R=\IN$ it is not too hard to see that we obtain exactly the functions
that are computable with finitely many mind changes.
We summarize some important classes of functions that can be characterized
by an appropriate version of choice.

\begin{corollary}[Notions of computability]
\label{cor:notions-computability}
Let $f$ be a problem. Then:
\begin{enumerate}
\item $f\leqW\C_\IN\iff f$ is computable with finitely many mind changes,
\item $f\leqW\C_{2^\IN}\iff f$ is non-deterministically computable,
\item $f\leqW\PC_{2^\IN}\iff f$ is Las Vegas computable.
\end{enumerate}
In particular, all the given properties of $f$ are invariant.
\end{corollary}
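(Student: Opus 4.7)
The plan is to obtain items 2 and 3 as essentially direct applications of Theorem~\ref{thm:non-deterministic}, while item 1 is obtained by combining the equivalence $\C_\IN \equivSW \lim_\IN$ (recorded in Figure~\ref{fig:choice}) with Proposition~\ref{prop:limit-computable}(2). The invariance claim is then automatic, since all three characterizations state that $f$ lies in a lower Weihrauch cone $\{g : g \leqW h\}$, and every such cone is invariant under $\leqW$ by transitivity.

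For item~1, I would simply chain the two equivalences: $f \leqW \C_\IN$ holds iff $f \leqW \lim_\IN$ (since $\C_\IN \equivSW \lim_\IN$ and strong equivalence implies ordinary equivalence), and the latter is equivalent to $f$ being computable with finitely many mind changes by Proposition~\ref{prop:limit-computable}(2). For item~2, I would instantiate Theorem~\ref{thm:non-deterministic} at $R = 2^\IN$: by definition this is exactly the claim that $f \leqW \C_{2^\IN}$ characterizes non-deterministic computability (with the default advice space $2^\IN$).

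For item~3, the key is to trace through the proof of Theorem~\ref{thm:non-deterministic} and verify that the extra measure constraint matches on both sides. In one direction, suppose $f$ is Las Vegas computable via $F$ and $S$, so that the set $R_p = \{r \in 2^\IN : S\langle p, r\rangle = 0\}$ is co-c.e.\ closed (uniformly in $p$) and has positive measure. Then $p \mapsto R_p$ is computable as a map into $\AA_-(2^\IN)$, takes values in the domain of $\PC_{2^\IN}$, and the map $\langle p, r\rangle \mapsto F\langle p, r\rangle$ on $\graph(R_\cdot)$ witnesses the Weihrauch reduction $f \leqW \PC_{2^\IN}$. Conversely, a reduction $f \leqW \PC_{2^\IN}$ with witnesses $H, K$ produces, for each name $p$, a closed set $A_p \In 2^\IN$ (described by the output of $K$) of positive measure; taking $R_p := A_p$ and letting $S$ be a co-c.e.\ test for membership in $A_p$ (available since $A_p$ is co-c.e.\ closed uniformly in $p$), together with $F\langle p, r\rangle := H\langle p, r\rangle$, gives a Las Vegas algorithm for $f$.

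The main obstacle is the bookkeeping for item~3: one must verify that the closed sets appearing on each side of the Weihrauch reduction can indeed be identified with the sets $R_p$ of successful oracles, and that the positive-measure condition transfers cleanly. In particular, care is needed because the Weihrauch reduction of item~2 allows the input $p$ to be consulted by the inner reduction $H$, so the witnessing $F$ and $S$ for Las Vegas computability must absorb this extra parameter exactly as in the proof of Theorem~\ref{thm:non-deterministic}. Once that matching is in place, the measure-theoretic hypothesis on the two sides is literally the same condition, and the equivalence falls out.
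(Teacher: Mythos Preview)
Your proposal is correct, and for items~2 and~3 it matches the paper's approach: item~2 is the instantiation $R=2^\IN$ of Theorem~\ref{thm:non-deterministic}, and item~3 is obtained by running the same proof with the positive-measure constraint carried along (the paper simply cites \cite[Corollary~3.4]{BGH15a} for this).

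For item~1 you take a different route from the paper. The paper derives item~1 \emph{also} from Theorem~\ref{thm:non-deterministic}, by instantiating $R=\IN$ and invoking the observation made just before the corollary that non-deterministic computability with advice space $\IN$ coincides with computability with finitely many mind changes. You instead chain the equivalence $\C_\IN\equivSW\lim_\IN$ with Proposition~\ref{prop:limit-computable}(2). Both arguments are valid; yours is a clean reduction to already-stated facts, while the paper's keeps everything inside the non-determinism framework and avoids relying on $\C_\IN\equivSW\lim_\IN$, which at this point in the text has only appeared in Figure~\ref{fig:choice} and is formally established later in Theorem~\ref{thm:CN-strong}. If you want to stay strictly within the logical order of the paper, the non-determinism route is slightly cleaner; otherwise your argument is fine.
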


One benefit of characterizing the choice problem with the help
of non-deterministic computations is that it is very easy to consider
compositions of non-deterministic computations, and hence one obtains
a simple proof of the following result that is much harder to prove directly.

\begin{theorem}[Independent choice]
\label{thm:independent-choice}
$\C_R*\C_S\leqW\C_{R\times S}$ and 
$\PC_R*\PC_S\leqW\PC_{R\times S}$ for all $R,S\In\IN^\IN$.
\end{theorem}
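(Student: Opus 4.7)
The plan is to apply the non-determinism characterization of Theorem~\ref{thm:non-deterministic}. Recall that by definition, the compositional product $f*g$ is the maximum (with respect to $\leqW$) of $f_0 \circ g_0$ as $f_0\leqW f$ and $g_0\leqW g$ range over all composable pairs; hence to prove $\C_R*\C_S\leqW\C_{R\times S}$ it suffices to show that whenever $\tilde f\leqW\C_R$ and $\tilde g\leqW\C_S$ are composable, $\tilde f\circ\tilde g\leqW\C_{R\times S}$. The Las Vegas statement is treated in parallel.

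The key claim I would prove is: if $\tilde g:\In X\mto Y$ is non-deterministically computable with advice $S$ and $\tilde f:\In Y\mto Z$ is non-deterministically computable with advice $R$, then $\tilde f\circ\tilde g$ is non-deterministically computable with advice $R\times S$ (embedded into $\IN^\IN$ via pairing); and the analogous statement with \emph{Las Vegas} in place of \emph{non-deterministically}. Combined with Theorem~\ref{thm:non-deterministic} and its analogue for $\PC$ (cf.\ Corollary~\ref{cor:notions-computability}(3)), this yields the theorem.

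To establish the claim, fix computable witnesses $(F_g,S_g)$ for $\tilde g$ and $(F_f,S_f)$ for $\tilde f$ in the sense of Definition~\ref{def:non-deterministic}. Given a name $p$ of $x\in\dom(\tilde f\circ\tilde g)$ and an advice pair $\langle r,s\rangle\in R\times S$, define the combined realizer by
\[F\langle p,\langle r,s\rangle\rangle := F_f\langle F_g\langle p,s\rangle, r\rangle,\]
and let the rejection map $S\langle p,\langle r,s\rangle\rangle$ fire (output $1$) as soon as either $S_g\langle p,s\rangle=1$ or $S_f\langle F_g\langle p,s\rangle,r\rangle=1$. This disjunction is computable in Sierpi\'nski space because its two potential triggers can be monitored in parallel. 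Non-emptiness of the set $R^{fg}_p$ of successful advice is then immediate: pick $s\in S^g_p$; then $q=F_g\langle p,s\rangle$ is a name of some $y\in\tilde g(x)$, and $y\in\dom(\tilde f)$ since $\tilde f\circ\tilde g$ is defined at $x$, so $R^f_q\ne\emptyset$ and any $(r,s)$ with $r\in R^f_q$ is successful.

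The genuinely nontrivial step is the measure-theoretic lower bound in the Las Vegas case. I would apply Fubini to the Borel set $R^{fg}_p\In 2^\IN\times 2^\IN$ to obtain
\[\mu\bigl(R^{fg}_p\bigr)=\int_{S^g_p}\mu\bigl(R^f_{F_g\langle p,s\rangle}\bigr)\,d\mu(s).\]
By hypothesis $\mu(S^g_p)>0$, and for every such $s$ the name $F_g\langle p,s\rangle$ lies in $\dom(\tilde f\delta_Y)$, so the Las Vegas hypothesis on $\tilde f$ gives $\mu\bigl(R^f_{F_g\langle p,s\rangle}\bigr)>0$. The integrand is therefore strictly positive on a positive-measure set, making the integral positive. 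Measurability of $R^{fg}_p$ is unproblematic: its complement is a union of two ${\mathrm\Sigma^0_1}$ events, so $R^{fg}_p$ is a ${\mathrm\Pi^0_1}$-set. The principal obstacle is precisely this Fubini step; the non-deterministic case is pure bookkeeping built on Theorem~\ref{thm:non-deterministic}.
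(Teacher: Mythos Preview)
Your argument is correct and follows exactly the approach the paper indicates: use the non-determinism characterization of Theorem~\ref{thm:non-deterministic} to reduce the problem to composing two non-deterministic computations, and in the positive-choice case invoke Fubini's theorem to verify that the combined set of successful advice has positive measure. The only cosmetic issue is notational overload (you use $S$ for both the advice set and the rejection map) and the reference to ``Las Vegas'' and $2^\IN\times 2^\IN$ where the theorem is stated for arbitrary $R,S\subseteq\IN^\IN$; the argument itself carries over unchanged.
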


In the case of positive choice one needs an invocation of Fubini's theorem
besides the composition of the two non-deterministic computations.
We obtain the following important corollary that, in particular, shows that the
notions of computability listed in Corollary~\ref{cor:notions-computability} are very natural.

\begin{corollary}[Composition]
\label{cor:choice-composition}
$\C_\IN,\C_{2^\IN},\C_\IR,\C_{\IN^\IN},\UC_{\IN^\IN},\PC_{2^\IN},\PC_\IR$ and $\PC_{\IN^\IN}$ 
are closed under compositional product and hence, in particular, idempotent.
\end{corollary}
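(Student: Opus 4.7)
The plan is to combine the Independent Choice Theorem (Theorem~\ref{thm:independent-choice}) with the observation that, for each listed space $X$, the product $X\times X$ is computably equivalent to $X$ in a suitable sense. The trivial direction $\C_X\leqW\C_X*\C_X$ holds because each listed space has a computable point, making $\C_X$ pointed; hence $\id\leqW\C_X$ and $\C_X=\C_X\circ\id\leqW\C_X*\C_X$ directly from the defining maximum in Theorem~\ref{thm:compositional-product-implication}. The same argument applies verbatim to $\UC_{\IN^\IN}$ and to the $\PC$ variants.

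For the non-trivial direction $\C_X*\C_X\leqW\C_X$, I would apply Theorem~\ref{thm:independent-choice} to obtain $\C_X*\C_X\leqW\C_{X\times X}$ (and analogously $\PC_X*\PC_X\leqW\PC_{X\times X}$), after identifying $X$ with its representing subset of Baire space. It then suffices to verify $\C_{X\times X}\equivW\C_X$ for each $X$ in the list. For $X\in\{\IN,2^\IN,\IN^\IN\}$, this is immediate from a standard computable bijection $X\times X\cong X$ (Cantor pairing on $\IN$, bit interleaving on $2^\IN$, and the pairing $\langle\cdot,\cdot\rangle$ on $\IN^\IN$), which gives even a strong equivalence. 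For $X=\IR$, I would use Proposition~\ref{prop:choice-spaces}(2): since $\IR^2$ is a computably $\sigma$-compact computable metric space, $\C_{\IR^2}\leqSW\C_\IR$; the converse follows from the computable projection $\IR\times\IR\to\IR$ via Proposition~\ref{prop:injection-surjection}(2).

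For the $\PC$ versions, the pairings above identify product measures with the natural measures on $2^\IN$ and $\IN^\IN$, so positive-measure subsets correspond and $\PC_{X\times X}\equivW\PC_X$; for $\PC_\IR$ the same $\sigma$-compactness argument, restricted to positive-measure sets, applies. For $\UC_{\IN^\IN}$, Theorem~\ref{thm:independent-choice} as stated does not apply literally, so I would instead invoke the non-deterministic characterization underlying it (Theorem~\ref{thm:non-deterministic}) and observe that composing two non-deterministic computations whose valid advice is unique yields a non-deterministic computation on $\IN^\IN\times\IN^\IN$ whose combined valid advice is again unique; together with the pairing this gives $\UC_{\IN^\IN}*\UC_{\IN^\IN}\leqW\UC_{\IN^\IN\times\IN^\IN}\equivSW\UC_{\IN^\IN}$.

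Idempotency is then a direct consequence: by Proposition~\ref{prop:order}(1), $\C_X\times\C_X\leqW\C_X*\C_X\equivW\C_X$, while the reverse reduction $\C_X\leqW\C_X\times\C_X$ is trivial since $\C_X$ is pointed. The main obstacle is the $\UC$ case, since Theorem~\ref{thm:independent-choice} is stated only for $\C$ and $\PC$; this requires a careful re-reading of its proof to verify that the uniqueness of valid non-deterministic advice propagates through the composition. A secondary subtlety is the implicit identification of $\C_X$ with $\C_R$ for some $R\In\IN^\IN$ arising from the representation of $X$, which must be justified (via admissibility) in order to apply Theorem~\ref{thm:independent-choice} to spaces such as $\IR$ that are not literally subsets of Baire space.
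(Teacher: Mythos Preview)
Your proposal is correct and follows essentially the same approach as the paper: apply the Independent Choice Theorem~\ref{thm:independent-choice} to obtain $\C_X*\C_X\leqW\C_{X\times X}$ (respectively $\PC_X*\PC_X\leqW\PC_{X\times X}$), then use the evident computable equivalences between $X\times X$ and $X$; the paper likewise notes that the proof of Theorem~\ref{thm:independent-choice} goes through for unique choice, exactly the point you flag for $\UC_{\IN^\IN}$. One minor remark: your treatment of $\PC_\IR$ via ``the same $\sigma$-compactness argument, restricted to positive-measure sets'' is a bit loose, since Proposition~\ref{prop:choice-spaces} is stated only for $\C_X$; the cleanest route is to use $\PC_\IR\equivSW\PC_{2^\IN\times\IN}$ (Theorem~\ref{thm:PCR}) and then argue via the product-measure-preserving pairing on $2^\IN\times\IN$, which fits your framework without modification.
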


\subsection{Choice on Natural Numbers}

An important equivalence class is the class of choice on natural numbers.
We summarize some of its characterizations. In particular, we use the complementary minimum function 
$\min^{\rm c}:\In\IN^\IN\to\IN,p\mapsto\min\{n\in\IN:(\forall k)\;p(k)\not=n\}$  
and the maximum function $\max:\In\IN^\IN\to\IN,p\mapsto\max\{p(n):n\in\IN\}$.

\begin{theorem}[Choice on $\IN$]
\label{thm:CN-strong}
$\UC_\IN\equivSW\C_\IN\equivSW\C_\IQ\equivSW\lim\nolimits_\IN\equivSW\min^{\rm c}\equivSW\max$.
\end{theorem}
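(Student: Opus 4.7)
My plan is to establish all six (seven, counting $\C_\IQ$) equivalences by exhibiting a cycle of strong Weihrauch reductions, each realised by simple computable $K$ and $H$. Concretely I would verify:
\begin{enumerate}
\item $\C_\IN \leqSW \lim_\IN$: given an enumeration $q$ of $\IN \setminus A$, set $p_s := \min\{n : n \notin \{q(0),\dots,q(s{-}1)\}\}$; the sequence $p$ is eventually constant at $\min A \in A$, so $H = \id$ works.
\item $\lim_\IN \leqSW \C_\IN$: for eventually constant $p$, let $K(p)$ name the co-c.e.\ closed set $A_p := \{\langle n,k\rangle : p_{k'} = n \text{ for all } k' \geq k\}$, which is nonempty, and every element has first coordinate $\lim p$; $H$ projects onto the first coordinate.
\item $\C_\IN \leqSW \min^{\rm c}$: the identity suffices, since $\min^{\rm c}(q) = \min A \in A$ when $q$ enumerates $\IN \setminus A$.
\item $\min^{\rm c} \leqSW \max$: the sequence $m_s := \min(\IN \setminus \{p_0,\dots,p_s\})$ is nondecreasing, bounded above by $\min^{\rm c}(p)$, and eventually reaches it, hence $\max(m_s)_s = \min^{\rm c}(p)$.
\item $\max \leqSW \UC_\IN$: for bounded $p$, the set $A_p := \{\langle n,s\rangle : p_s = n,\ p_{s'} < n \text{ for all } s' < s,\ p_{s'} \leq n \text{ for all } s'\}$ is co-c.e.\ and equals the singleton $\{\langle \max p, s_0\rangle\}$ with $s_0$ the first occurrence of $\max p$; $H$ projects.
\item $\UC_\IN \leqSW \C_\IN$: singletons are closed.
\item $\C_\IN \equivSW \C_\IQ$: transfer via a computable bijection $\phi\colon \IN \to \IQ$; from $\IN$ to $\IQ$ surround each enumerated complement point by a small ball and also enumerate a basis of balls covering $\IQ \setminus \IN$; from $\IQ$ to $\IN$ enumerate $n$ whenever $\phi(n)$ falls into some enumerated ball covering $\IQ \setminus A$.
\end{enumerate}

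Steps 1--6 chain into $\C_\IN \equivSW \lim_\IN \equivSW \min^{\rm c} \equivSW \max \equivSW \UC_\IN$ (with step 6 closing the loop), and step 7 adjoins $\C_\IQ$. All the verifications of nonemptiness, co-c.e.\ structure, and singletonhood are elementary, exploiting the discrete topology on $\IN$ and the closure of c.e.\ predicates under conjunction with decidable side-conditions.

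The main conceptual obstacle, and the reason steps 2 and 5 are not completely obvious, is that under \emph{strong} Weihrauch reducibility the postprocessor $H$ only sees the oracle's output, not the original input, so every reduction must encode all information required to recover the answer into the set supplied to the oracle. The naive singleton $\{\lim p\}$ is $\Sigma^0_2$ rather than co-c.e., which is why step 2 pairs $\lim p$ with stabilisation witnesses, and step 5 pairs $\max p$ with its \emph{first} occurrence, using the strict inequality on earlier indices to force uniqueness. Once these two pairing tricks are identified, the rest of the cycle is routine.
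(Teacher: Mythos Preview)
Your cycle is correct and essentially matches the paper's approach; in particular your step~4 is precisely the paper's argument for $\C_\IN\leqSW\max$. The one place you work harder than necessary is step~5: the paper shows $\max\leqSW\lim_\IN$ directly by outputting the running maximum $M_s:=\max\{p_0,\dots,p_s\}$, which stabilises at $\max p$, avoiding the pairing-with-first-occurrence trick and the detour through $\UC_\IN$.
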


Here $\IQ$ can be endowed with the discrete or the Euclidean topology.
The ordinary Weihrauch degree of $\C_\IN$ has some further members that
occasionally appear. 

\begin{theorem}
\label{thm:CN-weak}
$\CFC_\IN\equivW\C_\IN\equivW\lim_\Delta$ and $\CFC_\IN\lSW\C_\IN\lSW\lim\nolimits_\Delta$.
\end{theorem}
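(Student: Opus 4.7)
My plan is to unpack the theorem into its four constituent claims and leverage Theorem~\ref{thm:CN-strong} and Example~\ref{ex:limit} for most of them, with the only real work being one Weihrauch reduction.

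First, for the equivalence chain $\CFC_\IN\equivW\C_\IN\equivW\lim_\Delta$: the reduction $\CFC_\IN\leqW\C_\IN$ is a restriction (literally the identity reductions work), and the equivalence $\C_\IN\equivW\lim_\Delta$ is a corollary of Theorem~\ref{thm:CN-strong} together with the equivalence $\lim_\IN\equivW\lim_\Delta$ noted before Proposition~\ref{prop:limit-computable}. The only substantive step is $\C_\IN\leqW\CFC_\IN$, which I carry out as follows. Given a name $p$ of a nonempty $A\In\IN$ (an enumeration of $\IN\setminus A$), define at each stage $t$ the guess $g_t:=\min(\IN\setminus E_t)$, where $E_t$ is the finite portion of $\IN\setminus A$ enumerated by stage $t$. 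Because $A\neq\emptyset$, the integer sequence $(g_t)_t$ stabilises at $\min A$, so there is a last mind-change stage $T$ (set $T:=0$ if there is none). The reduction $K$ produces a name of the co-finite set $A':=\{n:n>T\}$ by enumerating $\{0\}$ initially and enumerating $\{0,1,\ldots,s\}$ into $\IN\setminus A'$ at every stage $s$ at which $g_{s+1}\neq g_s$; since only finitely many mind changes occur, $\IN\setminus A'$ really ends up finite, so $A'\in\dom(\CFC_\IN)$. Feeding $A'$ to $\CFC_\IN$ returns some $w\in A'$, i.e.\ $w>T$. The post-processor $H$ re-runs the guessing process on the original input $p$ for $w$ steps and outputs $g_w$; since $w>T$, $g_w=\min A\in A$, as required.

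Second, for $\CFC_\IN\lSW\C_\IN$: the reduction $\CFC_\IN\leqSW\C_\IN$ is again the restriction, while $\C_\IN\nleqSW\CFC_\IN$ follows from Proposition~\ref{prop:cardinality} since $\#\C_\IN=|\IN|$ (the singletons $\{n\}$ give pairwise disjoint instance-solution sets) but $\#\CFC_\IN=1$ (any two co-finite subsets of $\IN$ intersect, so no two distinct instances can have disjoint solution sets).

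Third, for $\C_\IN\lSW\lim_\Delta$: the forward reduction comes from the chain $\C_\IN\equivSW\lim_\IN\leqSW\lim_\Delta$, where the first equivalence is Theorem~\ref{thm:CN-strong} and the second reduction is part of Example~\ref{ex:limit}. For $\lim_\Delta\nleqSW\C_\IN$ I again invoke Proposition~\ref{prop:cardinality}: by Example~\ref{ex:limit}, $\#\lim_\Delta=|\IN^\IN|$, while $\#\C_\IN=|\IN|$, so no strong reduction exists.

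The main obstacle is ensuring the co-finite set $A'$ in the reduction $\C_\IN\leqW\CFC_\IN$ is actually produced by a computable name. The subtle point is that $K$ never knows whether the current stage is past $T$, yet must produce an enumeration of a guaranteed finite set; this is handled by the standard observation that any c.e.\ process whose output is eventually finite yields a legitimate name of a co-finite set, regardless of whether one can detect the termination stage. Everything else is bookkeeping on top of results already in the paper.
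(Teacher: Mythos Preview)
Your proof is correct and follows essentially the same approach as the paper: the paper cites external references for $\CFC_\IN\equivW\C_\IN$ and $\C_\IN\equivW\lim_\Delta$, and handles the strictness via Proposition~\ref{prop:cardinality} using $\#\CFC_\IN=1$, $\#\C_\IN=|\IN|$, $\#\lim_\Delta=|\IN^\IN|$, exactly as you do. The only difference is that you unpack the reduction $\C_\IN\leqW\CFC_\IN$ explicitly (converting the number of mind changes into a co-finite set) rather than citing \cite{BG11a}; your construction is the standard one and is correct.
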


Here the strictness results follow from Proposition~\ref{prop:cardinality}
since $\#\CFC_\IN=1$, $\#\C_\IN=|\IN|$ and $\#\lim_\Delta=|\IN^\IN|$.

An important result related to choice on natural numbers shows that it can not contribute anything to the
computation of total fractals, if it is applied first (possibly followed by another problem). 

\begin{theorem}[Choice elimination]
\label{thm:CN-elimination}
$f\leqW g*\C_\IN\TO f\leqW g$, for every total fractal $f$ and every problem $g$.
\end{theorem}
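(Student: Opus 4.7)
My plan is to pass to a total realizer version $F$ of $f$, exploit the discrete branching of the hidden $\C_\IN$-call in the given reduction, and combine Baire category with the fractal property to eliminate that call on a clopen piece and then promote the result back to all of $F$.

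\emph{Setup.} By Lemma~\ref{lem:realizer} and the definition of a total fractal I replace $f$ by a total $F\equivW f$ with $F:\IN^\IN\mto\IN^\IN$ satisfying $F|_A\equivW F$ for every nonempty clopen $A\In\IN^\IN$. Proposition~\ref{prop:compositional-product-cylinder} lets me switch from $g*\C_\IN$ to $g\star\C_\IN$, so I fix computable $K,H:\IN^\IN\to\IN^\IN$ such that $H\langle\id,R\circ K\rangle\vdash F$ for every realizer $R$ of $g\star\C_\IN$. Writing $K(p)=\langle s(p),v(p)\rangle$, the component $v(p)$ is a $\psi_-$-name of a nonempty closed set $C(p)\In\IN$, and $\Phi_{s(p)}$ is the intermediate computation between the $\C_\IN$-call and the $g$-call.

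\emph{Baire category.} For each $n\in\IN$ put $A_n:=\{p\in\IN^\IN : n\in C(p)\}$. The predicate $n\in C(p)$ asserts that $v(p)$ never enumerates $n$, which is a countable conjunction of clopen conditions on $v(p)$; since $v$ is continuous, each $A_n$ is closed. Totality of $F$ forces $C(p)\neq\emptyset$ for every $p\in\IN^\IN$, hence $\IN^\IN=\bigcup_{n\in\IN}A_n$, and Baire category in $\IN^\IN$ yields some $n_0\in\IN$ together with a nonempty basic clopen cylinder $B\In A_{n_0}$.

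\emph{Inlining and conclusion.} On $B$ I inline the oracle answer $n_0$: fixing a computable name $\widehat{n_0}$ of $n_0\in\IN$, define computable
\[k(p):=\bigl(p,\pi_2(\Phi_{s(p)}(\widehat{n_0}))\bigr),\qquad h(p,w):=H\bigl\langle p,\langle\pi_1(\Phi_{s(p)}(\widehat{n_0})),w\rangle\bigr\rangle.\]
For every $p\in B\In A_{n_0}$ the constant sequence $\widehat{n_0}$ is a legitimate realizer output of $\C_\IN$ on input $v(p)$, so $(k,h)$ faithfully simulates one admissible run of the original reduction; hence $h\circ(\id\times g)\circ k\prefix F|_B$, and Proposition~\ref{prop:GM09} gives $F|_B\leqW g$. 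The fractal property then yields $f\equivW F\equivW F|_B\leqW g$. The main obstacle is the Baire-category step: one has to recognize that the hidden $\C_\IN$-call induces a closed cover of $\IN^\IN$ indexed by $\IN$ so that some single index is valid throughout a clopen neighbourhood; once this is in place, the fractal hypothesis is exactly what converts the local reduction into the desired global one, and it is crucial here that $f$ be a \emph{total} fractal (otherwise the $A_n$ might fail to cover $\dom(F)$).
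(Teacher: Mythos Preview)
Your argument is correct and follows essentially the same route as the paper's (cited) proof: unfold the reduction to $g\star\C_\IN$, observe that the sets $A_n=\{p:n\in C(p)\}$ form a closed cover of $\IN^\IN$, apply Baire category to get a clopen $B$ on which a fixed answer $n_0$ is always legal, inline that answer to obtain $F|_B\leqW g$, and invoke the fractal property. One small imprecision: in your displayed maps $k,h$ you apply $g$ to the Baire-space point $\pi_2(\Phi_{s(p)}(\widehat{n_0}))$, but $g$ lives on represented spaces $Z,W$; you should either use $g^{\rm r}$ here (and then pass from $F|_B\leqW g^{\rm r}$ to $F|_B\leqW g$ via Lemma~\ref{lem:realizer}) or push the name through $\delta_Z$ before feeding it to $g$.
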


The proof of this theorem is based on the Baire category theorem.
In light of Proposition~\ref{prop:choice-fractal} we obtain the following corollary.

\begin{corollary}[Separations]
\label{cor:CN-separations}
$\ConC_{[0,1]}\nleqW\C_\IN$ and $\PC_{2^\IN}\nleqW\C_\IN$.
\end{corollary}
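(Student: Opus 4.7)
The plan is to derive both separations as one-line corollaries of Theorem~\ref{thm:CN-elimination}, using Proposition~\ref{prop:choice-fractal} to supply the hypothesis that $\ConC_{[0,1]}$ and $\PC_{2^\IN}$ are total fractals. The entire non-trivial content sits in Theorem~\ref{thm:CN-elimination}, whose proof (via the Baire category theorem) is assumed. Here the only ingredient we need to add is the observation that $\C_\IN$ itself sits trivially in the cone $g * \C_\IN$ for a computable $g$, combined with the fact that $\ConC_{[0,1]}$ and $\PC_{2^\IN}$ are non-computable.

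First I would record the key auxiliary equivalence $\mathbf{1} * \C_\IN \equivW \C_\IN$, where $\mathbf{1}$ denotes any computable representative (say $\id$). The reduction $\C_\IN \leqW \mathbf{1} * \C_\IN$ is immediate. For the converse, by definition of the compositional product it suffices to check $f_0 \circ h_0 \leqW \C_\IN$ whenever $f_0$ is computable and $h_0 \leqW \C_\IN$; but post-composing with a computable problem does not raise the Weihrauch degree, so $f_0 \circ h_0 \leqW h_0 \leqW \C_\IN$. Thus any $f \leqW \C_\IN$ also satisfies $f \leqW \mathbf{1} * \C_\IN$, and for total fractals $f$ Theorem~\ref{thm:CN-elimination} then forces $f \leqW \mathbf{1}$, i.e., $f$ is computable.

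Now I specialize to the two claims. By Proposition~\ref{prop:choice-fractal}, both $\ConC_{[0,1]}$ and $\PC_{2^\IN}$ are total fractals, so the previous step yields: if either of them were reducible to $\C_\IN$, it would be computable. This contradicts non-computability of both problems. For $\ConC_{[0,1]} \equivSW \IVT$ (Figure~\ref{fig:choice}), non-computability follows from $\Z_{[0,1]} \prefix \IVT$ together with the classical fact that there is a computable $f \in \CC[0,1]$ with $f(0)f(1) < 0$ but no computable zero. For $\PC_{2^\IN} \equivSW \WWKL$ (Figure~\ref{fig:choice}), non-computability follows from the existence of a $\Pi^0_1$-class in $2^\IN$ of positive measure with no computable member. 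Both contradictions close the argument, establishing $\ConC_{[0,1]} \nleqW \C_\IN$ and $\PC_{2^\IN} \nleqW \C_\IN$.

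The main obstacle is not in this corollary but in the cited results: proving Theorem~\ref{thm:CN-elimination} (the genuine content, essentially a Baire category argument) and identifying the two choice problems as total fractals. Once those are in hand, the deduction is purely formal.
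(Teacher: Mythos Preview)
Your overall approach is exactly the paper's intended one: apply Theorem~\ref{thm:CN-elimination} with $g=\id$ (so that $f\leqW\C_\IN$ forces $f$ computable for total fractals $f$), and feed in Proposition~\ref{prop:choice-fractal} to get that $\ConC_{[0,1]}$ and $\PC_{2^\IN}$ are total fractals. The auxiliary observation $\mathbf{1}*\C_\IN\equivW\C_\IN$ is correct and harmless.

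There is, however, a genuine error in your justification that $\ConC_{[0,1]}$ is non-computable. The ``classical fact'' you invoke---that there is a computable $f\in\CC[0,1]$ with $f(0)f(1)<0$ but no computable zero---is \emph{false}. The intermediate value theorem is non-uniformly computable: every computable such $f$ has a computable zero (either some rational is a zero, or else the sign of $f$ is decidable at every rational and bisection succeeds). The paper itself notes this in subsection~\ref{subsec:reverse}, listing $\IVT$ among the theorems that are non-uniformly computable yet not uniformly computable. What you actually need is \emph{uniform} non-computability of $\IVT\equivSW\ConC_{[0,1]}$, which follows for instance from $\C_2\leqSW\IVT$ (equivalently $\LLPO\leqW\IVT$, cf.\ Proposition~\ref{prop:LLPO-LPO} and the discussion around Theorem~\ref{thm:IVT}), or from the sequential counterexample in Corollary~\ref{cor:C2N}(4). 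Your argument for $\PC_{2^\IN}$ via a $\Pi^0_1$-class of positive measure with no computable member is fine.
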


We use the identification $n=\{0,1,...,n-1\}$ for all $n\in\IN$,
and we also consider the finite choice problems $\C_n$.
It is clear that $\C_0\equivW{\bf 0}$ and $\C_1\equivW{\bf 1}$.
The particular case of $\C_2$ is related to $\LLPO$, which is the counterpart
of the {\em lesser limited principle of omniscience} as it is known from constructive analysis.

\[\LLPO:\In2^\IN\mto\{0,1\},\LLPO(p)\ni\left\{\begin{array}{ll}
0 &\mbox{$\iff(\forall n)\;p(2n)=0$}\\
1 &\mbox{$\iff(\forall n)\;p(2n+1)=0$}
\end{array}\right.\]
with $\dom(\LLPO):=\{p\in2^\IN:p(k)\not=0$ for at most one $k\}$.

\begin{proposition}[Principles of omniscience]
\label{prop:LLPO-LPO}
$\C_2\equivSW\LLPO\lW\LPO\lW\C_\IN$.
\end{proposition}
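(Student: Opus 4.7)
The plan is to establish the chain link by link. For the equivalence $\C_2\equivSW\LLPO$, I would exhibit direct strong reductions in both directions. Given an $\LLPO$-input $p\in 2^\IN$ (having at most one $1$), I build a $\psi_-$-description of $A\In\{0,1\}$ starting with $A=\{0,1\}$, enumerating $0$ out of $A$ when a $1$ appears at an even position of $p$, and $1$ out of $A$ when a $1$ appears at an odd position; this yields $A\in\{\{0\},\{1\},\{0,1\}\}$, and any $\C_2$-answer on $A$ is a valid $\LLPO$-answer on $p$. Conversely, from a $\psi_-$-name of a non-empty $A\In\{0,1\}$ I construct $p\in\dom(\LLPO)$ by placing a $1$ at position $2n_0$ when $0$ is first enumerated out of $A$ at stage $n_0$, and a $1$ at position $2n_1+1$ when $1$ is first enumerated out at stage $n_1$, and $p=\widehat{0}$ otherwise. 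Since $A\not=\emptyset$ at most one of these events occurs, so $p\in\dom(\LLPO)$, and the $\LLPO$-answer on $p$ is a valid $\C_2$-answer on $A$.

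The reduction $\LLPO\leqSW\LPO$ follows from the input translation $q(n):=1-p(2n)$: we have $\LPO(q)=1$ iff some $p(2n)=1$, which for $p\in\dom(\LLPO)$ is equivalent to $\LLPO(p)=1$; and when $p=\widehat{0}$ both outputs of $\LLPO$ are acceptable, so $\LPO(q)=0$ is correct.

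The harder step $\LPO\nleqW\LLPO$ I would prove by a continuity argument against an assumed reduction via computable $H,K$. Take $p_0:=\widehat{1}$, so $\LPO(p_0)=0$. Since $K(p_0)\in\dom(\LLPO)$, either $K(p_0)=\widehat{0}$, in which case both realizer answers $b\in\{0,1\}$ are valid on $K(p_0)$ and correctness forces $H\langle p_0,0\rangle=H\langle p_0,1\rangle=0$, or $K(p_0)$ has a unique $1$ at some position $k_0$ determining a single $\LLPO$-value $b\in\{0,1\}$ with $H\langle p_0,b\rangle=0$. In the second sub-case, continuity of $K$ yields $M$ such that $K(p)$ still carries a $1$ at $k_0$ for any $p$ matching $p_0$ on $[0,M]$; combined with $K(p)\in\dom(\LLPO)$ this forces $\LLPO(K(p))=b$, so every realizer outputs $b$. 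By continuity of $H$ there is $N$ with $H\langle p,b\rangle=0$ for every $p$ agreeing with $p_0$ on $[0,N]$ (for both values of $b$ in the first sub-case); choosing $p$ that agrees with $p_0$ on $[0,\max(M,N)]$ but has a $0$ somewhere further along gives $\LPO(p)=1$ while the reduction outputs $0$, a contradiction.

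For $\LPO\leqW\C_\IN$, given $p\in\IN^\IN$ set $q_n:=0$ if $p(0),\dots,p(n)$ are all non-zero and $q_n:=1$ otherwise; then $(q_n)$ stabilises at $\LPO(p)$, so $\lim\nolimits_\IN(q)=\LPO(p)$, and since $\lim\nolimits_\IN\equivW\C_\IN$ by Theorem~\ref{thm:CN-strong} we obtain $\LPO\leqW\C_\IN$. For the strict separation $\C_\IN\nleqW\LPO$ I would assume a reduction via computable $H,K$ and diagonalise using Kleene's recursion theorem: the map $F:p\mapsto$ (canonical $\psi_-$-name of $\IN\sm\{H(p,0),H(p,1)\}$) is computable, so there is a computable $p^*$ with $\psi_-(p^*)=\psi_-(F(p^*))=\IN\sm\{H(p^*,0),H(p^*,1)\}$, which is non-empty. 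Yet the reduction's output on $p^*$ is $H(p^*,\LPO(K(p^*)))\in\{H(p^*,0),H(p^*,1)\}$, disjoint from $\psi_-(p^*)$ — the contradiction. The main obstacle will be the case analysis in the $\LPO\nleqW\LLPO$ step, where the multi-valuedness of $\LLPO$ at $\widehat{0}$ must be neutralised uniformly over all realizers.
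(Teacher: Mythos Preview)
Your argument is correct and supplies explicit constructions where the paper merely cites \cite{BBP12,Wei92c,BG11,BG11a} for the four links. Two minor technicalities are worth tightening. In $\LPO\nleqW\LLPO$ you write $H\langle p_0,b\rangle$ with $b\in\{0,1\}$ as the literal second argument; this is legitimate once you pass (as you may, by invariance of $\leqW$) to a representation of $\{0,1\}$ in which each point has a unique name, but without that convention different realizers of $\LLPO$ may output different names of $b$ and continuity of $H$ in the first coordinate alone would not suffice. In $\C_\IN\nleqW\LPO$, Kleene's recursion theorem does not literally deliver a $p^*$ with $\psi_-(p^*)=\psi_-(F(p^*))$; the correct formalisation is a direct self-referential construction of $p^*$ (emit no-op symbols while dovetailing $H\langle p^*,q_0\rangle$ and $H\langle p^*,q_1\rangle$ for fixed names $q_0,q_1$ of $0,1$, enumerating each converged output into the complement), which is well-defined because the $s$-th output symbol depends only on $p^*|_s$. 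Note also that only $H\langle p^*,q_b\rangle$ with $b=\LPO(K(p^*))$ is guaranteed to converge, but that already yields the contradiction. A shorter alternative for this last separation, available in the paper, is Proposition~\ref{prop:mind}: $\mind(\LPO)=1$ while $\mind(\C_3)=2$, so already $\C_3\nleqW\LPO$.
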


Likewise, one can define problems $\MLPO_n$ that are equivalent to $\C_n$ and problems 
$\LPO_n$ that are equivalent to $\ACC_n$. These yield an increasing and a decreasing chain of problems,
respectively. 

\begin{proposition}[Finite choice]
\label{prop:finite-choice}
For every $n>2$ and every $p\in\IN^\IN$ we obtain\linebreak
$\ACC_\IN<_{\rm W}^p\ACC_{n+1}<_{\rm W}^p\ACC_{n}<_{\rm W}^p\ACC_2=\C_2<_{\rm W}^p\C_n<_{\rm W}^p\C_{n+1}<_{\rm W}^p\C_\IN$.
\end{proposition}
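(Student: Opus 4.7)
The plan is to verify the seven Weihrauch reductions in the chain (each is in fact uniformly computable and strong, hence valid under every oracle $p$) and then to establish the six strict separations via a uniform adversarial continuity argument combined with a size-counting inequality.

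For the reductions, each is produced by a simple recoding. The right-hand cone $\C_n \leqSW \C_{n+1} \leqSW \C_\IN$ is obtained by extending the co-enumeration of the input with the additional co-enumerations of all indices $\geq n$, so that the oracle's answer is automatically in the original smaller set. The left-hand cone $\ACC_\IN \leqSW \ACC_{n+1} \leqSW \ACC_n$ is obtained by relaying the (at most one) forbidden element to the smaller instance only when it fits into the target range and suppressing it otherwise; the oracle's answer then lies in the source range and automatically avoids the original forbidden element. The equality $\ACC_2 = \C_2$ is immediate since the valid inputs of both problems coincide with the non-empty subsets of $\{0,1\}$.

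For each of the six strict separations, suppose for contradiction that a $p$-continuous reduction holds via $K, H$. Take $p_0 = \widehat{0}$ on the source side, set $B_0 := K(p_0)$ and $z_y := H(p_0, y)$ for each $y \in B_0$, and form the partition $B_0 = \bigsqcup_v S_v$ with $S_v := \{y \in B_0 : z_y = v\}$ indexed by $v$ in the source's output space. For each $v$, modify $p_0$ beyond the committed prefixes of $K$ and $H$ to a name $p_1^v$ representing the singleton instance $\{v\}$ in the $\C$-cases or the $\ACC$-instance with forbidden element $v$ in the $\ACC$-cases. Continuity of $H$ then forces $H(p_1^v, y) = z_y$ past the commitment, so that $K(p_1^v) \In S_v$ in the $\C$-cases and $K(p_1^v) \In B_0 \setminus S_v$ in the $\ACC$-cases; monotonicity of $K$ makes $K(p_1^v) \In B_0$ automatic. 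Since $K(p_1^v)$ must itself be a valid target instance of at least the target's minimum allowed size, a counting inequality against $\sum_v |S_v| = |B_0|$ yields the contradiction. In the $\C$-cases each $|S_v| \geq 1$, forcing the source's output-space cardinality to be at most $|B_0|$; this fails whenever the source output-space strictly exceeds the target input-space, giving $\C_\IN \not\leq_{\mathrm W}^p \C_{n+1}$, $\C_{n+1} \not\leq_{\mathrm W}^p \C_n$, and $\C_n \not\leq_{\mathrm W}^p \C_2$. In the $\ACC$-cases each $|S_v| \leq |B_0| - m + 1$, where $m$ is the target's minimum input size, and the same $\sum_v |S_v| = |B_0|$ yields a linear inequality on $|B_0|$ that fails analogously (with $n > 2$ being exactly the threshold that makes $\C_2 \not\leq_{\mathrm W}^p \ACC_n$ contradictory).

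The main technical obstacle is the two separations with an infinite output-space on one side, $\C_\IN \not\leq_{\mathrm W}^p \C_{n+1}$ and $\ACC_{n+1} \not\leq_{\mathrm W}^p \ACC_\IN$, where one cannot simultaneously commit $K$ and $H$ across all $v$. In the former, $B_0$ has size at most $n+1$, so the image $\{z_y : y \in B_0\}$ is finite, and any $v \in \IN$ outside this image witnesses $S_v = \emptyset$, forcing $K(p_1^v) = \emptyset$ and contradicting validity. In the latter, $B_0$ is infinite while $v$ ranges over the finite set $\{0,\ldots,n\}$, so pigeonhole yields a $v^*$ with $|S_{v^*}|$ infinite; picking any $y' \in S_{v^*} \cap K(p_1^{v^*})$, where the intersection is cofinite in $S_{v^*}$ by the at-most-one-element loss in $K(p_1^{v^*})$, produces $H(p_1^{v^*}, y') = z_{y'} = v^*$ in the forbidden output. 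This closes the strict chain of separations.
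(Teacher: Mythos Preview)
Your approach—a direct continuity/adversary argument combined with a size-counting inequality—is essentially the route taken in Weihrauch's original work cited by the paper (the paper itself gives no proof beyond the citation, though it notes the mind-change invariant $\mind(\C_n)=n-1$ as an alternative for the $\C$-side separations). The overall framework is sound, but two points need attention.

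First, the stated $\ACC$-case bound $|S_v|\leq|B_0|-m+1$ is off by one if $m$ is the minimum size of a valid target input: since $B_1^v\subseteq B_0\setminus S_v$ and $|B_1^v|\geq m$, one gets $|S_v|\leq|B_0|-m$. With your weaker bound, the counting for $\ACC_n\nleqW^p\ACC_{n+1}$ yields only $|B_0|\leq n(|B_0|-n+1)$, which is \emph{not} contradictory at $|B_0|=n+1$; the correct bound gives $n+1\leq n$. Similarly for $\C_2\nleqW^p\ACC_n$ your bound gives $n\leq 4$ rather than $n\leq 2$. (If you instead intended $m$ to be the cardinality of the target's underlying space, the bound is correct as stated; the text says ``minimum input size'', which reads as the former.)

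Second, and more substantively, the argument for $\ACC_{n+1}\nleqW^p\ACC_\IN$ is circular as written. You define $p_1^{v^*}$ (thereby fixing a prefix length $\ell$), then pick $y'\in S_{v^*}\cap B_1^{v^*}$, and then assert $H(p_1^{v^*},y')=z_{y'}$. But this equality requires $\ell$ to exceed the use of $H$ on $\langle p_0,\widehat{y'}\rangle$, and that use depends on $y'$, which was selected only \emph{after} $\ell$ was fixed. The fix is short: since $S_{v^*}$ is infinite, choose \emph{two} distinct $y',y''\in S_{v^*}$ in advance, take $\ell$ large enough to commit $H$ on both (and to commit $K$ on the at most one element missing from $B_0$), and then form $p_1^{v^*}$. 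As $B_1^{v^*}$ omits at most one element of $\IN$, at least one of $y',y''$ lies in $B_1^{v^*}$, and the continuity commitment for that survivor produces the forbidden output $v^*$.
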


The mere fact that $\C_{n+1}\nleqW\C_n$ holds, follows since
$\mind(\C_n)=n-1$ for all $n\geq1$.
While the power of choice increases with the finite cardinality, we can
compensate cardinality by sufficiently many parallel copies of $\C_2$, as the following result shows.

\begin{theorem}[Cardinality versus products]
\label{thm:cardinality-products}
$\C_{n+1}\leqSW\C_{2}^n$ for all $n\in\IN$.
\end{theorem}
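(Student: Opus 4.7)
The plan is to exhibit explicit computable maps $K$ and $H$ witnessing the strong reduction $H \circ \C_2^n \circ K \prefix \C_{n+1}$. For $n = 0$ the claim is immediate, so assume $n \geq 1$. The guiding picture is a caterpillar binary tree with $n + 1$ leaves $0, 1, \ldots, n$ and $n$ internal nodes $u_1, \ldots, u_n$, where $u_i$ has leaf $i - 1$ as its left child and $u_{i+1}$ as its right child, except that $u_n$'s right child is leaf $n$. The post-processing map $H \colon \{0,1\}^n \to \{0,1,\ldots,n\}$ simply follows the corresponding root-to-leaf path: $H(b_1, \ldots, b_n) = i - 1$ if $i$ is the least index with $b_i = 0$, and $H(b_1, \ldots, b_n) = n$ if no such $i$ exists.

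The input map $K$ produces a negative name of each $B_i \subseteq \{0, 1\}$ from a negative name of $A$. The intended meaning is that $b = 0$ selects the left subtree $\{i - 1\}$ of $u_i$ while $b = 1$ selects the right subtree $\{i, i+1, \ldots, n\}$; both "this subtree is disjoint from $A$" are c.e.\ events in the input. To keep each $B_i$ non-empty even when the whole subtree at $u_i$ is disjoint from $A$, I use a first-observed-disjoint rule: enumerate $0 \notin B_i$ only at the first stage at which $i - 1$ has been newly enumerated into $A^c$ and $\{i, \ldots, n\} \not\subseteq A^c$ at that stage; symmetrically, enumerate $1 \notin B_i$ only at the first stage at which $\{i, \ldots, n\} \subseteq A^c$ is newly established and $i - 1 \notin A^c$ at that stage. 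Both conditions are monotone and c.e., so $K$ is computable.

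The verification splits in two steps. First, each $B_i$ is non-empty: the two triggers cannot fire at the same stage, since a single new observation cannot belong to both $\{i - 1\}$ and $\{i, \ldots, n\}$; and they cannot fire at distinct stages either, since the monotonicity of the enumeration of $A^c$ contradicts one of the two "not yet" clauses. Second, for any $(b_1, \ldots, b_n) \in B_1 \times \cdots \times B_n$ I must show $k := H(b_1, \ldots, b_n) \in A$. Assuming $k \in A^c$, let $s^\ast$ be the stage at which $k$ is first enumerated into $A^c$; applying the rule to the path values $b_1 = \cdots = b_k = 1$ and $b_{k+1} = 0$ (or $b_1 = \cdots = b_n = 1$ in the case $k = n$) forces, inductively from the top index downwards, that $k, k+1, \ldots, n$ together with $k - 1, k - 2, \ldots, 0$ are all in $A^c_{s^\ast}$, which yields $A = \emptyset$ and contradicts $A \in \dom(\C_{n+1})$.

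The main obstacle is precisely the design of this first-observed-disjoint rule: it has to be monotone (hence computable), it has to keep every $B_i$ non-empty in every possible configuration of $A$ (including the degenerate case where the subtree at $u_i$ is entirely disjoint from $A$, in which case the path never reaches $u_i$ and an arbitrary singleton answer is harmless), and the resulting Cartesian product has to be mapped by $H$ into $A$. The stage-sensitive rule above is what reconciles these three demands, and checking this reconciliation via the two case analyses just sketched is the crux of the argument.
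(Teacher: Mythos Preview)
Your proof is correct. The paper itself does not give an argument but merely cites Pauly's original result, so there is no detailed proof to compare against here; your explicit caterpillar-tree construction with the stage-sensitive ``first-observed-disjoint'' rule is a clean, self-contained realization of the reduction, and your two verifications (non-emptiness of each $B_i$ via the mutual exclusion of the two triggers, and correctness of $H$ via the downward induction anchored at the stage $s^\ast$ where $k$ first enters $A^c$) go through as stated. One small remark: your phrase ``a single new observation cannot belong to both $\{i-1\}$ and $\{i,\ldots,n\}$'' is not quite the reason the two triggers cannot fire at the same stage---rather, if both fired at stage $s$ you would simultaneously have $i-1\in A^c_s$ (from the first trigger) and $i-1\notin A^c_s$ (from the second), which is the actual contradiction; your argument for distinct stages via monotonicity is fine. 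It is also worth noting explicitly that the sets $B_i$ depend on the particular \emph{name} of $A$ (through the enumeration order of $A^c$) rather than on $A$ alone, but this is entirely permissible since $K$ operates on names.
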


It is easy to see that also $\C_2^n\leqSW\C_{2^n}$ holds.
This implies the second equivalence in the following result.

\begin{proposition}[Compact choice]
\label{prop:KN}
$\K_\IN\equivSW\C_2^*\equivSW\C_n^*$ for all $n\geq2$.
\end{proposition}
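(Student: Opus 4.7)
The plan is to establish $\K_\IN \leqSW \C_2^*$, $\C_2^* \leqSW \K_\IN$, and the equivalence $\C_2^* \equivSW \C_n^*$ for $n\ge 2$. The last is the easiest: $\C_2^* \leqSW \C_n^*$ holds by applying the embedding $\{0,1\}\hookrightarrow\{0,\dots,n-1\}$ coordinate-wise, which gives a computable input transformer, with the output transformer being the identity since the answers already lie in $\{0,1\}$. The converse $\C_n^* \leqSW \C_2^*$ is obtained by applying the reduction $\C_n \leqSW \C_2^{n-1}$ of Theorem~\ref{thm:cardinality-products} to each of the $k$ coordinates, producing a $\C_2^*$-instance of length $k(n-1)$. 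The output decoder reads the length from the output word of $\C_2^*$, divides by the fixed $n-1$ to recover $k$, splits the bit tuple into $k$ blocks of length $n-1$, and applies the uniform decoder of Theorem~\ref{thm:cardinality-products} to each block.

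For $\C_2^* \leqSW \K_\IN$, given an input $(k,(A_1,\dots,A_k))$ with each non-empty $A_i \in \AA_-(\{0,1\})$, I define the finite set
\[K := \bigl\{\langle k,\, \textstyle\sum_{i=1}^k x_i 2^{i-1}\rangle : (x_1,\dots,x_k) \in A_1 \times\cdots\times A_k\bigr\} \In \IN.\]
A $\kappa_-$-name of $K$ is produced by enumerating codes of those finite $F\In\IN$ satisfying $K\In F$; this containment reduces to the semi-decidable statement that for every $m<2^k$ with $\langle k,m\rangle\notin F$, the bit expansion of $m$ fails to be a tuple in $A_1\times\cdots\times A_k$, which is a finite conjunction of semi-decidable $\AA_-$-non-membership tests. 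The trivial cover $\{\langle k,m\rangle : 0 \le m < 2^k\}$ is always valid, ensuring at least one cover is enumerated. The output decoder takes $\K_\IN$'s answer $j \in K$, parses $j=\langle k,m\rangle$, extracts the bits of $m$, and returns the word $(k,(x_1,\dots,x_k))$.

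For $\K_\IN \leqSW \C_2^*$, given $K \in \KK_-(\IN)$, the input transformer waits for the first enumerated finite cover in the $\kappa_-$-name and sets $n$ to one more than its maximum, so $K \In \{0,\dots,n-1\}$. The closed-set rep of $K$ in $\AA_-(\{0,\dots,n-1\})$ is then extracted from the $\kappa_-$-name (since $i \notin K$ is semi-decidable by waiting for a cover avoiding $i$), and the uniform reduction $\C_n \leqSW \C_2^{n-1}$ of Theorem~\ref{thm:cardinality-products} turns this into a $\C_2^*$-instance of length $n-1$. The output decoder reads $n-1$ from the output word of $\C_2^*$, reconstructs $n$, and applies the uniform Theorem~\ref{thm:cardinality-products} decoder to the bit tuple to yield an element of $K$.

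The main obstacle is the uniformity of Theorem~\ref{thm:cardinality-products} in $n$: both the $\K_\IN \leqSW \C_2^*$ and the $\C_n^* \leqSW \C_2^*$ reductions are strong Weihrauch, so the output transformer sees only the $\C_2^*$-output word and must reconstruct the parameter $n$ (respectively $k$) purely from its length. This is delivered by the standard uniform-in-$n$ constructions (e.g., a binary tournament) underlying Theorem~\ref{thm:cardinality-products}, but needs to be flagged explicitly since a non-uniform version would not suffice for strong reductions.
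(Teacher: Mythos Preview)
Your proof is correct and follows essentially the same route as the paper, which derives the result from Theorem~\ref{thm:cardinality-products} together with the easy observation $\C_2^n\leqSW\C_{2^n}$ (the paper itself just cites \cite[Proposition~10.9]{BGM12}). Your explicit use of the Cantor pairing $\langle k,m\rangle$ to make $k$ recoverable from the $\K_\IN$-output, and your flagging of the uniformity of Theorem~\ref{thm:cardinality-products} in $n$ as the key point for the strong reduction $\K_\IN\leqSW\C_2^*$, are exactly the details one needs to make the strong equivalences go through.
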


We note that $\C_2^*\lW\C_\IN$, since $\C_\IN$ is a fractal by Proposition~\ref{prop:choice-fractal}
and hence countably irreducible.

\begin{corollary}[Compact versus closed choice]
\label{cor:KN}
$\K_\IN\lW\C_\IN$.
\end{corollary}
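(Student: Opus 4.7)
The plan is to split the statement into the two reductions $\K_\IN\leqW\C_\IN$ and $\C_\IN\nleqW\K_\IN$, and in both cases pivot through the identification $\K_\IN\equivSW\C_2^*$ from Proposition~\ref{prop:KN}.

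For the easy direction, I would first observe that $\C_2\leqW\C_\IN$ is immediate (every non-empty subset of $\{0,1\}$ is a non-empty subset of $\IN$, and the natural inclusion of names is computable). Since $\C_\IN$ is pointed and idempotent by Corollary~\ref{cor:choice-composition}, Proposition~\ref{prop:idempotency-parallelizability} yields $\C_\IN^*\equivW\C_\IN$. Because finite parallelization is a closure operator with respect to $\leqW$ (Proposition~\ref{prop:mon-closure}), the reduction $\C_2\leqW\C_\IN$ lifts to $\C_2^*\leqW\C_\IN^*\equivW\C_\IN$, and combining with Proposition~\ref{prop:KN} gives $\K_\IN\leqW\C_\IN$.

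For the strict part I would argue by contradiction, following the hint given in the text right after Proposition~\ref{prop:KN}. Suppose $\C_\IN\leqW\K_\IN\equivSW\C_2^*$. Unfolding the definitions in Definition~\ref{def:algebraic-operations}(7), the input space $X^*$ of $\C_2^*$ is the countable disjoint union $\bigsqcup_{n\in\IN}X^n$, and on the $n$-th component $\C_2^*$ acts as $\C_2^n$; hence $\C_2^*\equivW\bigsqcup_{n\in\IN}\C_2^n$. By Proposition~\ref{prop:choice-fractal}, $\C_\IN$ is a fractal, so by Proposition~\ref{prop:fractal} it is countably irreducible. Thus $\C_\IN\leqW\bigsqcup_{n\in\IN}\C_2^n$ forces $\C_\IN\leqW\C_2^n$ for some fixed $n\in\IN$.

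To close the contradiction I would use the mind-change invariant. By Proposition~\ref{prop:finite-choice} we have $\C_{n+2}\leqW\C_\IN$, so transitivity gives $\C_{n+2}\leqW\C_2^n$. But a direct count shows $\mind(\C_{n+2})=n+1$ (the greedy strategy of guessing $0,1,2,\dots$ until the current guess gets enumerated out needs $n+1$ revisions in the worst case, namely on the singleton $\{n+1\}$), whereas $\mind(\C_2^n)\leq n$ (each of the $n$ independent bits requires at most one revision). This violates Proposition~\ref{prop:mind}, completing the separation. The main obstacle is the second step, and specifically the choice of numerical invariant: cardinality fails here because it is a strong-Weihrauch invariant and $\#\C_2^n=2^n\geq n+1$, so one genuinely needs the mind-change count, which is preserved under ordinary $\leqW$, to rule out $\C_\IN\leqW\C_2^n$ for any finite $n$.
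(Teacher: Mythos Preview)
Your proof is correct and follows essentially the same approach as the paper: the text immediately preceding the corollary already indicates that the strictness comes from $\C_\IN$ being a fractal (Proposition~\ref{prop:choice-fractal}) and hence countably irreducible, reducing the question to $\C_\IN\nleqW\C_2^n$ for each fixed $n$.

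The only minor difference is in how that last step is closed. You invoke the mind-change invariant, which works. The paper's surrounding material suggests a slightly shorter route: right after Theorem~\ref{thm:cardinality-products} it is noted that $\C_2^n\leqSW\C_{2^n}$, and Proposition~\ref{prop:finite-choice} already gives $\C_{2^n}\lW\C_\IN$, so $\C_\IN\leqW\C_2^n$ is impossible directly. Your closing remark that ``one genuinely needs the mind-change count'' is therefore a slight overstatement---no numerical invariant is required at all---but this is a point of economy, not of correctness.
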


We use the minimum function $\min:\In\IN^\IN\to\IN,p\mapsto\min\{p(n):n\in\IN\}$
in order to express the last result of this subsection.

\begin{proposition}[Minimum]
\label{prop:min}
$\LPO^*\equivSW\min$.
\end{proposition}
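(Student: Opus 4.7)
The plan is to establish both strong Weihrauch reductions separately using Proposition~\ref{prop:GM09}.

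For $\min \leqSW \LPO^*$, I would exploit that the very first coordinate $p(0)$ provides an upper bound on $\min(p)$, so the problem reduces to deciding for each $m \in \{0, 1, \dots, p(0)-1\}$ whether $m$ appears somewhere in the sequence $p$. Explicitly, the pre-processor $K$ maps $p$ to the tuple $(p(0), q_0, \dots, q_{p(0)-1})$ where $q_m(k) := 0$ if $p(k) = m$ and $q_m(k) := 1$ otherwise, so that $\LPO(q_m) = 1$ iff $m$ appears in $p$. The post-processor $H$ then reads the bit vector returned by $\LPO^*$ and outputs the smallest index $m$ with $b_m = 1$, or $p(0)$ if all bits are zero. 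Both $H$ and $K$ are plainly computable, and the composition solves $\min$ for every realizer of $\LPO^*$.

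For the reverse direction $\LPO^* \leqSW \min$, the idea is to encode the entire bit-vector answer of an $\LPO^*$-instance as a single natural number and to construct a sequence whose minimum recovers this encoding. Given input $(n, p_1, \dots, p_n)$, let $m^* \in \{0, \dots, 2^n - 1\}$ be the number whose $i$-th bit equals $\LPO(p_i)$. I would call a candidate $m < 2^n$ \emph{refuted at stage $k$} if there is some $i$ with the $i$-th bit of $m$ equal to $0$ while $p_i(k') = 0$ for some $k' \leq k$. A candidate is never refuted iff it bit-wise dominates $m^*$, so among all candidates that are never refuted, $m^*$ is numerically the smallest. Setting $q(k) := \langle n, \mu_k \rangle$ with $\mu_k := \min\{m < 2^n : m \text{ not refuted at stage } k\}$ and $\langle \cdot, \cdot \rangle$ the Cantor pairing (which is monotone in its second argument for fixed first argument), I obtain $\min(q) = \langle n, m^* \rangle$, from which the post-processor can extract $n$ and the bits.

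The main obstacle is a bookkeeping concern: because for strong reductions the post-processor sees only the oracle's output, it must recover both the length $n$ and the bit pattern from $\min(q)$ alone, which is precisely what the pairing trick accomplishes. One also needs to verify that $\mu_k$ is always well-defined (the candidate $m = 2^n - 1$ is never refuted, so the minimum exists) and that $\mu_k$ eventually stabilizes at $m^*$ once every $i$ with $\LPO(p_i) = 1$ has been witnessed by a zero in $p_i$; since refutation is monotone in $k$ the sequence $\mu_k$ is non-decreasing and bounded above by $m^*$ at every stage when all relevant witnesses have appeared, so $\min(q) = \langle n, m^* \rangle$ as required.
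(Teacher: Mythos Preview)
Your reduction $\min\leqSW\LPO^*$ is correct and matches the paper's argument.

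The reduction $\LPO^*\leqSW\min$ has a genuine error. You correctly observe that refutation is monotone in $k$, so your sequence $(\mu_k)_k$ is non-decreasing; and you correctly observe that $m^*$ is never refuted, so $\mu_k\leq m^*$ for every $k$. But these two facts together imply $\min_k\mu_k=\mu_0$, not $\min_k\mu_k=m^*$. Concretely, take $n=1$ and $p_1=1,0,1,1,\ldots$; then $\LPO(p_1)=1$ and $m^*=1$, yet at stage $0$ no zero has been seen, so candidate $0$ is unrefuted and $\mu_0=0$. Hence $\min(q)=\langle 1,0\rangle\neq\langle 1,m^*\rangle$. Your final sentence conflates the limit of a non-decreasing bounded sequence with its minimum.

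The fix is to make the enumerated sequence non-increasing rather than non-decreasing, so that its minimum equals its limit. This is exactly what the paper does: start by repeatedly outputting $2^n-1$, and whenever a zero is found in $p_i$ (witnessing $\LPO(p_i)=1$), subtract $2^i$ from the value currently being output. The resulting sequence is non-increasing and stabilizes at $\sum_{\LPO(p_i)=0}2^i$, from which the bit vector is read off. Your pairing trick with $\langle n,\cdot\rangle$ to recover $n$ in the post-processor is fine and can be retained. (Incidentally, within your own framework the repair would be to track, at stage $k$, the number whose $i$-th bit is $1$ exactly when no zero has yet appeared in $p_i$; this is non-increasing with limit $2^n-1-m^*$.)
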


\subsection{Choice on Cantor Space}

Choice on Cantor space $2^\IN$ is closely related to weak K\H{o}nig's lemma,
which states that every infinite binary tree $T\In2^*$ has an infinite path $p\in 2^\IN$ (formally, a binary tree is a subset of $2^*$ closed downward with respect to the partial order induced by the prefix relation).
By $\Tr$ we denote the set of binary trees $T\In2^*$ (represented by their characteristic
functions ${\chi_T:{2^*}\to 2}$), and by $[T]$ we denote the set of infinite paths $p\in2^\IN$ of $T$.
Now we formalize weak K\H{o}nig's lemma as the problem $\WKL:\In\Tr\mto2^\IN,T\mapsto[T]$,
where $\dom(\WKL)$ consists of all infinite binary trees.

It is well-known that the map $[.]:\Tr\to\AA_-(2^\IN),T\mapsto[T]$ is computable, surjective and
it has a computable multi-valued right-inverse. This yields the first equivalence in the following
theorem.

\begin{theorem}[Choice on Cantor space]
\label{thm:choice-cantor}
$\WKL\equivSW\!\C_{2^\IN}\!\equivSW\!\C_X\!\equivSW\!\widehat{\C_2}\!\equivSW\!\widehat{\LLPO}$ for every rich computably compact computable metric space $X$.
\end{theorem}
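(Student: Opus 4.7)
The plan is to establish four strong equivalences whose transitive closure yields the full chain; two are essentially immediate and two rely on the classical tree-building argument that links closed choice on Cantor space to parallel binary choice.

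First, $\WKL\equivSW\C_{2^\IN}$ follows directly from the stated fact that $[\cdot]:\Tr\to\AA_-(2^\IN)$ is computable, surjective, and admits a computable multi-valued right-inverse: one direction places $[\cdot]$ on the input side and the identity on the output side, while the other uses the right-inverse. Next, $\widehat{\C_2}\equivSW\widehat{\LLPO}$ is immediate from $\C_2\equivSW\LLPO$ (Proposition~\ref{prop:LLPO-LPO}) together with the fact that $\;\widehat{\ }\;$ is a closure operator for $\leqSW$ (Proposition~\ref{prop:mon-closure}(3)). For $\C_X\equivSW\C_{2^\IN}$ I would invoke Proposition~\ref{prop:injection-surjection}: richness supplies a computable embedding $\iota:2^\IN\into X$ whose image is computably compact, hence co-c.e.\ closed, so part~(1) yields $\C_{\iota(2^\IN)}\leqSW\C_X$; via $\iota$ and its partial inverse one has $\C_{\iota(2^\IN)}\equivSW\C_{2^\IN}$, giving $\C_{2^\IN}\leqSW\C_X$. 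Conversely, computable compactness of $X$ supplies a computable surjection $s:2^\IN\to X$ (obtained by coding points through sequences of $2^{-n}$-dense finite covers), and part~(2) delivers $\C_X\leqSW\C_{2^\IN}$.

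The heart of the theorem is $\C_{2^\IN}\equivSW\widehat{\C_2}$. The direction $\widehat{\C_2}\leqSW\C_{2^\IN}$ is direct: a sequence $(A_n)_n$ of non-empty co-c.e.\ closed subsets of $\{0,1\}$ gives the non-empty co-c.e.\ closed product $\prod_{n\in\IN}A_n\In2^\IN$ uniformly, and any point of it encodes a $\C_2$-answer at every coordinate. For the reverse, given a non-empty co-c.e.\ closed $A\In2^\IN$, I would build a path in the pruned tree $T_A$ bit by bit. For each $w\in2^*$ construct $B_w\In\{0,1\}$ as follows: as basic balls are enumerated into the complement of $A$, remove $i\in\{0,1\}$ from $B_w$ when the cylinder $[wi]$ is first covered, with the proviso that an element is never removed if it is currently the last one remaining. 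For every $w\in T_A$ the surviving element is the unique child that extends to a point of $A$, while for $w\notin T_A$ the instance still satisfies the $\C_2$ promise but its answer is ultimately irrelevant. Feeding $(B_w)_{w\in2^*}$ into $\widehat{\C_2}$ produces a family $(b_w)_{w\in2^*}$, from which the recursion $p(0):=b_{()}$ and $p(n+1):=b_{p(0)p(1)\ldots p(n)}$ extracts a point of $A$.

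The main obstacle is this last reduction: one has to produce the co-c.e.\ instances $B_w$ uniformly in $w$ while respecting monotonicity (elements can only be removed, never added back) and simultaneously ensuring non-emptiness everywhere, so that the $\widehat{\C_2}$ premise holds at every parallel index. The ``keep the last element'' rule handles exactly this, and the construction is a strong reduction because the output path is assembled solely from the $\widehat{\C_2}$ output without re-consulting the input.
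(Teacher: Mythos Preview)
Your proposal is correct and follows the standard arguments that the paper's cited references carry out; the paper itself supplies only citations (to Brattka--Gherardi for $\WKL\equivSW\widehat{\LLPO}$ and to Brattka--de~Brecht--Pauly for $\C_X\equivSW\C_{2^\IN}$) together with the remark about the map $[\cdot]$ that you invoke for $\WKL\equivSW\C_{2^\IN}$. Your tree-following construction for $\C_{2^\IN}\leqSW\widehat{\C_2}$ is exactly the classical one, with the minor caveat that $B_w$ may retain both bits when both children of $w$ lie in $T_A$---but this is harmless since your recursion only requires $wb_w\in T_A$, which your ``keep the last element'' rule guarantees.
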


In particular, $\C_{2^\IN}$ is parallelizable, and the problem of finding a path in a binary tree
can be reduced to countably many binary choices, i.e., $\WKL\leqW\widehat{\C_2}$.
In the following corollary we list the choice problem for some important examples of rich computably compact computable metric spaces.

\begin{corollary}
\label{cor:C2N-equiv}
$\C_{2^\IN}\equivSW\C_{[0,1]^\IN}\equivSW\C_{[0,1]^n}$ for all $n\geq1$.
\end{corollary}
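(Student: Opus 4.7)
The plan is to deduce both equivalences directly from Theorem~\ref{thm:choice-cantor}, which already establishes $\C_X\equivSW\C_{2^\IN}$ for every rich computably compact computable metric space $X$. Thus the task reduces to verifying that $[0,1]^n$ for every $n\geq 1$, and $[0,1]^\IN$, satisfy the three hypotheses ``computable metric space'', ``computably compact'', and ``rich''.

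For the metric-space structure, $[0,1]$ inherits one from $\IR$ via the Euclidean metric and the standard enumeration of rationals in $[0,1]$; finite products carry the sup-metric together with tuples of rationals as the dense sequence, and the Hilbert cube $[0,1]^\IN$ is given the standard metric $d((x_i),(y_i)):=\sum_{i=0}^\infty 2^{-i}|x_i-y_i|$ together with the dense sequence of eventually-zero rational sequences. In all cases, the required computability of $d\circ(\alpha\times\alpha)$ is routine.

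For computable compactness, $[0,1]$ is computably compact because from any formal rational covering of $[0,1]$ by basic balls one can effectively decide whether it covers. Finite products of computably compact spaces are computably compact by forming rectangular covers, so $[0,1]^n$ works. For $[0,1]^\IN$ one uses that every finite union $\bigcup_{i=1}^k B_{n_i}$ of basic balls depends only on finitely many coordinates, so the covering question reduces uniformly to the same question on some $[0,1]^N$; a dense computable sequence is given by the eventually-zero rational sequences.

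For richness, I would use the classical Cantor embedding $\iota_0:2^\IN\to[0,1]$, $(b_k)\mapsto\sum_{k=0}^\infty 2b_k\cdot 3^{-k-1}$, which is computable, injective, and whose partial inverse is computable since the ternary gaps at each level are effectively located. For $X=[0,1]^n$ with $n\geq 1$, take $\iota(b):=(\iota_0(b),0,\dots,0)$; for $X=[0,1]^\IN$, take $\iota(b):=(\iota_0(b),0,0,\dots)$ (or simply the inclusion $2^\IN\into[0,1]^\IN$). In every case $\iota$ and $\iota^{-1}$ are computable, so Theorem~\ref{thm:choice-cantor} applies and delivers $\C_{[0,1]^n}\equivSW\C_{2^\IN}\equivSW\C_{[0,1]^\IN}$.

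The main obstacle is nothing conceptual but only ensuring the strong form of the reducibility. One must check that the cover enumeration for the Hilbert cube and the inversion of the Cantor embedding can be produced uniformly (independent of the input), so that the upper bounds of Proposition~\ref{prop:injection-surjection} and the equivalence in Theorem~\ref{thm:choice-cantor} hold with $\equivSW$ rather than merely $\equivW$; both are standard in computable analysis.
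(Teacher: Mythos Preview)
Your proposal is correct and follows exactly the paper's intended approach: the corollary is introduced as listing ``some important examples of rich computably compact computable metric spaces,'' so it is meant to be an immediate application of Theorem~\ref{thm:choice-cantor}, and you have supplied precisely the required verifications (computable metric structure, computable compactness, and richness via the Cantor embedding). Your final paragraph is a bit over-cautious—Theorem~\ref{thm:choice-cantor} already delivers $\equivSW$, so once the hypotheses are checked there is nothing further to worry about.
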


Similarly as for choice on natural numbers there is a choice elimination result for $\C_{2^\IN}$.
This result can be proved using compactness properties.

\begin{theorem}[Choice elimination]
\label{thm:C2N-elimination}
$f\leqW\C_{2^\IN}*g\TO f\leqW g$ for every single-valued problem $f:X\to Y$ with a 
computable metric space $Y$ and every $g$.
\end{theorem}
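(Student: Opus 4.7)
The plan is to exploit the compactness of Cantor space together with the fact that $f$ is single-valued into a computable metric space, in order to eliminate the non-deterministic choice step after $g$ has been applied.

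First I would unpack the hypothesis. Using $\C_{2^\IN}*g\equivW\C_{2^\IN}\star g$ from Proposition~\ref{prop:compositional-product-cylinder} together with the definition of $\star$, the reduction $f\leqW\C_{2^\IN}*g$ yields computable $H,K$ such that for each name $u$ of an input $x\in\dom(f)$ and each realizer $G$ of $g$, writing $K(u)=\langle p,q\rangle$ and $\Phi_p(G(q))=\langle s,b\rangle$, the sequence $b$ is a $\psi_-$-name of a non-empty closed set $A\subseteq 2^\IN$ and $H\langle u,\langle s,r\rangle\rangle$ is a name of $f(x)$ \emph{for every} $r\in A$. In particular, both $s$ and $A$ are computable from $u$ and $G(q)$, independently of the choice of $r$.

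Next, for each $n$ I would define $\Psi_n\colon A\to\IN$ to send $r\in A$ to the index (in the dense sequence $\alpha$ of $Y$) of the $n$-th component of the Cauchy name $H\langle u,\langle s,r\rangle\rangle$. Since $f$ is single-valued, every $r\in A$ yields a valid Cauchy name of the same point $f(x)$, so $d(\alpha\Psi_n(r),f(x))<2^{-n}$ for all $r\in A$. The map $\Psi_n$ is continuous into the discrete space $\IN$, hence locally constant; by compactness of $A\subseteq 2^\IN$ it takes only finitely many values, pairwise within distance $2^{-n+1}$.

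The main step is building the reduction $f\leqW g$. Take $K'(u)$ to be the $q$-part of $K(u)$ and define $H'$ on $\langle u,G(q)\rangle$ as follows. Recompute $p$ from $u$, then $\langle s,b\rangle=\Phi_p(G(q))$, and hence the $\psi_-$-name $b$ of $A$. For each precision $n$, search dovetailed over $T\in\IN$ for a partition $2^T=D\sqcup E$ satisfying: (i) for every $w\in D$ the simulation of $\Psi_{n+2}$ on oracle $w$ halts within $T$ steps while querying only positions $<T$, producing an output $q(w)$; (ii) for every $w\in E$ the enumeration provided by $b$ has already witnessed $[w]\cap A=\emptyset$, where $[w]$ denotes the cylinder of sequences extending $w$; and (iii) the values $\alpha q(w)$ for $w\in D$ are pairwise within $2^{-n-1}$. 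Output any such $\alpha q(w)$ as the $n$-th approximation of $f(x)$ in the Cauchy name.

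The main obstacle, and the only non-routine step, is justifying this dovetailed search. Termination follows from compactness of $A$: by continuity, each $\Psi_{n+2}(r)$ uses only a finite prefix of $r$, and the supremum of required prefix lengths over $r\in A$ is finite; combined with the locally constant behavior of $\Psi_{n+2}$ and the $2^{-n+1}$-clustering noted above, all three conditions are met once $T$ exceeds that supremum (padding $n+1$ to $n+2$ provides slack). Correctness holds because $A\neq\emptyset$ forces some $w^*\in D$ with $[w^*]\cap A\neq\emptyset$, and for any $r\in[w^*]\cap A$ we have $\alpha q(w^*)=\alpha\Psi_{n+2}(r)$ within $2^{-n-2}$ of $f(x)$; hence any chosen $\alpha q(w)$ with $w\in D$ lies within $2^{-n-2}+2^{-n-1}<2^{-n}$ of $f(x)$. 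All remaining work is bookkeeping to assemble $H'$ and $K'$ into a valid Weihrauch reduction, giving $f\leqW g$.
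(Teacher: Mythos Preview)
Your argument is correct and follows the standard compactness approach; this is exactly the method of the proof the paper cites (\cite[Theorem~5.1]{BBP12}), which the paper does not reproduce. One small imprecision: your termination argument explains why condition (i) is eventually met for strings $w$ meeting $A$, but you should also note that for $w\in 2^T$ with $[w]\cap A=\emptyset$ the $\psi_-$-name $b$ will \emph{eventually} witness this, so the dovetailed search must also advance the reading of $b$; once both bounds are exceeded the partition $D=\{w:[w]\cap A\neq\emptyset\}$, $E=2^T\setminus D$ satisfies all three conditions. This is implicit in your ``routine bookkeeping'' but worth stating, since otherwise a $w$ with $[w]\cap A=\emptyset$ on which the simulation happens to halt with a bad value could not be placed in $E$.
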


This result can also be generalized to admissibly represented spaces $Y$. 
We obtain the following important special case.

\begin{corollary}[Single-valuedness]
\label{cor:C2N-single-valued}
$f\leqW\C_{2^\IN}\TO f$ computable, for all single-valued problems $f:X\to Y$ with a computable metric space $Y$.
\end{corollary}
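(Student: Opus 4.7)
The plan is to obtain this corollary as an immediate specialization of Theorem~\ref{thm:C2N-elimination}, taking the second problem to be the identity $\id$ on Baire space (which represents the computable degree $\mathbf{1}$). The only thing I need beforehand is the equivalence $\C_{2^\IN} * \id \equivW \C_{2^\IN}$, so that the hypothesis $f \leqW \C_{2^\IN}$ can be rewritten as $f \leqW \C_{2^\IN} * \id$ and the elimination theorem becomes applicable.

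For this preliminary equivalence, $\C_{2^\IN} \leqW \C_{2^\IN} * \id$ is immediate from the definition of the compositional product in Theorem~\ref{thm:compositional-product-implication}(1), by choosing $f_0 = \C_{2^\IN}$ and $g_0 = \id$. For the reverse direction, any candidate $f_0 \circ g_0$ with $f_0 \leqW \C_{2^\IN}$ and $g_0$ computable satisfies $f_0 \circ g_0 \leqW \C_{2^\IN}$, since the computable pre-processing $g_0$ can be folded into the input-side reduction function of Proposition~\ref{prop:GM09}(1); the maximality clause in the definition of $*$ then yields $\C_{2^\IN} * \id \leqW \C_{2^\IN}$.

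With this equivalence in hand, the hypothesis gives $f \leqW \C_{2^\IN} * \id$; since $f \colon X \to Y$ is single-valued with $Y$ a computable metric space, Theorem~\ref{thm:C2N-elimination} applied with $g = \id$ delivers $f \leqW \id \equivW \mathbf{1}$, which by the characterization of the degree $\mathbf{1}$ recorded in Section~\ref{sec:Weihrauch} is precisely the computability of $f$. I do not foresee any genuine obstacle here: the substantive content of the corollary is entirely absorbed into Theorem~\ref{thm:C2N-elimination}, whose proof encodes the compactness argument for Cantor space; once that theorem is available, the corollary is essentially a one-line specialization.
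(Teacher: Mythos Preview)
Your proposal is correct and matches the paper's intended derivation: the corollary is placed immediately after Theorem~\ref{thm:C2N-elimination} precisely because it is the specialization $g=\id$, using that $\id$ is a neutral element for $*$ (so $\C_{2^\IN}*\id\equivW\C_{2^\IN}$) and that $f\leqW\id$ characterizes computability. The paper's appendix proof simply cites the original source \cite[Corollary~8.8]{BG11}, but your explicit unpacking is exactly the argument implicit in the placement of the corollary.
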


In particular this applies to $f=\UC_{2^\IN}$. Since $\lim_\IN$ is a single-valued problem
in the equivalence class of $\C_\IN$, we also get the following conclusion.

\begin{corollary}[Separation]
\label{cor:C2N-separation}
$\C_\IN\nleqW\C_{2^\IN}$.
\end{corollary}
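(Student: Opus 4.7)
The plan is to derive the separation from Corollary~\ref{cor:C2N-single-valued} together with the identification $\C_\IN\equivSW\lim_\IN$ from Theorem~\ref{thm:CN-strong}. The key observation is that the equivalence class of $\C_\IN$ contains a single-valued representative whose codomain is a computable metric space, so the single-valuedness barrier of $\C_{2^\IN}$ directly applies.

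Concretely, I would argue by contradiction. Suppose $\C_\IN\leqW\C_{2^\IN}$. By Theorem~\ref{thm:CN-strong} we have $\lim\nolimits_\IN\equivSW\C_\IN$, hence in particular $\lim\nolimits_\IN\leqW\C_{2^\IN}$. Now $\lim\nolimits_\IN\colon\In\IN^\IN\to\IN$ is a single-valued problem whose codomain $\IN$ is a computable metric space (under the discrete metric). Therefore Corollary~\ref{cor:C2N-single-valued} applies and forces $\lim\nolimits_\IN$ to be computable. This is the desired contradiction, provided we know that $\lim\nolimits_\IN$ is \emph{not} computable.

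To close the argument, I would note that non-computability of $\lim\nolimits_\IN$ is standard: a computable realizer of $\lim\nolimits_\IN$ would be able to determine the limit of any computable eventually constant sequence in $\IN$ after finitely many steps, which would let one decide the halting problem. Within the framework of the paper this can also be phrased as $\mind(\lim\nolimits_\IN)\geq 1$, whereas every computable problem has $\mind=0$; combined with Proposition~\ref{prop:mind} this rules out $\lim\nolimits_\IN\leqW\mathbf{1}$.

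I do not anticipate a real obstacle: the substantive work is hidden in Corollary~\ref{cor:C2N-single-valued} (which rests on the compactness-based choice elimination Theorem~\ref{thm:C2N-elimination}) and in the identification $\C_\IN\equivW\lim\nolimits_\IN$. Once these are invoked, the separation is immediate. The only point that requires a line of justification is the non-computability of $\lim\nolimits_\IN$, which is the smallest piece of ``external'' input to the argument.
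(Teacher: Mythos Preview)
Your proposal is correct and follows essentially the same route as the paper: the text immediately preceding Corollary~\ref{cor:C2N-separation} observes that $\lim_\IN$ is a single-valued representative of the degree of $\C_\IN$, so Corollary~\ref{cor:C2N-single-valued} yields the separation. Your added justification that $\lim_\IN$ is non-computable is fine and makes the argument self-contained.
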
 

The so-called weak weak K\H{o}nig's lemma $\WWKL$ is $\WKL$ restricted to trees
$T$ such that $\mu([T])>0$. It is easy to see that it is equivalent to $\PC_{2^\IN}$.

\begin{theorem}[Positive choice on Cantor space]
\label{thm:PC2N}
$\WWKL\equivSW\PC_{2^\IN}\equivSW\PC_{[0,1]}$.
\end{theorem}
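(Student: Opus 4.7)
The plan is to establish the three strong Weihrauch equivalences in succession, using explicit reduction witnesses $(K,H)$ (with $H$ applied to the oracle output only, no bypass), and verifying in each case that the translation preserves the positive-measure side-condition.

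For $\WWKL\equivSW\PC_{2^\IN}$, I would reuse the bijective correspondence between binary trees and closed subsets of $2^\IN$ which underlies the equivalence $\WKL\equivSW\C_{2^\IN}$ in Theorem~\ref{thm:choice-cantor}. The forward direction $\WWKL\leqSW\PC_{2^\IN}$ uses $K(T):=[T]$, which is computable $\Tr\to\AA_-(2^\IN)$, together with $H=\id$. The converse $\PC_{2^\IN}\leqSW\WWKL$ uses the computable (multi-valued) inverse: given $A\in\AA_-(2^\IN)$ presented as $2^\IN\setminus\bigcup_i[v_i]$, set $K(A):=T$ with $\chi_T(w)=1$ iff no $v_i$ with $i\leq|w|$ is a prefix of $w$; this is computable and satisfies $[T]=A$. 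The key observation on the positive-measure restriction is that $\mu([T])=\mu(A)$ on both sides (the uniform path measure on $[T]$ equals the Haar measure of the associated closed set), so in either direction the hypothesis $\mu>0$ is preserved under $K$.

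For $\PC_{[0,1]}\leqSW\PC_{2^\IN}$, I would use the binary expansion surjection $s\colon 2^\IN\to[0,1]$, $s(p):=\sum_{n=0}^\infty p(n)2^{-n-1}$, which is computable and pushes the uniform measure forward to Lebesgue measure. Define $K(A):=s^{-1}(A)$: since $s$ is computable and $[0,1]\setminus A=\bigcup_i B_i$ is given c.e., $s^{-1}(A)^c=\bigcup_i s^{-1}(B_i)$ is a c.e.\ open cover in $2^\IN$, so $s^{-1}(A)\in\AA_-(2^\IN)$ uniformly in $A$. Measure preservation gives $\mu(s^{-1}(A))=\lambda(A)>0$. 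Finally $H:=s$ sends $p\in s^{-1}(A)$ to $s(p)\in A$.

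The main obstacle is the reverse direction $\PC_{2^\IN}\leqSW\PC_{[0,1]}$, since the binary expansion is not injective on dyadic rationals and so cannot be inverted pointwise. The plan is to replace $s$ by a computable measure-scaling homeomorphism $\iota\colon 2^\IN\to C$ onto a fat Cantor set $C\subseteq[0,1]$. I would fix a computable sequence $(\epsilon_n)$ of positive rationals with $\sum_n 2^n\epsilon_n<1$ and define closed intervals $J_w$ indexed by $w\in 2^*$ by starting from $J_\epsilon=[0,1]$ and recursively splitting each $J_w$ into $J_{w0}$ and $J_{w1}$ by removing an open middle gap $G_w$ of length $\epsilon_{|w|}$; setting $\iota(p):=$ the unique point of $\bigcap_n J_{p|_n}$ yields a computable homeomorphism onto $C:=\bigcap_n\bigcup_{|w|=n}J_w$, with $\lambda(C)>0$ and $\iota_*\mu=\lambda|_C/\lambda(C)$, so $\lambda(\iota(A))=\lambda(C)\cdot\mu(A)$. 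For $K(A):=\iota(A)$ to be computable as a co-c.e.\ closed subset of $[0,1]$, I would enumerate the open cover of $[0,1]\setminus\iota(A)$ by combining (i) the c.e.\ family of gaps $G_w$ (which cover $[0,1]\setminus C$), and (ii) for each $v_i$ enumerated in $2^\IN\setminus A$, the open interval $(a_i-\delta_i,b_i+\delta_i)$ where $[a_i,b_i]=J_{v_i}$ and $\delta_i$ is half the width of the adjacent gaps. The verification $\iota(A)^c=([0,1]\setminus C)\cup\bigcup_i(a_i-\delta_i,b_i+\delta_i)$ is a case check: interior points of $J_{v_i}$ in $C$ correspond via $\iota^{-1}$ to $[v_i]\subseteq A^c$, while the extensions only dip into gaps and thus avoid $C\setminus\iota([v_i])$. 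Then $H:=\iota^{-1}$ is computable on $C$ and sends $x\in\iota(A)$ back to a point of $A$, completing the strong reduction.
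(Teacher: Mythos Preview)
Your argument is correct. The paper itself does not give a proof but simply cites \cite[Proposition~8.2]{BGH15a}, so there is no detailed comparison to make; nevertheless, what you wrote is essentially the standard route to this result.

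A few remarks on the details. For $\WWKL\equivSW\PC_{2^\IN}$ your use of the tree--path correspondence together with $\mu([T])=\mu(A)$ is exactly what is needed; nothing more has to be said. For $\PC_{[0,1]}\leqSW\PC_{2^\IN}$ the binary expansion surjection works as you describe, since $s_*\mu=\lambda$ gives $\mu(s^{-1}(A))=\lambda(A)>0$ on the nose. The substantive direction is $\PC_{2^\IN}\leqSW\PC_{[0,1]}$, and your fat Cantor set construction is the right idea: the point is that the ordinary middle-thirds embedding, while computably bi-continuous, kills Lebesgue measure, so one needs an embedding $\iota:2^\IN\hookrightarrow[0,1]$ that scales measure by a positive constant. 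Your symmetric construction (equal-length intervals at each level) guarantees $\lambda(\iota([w]))=\lambda(C)\cdot 2^{-|w|}$ and hence $\lambda(\iota(A))=\lambda(C)\cdot\mu(A)>0$.

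Two small things to tidy up. First, in the enumeration of $[0,1]\setminus\iota(A)$, the prescription ``half the width of the adjacent gaps'' fails literally when $v_i\in\{0^k,1^k\}$, since then $J_{v_i}$ abuts $0$ or $1$ rather than a gap; just treat those endpoints separately (extend past $0$ or $1$, or declare $\delta_i=0$ on that side). Second, your claim $\iota_*\mu=\lambda|_C/\lambda(C)$ deserves one line of justification: it follows because, by symmetry, all $2^n$ sets $C\cap J_w$ with $|w|=n$ have equal Lebesgue measure summing to $\lambda(C)$. With these cosmetic fixes the proof is complete.
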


One can use a result of Jockusch and Soare~\cite[Theorem~5.3]{JS72} that essentially shows that $\WKL$ cannot be computed with an advice set of positive measure, 
in order to separate $\PC_{2^\IN}$ and $\C_{2^\IN}$.

\begin{proposition}[Positive choice versus choice]
\label{prop:PC-C}
$\PC_{2^\IN}\lW\C_{2^\IN}$.
\end{proposition}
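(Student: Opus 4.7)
The plan is to split the statement into its two parts: the reduction $\PC_{2^\IN} \leqW \C_{2^\IN}$ and the strictness $\C_{2^\IN} \nleqW \PC_{2^\IN}$.

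For the easy direction, observe that $\PC_{2^\IN}$ is obtained from $\C_{2^\IN}$ simply by restricting the domain to closed sets of positive measure. Hence the identity realizers witness $\PC_{2^\IN} \leqSW \C_{2^\IN}$, and in particular $\PC_{2^\IN} \leqW \C_{2^\IN}$.

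For the strict inequality, the plan is to argue by contradiction via the Las Vegas characterization. Suppose $\C_{2^\IN} \leqW \PC_{2^\IN}$. By Corollary~\ref{cor:notions-computability}(3), this would imply that $\C_{2^\IN}$ is Las Vegas computable, and by Theorem~\ref{thm:choice-cantor} the same would hold for $\WKL$. Unpacking Definition~\ref{def:non-deterministic}, there would be computable functions $F, S$ such that for every infinite binary tree $T$, the set $R_T \subseteq 2^\IN$ of successful advice has positive uniform measure, and for each $r \in R_T$ the value $F\langle T, r\rangle$ is a path through $T$.

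The main obstacle is converting this conclusion into a concrete contradiction, and here one invokes the Jockusch--Soare result cited in the excerpt. Take $T_0$ to be a fixed computable infinite binary tree whose infinite paths are exactly the completions of Peano arithmetic (a standard $\Pi^0_1$ class). For such a $T_0$, the Jockusch--Soare theorem \cite[Theorem~5.3]{JS72} asserts that the set
\[
\{\,r \in 2^\IN : \exists\, p \in [T_0]\text{ with } p \leqT r\,\}
\]
has uniform measure zero. But $R_{T_0}$ is contained in this set, because whenever $r \in R_{T_0}$ the path $F\langle T_0, r\rangle \in [T_0]$ is computable from $r$ (using that $T_0$ itself is computable and $F$ is computable). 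This contradicts $\mu(R_{T_0}) > 0$, completing the separation. The delicate point to verify carefully in the write-up is only that the reduction functions $H, K$ in a Weihrauch reduction $\C_{2^\IN} \leqW \PC_{2^\IN}$ genuinely yield a Las Vegas algorithm in the sense of Definition~\ref{def:non-deterministic}; this is exactly what Theorem~\ref{thm:non-deterministic} together with the positive-measure refinement guarantees.
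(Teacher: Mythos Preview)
Your proof is correct and follows essentially the same approach as the paper: the easy direction is the trivial domain restriction, and the separation is obtained from the Jockusch--Soare result \cite[Theorem~5.3]{JS72} that the set of oracles computing a path through the PA tree has measure zero. Your write-up makes the passage through the Las Vegas characterization (Corollary~\ref{cor:notions-computability}(3)) explicit, which the paper leaves implicit in the phrase ``$\WKL$ cannot be computed with an advice set of positive measure,'' but the underlying argument is the same.
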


Since $\C_2\leqW\PC_{2^\IN}\lW\C_{2^\IN}\equivW\widehat{\C_2}$, it is clear that $\PC_{2^\IN}$
is not parallelizable.

\begin{corollary}[Parallelizability]
\label{cor:PC2N-parallelizability}
$\PC_{2^\IN}$ is not parallelizable and $\widehat{\PC_{2^\IN}}\equivSW\C_{2^\IN}$.
\end{corollary}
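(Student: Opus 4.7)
My strategy is to first establish the equivalence $\widehat{\PC_{2^\IN}}\equivSW\C_{2^\IN}$, and then derive the failure of parallelizability as a short corollary via Proposition~\ref{prop:PC-C}.

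For the upper bound $\widehat{\PC_{2^\IN}}\leqSW\C_{2^\IN}$, I would observe that $\PC_{2^\IN}\leqSW\C_{2^\IN}$ holds trivially by restricting inputs: a closed set of positive measure is, in particular, a non-empty closed set, and the identities on input and output witness this reduction. Since $\;\widehat{\ }\;$ is a closure operator with respect to $\leqSW$ by Proposition~\ref{prop:mon-closure}, this implies $\widehat{\PC_{2^\IN}}\leqSW\widehat{\C_{2^\IN}}$. From Theorem~\ref{thm:choice-cantor} we have $\C_{2^\IN}\equivSW\widehat{\C_2}$, and applying $\;\widehat{\ }\;$ as a closure operator again gives $\widehat{\C_{2^\IN}}\equivSW\widehat{\widehat{\C_2}}\equivSW\widehat{\C_2}\equivSW\C_{2^\IN}$, so $\C_{2^\IN}$ is strongly parallelizable and the upper bound follows.

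For the lower bound $\C_{2^\IN}\leqSW\widehat{\PC_{2^\IN}}$, the key step is $\C_2\leqSW\PC_{2^\IN}$. Given a non-empty closed $A\In\{0,1\}$ (with negative information), I would compute a name of the cylinder $A\times 2^\IN\In 2^\IN$, which is closed and has measure $\geq\tfrac12$; this is straightforward because the negative information about $A$ translates directly into enumerations of forbidden basic open cylinders in $2^\IN$. The output post-processing is the computable projection $p\mapsto p(0)$. Parallelizing yields $\widehat{\C_2}\leqSW\widehat{\PC_{2^\IN}}$, and combining with $\C_{2^\IN}\equivSW\widehat{\C_2}$ from Theorem~\ref{thm:choice-cantor} completes the lower bound.

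Having established $\widehat{\PC_{2^\IN}}\equivSW\C_{2^\IN}$, the non-parallelizability is immediate: if $\PC_{2^\IN}$ were parallelizable, then $\PC_{2^\IN}\equivW\widehat{\PC_{2^\IN}}\equivW\C_{2^\IN}$, directly contradicting the strict reduction $\PC_{2^\IN}\lW\C_{2^\IN}$ of Proposition~\ref{prop:PC-C}. The only genuinely delicate point is verifying that the reduction $\C_2\leqSW\PC_{2^\IN}$ is truly \emph{strong}, i.e., requires no feed-through of the input; the cylinder/projection construction above separates cleanly into input and output computations, so this causes no difficulty.
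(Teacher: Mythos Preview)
Your proof is correct and follows essentially the same route as the paper: the paper derives the corollary from the chain $\C_2\leqW\PC_{2^\IN}\lW\C_{2^\IN}\equivW\widehat{\C_2}$ (stated just before the corollary), which is exactly the content of your two reductions plus Proposition~\ref{prop:PC-C}. Your additional care in tracking that the reductions are \emph{strong} (via the cylinder construction for $\C_2\leqSW\PC_{2^\IN}$) is needed for the $\equivSW$ part of the statement and is handled correctly.
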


Also quantitative versions of $\WWKL$ have been considered, and by $\varepsilon\dash\WWKL$
for $\varepsilon\in(0,1)$
we denote $\WWKL$ restricted to trees with $\mu([T])>\varepsilon$.

\begin{theorem}[Quantitative $\WWKL$]
\label{thm:quantitative-WWKL}
$\varepsilon\dash\WWKL\leqW\delta\dash\WWKL\iff\varepsilon\geq\delta$.
\end{theorem}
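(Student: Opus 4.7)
The plan is to handle the two directions separately. For the reverse implication, if $\varepsilon\geq\delta$ then every binary tree $T$ with $\mu([T])>\varepsilon$ already satisfies $\mu([T])>\delta$, so $T$ is itself a valid input for $\delta\dash\WWKL$ with the same set of solutions $[T]$. Thus the identity reduction (pass $T$ through unchanged and return the retrieved path) witnesses $\varepsilon\dash\WWKL\leqSW\delta\dash\WWKL$, and hence also $\varepsilon\dash\WWKL\leqW\delta\dash\WWKL$.

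For the forward implication I assume $\varepsilon<\delta$ and argue by contradiction. Suppose there are computable functionals $K$ and $H$ such that for every binary tree $T$ with $\mu([T])>\varepsilon$, the tree $K(T)$ satisfies $\mu([K(T)])>\delta$ and $H(T,p)\in[T]$ for every $p\in[K(T)]$. Fix $\eta>0$ with $\varepsilon+\eta<\delta$. I would consider a uniformly computable family of trees $\{T_u\}_{u\in 2^\IN}$, each with $\mu([T_u])=\varepsilon+\eta$, obtained from a fixed computable base tree $T_0$ via the bitwise XOR action $[T_u]=u\oplus[T_0]$; concretely, $\sigma\in T_u$ iff $u|_{|\sigma|}\oplus\sigma\in T_0$. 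For every $u$ the set $S_u:=\{p:H(T_u,p)\in[T_u]\}$ contains $[K(T_u)]$ and so $\mu(S_u)>\delta$; integrating over uniformly distributed $u$ and applying Fubini yields
\[
\delta \;<\; \int_u \mu(S_u)\,du \;=\; \int_p \Pr\nolimits_u\bigl[H(T_u,p)\in[T_u]\bigr]\,dp.
\]
The goal is then to upper-bound the inner probability by $\varepsilon+\eta=\mu([T_0])$ for every $p$, which would force $\delta\leq\varepsilon+\eta$ and contradict the choice of $\eta$.

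The main obstacle is this pointwise bound. Naive measure preservation of the XOR action fails because $H(T_u,p)$ itself depends on $u$, so the map $u\mapsto u\oplus H(T_u,p)$ need not be measure-preserving. My plan to overcome this is to combine the continuity of $H$ with a preservation-of-randomness argument: if $u$ is Martin-L\"of random relative to $p$ and the data describing $H$, then $H(T_u,p)$ is a $(u\oplus p)$-computable point that is forced to lie in $u\oplus[T_0]$, and a standard randomness-preservation argument for computable operations on uniformly distributed sequences shows that this can happen only on a set of $u$ of Lebesgue measure at most $\mu([T_0])$. Establishing this bound in the presence of the coupling of $u$ into both the tree index and the output of $H$ is the technical heart of the proof; once it is available, the Fubini computation above delivers the required contradiction.
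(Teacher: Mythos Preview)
Your easy direction is fine. The hard direction has a genuine gap precisely where you flag it: the pointwise bound $\Pr_u\bigl[H(T_u,p)\in[T_u]\bigr]\le\mu([T_0])$ is false for the XOR family in general, and the ``randomness preservation'' you invoke does not hold for the relevant map.

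Here is an explicit failure. Take $[T_0]=\{x\in 2^\IN:x(0)=0\}$, so $\mu([T_0])=\tfrac12$ and $[T_u]=\{x:x(0)=u(0)\}$. The bit $u(0)$ is directly readable from the characteristic function of $T_u$ (it is the unique length-one node present), so the functional that outputs $u(0)$ followed by zeros lands in $[T_u]$ for every $u$ and every $p$; the probability in question is $1$, not at most $\tfrac12$. Correspondingly, the map $u\mapsto u\oplus H(T_u,p)$ here sends $u$ to $0\,u(1)\,u(2)\cdots$, collapsing the first coordinate, so it is not measure-preserving and no preservation-of-randomness theorem applies. This is not an artefact of a degenerate $T_0$: as soon as $T_0$ is a proper subtree, the characteristic function of $T_u$ leaks information about $u$ through its missing nodes, and your argument --- which only examines $H$ on the orbit $\{T_u\}$ and never uses that $H$ must also succeed on trees outside that orbit --- cannot exclude a reduction whose $H$ behaves in exactly this way on your chosen family.

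The paper itself only cites \cite{DDH+16} and \cite{BGH15a} for the proof. The argument there builds a single diagonalising instance $T$ against the fixed pair $(H,K)$ rather than averaging over a shift family: one commits to $T$ in stages, waits for $H$ to produce output prefixes on a large set of advice strings, and then prunes $T$ above those prefixes. The inequality $\varepsilon<\delta$ enters as the measure budget available for pruning, not through any claim that a derived map preserves Lebesgue measure.
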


We continue with the discussion of further special versions of choice related to $\C_{2^\IN}$.
It is not obvious at all that connected choice $\ConC_{[0,1]^n}$ is in the same equivalence class
as $\C_{2^\IN}$ from dimension $n\geq2$. 

\begin{theorem}[Connected choice]
\label{thm:CC}
$\ConC_{[0,1]^n}\equivSW\PWCC_{[0,1]^{n+1}}\equivSW\C_{2^\IN}$ for $n\geq2$.
\end{theorem}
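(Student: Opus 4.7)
The plan is to show all three problems are strongly Weihrauch equivalent to $\C_{2^\IN}$. Upper bounds are routine: a pathwise-connected closed subset of $[0,1]^{n+1}$ is in particular a connected closed subset, and any connected closed subset of $[0,1]^k$ is in particular a closed subset, so the identity realizer on names gives $\PWCC_{[0,1]^{n+1}}\leqSW\ConC_{[0,1]^{n+1}}\leqSW\C_{[0,1]^{n+1}}$ and $\ConC_{[0,1]^n}\leqSW\C_{[0,1]^n}$, both of which are $\equivSW\C_{2^\IN}$ by Corollary~\ref{cor:C2N-equiv}.

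The heart of the proof is the reduction $\C_{2^\IN}\leqSW\ConC_{[0,1]^2}$. Using Theorem~\ref{thm:choice-cantor}, replace $\C_{2^\IN}$ by $\WKL$; the task is, given an infinite binary tree $T\subseteq 2^{<\IN}$, to build a nonempty compact connected $A_T\subseteq[0,1]^2$, computably in $T$, together with a fixed computable retrieval $H$ that turns any point of $A_T$ into a path of $[T]$. I would construct $A_T$ as a dynamically pruned dyadic dendrite: assign to $\sigma\in 2^{<\IN}$ of length $n$ the vertex $v_\sigma=(0.\sigma+2^{-n-1},2^{-n})$ and join parent to child by straight edges $e_\sigma$; the union together with its limit points $\{(0.p,0):p\in 2^\IN\}$ is the classical compact connected dyadic dendrite. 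Since the predicate ``$\sigma$ is dead in $T$'' (its subtree in $T$ is finite) is c.e.\ in the co-c.e.\ presentation of $[T]$ by K\"onig's lemma, one can progressively excise the pendant sub-dendrite hanging off each newly enumerated dead node; taking the intersection of the resulting decreasing sequence of compact connected approximations yields $A_T$, which is compact connected (nested intersections of compact connected subsets of a Hausdorff space are connected), co-c.e.\ closed uniformly in $T$, and has ends exactly $\{(0.p,0):p\in[T]\}$. The retrieval $H$ reads a Cauchy name of $(x,y)\in A_T$ and outputs a sequence $p\in 2^\IN$ bit by bit: at each level $k$, a finer approximation of $(x,y)$ identifies a longer prefix of the unique path in $[T]$ that the point is eventually connected to by descending through the surviving dendrite.

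For $n\geq 3$, $\ConC_{[0,1]^2}\leqSW\ConC_{[0,1]^n}$ follows by embedding $[0,1]^2$ as the face of $[0,1]^n$ where the last $n-2$ coordinates vanish; a connected closed subset of that face is a connected closed subset of $[0,1]^n$. For the pathwise-connected variant $\C_{2^\IN}\leqSW\PWCC_{[0,1]^{n+1}}$, perform the dendritic construction in dimension $3$ with edges thickened to narrow tubes, so that excisions become narrowings of tubes rather than outright cuts; the resulting set is path-connected because one can always route a path through the tubes, and the same retrieval procedure works. The cases $n+1\geq 4$ follow by face embedding. The main obstacle is calibrating the dynamic pruning against the revelation of the point's name by the $\ConC$-oracle: since the retrieval $H$ has no access to $T$, every prefix it emits must already be a genuine prefix of some path in $[T]$, so the excision schedule has to remove all misleading dead subtrees \emph{before} the name of $(x,y)$ is refined past the corresponding precision. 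Arranging this synchronization — and, in particular, making the ``descend through the surviving dendrite'' rule a computable function of the point's Cauchy name alone — is the delicate geometric part of the argument.
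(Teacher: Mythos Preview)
Your upper bounds are fine. The gap is precisely the point you flag in your last paragraph and then leave open.

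In a strong reduction $\WKL\leqSW\ConC_{[0,1]^2}$ the retrieval $H$ sees \emph{only} a Cauchy name of a point $(x,y)\in A_T$; it has no access to $T$ and no influence over which point or which name the realizer of $\ConC$ returns. If the realizer hands back a point with $y>0$, sitting at (or on an edge above) an internal node $\sigma$ of your dendrite, then $H$ can recover $\sigma$ from the geometry but cannot extend $\sigma$ to an infinite branch of $[T]$: there may be several, and selecting one is again a $\WKL$-instance. Your ``synchronization'' idea cannot help, because the schedule by which $K$ prunes $A_T$ is invisible to $H$; the adversarial realizer may return any point of the \emph{final} set $A_T$ with any valid name, independent of how $K$ arrived there. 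The same obstruction hits your tube variant for $\PWCC$. Note that the dendrite does not even obviously yield ordinary $\leqW$: with $T$ in hand, $H$ must still find a path extending $\sigma$, which is $\WKL$-hard in its own right.

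For comparison, the paper bypasses all of this in dimensions $\geq 3$ with a one-line construction. Given nonempty closed $A\subseteq[0,1]$, set
\[B \;=\; (A\times[0,1]\times\{0\})\;\cup\;(A\times A\times[0,1])\;\cup\;([0,1]\times A\times\{1\})\;\subseteq\;[0,1]^3.\]
This $B$ is closed, pathwise connected (hence connected), and its $\AA_-$-name is computable from that of $A$. From any $(x,y,z)\in B$ one recovers a point of $A$ \emph{without knowing $A$}: wait until the Cauchy name of $z$ certifies $z<1$ (then $x\in A$) or $z>0$ (then $y\in A$); one of these must eventually occur, and when $0<z<1$ both choices are correct. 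This single map handles $\C_{2^\IN}\equivSW\C_{[0,1]}\leqSW\PWCC_{[0,1]^{n+1}}$ for all $n\geq 2$ and $\C_{2^\IN}\leqSW\ConC_{[0,1]^n}$ for all $n\geq 3$. Only $\ConC_{[0,1]^2}$ remains, and the paper explicitly says this two-dimensional case needs a more sophisticated argument, carried out in \cite{BLRMP18}; your dendrite sketch does not supply it.
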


The map $A\mapsto (A\times[0,1]\times\{0\})\cup(A\times A\times[0,1])\cup([0,1]\times A\times\{1\})$
shows that one can map each closed subset $A\In[0,1]$ to a pathwise connected closed 
subset $B\In[0,1]^3$, and given a point in the latter set one can reconstruct a point in the former set.
This proves the previous statement on $\PWCC_{[0,1]^n}$ and $\ConC_{[0,1]^n}$ for $n\geq3$.
Only the two-dimensional case needs a more sophisticated argument, and in the case of $\PWCC_{[0,1]^2}$
the Weihrauch degree is not known.

\begin{problem}[Pathwise connected choice]
\label{prob:PWCC}
Does $\PWCC_{[0,1]^2}\equivW\C_{2^\IN}$ hold?
\end{problem}

The one-dimensional case of connected choice yields the degree of the intermediate value theorem.

\begin{theorem}[Intermediate value theorem]
\label{thm:IVT}
$\ConC_{[0,1]}\equivSW\IVT$.
\end{theorem}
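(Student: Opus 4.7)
For $\IVT \leqSW \ConC_{[0,1]}$, the plan is to build from $f \in \CC[0,1]$ with $f(0) f(1) < 0$ a closed interval $C = [a_\infty, b_\infty]$ entirely contained in $f^{-1}\{0\}$, so that the post-processing can be taken to be the identity. I would maintain a rational interval $[a_n, b_n]$ with the invariant $f(a_n) f(b_n) < 0$, starting from $[0, 1]$, and at each stage dovetail over rationals $c \in (a_n, b_n)$ while semi-deciding $f(c) > 0$ and $f(c) < 0$; the first confirmation causes an update of whichever endpoint's sign matches $f(c)$, with the newly excluded slice enumerated as an open interval into the complement of $C$. The algorithm always makes progress, for otherwise $f$ would vanish on every rational in $(a_n,b_n)$, hence on $[a_n,b_n]$ by density and continuity, contradicting $f(a_n) \neq 0$. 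To verify $C \subseteq f^{-1}\{0\}$ I would argue by contradiction: if $f(x) \neq 0$ for some $x \in (a_\infty,b_\infty)$, continuity supplies a rational $q$ on the appropriate side of $x$ with a confirmable nonzero sign matching one of the endpoints, and the algorithm would eventually update that endpoint past $x$, contradicting $x \in (a_n,b_n)$ for all $n$.

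For the converse $\ConC_{[0,1]} \leqSW \IVT$, given $[a,b] \in \AA_-([0,1])$ I would first rescale via the computable map $\varphi(x) = \tfrac{1}{3} + \tfrac{x}{3}$ to obtain $[a',b'] := \varphi([a,b]) \subseteq [\tfrac{1}{3}, \tfrac{2}{3}]$, and then construct $f \in \CC[0,1]$ with $f(0) < 0 < f(1)$ and $f^{-1}\{0\} = [a',b']$, so that the post-processing $z \mapsto 3z - 1$ carries any zero of $f$ back into $[a,b]$. The construction of $f$ proceeds as follows: use Proposition~\ref{prop:closed} to compute a nonnegative $\phi \in \CC[0,1]$ with $\phi^{-1}\{0\} = [a',b']$, then set $F(x) = \int_0^x \phi(t)\, dt$, which is computable and constant on $[a',b']$ with value $F(a')$. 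The key observation is that although $a'$ is only lower-computable and $b'$ only upper-computable from the $\AA_-$-name of $[a,b]$, the common value $F(a') = F(b')$ \emph{is} Cauchy-computable: from lower approximations $a_n' \uparrow a'$ we get $F(a_n') \uparrow F(a')$ (lower bounds), while from upper approximations $b_n' \downarrow b'$ we get $F(b_n') \downarrow F(b') = F(a')$ (upper bounds), because $F$ is strictly increasing on $[0,a')$ and on $(b',1]$ by positivity of $\phi$ there. Defining $f(x) := F(x) - F(a')$ then yields a computable continuous function with sign change at the endpoints and zero set exactly $[a',b']$.

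The heart of the proof, and where I expect to spend the most care, is the Cauchy-computability of $F(a')$ in the second direction: without the preliminary rescaling there would be no margin of positive $\phi$-mass on both sides of $[a',b']$ to squeeze $F(a')$ between matching lower and upper approximations. The first direction is, by contrast, essentially routine once the dovetail invariant and sign-confirmation scheme are set up, since any ``stall'' in the algorithm would immediately violate continuity of $f$ at the endpoint $a_n$.
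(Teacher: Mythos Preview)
Your proposal is correct. The paper itself does not supply a self-contained argument for this theorem; its proof is the single citation ``See \cite[Theorem~6.2]{BG11a}'', so your write-up in fact provides more detail than the survey does.

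For $\IVT \leqSW \ConC_{[0,1]}$, your dovetailing search for sign-confirming rationals inside the current bracket is essentially the approach of the cited reference. One point to make precise when you write it up: the dovetailing should run on a single global clock rather than restarting fresh at each stage, so that any fixed rational $q$ with $f(q)\neq 0$ that lies in $(a_n,b_n)$ for every $n$ is guaranteed to confirm eventually. With a naive per-stage restart, faster-confirming rationals could in principle starve $q$ indefinitely, and your contradiction argument for $C\subseteq f^{-1}\{0\}$ depends on $q$ actually being processed.

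For $\ConC_{[0,1]} \leqSW \IVT$, your integration construction is a genuinely different route from the one in \cite{BG11a}. There the function $f$ is built directly as a piecewise-linear ``staircase'': with rational approximants $a'_n\uparrow a'$ and $b'_n\downarrow b'$ one sets $f(0)=-1$, $f(a'_n)=-2^{-n-1}$, $f(b'_n)=2^{-n-1}$, $f(1)=1$ and interpolates linearly, the geometric decay yielding a computable modulus of uniform convergence. Your approach via $F(x)=\int_0^x\phi$ replaces this explicit rate by the smoothing effect of integration, and your key observation --- that $F(a')=F(b')$ becomes a genuine Cauchy real because the lower approximants of $a'$ and the upper approximants of $b'$ squeeze it from both sides --- is exactly right and arguably more conceptual. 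The one step you should justify explicitly is that $a'$ is indeed computable as a lower real from the $\psi_-$-name of $[a',b']$: after the rescaling you know $a'\geq\tfrac13$, so the connected component of $0$ in the finite-stage union of enumerated complementary intervals has a rational right endpoint that increases to $a'$; without the rescaling margin there would be no anchor from which to grow this component.
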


We mention a fact that was already stated in Example~\ref{ex:idempotent-parallelizable}.

\begin{theorem}[Idempotency]
\label{thm:CC-not-idempotent}
$\ConC_{[0,1]}$ is not idempotent.
\end{theorem}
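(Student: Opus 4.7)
The strategy is to show that $\ConC_{[0,1]} \times \C_2 \nleqW \ConC_{[0,1]}$. Combined with the easy reduction $\C_2 \leqSW \ConC_{[0,1]}$ (encoding an $\LLPO$-style input as a closed interval that continuously narrows to one of $[0,\epsilon]$, $[1-\epsilon,1]$, or $[0,1]$ depending on which alternative is signaled) and monotonicity of $\times$ (Proposition~\ref{prop:mon-closure}), this yields $\ConC_{[0,1]} \times \ConC_{[0,1]} \nleqW \ConC_{[0,1]}$, so $\ConC_{[0,1]}$ is not idempotent.

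Suppose for contradiction that $(H, K)$ is a computable reduction witnessing $\ConC_{[0,1]} \times \C_2 \leqW \ConC_{[0,1]}$. Fix the trivial instance $A_0 := [0,1]$ of $\ConC_{[0,1]}$ and a name $\alpha^*$ of the $\C_2$-input $\{0,1\}$ (the ``undecided'' case). The map $H((A_0, \alpha^*), \cdot): K(A_0, \alpha^*) \to [0,1]\times\{0,1\}$ is continuous from the connected closed interval $K(A_0, \alpha^*)$ into a topologically disconnected space, so its projection to $\{0,1\}$ is constant, say equal to $b^* \in \{0,1\}$. Now choose names $a_N$ that agree with $\alpha^*$ on the first $N$ positions and thereafter signal exclusion of $b^*$, so each $a_N$ codes $\{1-b^*\}$. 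By continuity of $K$, the closed-set names $K(A_0, a_N)$ agree with $K(A_0, \alpha^*)$ on prefixes of length tending to infinity as $N \to \infty$. If one can construct a realizer $G$ of $\ConC_{[0,1]}$ such that $G(K(A_0, a_N))$ agrees with $G(K(A_0, \alpha^*))$ on a prefix tending to infinity, then by continuity of $H$ the discrete component of $H((A_0, a_N), G(K(A_0, a_N)))$ is eventually $b^*$, contradicting the correctness requirement that for input $\{1-b^*\}$ it equal $1-b^*$.

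The main obstacle is the construction of such a realizer $G$. Since $\ConC_{[0,1]}$ admits no continuous single-valued selection in general --- closed intervals whose names share a prefix may lie in distinct connected components of $[0,1]$ minus the initially enumerated balls --- a globally continuous $G$ does not exist. Instead $G$ must be tailored adversarially to $K$, exploiting the rigidity of 1-dimensional intervals: the common name-prefix forces both $K(A_0, \alpha^*)$ and $K(A_0, a_N)$ into a common finite union $V_M$ of closed subintervals of $[0,1]$, and one then argues via the connected-component structure of $V_M$ and continuity of $K$ that a coordinated output choice with agreeing prefix is achievable. This diagonalization against $(H, K)$ is the technical heart of the proof, and the 1-dimensional setting is essential: no analogous obstruction exists for $\ConC_{[0,1]^n}$ with $n\geq 2$, since by Theorem~\ref{thm:CC} that problem is $\equivSW \WKL$ and hence parallelizable, in particular idempotent.
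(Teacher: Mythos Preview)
Your high-level strategy is sound and matches the paper's: the paper derives non-idempotency from the separation $\C_2\times\AoUC_{[0,1]}\nleqW\ConC_{[0,1]}$ (Theorem~\ref{thm:AoUC-separation}(3)), which together with $\C_2,\AoUC_{[0,1]}\leqW\ConC_{[0,1]}$ gives $\ConC_{[0,1]}\times\ConC_{[0,1]}\nleqW\ConC_{[0,1]}$. Your target $\ConC_{[0,1]}\times\C_2\nleqW\ConC_{[0,1]}$ is a consequence of this and would equally suffice.

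However, your step~2 contains a genuine error. You claim that $H((A_0,\alpha^*),\cdot)$ is a continuous map from the interval $B$ named by $K(A_0,\alpha^*)$ into $[0,1]\times\{0,1\}$, and then invoke connectedness of $B$ to conclude the $\{0,1\}$-component is a constant $b^*$. But $H$ is a function on \emph{names}, not on points: its domain here is $\delta_\IR^{-1}(B)\subseteq\IN^\IN$, which is totally disconnected. There is no continuous section $B\to\IN^\IN$ of the Cauchy representation (any continuous map from a connected space to $\IN^\IN$ is constant), so connectedness of $B$ does not transfer. Concretely, $H$ may read an initial segment of the point-name $q$ and output $0$ or $1$ depending on, say, whether the first rational approximation is below or above $\tfrac12$; both answers are valid for the undecided $\C_2$-instance $\{0,1\}$. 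Thus no global $b^*$ need exist, and the subsequent diagonalization against a fixed $b^*$ does not get off the ground.

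You correctly identify that constructing the adversarial realizer $G$ is ``the technical heart'', but the difficulty is not only there: the reduction $H$ can adaptively choose its $\C_2$-answer based on \emph{which} point (more precisely, which name) of the oracle interval it receives, so the argument must simultaneously control the geometry of the intervals produced by $K$ and the dependency of $H$'s discrete output on the point-name. The published proofs (in \cite{BLRMP18} and \cite{BGH15a}) handle this by a careful combinatorial analysis of how the interval can split under prefix extensions, rather than by a one-shot connectedness observation.
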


While connected choice is very stable with respect to the dimension of the space, this is not so for
convex choice as the following result shows.

\begin{theorem}[Convex choice]
\label{thm:convex-choice}
$\XC_{[0,1]^n}\lW\XC_{[0,1]^{n+1}}$ for all $n\in\IN$.
\end{theorem}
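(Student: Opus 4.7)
The easy direction, $\XC_{[0,1]^n}\leqSW\XC_{[0,1]^{n+1}}$, is handled by a canonical embedding and projection. Given a closed convex set $A\In[0,1]^n$ in co-c.e.\ closed representation, form $B:=A\times[0,1]\In[0,1]^{n+1}$; the product of closed convex sets is closed and convex, and the transformation is computable on $\psi_-$--names because each complement ball $U$ of $A$ in $[0,1]^n$ lifts to the complement slab $U\times[0,1]$ in $[0,1]^{n+1}$, which is a computable union of basic balls. Given any point $(x,y)\in B$ returned by $\XC_{[0,1]^{n+1}}$, the first-coordinate projection $x\in A$ solves the original instance. Both legs are strongly computable, so the reduction is strong Weihrauch.

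The hard direction, $\XC_{[0,1]^{n+1}}\nleqW\XC_{[0,1]^n}$, is genuinely dimension-sensitive. The plan is to locate a Weihrauch invariant $\nu$ monotone under $\leqW$ with $\nu(\XC_{[0,1]^{n+1}})>\nu(\XC_{[0,1]^n})$. My candidate is a geometric refinement of the mind-change count tailored to convex choice, measuring the maximal number of independent ``affine bisection decisions'' that the problem effectively encodes. The lower bound would be witnessed by constructing, inside $\XC_{[0,1]^{n+1}}$, a family of instances $A(p_1,\dots,p_{n+1})\In[0,1]^{n+1}$ parametrised by $n+1$ independent $\LLPO$--type inputs --- arranged so that the $i$-th coordinate of every point in $A(p_1,\dots,p_{n+1})$ lies in the left or right half of $[0,1]$ according to $\LLPO(p_i)$ --- so that any selector simultaneously resolves all $n+1$ instances. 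The matching upper bound would show that any solution of $\XC_{[0,1]^n}$ uses only $n$ such independent decisions, the residual computation being essentially single-valued and hence (by Corollary~\ref{cor:C2N-single-valued}--type arguments) computable once the decisions are fixed.

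An alternative route is a direct diagonalization via the realizer characterization of Theorem~\ref{thm:Tavana-Weihrauch}: a putative reduction $\XC_{[0,1]^{n+1}}\leqW\XC_{[0,1]^n}$ would give a Turing procedure making a single oracle call $G\vdash\XC_{[0,1]^n}$. Against this, one constructs an adversarial convex-choice instance in $[0,1]^{n+1}$ stagewise, using the machine's current partial outputs to reveal complement balls along a direction to which the $n$-dimensional oracle input $K(p)$ is blind. The key idea is that $K(p)$ carves out a closed convex subset of only $n$ affine dimensions, whereas the target set maintains flexibility in $n+1$ dimensions, so the adversary can always force disagreement along a residual direction.

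The principal obstacle is that the standard numerical invariants do not apply: $\mind$ is undefined for both sides because real-valued output requires infinitely many total mind changes, and $\#$ equals $|\IN^\IN|$ for both, since continuum-many pairwise-disjoint convex sets exist in each dimension. The separating invariant must therefore be genuinely geometric --- an effective analogue of affine dimension --- and its monotonicity under $\leqW$ is what has to be established with care. For the adversarial route, the delicate point is preserving both convexity and non-emptiness of the stagewise approximation $A_\infty$ while still reserving one ``unused'' direction for diagonalization; convexity strongly constrains which new complement balls can be added without killing the set, so the freedom needed for the separation has to be found within these constraints.
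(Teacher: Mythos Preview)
Your easy direction is correct and matches the standard argument: the cylinder embedding $A\mapsto A\times[0,1]$ and projection witness $\XC_{[0,1]^n}\leqSW\XC_{[0,1]^{n+1}}$.

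For the hard direction, what you have written is a plan, not a proof. You correctly identify that the standard invariants $\mind$ and $\#$ are useless here, and your instinct that the separation must be ``genuinely geometric'' and dimension-sensitive is right. But neither of your two routes is carried out: the invariant $\nu$ is never defined, its monotonicity under $\leqW$ is asserted as ``what has to be established with care'' rather than established, and the adversarial construction is described only at the level of intent. In particular, the sentence ``$K(p)$ carves out a closed convex subset of only $n$ affine dimensions'' conflates the dimension of the \emph{oracle input} (which is indeed in $[0,1]^n$) with the structure of the \emph{reduction}; nothing prevents the pre-processing $K$ from encoding high-dimensional information into a low-dimensional convex set, so the ``residual direction'' heuristic does not obviously survive.

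The paper itself does not prove this result but cites Le Roux and Pauly~\cite{LRP15a}. Their argument does not proceed via an abstract affine-dimension invariant; rather, it goes through an intermediate problem: choice from closed subsets of $[0,1]^n$ with at most $k$ elements (or, dually, convex sets containing a given finite set of candidate extreme points). The separation then reduces to a combinatorial statement about finite choice with cardinality bounds, where one can count precisely. Your $\LLPO$-packing idea for the lower bound is in the right spirit --- it is essentially how one shows $\C_{n+1}\leqW\XC_{[0,1]^n}$ --- but the matching upper bound (that $\XC_{[0,1]^n}$ cannot do more) requires the machinery developed in that paper and is not something you can extract from the tools in this survey alone.
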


Convex choice is not closed under composition, as the following result shows.

\begin{theorem}[Composition of convex choice]
\label{thm:XC-composition}
$\XC_{[0,1]^n}$ is not closed under compositional product $*$ and
${\XC_{[0,1]}*\XC_{[0,1]}\nleqW\XC_{[0,1]^n}}$ for all $n\geq1$.
\end{theorem}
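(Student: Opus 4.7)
The plan is to prove both parts using a common projection--slice construction, combined with the strict dimensional hierarchy of Theorem~\ref{thm:convex-choice}.

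For part (a), the key claim is the reduction
\[
\XC_{[0,1]^{2n}} \leqW \XC_{[0,1]^n} * \XC_{[0,1]^n}.
\]
Given a non-empty convex $A \in \AA_-([0,1]^{2n})$, I first compute the projection $\pi(A)$ onto the first $n$ coordinates. Since $A$ is compact (a closed subset of the computably compact space $[0,1]^{2n}$), this projection is non-empty, convex, and uniformly computable in $\AA_-([0,1]^n)$. A first application of $\XC_{[0,1]^n}$ yields $x \in \pi(A)$, and then the fibre $A_x = \{y \in [0,1]^n : (x,y) \in A\}$ is non-empty, convex, closed, and uniformly computable in $(A,x)$. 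A second application of $\XC_{[0,1]^n}$ produces $y \in A_x$, and we return $(x,y) \in A$. Since $2n \geq n+1$ for every $n \geq 1$, the strict hierarchy forces $\XC_{[0,1]^{2n}} \nleqW \XC_{[0,1]^n}$, and hence $\XC_{[0,1]^n} * \XC_{[0,1]^n} \nleqW \XC_{[0,1]^n}$.

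For part (b), the same projection--slice construction gives $\XC_{[0,1]^{k+1}} \leqW \XC_{[0,1]} * \XC_{[0,1]^k}$ for all $k$, and iterating yields $\XC_{[0,1]^k} \leqW \XC_{[0,1]}^{[k]}$. I would argue by contradiction: assume $\XC_{[0,1]} * \XC_{[0,1]} \leqW \XC_{[0,1]^n}$ for some fixed $n$. Then, by monotonicity and associativity of $*$ (Proposition~\ref{prop:algebraic-properties}), iteration gives
\[
\XC_{[0,1]^{2k}} \leqW \XC_{[0,1]}^{[2k]} \leqW \bigl(\XC_{[0,1]^n}\bigr)^{[k]} \quad \text{for every } k \geq 1.
\]
For a fixed $n$, the target is to pull $(\XC_{[0,1]^n})^{[k]}$ back into some $\XC_{[0,1]^M}$ with $M < 2k$, contradicting the strict hierarchy for large $k$.

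The hard part is precisely closing this loop: by part (a), $(\XC_{[0,1]^n})^{[k]}$ is \emph{not} bounded by $\XC_{[0,1]^n}$ itself, and no a priori finite-dimensional upper bound on iterated compositional products of $\XC_{[0,1]^n}$ is available. The main obstacle is therefore to produce an explicit instance-level witness---a concrete problem $f \leqW \XC_{[0,1]} * \XC_{[0,1]}$ whose two-round adaptive structure (the second convex set in $[0,1]$ depends computably on the outcome of the first choice) cannot be realised through any single convex subset of $[0,1]^n$. Concretely, I would try to design a parameterised family of two-stage convex problems whose dependence of the second slice on the first choice has a topological/dimension-theoretic obstruction to being flattened into a static convex set of fixed finite dimension under a continuous reduction, exploiting the fact that adaptivity in $*$ fundamentally outruns the static geometry of $\XC_{[0,1]^n}$. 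This construction---rather than the algebraic iteration---is where the weight of the proof will lie.
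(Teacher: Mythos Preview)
Your argument for part (a) is correct and gives a clean self-contained proof: the projection--slice reduction $\XC_{[0,1]^{2n}}\leqW\XC_{[0,1]^n}*\XC_{[0,1]^n}$ combined with the strict dimension hierarchy (Theorem~\ref{thm:convex-choice}) indeed rules out closure under $*$. This is a slightly different route from the paper, which obtains part (a) as an immediate corollary of part (b): since $\XC_{[0,1]}\leqW\XC_{[0,1]^n}$, the failure $\XC_{[0,1]}*\XC_{[0,1]}\nleqW\XC_{[0,1]^n}$ already forces $\XC_{[0,1]^n}*\XC_{[0,1]^n}\nleqW\XC_{[0,1]^n}$.

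For part (b), however, you yourself identify that your iteration strategy does not close: from the hypothesis $\XC_{[0,1]}*\XC_{[0,1]}\leqW\XC_{[0,1]^n}$ you only get $\XC_{[0,1]}^{[2k]}\leqW(\XC_{[0,1]^n})^{[k]}$, and there is no reason for the right-hand side to stay inside any fixed $\XC_{[0,1]^M}$---indeed, by your own part (a) it cannot. The programme you sketch at the end (designing a two-stage family whose adaptive dependence resists flattening into a single convex set) is in the right spirit but is not a proof; it is precisely the hard part, and you have not carried it out.

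The paper's route is concrete: it invokes Theorem~\ref{thm:AoUC-separation}(1), namely $\XC_{[0,1]}*\AoUC_{[0,1]}\nleqW\XC_{[0,1]^n}$ for all $n$, due to Kihara. Since $\AoUC_{[0,1]}\leqW\XC_{[0,1]}$ (the instances of $\AoUC_{[0,1]}$ are singletons or all of $[0,1]$, hence convex), monotonicity of $*$ immediately gives part (b). The key insight you are missing is that the second factor can be taken to be the far more rigid $\AoUC_{[0,1]}$, whose all-or-unique dichotomy can be played off against the geometry of convex sets in $[0,1]^n$; this is what Kihara's argument exploits, and it is not something a generic ``adaptivity versus static geometry'' heuristic will produce without the specific construction.
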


We mention that compact choice $\K_X$ does not lead to anything new on rich computable metric spaces.

\begin{theorem}[Compact choice]
\label{thm:KX}
$\K_X\equivSW\C_{2^\IN}$ for all rich computable metric spa\-ces $X$.
\end{theorem}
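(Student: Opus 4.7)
I would establish both reductions using the richness of $X$, which provides a computable embedding $\iota\colon 2^\IN\hookrightarrow X$, together with the strong equivalence $\C_{[0,1]^\IN}\equivSW\C_{2^\IN}$ from Corollary~\ref{cor:C2N-equiv}.

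For $\C_{2^\IN}\leqSW\K_X$ the plan is to transport a closed $A\In 2^\IN$ forward along $\iota$. Since $\iota(2^\IN)$ is computably compact (being the computable image of $2^\IN$), any closed $A\In 2^\IN$ yields a compact $\iota(A)\In\iota(2^\IN)\In X$, and a $\kappa_-$-name of $\iota(A)$ can be produced uniformly from a $\psi_-$-name of $A$: given an enumerated finite subfamily $U_{n_1},\dots,U_{n_m}$ of the complement of $A$, the set $\iota(2^\IN\setminus\bigcup_i U_{n_i})$ is a co-c.e.\ compact subset of $\iota(2^\IN)$ and therefore admits enumerable finite covers by basic balls in $X$. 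One call to $\K_X$ then returns some $y\in\iota(A)$, and the fixed computable partial map $\iota^{-1}$ (defined on $\iota(2^\IN)$) decodes $y$ back to a point of $A$.

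For $\K_X\leqSW\C_{2^\IN}$ the plan is to fix a single universal embedding $\iota'\colon X\hookrightarrow[0,1]^\IN$ defined by $\iota'(x):=(\min\{1,d(x,\alpha(n))\})_{n\in\IN}$, where $\alpha$ is the dense sequence of $X$. A short density argument shows that $\iota'$ is injective and a computable homeomorphism onto its image, so $\iota'^{-1}$ is a fixed computable partial map on $\iota'(X)$. Given $K\in\KK_-(X)$, uniform continuity of the Lipschitz map $\iota'$ converts finite covers of $K$ in $X$ into finite covers of $\iota'(K)$ in $[0,1]^\IN$, so $\iota'(K)\in\KK_-([0,1]^\IN)$ is obtained uniformly from $K$; this coincides with $\AA_-([0,1]^\IN)$ since $[0,1]^\IN$ is computably compact. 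Composing with the uniform strong reduction $\C_{[0,1]^\IN}\leqSW\C_{2^\IN}$ from Corollary~\ref{cor:C2N-equiv} yields some $y\in\iota'(K)$, and the fixed map $\iota'^{-1}$ decodes it to a point of $K$.

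The main obstacle is the strength of the reduction: for $\leqSW$ the output-side map must be independent of the varying input, which is why both $\iota$ and $\iota'$ have to be fixed once from the structure of $X$ rather than reconstructed per input. The remaining technical work is to verify that the image operations $A\mapsto\iota(A)$ and $K\mapsto\iota'(K)$ are computable in the specified hyperspace representations, which is routine in the computable metric space setting.
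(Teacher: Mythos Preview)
Your approach is correct and is essentially the standard argument (and presumably the one in the cited reference \cite[Corollaries~10.11 and 10.14]{BGM12}, since the paper itself gives no proof beyond that citation): for $\C_{2^\IN}\leqSW\K_X$ push closed subsets of $2^\IN$ forward along the richness embedding $\iota$ and decode with the fixed $\iota^{-1}$; for $\K_X\leqSW\C_{2^\IN}$ embed $X$ into the Hilbert cube via the fixed Kuratowski map $\iota'$, use $\C_{[0,1]^\IN}\equivSW\C_{2^\IN}$, and decode with the fixed $\iota'^{-1}$. The only cosmetic point is that your justification for the computability of $A\mapsto\iota(A)\in\KK_-(X)$ is phrased a bit loosely; the clean way is to note that $\iota(A)\In\bigcup_j B_{n_j}$ iff $A\In\iota^{-1}\big(\bigcup_j B_{n_j}\big)$, the right-hand side is a c.e.\ open subset of the computably compact space $2^\IN$, and containment of a co-c.e.\ closed set in a c.e.\ open set is semi-decidable there.
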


In particular, this implies $\K_{2^\IN}\equivSW\K_\IR\equivSW\K_{\IN^\IN}\equivSW\C_{2^\IN}$.
We close this section by mentioning that $\ConC_{[0,1]}$ and $\PC_{2^\IN}$ are both upper bounds
of $\K_\IN$.

\begin{proposition}[Upper bound of compact choice]
\label{prop:KN-upper}
$\K_\IN\leqW\ConC_{[0,1]}\sqcap\PC_{2^\IN}$.
\end{proposition}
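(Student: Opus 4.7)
Since $\sqcap$ is the infimum operation of the Weihrauch lattice (Theorem~\ref{thm:lattice}), the reduction $\K_\IN\leqW\ConC_{[0,1]}\sqcap\PC_{2^\IN}$ will follow once we establish the two separate reductions $\K_\IN\leqW\ConC_{[0,1]}$ and $\K_\IN\leqW\PC_{2^\IN}$: given them, one simply feeds a name of $K$ into both pre-processors in parallel, submits the pair to the meet, and routes the returned output (labelled $0$ or $1$) through the corresponding post-processor. Thus the plan reduces to proving the two coordinate reductions.

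For $\K_\IN\leqW\ConC_{[0,1]}$, invoke Theorem~\ref{thm:IVT} to replace $\ConC_{[0,1]}$ by $\IVT$. Given a name of a nonempty compact $K\In\IN$ (via $\kappa_-$), one can effectively compute a bound $N$ with $K\In\{0,\dots,N\}$ together with a c.e.\ enumeration of $\{0,\dots,N\}\setminus K$. Place candidate zeros at the midpoints $x_k:=(2k+1)/(2N+2)$ of the subintervals $[k/(N+1),(k+1)/(N+1)]$, and construct a continuous piecewise linear $f:[0,1]\to\IR$ in stages, maintaining $f(0)=-1$ and $f(1)=1$ throughout: initially $f$ vanishes precisely at every $x_k$, and whenever we learn that some $k$ is to be removed from $K$ we locally perturb $f$ on a shrinking neighborhood of $x_k$ so as to lift it off zero, without altering the values at any other $x_j$ or at the endpoints. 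The limit is a computable continuous function with $f(0)f(1)<0$ whose zero set equals $\{x_k:k\in K\}$, which is nonempty; any zero returned by $\IVT$ decodes some element of $K$.

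For $\K_\IN\leqW\PC_{2^\IN}$, use the same input data. Choose $m$ with $2^m\geq N+1$ and identify length-$m$ binary words with integers in $\{0,\dots,2^m-1\}$. Let $A\In 2^\IN$ consist of all sequences whose first $m$ bits encode some element of $K$. Since $\{0,\dots,N\}\setminus K$ is c.e., the complement of $A$ is a c.e.\ union of cylinders, so $A$ is co-c.e.\ closed; and its uniform measure is at least $2^{-m}>0$. Hence $A$ is a legal input to $\PC_{2^\IN}$, and any output sequence decodes an element of $K$ in its first $m$ bits.

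The only real technical point is the stage-wise construction of $f$ for the $\IVT$ reduction, where one must arrange that excluded $x_k$ become non-zeros while all not-yet-excluded $x_k$ remain zeros, no new zeros are introduced elsewhere, and the successive modifications accumulate into a computable continuous limit compatible with the fixed endpoint values. Once this bookkeeping is set up, both coordinate reductions are routine, and combining them via the universal property of $\sqcap$ yields the proposition.
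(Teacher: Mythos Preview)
Your overall plan---reduce to the two coordinate reductions and combine via the universal property of $\sqcap$---is exactly the paper's approach, and your argument for $\K_\IN\leqW\PC_{2^\IN}$ is correct and is what the paper calls ``easy to prove.''

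The gap is in your construction for $\K_\IN\leqW\ConC_{[0,1]}$ (equivalently, $\leqW\IVT$). You want an $f$ with $f(0)=-1$, $f(1)=1$ whose zero set is exactly $\{x_k:k\in K\}$, built by locally lifting $f$ off zero at each excluded $x_k$. But any continuous $f$ with $f(0)<0<f(1)$ and finitely many zeros must have at least one \emph{sign-change} zero. If that sign change sits at some $x_k$ and $k$ is later enumerated out of $K$, a perturbation supported in a small neighborhood of $x_k$ cannot remove the zero: pushing $f(x_k)$ to either sign forces, by the intermediate value theorem applied inside that neighborhood, a new zero arbitrarily close to $x_k$. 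That new zero decodes back to $k\notin K$, so the post-processing fails. Your last paragraph flags exactly this bookkeeping as ``the only real technical point,'' but it is not a matter of bookkeeping---the invariant ``zero set $=\{x_k:k\in K_s\}$'' is incompatible with purely local edits.

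The reduction $\K_\IN\leqW\ConC_{[0,1]}$ is genuine (the paper cites \cite[Proposition~9.2]{BLRMP18}), but the working constructions are not of the ``place a zero at each candidate and delete the bad ones'' kind. One standard route is instead to maintain a single shrinking interval (equivalently, a single moving zero of $f$) whose position encodes a current guess $g_s\in K_s$ together with the finite history of guess changes; when $g_s$ is enumerated out, one shrinks into a designated subinterval that records the next guess. Only finitely many changes occur (at most $N$), and from any point of the limit interval together with the original input one can decode the final guess, which lies in $K$. This is the kind of argument you would need to replace your $\IVT$ construction.
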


\subsection{Choice on Euclidean Space}

In this section we discuss $\C_\IR$ and related problems. 
The basic observation is that $\C_\IR$ can be described with the help of $\C_{2^\IN}$ and $\C_\IN$
in several different ways.

\begin{theorem}[Choice on Euclidean space]
\label{thm:CR}
$\C_\IR\equivSW\C_{\IR^n}\equivSW\C_{2^\IN\times\IN}\equivSW$\linebreak 
${\C_{2^\IN}\times\C_\IN}\equivSW\C_{2^\IN}\star\C_\IN\equivSW\C_\IN\star\C_{2^\IN}$
for all $n\geq1$.
\end{theorem}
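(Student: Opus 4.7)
The plan is to close a cycle of strong Weihrauch reductions with $\C_\IR$ as the central hub. For the spatial equivalences, $\C_\IR \leqSW \C_{\IR^n}$ follows from the computable embedding of $\IR$ as the first coordinate axis in $\IR^n$, which has co-c.e.\ closed image, combined with Proposition~\ref{prop:injection-surjection}(1). The converse $\C_{\IR^n} \leqSW \C_\IR$ uses that $\IR^n = \bigcup_k [-k,k]^n$ is computably $\sigma$-compact (Proposition~\ref{prop:choice-spaces}(2)). For the equivalence $\C_\IR \equivSW \C_{2^\IN \times \IN}$, I would embed $2^\IN \times \IN$ into $\IR$ as a disjoint union $\bigsqcup_n (K + n)$ of shifted copies of the standard Cantor set $K \In [0,\tfrac12]$, giving a co-c.e.\ closed image; for the converse I would decompose a closed $A \In \IR$ into integer-interval slices $A_n = A \cap [n,n+1]$, binary-encode each as a closed $E_n \In 2^\IN$, and form the closed set in $2^\IN \times \IN$ whose slices are the $E_n$.

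For the composition equivalences, $\C_{2^\IN} \times \C_\IN \leqSW \C_{2^\IN \times \IN}$ is immediate via the product-set construction $(A,B) \mapsto A \times B$, and $\C_{2^\IN} \times \C_\IN \leqSW \C_{2^\IN} \star \C_\IN$ is given by Proposition~\ref{prop:order}(1). Both $\C_{2^\IN}$ and $\C_\IN$ reduce to $\C_\IR$ (Proposition~\ref{prop:choice-spaces}), and $\C_\IR$ is closed under compositional product (Corollary~\ref{cor:choice-composition}), so monotonicity of $*$ yields $\C_{2^\IN} \star \C_\IN \leqW \C_\IR$ and $\C_\IN \star \C_{2^\IN} \leqW \C_\IR$. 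Since both compositional products are cylinders by Proposition~\ref{prop:compositional-product-cylinder} and $\C_\IR$ is a cylinder by Proposition~\ref{prop:choice-cylinder}(1), these Weihrauch reductions upgrade to $\leqSW$ via Proposition~\ref{prop:cylinder}.

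The remaining link $\C_{2^\IN \times \IN} \leqSW \C_{2^\IN} \times \C_\IN$ is the main obstacle. Given closed non-empty $C \In 2^\IN \times \IN$ with slices $C_n = \{x : (x,n) \in C\}$, I take $B = \{n : C_n \neq \emptyset\}$ as the $\C_\IN$-input; this is non-empty and co-c.e.\ in $\IN$. A naive choice of $\C_{2^\IN}$-input as the padded product $\prod_n (C_n \cup \{\mathrm{dummy}_n\})$ fails because $\C_{2^\IN}$ may return the all-dummy point, which supplies no information when combined with any $m \in B$. My approach is to exploit parallelizability $\widehat{\C_{2^\IN}} \equivSW \C_{2^\IN}$ (Theorem~\ref{thm:choice-cantor}) together with the non-deterministic characterization (Theorem~\ref{thm:non-deterministic}): take $A$ to be the advice set of the non-deterministic procedure which, on advice $x$ and parameter $m \in B$, outputs a point of $C_m$. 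Parallelizability guarantees a single universal advice exists simultaneously for all $m \in B$, and an argument using the co-c.e.\ description of the slices shows this advice set is a uniformly co-c.e.\ closed non-empty subset of $2^\IN$ computable from $C$; from $(x,m) \in A \times B$ the realizer then yields $(y,m) \in C$. The hardest part of this step is verifying the closedness of the advice set in a way that is uniform in $C$ despite the $\Pi^0_1$ nature of $B$. Because $\C_{2^\IN} \times \C_\IN$ is a cylinder (the $\times$-product of the cylinder $\C_{2^\IN}$ with any problem, since $\id \leqSW \C_{2^\IN}$), the resulting Weihrauch reduction upgrades to a strong Weihrauch reduction by Proposition~\ref{prop:cylinder}, closing the cycle.
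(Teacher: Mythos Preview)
Your cycle and the handling of the $\star$ parts are correct and essentially match the paper's argument (the paper applies Theorem~\ref{thm:independent-choice} directly to get $\C_{2^\IN}\star\C_\IN\leqW\C_{2^\IN\times\IN}$ rather than routing through $\C_\IR*\C_\IR\leqW\C_\IR$, but this is a cosmetic difference, and the cylinder upgrades are the same).

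The one genuine gap is the link $\C_{2^\IN\times\IN}\leqSW\C_{2^\IN}\times\C_\IN$, which the paper simply cites from~\cite{BBP12}. You are right that the naive padding $C_n\cup\{\text{dummy}\}$ fails, but you abandon padding too quickly and then propose an advice-set construction whose closedness you yourself flag as unresolved---and indeed the set $\{x:(\forall m\in B)\,x\text{ succeeds for }m\}$ is an intersection over a $\Pi^0_1$ index set and is not obviously co-c.e. The fix is a smarter padding that exploits compactness of $2^\IN$: from a $\psi_-$-name of $C_n$ enumerate the complement balls, but suppress any ball that would make the remaining set empty (decidable since $2^\IN$ is co-c.e.\ compact). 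By compactness the resulting $\tilde C_n$ is still non-empty, and $\tilde C_n=C_n$ whenever $C_n\neq\emptyset$. Now feed $(\tilde C_n)_n$ to $\widehat{\C_{2^\IN}}\equivSW\C_{2^\IN}$ and $B=\{n:C_n\neq\emptyset\}$ to $\C_\IN$; from outputs $(x_n)_n$ and $m\in B$ return $(x_m,m)\in C$. This is already a strong reduction (no access to $C$ is needed in the output stage), so the detour through non-determinism and the final cylinder upgrade are both unnecessary for this step.
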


The results regarding $\star$ follow with the help of Theorem~\ref{thm:independent-choice}.
We have deliberately used the symbol $\star$ and not $*$, since the degrees with $\star$
are cylinders and hence we obtain strong equivalences.
Theorem~\ref{thm:CR} shows that Theorem~\ref{thm:C2N-elimination} is applicable to $\C_\IR$, and
we obtain the following conclusion.

\begin{corollary}[Single-valuedness]
\label{cor:CR-single}
$f\leqW\C_\IR\TO f\leqW\C_\IN$, for all single-valued problems $f:X\to Y$ with a computable metric space $Y$.
\end{corollary}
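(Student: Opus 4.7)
The plan is to combine Theorem~\ref{thm:CR} with the choice elimination theorem for Cantor space (Theorem~\ref{thm:C2N-elimination}). More precisely, Theorem~\ref{thm:CR} identifies $\C_\IR$ with the compositional product $\C_{2^\IN}\star\C_\IN$, and by Proposition~\ref{prop:compositional-product-cylinder} we have $\C_{2^\IN}\star\C_\IN\equivW\C_{2^\IN}*\C_\IN$. So for a problem $f$ satisfying the hypotheses of the corollary, the reduction $f\leqW\C_\IR$ becomes $f\leqW\C_{2^\IN}*\C_\IN$.

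From there I would invoke Theorem~\ref{thm:C2N-elimination} with the choice ``$g=\C_\IN$'': since $f:X\to Y$ is single-valued with a computable metric space $Y$, the theorem directly yields $f\leqW\C_\IN$. This is the entire argument; essentially the corollary just combines two structural facts that have already been established earlier in the text.

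The only step that requires any real care is making sure the statement of Theorem~\ref{thm:CR} is applied correctly. In particular, one should note that the equivalences there are listed as strong Weihrauch equivalences for $\star$, so they automatically give ordinary Weihrauch equivalence for $*$; and one should check that the hypothesis ``single-valued with computable metric codomain'' transfers unchanged through the reduction $\C_\IR\equivW\C_{2^\IN}*\C_\IN$ into the hypothesis required by Theorem~\ref{thm:C2N-elimination}. Neither of these points is an obstacle, and I do not foresee any technical difficulty; the corollary is really just an advertised packaging of the previous two results.
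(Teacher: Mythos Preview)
Your proposal is correct and is exactly the approach the paper intends: the text immediately preceding the corollary already states that ``Theorem~\ref{thm:CR} shows that Theorem~\ref{thm:C2N-elimination} is applicable to $\C_\IR$,'' and your argument spells this out by passing from $\C_\IR\equivSW\C_{2^\IN}\star\C_\IN\equivW\C_{2^\IN}*\C_\IN$ and then invoking Theorem~\ref{thm:C2N-elimination} with $g=\C_\IN$.
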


This result applies in particular to $\UC_\IR$ and implies $\UC_\IR\equivW\C_\IN$.
Now we discuss an important upper bound on $\C_\IR$. The low basis theorem of Jockusch and Soare
states that every computable infinite binary tree has a low path. 
We recall that $p\in\IN^\IN$ is called {\em low} if $p'\leqT\emptyset'$ holds, i.e., 
if the halting problem relative to $p$ is not more difficult than the ordinary halting problem.
Lowness is represented by the problem $\Low:=\J^{-1}\circ\lim$ since $p$ is low
if and only if there is a computable $q$ such that $p=\Low(q)$. 
It is clear that $\Low\lW\lim$ (since $\J^{-1}$ is computable and not every limit computable $p$ is low).
The following result can be seen as a uniform version of the low basis theorem.

\begin{theorem}[Uniform low basis theorem]
\label{thm:low-basis}
$\C_\IR\lSW\Low$.
\end{theorem}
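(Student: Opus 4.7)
The plan is to split the argument into the reduction $\C_\IR\leqSW\Low$ and the strict separation $\Low\not\leqSW\C_\IR$. For the reduction, I first invoke the strong equivalence $\C_\IR\equivSW\C_{2^\IN\times\IN}$ from Theorem~\ref{thm:CR}, so it suffices to reduce the problem of choosing a point from a non-empty closed set $A\In 2^\IN\times\IN$. Viewing $A$ as a uniformly given sequence $(T_n)_{n\in\IN}$ of binary trees (at least one of which is infinite), I run a uniform Jockusch--Soare priority construction: at stage $s$ I maintain the least candidate index $n_s$ (whose tree currently still looks infinite) together with a tentative extendible segment $\sigma_s\in T_{n_s}$, process the lowness requirement $R_e$: ``$\Phi_e^p(e)\!\downarrow$?'' by attempting to extend $\sigma_s$ inside $T_{n_s}$ so as to force divergence (and otherwise marking $R_e$ as convergent), and roll back to the next index whenever $T_{n_s}$ is later discovered finite. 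The stage-$s$ output $q(s)$ codes the current tentative answers for $R_0,\dots,R_s$ together with $n_s$ and $\sigma_s$. In the limit, $n_s$ stabilizes to a good $n$, the segments $\sigma_s$ converge to a path $p\in[T_n]$, and the $R_e$-decisions converge to $p'(e)$; hence $\lim q=\J(\langle n,p\rangle)$, so $q\in\dom(\Low)$ and $\Low(q)=\langle n,p\rangle$, from which a computable post-processor returns the pair $(n,p)\in A$.

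For the separation it suffices to prove $\Low\nleqW\C_\IR$. The jump $\J$ is injective on Baire space---the oracle machine which on input $p$ queries $p(n)$ and halts iff $p(n)=k$ witnesses that the value $p(n)$ can be read off $p'$---so $\Low=\J^{-1}\circ\lim$ is a single-valued problem with codomain the computable metric space $\IN^\IN$. If $\Low\leqW\C_\IR$ held, then Corollary~\ref{cor:CR-single} would yield $\Low\leqW\C_\IN$, and Corollary~\ref{cor:notions-computability} would in turn make $\Low$ computable with finitely many mind changes. Such a computation, applied to a computable input, must produce a computable output (the whole run is computable). To derive the desired contradiction I invoke the classical low basis theorem to obtain a non-computable low $p\in 2^\IN$; since $p'\leqT\emptyset'$ there is a computable $q\in\IN^\IN$ with $\lim q=p'$, whence $q\in\dom(\Low)$ but $\Low(q)=p$ is non-computable.

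The main obstacle is the priority construction in the first half: it has to be uniform in the input $(T_n)_n$ and must deliver a sequence $q$ whose limit genuinely lies in $\range(\J)$---otherwise $q\notin\dom(\Low)$ and the reduction fails. The delicate points are the parallel commitment to an index $n$ (reflecting the $\C_\IN$-factor in the decomposition $\C_\IR\equivSW\C_{2^\IN}\star\C_\IN$) together with the lowness requirements, and the coding of the answer $(n,p)$ into one element of Baire space so that its jump matches the tentative $R_e$-decisions produced at each stage.
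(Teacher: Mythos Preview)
Your separation argument is exactly the paper's: $\Low$ is single-valued with codomain $\IN^\IN$, so Corollary~\ref{cor:CR-single} would force $\Low\leqW\C_\IN$, contradicting the existence of a computable $q$ with $\Low(q)$ non-computable. This is correct, and since $\nleqW$ implies $\nleqSW$ you get the required strictness.

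For the reduction you follow the standard uniform Jockusch--Soare construction that the paper cites from \cite{BBP12}, so the overall route is the same. One detail in your sketch does not type-check, though. You write that the stage-$s$ output $q(s)$ ``codes the current tentative answers for $R_0,\dots,R_s$ together with $n_s$ and $\sigma_s$'', and then assert $\lim q=\J(\langle n,p\rangle)$. If $q_s$ literally carries $n_s$ and $\sigma_s$ as part of its data, then $\lim_s q_s$ carries $n$ and (an encoding of) $p$, not the jump of $\langle n,p\rangle$; such a limit need not lie in $\range(\J)$, so $q\notin\dom(\Low)$. The fix is simple: keep $(n_s,\sigma_s)$ as \emph{internal} state of the priority construction and let $q_s\in\IN^\IN$ be only the current $0/1$ guesses for $\Phi_e^{\langle n_s,\sigma_s0^\omega\rangle}(e)\!\downarrow$. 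After $n_s$ and the injury pattern stabilise, each coordinate $q_s(e)$ converges to $\langle n,p\rangle'(e)$, so $\lim q=\J(\langle n,p\rangle)$ as you want, and the post-processor $H$ decodes $\Low(q)=\langle n,p\rangle\mapsto(p,n)\in A$. With that adjustment your argument goes through and yields the strong reduction.
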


The strictness follows for instance from Corollary~\ref{cor:CR-single} since $\Low$ is single-valued.
We mention an interesting algebraic example of how infima and suprema of the degrees of $\C_\IN$ and $\C_{2^\IN}$  
interact. 

\begin{example}
\label{ex:CR-CN}
We obtain
\begin{enumerate}
\item $(\C_{2^\IN}\sqcap\C_\IN)*(\C_{2^\IN}\sqcup\C_\IN)\equivW(\C_{2^\IN}\sqcap\C_\IN)\times(\C_{2^\IN}\sqcup\C_\IN)\equivW\C_{2^\IN}\sqcup\C_\IN$.
\item
$(\C_{2^\IN}\sqcup\C_\IN)*(\C_{2^\IN}\sqcap\C_\IN)\equivW\C_{2^\IN}*\C_\IN\equivW\C_{2^\IN}\times\C_\IN$.
\end{enumerate}
\end{example}

We note that $\C_{2^\IN}\sqcup\C_\IN\lW\C_{2^\IN}\times\C_\IN\equivW\C_\IR$ since the right-hand degree is join-irreducible
and since $\C_{2^\IN}$ and $\C_\IN$ are incomparable.
We formulate a counterpart of Theorem~\ref{thm:CR} for $\PC_\IR$.

\begin{theorem}[Positive choice on Euclidean space]
\label{thm:PCR}
$\PC_\IR\equivSW\PC_{2^\IN\times\IN}\equivSW$\linebreak 
${\PC_{2^\IN}\times\C_\IN}\equivW\PC_{2^\IN}*\C_\IN\equivW\C_\IN*\PC_{2^\IN}$
for all $n\geq1$.
\end{theorem}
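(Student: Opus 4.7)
The plan is to mirror the structure of the proof of Theorem~\ref{thm:CR} for $\C_\IR$, decomposing the chain into four equivalences:
(i) $\PC_\IR\equivSW\PC_{2^\IN\times\IN}$,
(ii) $\PC_{2^\IN\times\IN}\equivSW\PC_{2^\IN}\times\C_\IN$,
(iii) $\PC_{2^\IN}\times\C_\IN\equivW\PC_{2^\IN}*\C_\IN$, and
(iv) $\PC_{2^\IN}*\C_\IN\equivW\C_\IN*\PC_{2^\IN}$,
while carrying along at each step the positive-measure data that distinguishes $\PC$ from $\C$.

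For (i), the measure-preserving decomposition $\IR=\bigsqcup_{k\in\IZ}[k,k+1)$ together with the computable measure-preserving bijection $[0,1)\to 2^\IN$ given by binary expansion (defined off the null set of dyadic rationals) and a computable $\IZ\leftrightarrow\IN$ converts a closed $A\In\IR$ of positive Lebesgue measure to a closed $A'\In 2^\IN\times\IN$ with $\mu\otimes\nu(A')>0$: some intersection $A\cap[k,k+1)$ has positive Lebesgue measure and $\nu(k)>0$ on $\IN$, so the product measure of the image is strictly positive. For (iii), the $\leqW$-direction is Proposition~\ref{prop:order}; for the reverse, note that $\C_\IN=\PC_\IN$ under the geometric probability measure (nonemptiness equals positive measure), and independent choice (Theorem~\ref{thm:independent-choice}) yields $\PC_{2^\IN}*\C_\IN\leqW\PC_{2^\IN\times\IN}\equivSW\PC_{2^\IN}\times\C_\IN$ via~(ii). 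The same pattern with the factors swapped gives~(iv).

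The main work concentrates on~(ii). The direction $\PC_{2^\IN}\times\C_\IN\leqSW\PC_{2^\IN\times\IN}$ is the rectangle construction $(A,B)\mapsto A\times B$: the product measure equals $\mu(A)\cdot\nu(B)>0$, and a solution splits into solutions of both components. For $\PC_{2^\IN\times\IN}\leqSW\PC_{2^\IN}\times\C_\IN$ I would use the computable injection $\iota\colon 2^\IN\times\IN\to 2^\IN$, $\iota(x,n):=0^n1x$, whose push-forward sends $\mu\otimes\nu$ to the uniform measure $\mu$ on the full-measure subspace $2^\IN\setminus\{0^\omega\}$. Reading the $\psi_-$-name of $A$ as $A^c=\bigcup_k W_k\times\{n_k\}$, the set $A':=2^\IN\setminus\bigcup_k 0^{n_k}1W_k$ is uniformly computable in $\AA_-(2^\IN)$, equals $\iota(A)\cup\{0^\omega\}$, and satisfies $\mu(A')=\mu\otimes\nu(A)>0$; for the $\C_\IN$-input I take $B:=\{n:A_n\ne\emptyset\}$, which is nonempty with c.e.\ complement by compactness of $2^\IN$. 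A solution $(x,m)\in A'\times B$ with $x=0^n1y$ decodes by the prefix rule to $(y,n)\in A$.

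The main obstacle is the measure-zero edge case $x=0^\omega$, which is an admissible output of $\PC_{2^\IN}(A')$ but cannot be decoded by the prefix rule; since the post-processing map in a strong reduction does not have access to $A$, simply falling back on $A_m$ via the $\C_\IN$-component does not directly furnish a point. I expect the resolution to be a more delicate encoding: either replacing $\iota$ by a measure-preserving computable map whose image in $2^\IN$ is already closed, or adapting the auxiliary construction used in the analogous step of Theorem~\ref{thm:CR} so that the single-point defect is absorbed by the $\C_\IN$-component. This is the principal novelty compared to the $\C$-case, and once it is handled the remainder of the proof follows the sketched pattern routinely.
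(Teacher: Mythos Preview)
Your decomposition into (i)--(iv) is natural and matches the paper's structure (the paper itself just cites \cite[Corollary~6.4, Theorem~9.3]{BGH15a}), and your handling of (i), (iii), (iv) and the rectangle direction of (ii) is correct.

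However, the gap you flag in (ii) is genuine, and neither of your proposed fixes works. For the first fix: there is no continuous injection $\iota':2^\IN\times\IN\to 2^\IN$ whose image is closed, since a closed subset of $2^\IN$ is compact while $2^\IN\times\IN$ is not; so you cannot simply replace $\iota$ by a better embedding. For the second fix: the analogous step in Theorem~\ref{thm:CR} (i.e., $\C_{2^\IN\times\IN}\leqSW\C_{2^\IN}\times\C_\IN$) is proved not via $\iota$ but via the product construction $A'=\prod_n\tilde A_n\subseteq(2^\IN)^\IN\cong2^\IN$, where $\tilde A_n$ is obtained from the name of $A_n$ by halting the enumeration of the complement just before it would cover $2^\IN$; then from $((x_n)_n,m)$ with $m\in B=\{n:A_n\neq\emptyset\}$ one outputs $(x_m,m)$. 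This avoids the defect point entirely, but it does \emph{not} transfer to $\PC$: if several slices $A_n$ are nonempty of measure zero (which is compatible with $\mu\otimes\nu(A)>0$), then $\mu(A')=\prod_n\mu(\tilde A_n)=0$, so $A'$ is not a legal input to $\PC_{2^\IN}$.

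In fact your $\iota$-encoding is provably unfixable by adjusting $B$ alone: in a strong reduction the decoder $h$ depends only on $(x,m)$, and for $(0^\omega,m)$ the value $h(0^\omega,m)$ would have to lie in \emph{every} positive-measure $A$ with $A_m\neq\emptyset$; but for any candidate point $(y,n)$ one can take $A=[w]^c\times\{m\}$ with $w$ a prefix of $y$ (when $n=m$) or $A=2^\IN\times\{m\}$ (when $n\neq m$) to exclude it. So the defect is not a technicality to be ``absorbed'' by $\C_\IN$; a genuinely different construction is needed for $A'$, and this is precisely the content of the cited result in \cite{BGH15a}. Your sketch correctly isolates where the work is, but does not supply it.
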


We mention that in this case we cannot simply replace $*$ by $\star$ and $\equivW$ by $\equivSW$, since
$\PC_\IR$ is not a cylinder.
We note that $\C_\IN\lW\C_\IR\lW\lim\equivW\widehat{\C_\IN}$ implies the following.

\begin{corollary}[Parallelizability]
\label{cor:CR-parallelizability}
$\C_\IR$ and $\PC_\IR$ are not parallelizable, and we obtain $\widehat{\PC_\IR}\equivSW\widehat{\C_\IR}\equivSW\lim$.
\end{corollary}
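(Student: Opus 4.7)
The plan is to first identify $\widehat{\C_\IR}$ and $\widehat{\PC_\IR}$ with $\lim$, and then read off non-parallelizability from the fact that $\C_\IR \lW \lim$ and a separate argument for $\PC_\IR$.

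For the equivalences $\widehat{\PC_\IR} \equivSW \widehat{\C_\IR} \equivSW \lim$, both bounds should be essentially direct. For the upper bound, I would write $\C_\IR \equivSW \C_{2^\IN} \times \C_\IN$ (Theorem~\ref{thm:CR}) and note that $\C_{2^\IN} \equivSW \widehat{\C_2} \leqSW \widehat{\C_\IN} \equivSW \lim$ (Theorems~\ref{thm:lim} and~\ref{thm:choice-cantor}) and $\C_\IN \leqSW \lim$, so by strong idempotency of $\lim$ (Proposition~\ref{prop:algebraic-lim}) I get $\C_\IR \leqSW \lim$, and hence $\widehat{\C_\IR} \leqSW \widehat{\lim} \equivSW \lim$. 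Since $\PC_\IR \leqSW \C_\IR$ via monotonicity of products and $\PC_{2^\IN} \leqSW \C_{2^\IN}$, the same bound applies to $\widehat{\PC_\IR}$. For the lower bound, projection onto the second factor in $\PC_{2^\IN} \times \C_\IN$ gives $\C_\IN \leqSW \PC_\IR$, and parallelization is monotone (Proposition~\ref{prop:mon-closure}), so $\lim \equivSW \widehat{\C_\IN} \leqSW \widehat{\PC_\IR} \leqSW \widehat{\C_\IR}$.

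For non-parallelizability of $\C_\IR$, I would argue by contradiction: if $\C_\IR \equivW \widehat{\C_\IR}$, then combined with what we just proved this yields $\C_\IR \equivW \lim$, contradicting the strict reduction $\C_\IR \lW \lim$ that the corollary takes as already established.

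The main obstacle is the non-parallelizability of $\PC_\IR$, since $\PC_\IR$ does not obviously sit strictly below $\lim$ in a way one can quote directly. My plan is to assume $\PC_\IR$ parallelizable, which (by the first part) gives $\PC_\IR \equivW \lim$, and then derive $\C_{2^\IN} \leqW \PC_{2^\IN} \times \C_\IN$ via $\C_{2^\IN} \leqW \lim \equivW \PC_\IR$ together with Theorem~\ref{thm:PCR}. Since $f \times g \leqW f * g$ always, this upgrades to $\C_{2^\IN} \leqW \PC_{2^\IN} * \C_\IN$. Now $\C_{2^\IN}$ is a total fractal (Proposition~\ref{prop:choice-fractal}), so the choice elimination theorem for $\C_\IN$ (Theorem~\ref{thm:CN-elimination}) applies and yields $\C_{2^\IN} \leqW \PC_{2^\IN}$, directly contradicting Proposition~\ref{prop:PC-C}. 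The one subtlety to verify here is that Theorem~\ref{thm:CN-elimination} is formulated with $\C_\IN$ being applied \emph{first} in the compositional product, which matches the shape $\PC_{2^\IN} * \C_\IN$ produced by the inequality $f \times g \leqW f * g$.
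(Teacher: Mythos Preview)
Your argument is correct, but for the non-parallelizability of $\PC_\IR$ you work considerably harder than the paper does. You write that ``$\PC_\IR$ does not obviously sit strictly below $\lim$ in a way one can quote directly,'' and then route through choice elimination and Proposition~\ref{prop:PC-C}. In fact the strictness is immediate from facts you already have in hand: $\PC_\IR\leqW\C_\IR\lW\lim$, and if $\lim\leqW\PC_\IR$ held then by transitivity $\lim\leqW\C_\IR$, contradicting $\C_\IR\lW\lim$. So $\PC_\IR\lW\lim$, and non-parallelizability follows exactly as in your argument for $\C_\IR$. The paper's proof is just the chain $\C_\IN\lSW\PC_\IR\lSW\C_\IR\lSW\lim\equivSW\widehat{\C_\IN}$, from which both the equivalences and both non-parallelizability statements drop out at once.

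Your detour via $\C_{2^\IN}\leqW\PC_{2^\IN}*\C_\IN$ and Theorem~\ref{thm:CN-elimination} is valid and is a nice illustration of how choice elimination can be deployed, but it invokes substantially heavier machinery (total fractality of $\C_{2^\IN}$, the Baire-category-based elimination theorem, and the Jockusch--Soare separation behind Proposition~\ref{prop:PC-C}) to reprove what already follows from $\C_\IR\lW\lim$ by a one-line transitivity argument. The equivalence part of your argument is essentially the same as the paper's, just written out in more detail.
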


\subsection{Choice on Baire Space}

Choice on Baire space is the upper bound of all choice problems of complete computable metric spaces.
In fact, we obtain the following.

\begin{theorem}[Non $\sigma$--compact spaces]
\label{thm:non-Ks}
$\C_{\IN^\IN}\equiv_{\rm W}^p\C_X$ for some oracle $p\in\IN^\IN$ if $X$ is a separable
complete metric space that is not $\sigma$--compact.
\end{theorem}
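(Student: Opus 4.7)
The plan is to prove both Weihrauch reductions $\C_X \leq_{\mathrm W}^p \C_{\IN^\IN}$ and $\C_{\IN^\IN} \leq_{\mathrm W}^p \C_X$ for a single well-chosen oracle $p$. The first reduction is routine: since $X$ is separable and complete, there is an oracle $p_0$ relative to which a dense sequence in $X$ together with its table of pairwise distances is computable, turning $X$ into a computable metric space relative to $p_0$. Since $X$ is complete, a relativised application of Proposition~\ref{prop:choice-spaces}(1) yields $\C_X \leq_{\mathrm{sW}}^{p_0} \C_{\IN^\IN}$.

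For the harder reduction I would invoke the classical Hurewicz dichotomy for Polish spaces: every separable completely metrisable space is either $\sigma$-compact or contains a closed subset homeomorphic to Baire space $\IN^\IN$. By hypothesis $X$ is not $\sigma$-compact, so fix such a closed copy $F \In X$ and a homeomorphism $h : \IN^\IN \to F$. Extend $p_0$ to an oracle $p$ that additionally supplies an enumeration of basic open balls in $X$ whose union is $X \sm F$ (so that $F$ becomes co-c.e.\ closed in $X$ relative to $p$) and enough data to compute both $h$ and $h^{-1}$. Proposition~\ref{prop:injection-surjection}(1) now gives $\C_F \leq_{\mathrm{sW}}^p \C_X$, while the computable surjection $h^{-1} : F \to \IN^\IN$ combined with Proposition~\ref{prop:injection-surjection}(2) gives $\C_{\IN^\IN} \leq_{\mathrm{sW}}^p \C_F$. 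Composing the two yields $\C_{\IN^\IN} \leq_{\mathrm W}^p \C_X$, completing the equivalence.

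The main obstacle is carrying out the second application of Proposition~\ref{prop:injection-surjection}(2) cleanly, because that proposition is formulated with respect to the fixed representations built into the spaces: one must check that the subspace representation of $F$ (inherited from $X$ relative to $p$) and the representation transported from $\IN^\IN$ via $h$ give rise to the same Weihrauch degree $\C_F$. This is precisely the fact that any two admissible representations of a given Polish space become computably equivalent relative to a suitable oracle, which is exactly why we baked both $h$ and $h^{-1}$ into $p$. Concretely, given a co-c.e.\ closed $A \In \IN^\IN$, one uses $h^{-1}$ continuity (with oracle $p$) to pull back an exhaustion of $\IN^\IN \sm A$ by cylinders to an enumeration of basic open balls in $X$ covering $F \sm h(A)$, which together with the enumeration witnessing that $F$ is co-c.e.\ closed produces a $\psi_-$-name of $h(A)$ as a closed subset of $X$. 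Once this bookkeeping is dispatched, the remaining steps are immediate from the cited propositions.
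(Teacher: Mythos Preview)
Your proposal is correct and follows essentially the same route as the cited reference \cite[Corollary~4.10]{BBP12}: one direction is the relativised version of Proposition~\ref{prop:choice-spaces}(1), and the other direction combines the Hurewicz dichotomy (a non-$\sigma$-compact Polish space contains a closed copy of $\IN^\IN$) with the relativised versions of Proposition~\ref{prop:injection-surjection}(1) and (2). The bookkeeping you flag in the final paragraph is exactly what is needed, and it is dispatched by choosing the oracle $p$ to encode the dense sequence in $X$, the $\psi_-$-name of $F$, and realizers for $h$ and $h^{-1}$; once this is done, admissibility of the Cauchy representation ensures that the subspace representation of $F$ and the one transported from $\IN^\IN$ via $h$ are equivalent relative to $p$, so the two applications of Proposition~\ref{prop:injection-surjection} go through without further difficulty.
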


We list a number of choice problems that fall into the equivalence class of $\C_{\IN^\IN}$.
We assume that these spaces are represented as computable metric spaces in the standard way.

\begin{theorem}[Baire space]
\label{thm:CNN-equiv}
$\C_{\IN^\IN}\equivSW\C_{\IR^\IN}\equivSW\C_{\IR\setminus\IQ}\equivSW\C_{\ell_p}\equivSW\C_{\CC[0,1]}$ for all computable $p\geq1$.
\end{theorem}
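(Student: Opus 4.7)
The plan is to prove each of the stated strong Weihrauch equivalences $\C_X \equivSW \C_{\IN^\IN}$ separately, by exhibiting reductions in both directions using Proposition~\ref{prop:injection-surjection}.

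For the direction $\C_X \leqSW \C_{\IN^\IN}$ in each case: every $X$ in the list is a computable Polish space, so the partial Cauchy representation $\delta_X : \In \IN^\IN \to X$ extends to a total computable surjection $s: \IN^\IN \to X$. Indeed, the Cauchy condition $(\forall i>j)\, d(\alpha p(i),\alpha p(j)) < 2^{-j}$ is decidable step by step, so one may redirect any $p$ that violates it at stage $n$ to a fixed computable Cauchy name from stage $n$ onward. Proposition~\ref{prop:injection-surjection}(2) applied to $s$ then yields the reduction.

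For the direction $\C_{\IN^\IN} \leqSW \C_X$ in each case: I would exhibit a computable closed embedding $\phi_X: \IN^\IN \hookrightarrow X$ whose image $A_X = \phi_X(\IN^\IN)$ is co-c.e.\ closed in $X$ and such that $\phi_X$ is a computable homeomorphism onto $A_X$. From such a $\phi_X$ one obtains $\C_{\IN^\IN} \equivSW \C_{A_X}$ by transporting closed-set representations along $\phi_X$ (both push-forward and pull-back of closed sets are computable), while Proposition~\ref{prop:injection-surjection}(1) yields $\C_{A_X} \leqSW \C_X$. The embeddings to construct are:
\begin{itemize}
\item For $X = \IR^\IN$, the natural inclusion $\IN^\IN \hookrightarrow \IR^\IN$, since $\IN$ is co-c.e.\ closed in $\IR$ via the c.e.\ open complement $\bigcup_{k \in \IZ}(k,k+1)$, and this lifts to $\IN^\IN \In \IR^\IN$ by the standard product construction.
\item For $X = \IR \setminus \IQ$, the classical continued-fraction expansion gives a full computable homeomorphism $\IN^\IN \cong \IR \setminus \IQ$ after using one coordinate to encode integer part and sign; both directions are computable on the irrationals, so no proper subset is needed.
\item For $X = \ell_p$, define $\phi(x) := \sum_n 2^{-n} e_{\langle n, x_n \rangle}$, where $\langle \cdot,\cdot\rangle$ is the Cantor pairing and $(e_m)$ is the canonical basis. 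A direct calculation gives $\|\phi(x)-\phi(x')\|_p^p \geq 2\cdot 2^{-np}$ whenever $x_n \neq x'_n$, yielding closedness of the image and a computable inverse (decode $x_n$ by searching for the unique $k$ with $y_{\langle n,k\rangle} > 2^{-n}/2$).
\item For $X = \CC[0,1]$, use disjointly supported tent functions $g_k$ of height $1$ concentrated near $1/(k+2)$, and set $\phi(x)(t) := \sum_n 3^{-n} g_{x_n}(t)$; the coefficients $3^{-n}$ ensure that reconstruction of $x$ from the sample values $\phi(x)(1/(k+2))$ is unambiguous via uniqueness of ternary expansions with digits in $\{0,1\}$.
\end{itemize}

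The main technical obstacle will be verifying co-c.e.\ closedness of $A_X$ in the $\ell_p$ and $\CC[0,1]$ cases. The complement of $A_X$ involves ruling out equality conditions (e.g.\ ``some coordinate equals $2^{-n}$'') that are \emph{a priori} $\Pi^0_1$ rather than $\Sigma^0_1$. The resolution exploits the ambient decay: an $\ell_p$-element has coordinates tending to zero with an effective bound coming from its norm, and tent supports in $\CC[0,1]$ are uniformly localized, so within any slice $n$ only finitely many positions, bounded effectively by the ambient norm, can carry a putative peak of magnitude $\geq 2^{-n}$. This reduces each universal check to a finite search, making the complement enumerable as a c.e.\ union of basic open balls.
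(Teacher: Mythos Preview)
Your approach is correct and follows exactly the framework the paper sets up for this purpose. The paper's own proof is simply a citation to \cite[Corollaries~4.11 and 4.14]{BBP12}, and the argument there proceeds along the same lines: the direction $\C_X\leqSW\C_{\IN^\IN}$ is precisely Proposition~\ref{prop:choice-spaces}(1), which you re-derive via the totalization of the Cauchy representation (you could simply cite it), while the reverse direction is obtained by exhibiting a co-c.e.\ closed copy of $\IN^\IN$ inside $X$ and invoking Proposition~\ref{prop:injection-surjection}(1).

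One remark on the technical point you flag. Your heuristic for co-c.e.\ closedness in the $\ell_p$ case (``only finitely many positions can carry a peak, bounded by the norm'') is morally right but a bit loose as stated, since the bound depends on the unknown point $y$. A cleaner way to organise the verification is to note that for any rational finitely supported $z$, the quantity $\|\phi(x)-z\|_p$ depends only on finitely many coordinates of $x$ (those $n$ for which some $\langle n,k\rangle$ lies in the support of $z$), and within each such coordinate only finitely many values of $x_n$ give distinct contributions; hence $d(z,A_X)=\min_x\|\phi(x)-z\|_p$ is computable outright by a finite search. Then $d(y,A_X)$ is computable for arbitrary $y$ by approximation, so $\chi_{\ell_p\setminus A_X}$ is computable and $A_X$ is co-c.e.\ closed by Proposition~\ref{prop:closed}. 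The same pattern handles the $\CC[0,1]$ case once the tent supports are specified. With this adjustment your argument goes through and matches the intended proof.
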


Also the single-valued problems below $\C_{\IN^\IN}$ have a very natural characterization.

\begin{theorem}[Single-valuedness]
\label{thm:CNN-single-valued}
$f\leqW\C_{\IN^\IN}\iff f$ is effectively Borel measurable, for $f:X\to Y$ on complete computable metric spaces.
\end{theorem}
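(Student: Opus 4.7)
The plan is to apply Theorem~\ref{thm:non-deterministic} to rewrite $f\leqW\C_{\IN^\IN}$ as the assertion that $f$ is non-deterministically computable with advice space $\IN^\IN$: there exist computable $F,S$ such that for every $p\in\dom(f\delta_X)$ the set $R_p:=\{r\in\IN^\IN:S\langle p,r\rangle=0\}$ is a nonempty $\pO{1}$-subset of $\IN^\IN$ and each $r\in R_p$ yields a $\delta_Y$-name $F\langle p,r\rangle$ of $f(\delta_X p)$. Each direction of the theorem then becomes a translation between non-deterministic computations and effective Borel codes.

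For $(\Rightarrow)$, suppose such $F,S$ are given and fix a basic open $U\In Y$ with $V:=\delta_Y^{-1}(U)$, which is c.e.\ open uniformly in a code of $U$. Then $\delta_X p\in f^{-1}(U)$ iff $\exists r\in R_p\;F\langle p,r\rangle\in V$, which exhibits the preimage as effective $\sI{1}$, uniformly in $p$ and $U$. Since $f$ is single-valued, all elements of $R_p$ must be names of the same point $f(\delta_X p)$, so this is equivalent to $\forall r\in R_p\;F\langle p,r\rangle\in V$, which is an effective $\pI{1}$-description of the same preimage. Hence $\delta_X^{-1}(f^{-1}(U))$ is uniformly effective $\dI{1}$, and the effective Suslin--Kleene theorem converts such a uniform $\dI{1}$-description into a uniformly computable Borel code, certifying that $f$ is effectively Borel measurable.

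For $(\Leftarrow)$, assume $f$ is effectively Borel measurable. Since $\delta_Y$ is admissible and $Y$ is a complete computable metric space, the equality $\delta_Y(q)=f(\delta_X p)$ is equivalent to the countable conjunction, over basic open $U\In Y$, of the biconditionals $q\in\delta_Y^{-1}(U)\iff p\in\delta_X^{-1}(f^{-1}(U))$. Each biconditional is uniformly effectively Borel by hypothesis, so the realizer graph $G:=\{(p,q):\delta_Y(q)=f(\delta_X p)\}$ is effectively Borel, hence effective $\sI{1}$. Fix a $\pO{1}$-predicate $P$ with $(p,q)\in G\iff\exists s\;P(p,q,s)$. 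Declare the advice $r$ to be $\langle q,s\rangle$ and set $A_p:=\{\langle q,s\rangle:P(p,q,s)\}$, which is a $\pO{1}$-subset of $\IN^\IN$, nonempty because $f(\delta_X p)$ has at least one $\delta_Y$-name. Defining $F\langle p,\langle q,s\rangle\rangle:=q$ (trivially computable) and letting $S$ semi-decide the failure of $P$ produces a non-deterministic computation of $f$ with $\IN^\IN$-advice, so $f\leqW\C_{\IN^\IN}$ by Theorem~\ref{thm:non-deterministic}.

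The principal obstacle is the effective Suslin--Kleene step in $(\Rightarrow)$: one must ensure that the classical identity between $\dI{1}$-sets and Borel sets lifts to a uniformly effective translation from $\dI{1}$-descriptions into computable Borel codes. This is a standard result from effective descriptive set theory (Kleene, Moschovakis, Louveau), but uniformity in the parameter $U$, and the reliance on the Polish structure of $X$ and $Y$, need to be handled carefully to match the notion of effective Borel measurability used in the paper.
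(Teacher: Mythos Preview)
Your argument is correct and follows the standard route. The paper itself does not give a self-contained proof of this theorem; it simply cites \cite[Theorem~7.7]{BBP12}. Your sketch reconstructs precisely the argument from that reference: use Theorem~\ref{thm:non-deterministic} to reformulate $f\leqW\C_{\IN^\IN}$ as non-deterministic computability with Baire space advice, then for $(\Rightarrow)$ exploit single-valuedness to obtain matching $\sI{1}$ and $\pI{1}$ descriptions of preimages and invoke effective Suslin--Kleene, and for $(\Leftarrow)$ code the realizer graph as an effective $\sI{1}$ set and project out of its $\pO{1}$ witness set. Your identification of the uniform Suslin--Kleene step as the crux is accurate, and your handling of the graph characterization via the countable conjunction of neighborhood biconditionals is sound (the case $q\notin\dom(\delta_Y)$ is ruled out because then every biconditional would force $f(\delta_Xp)$ to avoid every basic open set).
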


Similarly as Theorem~\ref{thm:Borel} this result can be relativized.
We mention that it is easy to see that $\C_{\IN^\IN}$ is parallelizable.

\begin{proposition}[Parallelizability]
\label{prop:CNN-parallelizability}
$\C_{\IN^\IN}$ is strongly parallelizable.
\end{proposition}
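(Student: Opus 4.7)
The plan is to prove both $\widehat{\C_{\IN^\IN}} \leqSW \C_{\IN^\IN}$ and $\C_{\IN^\IN} \leqSW \widehat{\C_{\IN^\IN}}$. The latter is immediate, since for any problem $f$ one trivially has $f \leqSW \widehat{f}$ by using a constant sequence (or more properly, since parallelization is a closure operator on $\leqSW$ as recorded in Proposition~\ref{prop:mon-closure}). The real content lies in the other direction.

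To establish $\widehat{\C_{\IN^\IN}} \leqSW \C_{\IN^\IN}$, I would use Proposition~\ref{prop:GM09} and exhibit computable $h, k$ with $h \circ \C_{\IN^\IN} \circ k \prefix \widehat{\C_{\IN^\IN}}$. The key observation is that the countable power $(\IN^\IN)^\IN$ is computably isomorphic to $\IN^\IN$ via the standard pairing $\langle p_0, p_1, \ldots \rangle$, so $\C_{(\IN^\IN)^\IN} \equivSW \C_{\IN^\IN}$. Given a sequence $(A_n)_{n \in \IN}$ of non-empty closed sets in $\AA_-(\IN^\IN)$, the reduction $k$ should send it to the product $\prod_{n \in \IN} A_n \in \AA_-((\IN^\IN)^\IN) \cong \AA_-(\IN^\IN)$; then an application of $\C_{\IN^\IN}$ produces a point $\langle x_0, x_1, \ldots \rangle$ in this product, and the postprocessor $h$ simply unpairs it to yield the sequence $(x_n)_{n \in \IN}$ with each $x_n \in A_n$.

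The computability of $k$ is the main technical step. Using the third characterization of Proposition~\ref{prop:closed}, $A \in \AA_-(\IN^\IN)$ is encoded by $\chi_{\IN^\IN \setminus A}: \IN^\IN \to \IS$. From a sequence of such characteristic functions $(\chi_{\IN^\IN \setminus A_n})_{n \in \IN}$ one computes $\chi_{(\IN^\IN)^\IN \setminus \prod_n A_n}: (\IN^\IN)^\IN \to \IS$ by the rule $\chi_{(\IN^\IN)^\IN \setminus \prod_n A_n}(\langle x_0, x_1, \ldots\rangle) = 1 \iff (\exists n)\; \chi_{\IN^\IN \setminus A_n}(x_n) = 1$, which is realizable by an effective search since existential quantification over $\IN$ is computable on $\IS$. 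Composing with the standard isomorphism $(\IN^\IN)^\IN \cong \IN^\IN$ and invoking the uniformity clause of Proposition~\ref{prop:closed} gives a computable name in $\AA_-(\IN^\IN)$. Non-emptiness of $\prod_n A_n$ follows from non-emptiness of each factor by the axiom of choice for Baire space, which we are using throughout, so the input to $\C_{\IN^\IN}$ lies in its domain.

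The main obstacle is perhaps notational rather than conceptual: one must verify that the encoding passes through the particular representations involved (the Sierpi\'nski-space encoding of $\AA_-$ for general represented spaces, versus the $\psi_-$ enumeration of complementary balls for computable metric spaces) in a uniformly computable way, and that the overall reduction is strong, which it is because the decoding $h$ only needs the output of $\C_{\IN^\IN}$ and does not require access to the original input $(A_n)_n$.
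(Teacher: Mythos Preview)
Your proof is correct and follows essentially the same approach as the paper: both argue that from a sequence $(A_n)_{n\in\IN}$ of non-empty closed subsets of $\IN^\IN$ one can compute the closed set $\langle A_0\times A_1\times\ldots\rangle\subseteq\IN^\IN$, and that any point therein yields a sequence of points in the factors via unpairing. Your version is more detailed in justifying the computability of the product map via the Sierpi\'nski-space characterization, but the underlying idea is identical.
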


We briefly mention $\UC_{\IN^\IN}$, the unique version of choice on Baire space.
It is easy to see that $\lim\lW\UC_{\IN^\IN}$ holds. It follows from a basis theorem of Kreisel
that $\UC_{\IN^\IN}$ is strictly weaker than $\C_{\IN^\IN}$.

\begin{proposition}[Unique choice]
\label{pop:UCNN}
$\lim^{(n)}\lW\UC_{\IN^\IN}\lW\C_{\IN^\IN}$ for all $n\in\IN$.
\end{proposition}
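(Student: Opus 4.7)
The proposition chains two strict inequalities, each with a reduction and a separation. The plan is to reduce the Turing jump directly to $\UC_{\IN^\IN}$, iterate using closure of $\UC_{\IN^\IN}$ under compositional product, and then deduce the two strictness statements from the effective Borel hierarchy on the left and from Kreisel's basis theorem on the right.

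First I would prove $\J\leqSW\UC_{\IN^\IN}$, whence $\lim\leqW\UC_{\IN^\IN}$ by Theorem~\ref{thm:lim}. Given $p\in\IN^\IN$, I would build a computable tree $T_p\In\IN^*$ whose unique infinite path $q$ encodes $p'$ by $q(e)=s+1$ if the $e$-th oracle machine on input $e$ with oracle $p$ first halts at stage $s$, and $q(e)=0$ otherwise. Put $\sigma$ into $T_p$ exactly when, for each $e<|\sigma|$, the entry $\sigma(e)$ is consistent with simulating this machine for $|\sigma|$ steps---a predicate uniformly decidable in $p$ and $\sigma$. Closure of $T_p$ under prefixes is immediate since a shorter simulation refutes at most what a longer one does. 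Every infinite branch must record the correct first-halt-time or genuine divergence at each $e$, so $[T_p]=\{q\}$, and $p'$ is computable from $q$. A $\psi_-$-name of $[T_p]$ is obtained by c.e.-enumerating the cylinders of all $\sigma\notin T_p$.

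To obtain the full reduction $\lim^{(n)}\leqW\UC_{\IN^\IN}$, I would iterate: Corollary~\ref{cor:choice-composition} gives $\UC_{\IN^\IN}*\UC_{\IN^\IN}\equivW\UC_{\IN^\IN}$, so monotonicity of $*$ combined with the base case yields $\lim^{[k]}\leqW\UC_{\IN^\IN}$ for every $k\in\IN$ by induction. Iterating Proposition~\ref{prop:jump-cylinder} (using that every $\lim^{(k)}$ is a cylinder) gives $\lim^{(n)}\equivW\lim^{[n+1]}$, completing the upper bound. For the strictness $\lim^{(n)}\lW\UC_{\IN^\IN}$: if $\UC_{\IN^\IN}\leqW\lim^{[n+1]}$ held, then $\lim^{[n+2]}\leqW\UC_{\IN^\IN}\leqW\lim^{[n+1]}$, which by Theorem~\ref{thm:Borel} would collapse the effective Borel hierarchy at level $n+2$, contradicting for instance $\emptyset^{(n+2)}\nleqT\emptyset^{(n+1)}$.

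The right-hand reduction $\UC_{\IN^\IN}\leqW\C_{\IN^\IN}$ is the trivial restriction of $\C_{\IN^\IN}$ to singletons. The separation $\C_{\IN^\IN}\nleqW\UC_{\IN^\IN}$ is where I expect the main obstacle, and is the reason the proof invokes Kreisel's basis theorem: uniformly in its code, every nonempty ${\mathrm\Pi^0_1}$ class in $\IN^\IN$ contains a hyperarithmetic element, so every problem Weihrauch-reducible to $\UC_{\IN^\IN}$ admits realizers whose outputs are uniformly hyperarithmetic in the input. A Kreisel-style failure-of-uniformisation then supplies a computable sequence of nonempty ${\mathrm\Pi^0_1}$ classes with no uniform hyperarithmetic selector, precluding $\C_{\IN^\IN}\leqW\UC_{\IN^\IN}$. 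Pinning down the exact uniform form of Kreisel's theorem that rules out not just a computable but an arbitrary Weihrauch reduction is the delicate point in the argument.
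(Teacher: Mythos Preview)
Your treatment of $\lim^{(n)}\lW\UC_{\IN^\IN}$ is correct and essentially matches the paper: reduce $\J$ (hence $\lim$) to $\UC_{\IN^\IN}$ directly, iterate via closure of $\UC_{\IN^\IN}$ under compositional product (Corollary~\ref{cor:choice-composition}), and separate using the strictness of the effective Borel hierarchy (Theorem~\ref{thm:Borel}).

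For the separation $\C_{\IN^\IN}\nleqW\UC_{\IN^\IN}$, however, there is a genuine error and an unnecessary complication. Your stated version of Kreisel's basis theorem---``every nonempty ${\mathrm\Pi^0_1}$ class in $\IN^\IN$ contains a hyperarithmetic element''---is false: Kleene constructed a computable tree $T\In\IN^*$ with $[T]\neq\emptyset$ but no hyperarithmetic path, and this is exactly the witness the paper uses on the $\C_{\IN^\IN}$ side. What the paper actually invokes under Kreisel's name is the much narrower fact that if $[T]=\{x\}$ is a \emph{singleton} for a computable tree $T$, then $x$ is hyperarithmetic (the graph of $x$ being both $\Sigma^1_1$ and $\Pi^1_1$).

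With this correction the argument becomes simpler than your uniform-selector approach, and the ``delicate point'' you flag disappears. The property ``every computable instance has a hyperarithmetic solution'' is a non-uniform invariant preserved downward under $\leqW$ (same mechanism as the last item of Corollary~\ref{cor:invariance}, with hyperarithmetic in place of computable). By the corrected Kreisel fact, $\UC_{\IN^\IN}$ has this property; by Kleene's tree, $\C_{\IN^\IN}$ does not. A single computable instance of $\C_{\IN^\IN}$ with no hyperarithmetic solution already blocks the reduction---no computable sequence of classes and no failure-of-uniformisation argument is needed.
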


We close with the following characterization of positive choice on Baire space.

\begin{theorem}[Positive choice]
\label{thm:PCNN}
$\PC_{\IN^\IN}\equivSW\PC_\IR$.
\end{theorem}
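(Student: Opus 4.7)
The plan is to bootstrap the equivalence from Theorem~\ref{thm:PCR}, which gives $\PC_\IR \equivSW \PC_{2^\IN \times \IN}$, by exhibiting strong reductions between $\PC_{\IN^\IN}$ and $\PC_{2^\IN \times \IN}$ in both directions. The key technical device is the run-length encoding $\varphi : \IN^\IN \to 2^\IN$ defined by $\varphi(n_0, n_1, \ldots) := 0^{n_0} 1 0^{n_1} 1 \cdots$. This map is computable, continuous, injective and measure-preserving: it pushes the geometric product measure on $\IN^\IN$ forward onto the uniform measure on $2^\IN$, since its image is the full-measure set $S := \{q \in 2^\IN : q \text{ has infinitely many ones}\}$.

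For the easier direction $\PC_\IR \leqSW \PC_{\IN^\IN}$, I would reduce to showing $\PC_{2^\IN \times \IN} \leqSW \PC_{\IN^\IN}$. Given a closed $A \subseteq 2^\IN \times \IN$ of positive product measure, set
\[
B := \{(n,r) \in \IN \times \IN^\IN : (\varphi(r), n) \in A\},
\]
viewed as a subset of $\IN^\IN$ via the canonical identification $\IN^\IN \cong \IN \times \IN^\IN$. Since $(n,r) \mapsto (\varphi(r), n)$ is a total, computable, continuous, measure-preserving map, $B$ is co-c.e.\ closed with $\mu(B) = \mu(A) > 0$. Any $(n, r) \in \PC_{\IN^\IN}(B)$ then postprocesses to $(\varphi(r), n) \in A$.

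The harder direction $\PC_{\IN^\IN} \leqSW \PC_{2^\IN \times \IN}$ must circumvent the fact that $\varphi^{-1}$ is only partially defined. For this, I use the compact truncations $K_N := \{p \in \IN^\IN : p(i) \leq N + i \text{ for all } i\}$, each of which is computably compact with $\mu(K_N) \to 1$ as $N \to \infty$, and whose images $F_N := \varphi(K_N) \subseteq 2^\IN$ are likewise computably compact. Given a closed $B \subseteq \IN^\IN$ with $\mu(B) > 0$, I define
\[
A := \bigcup_{N \in \IN} \varphi(B \cap K_N) \times \{N\} \;\subseteq\; 2^\IN \times \IN
\]
and verify: (i) $A$ is co-c.e.\ closed, because for each $N$ the slice-complement decomposes as $(2^\IN \setminus F_N) \cup \varphi(K_N \setminus B)$, both c.e.\ open uniformly in $N$ since $\varphi|_{K_N} : K_N \to F_N$ is a computable homeomorphism between compact sets; (ii) $\mu(A) = \sum_N 2^{-(N+1)}\,\mu(B \cap K_N) > 0$, since $\mu(B \cap K_N) \to \mu(B) > 0$; (iii) from any $(q, N) \in A$, the point $p := \varphi^{-1}(q) \in B \cap K_N$ is computably recoverable from $(q, N)$, by reading sufficiently many initial bits of $q$ up to the finite position guaranteed by the bound $p(i) \leq N + i$.

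The main obstacle is the uniform computable closedness of $A$ in step (i); this rests on $\varphi$ becoming a computable homeomorphism when restricted to each $K_N$, which lets us effectively transport negative information about $B$ through $\varphi$ to enumerate the complement of $\varphi(B \cap K_N)$ inside $F_N$, and combine it with the c.e.\ open set $2^\IN \setminus F_N$. Once that is in place, the measure computation and postprocessing are routine, and combining both strong reductions with Theorem~\ref{thm:PCR} yields $\PC_{\IN^\IN} \equivSW \PC_{2^\IN \times \IN} \equivSW \PC_\IR$.
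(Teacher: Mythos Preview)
Your argument is correct and self-contained. The paper itself does not give a proof of this theorem; it merely cites \cite[Theorem~5.2]{BK17} (Bienvenu and Kuyper). Your reduction via the run-length encoding $\varphi:\IN^\IN\to 2^\IN$ is the natural approach: the key observations---that $\varphi$ is a computable, injective, measure-preserving map onto the full-measure set of sequences with infinitely many ones, and that it restricts to a computable homeomorphism on each computably compact truncation $K_N$---are exactly what is needed to transport positive-measure closed sets back and forth between $\IN^\IN$ and $2^\IN\times\IN$, after which Theorem~\ref{thm:PCR} finishes the job.

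One small remark: in your postprocessing step~(iii) for the harder direction, you do not actually need the bound $N$ to recover $p=\varphi^{-1}(q)$; since $q\in F_N\subseteq S$ is guaranteed to have infinitely many ones, simply reading off the run lengths of zeros is already a total computable map on the relevant domain. The bound $p(i)\leq N+i$ is what ensures $q\in S$, but once that is granted the inverse is computable without reference to $N$. This does not affect correctness, only the phrasing.
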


\subsection{Jumps of Choice}

In order to characterize the jump of choice we need the {\em cluster point problem}
$\CL_X:\In X^\IN\mto X,(x_n)_{n\in\IN}\mapsto\{x\in X: x$ is a cluster point of $(x_n)_{n\in\IN}\}$.
This problem fully characterizes the jump of $\C_X$ on computable metric spaces $X$.
If we restrict $\CL_X$ to such sequences $(x_n)_{n\in\IN}$ whose range $\{x_n:n\in\IN\}$
has a compact closure, then we denote it as $\BWT_X:\In X^\IN\mto X$ since it can be seen
as a problem that realizes the Bolzano-Weierstra\ss{} theorem. 

\begin{theorem}[Jump of choice]
\label{thm:jumps-choice}
$\C_X'\equivSW\CL_X$ and $\K_X'\equivSW\BWT_X$ for all computable metric spaces $X$.
\end{theorem}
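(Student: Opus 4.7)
The plan is to prove $\C_X' \equivSW \CL_X$ (and analogously $\K_X' \equivSW \BWT_X$) by exhibiting strong Weihrauch reductions in both directions. The crux is the informal equivalence that, in a computable metric space, the closed sets carried by jumped $\psi_-$-names coincide uniformly with sets of cluster points of computable-in-input sequences. The same skeleton handles $\K_X' \equivSW \BWT_X$ once one replaces $\psi_-$ by $\kappa_-$ and uses the finite-subcover information encoded in $\kappa_-$-names.

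For $\CL_X \leqSW \C_X'$, I would use that the cluster set of a sequence $(x_n) \in X^\IN$ is closed with an effective $\Sigma^0_2$ complement: $y \notin \CL_X((x_n))$ iff there is a basic open ball $B \ni y$ such that $\{n : x_n \in B\}$ is finite. Since $x_n \in B$ is $\Sigma^0_1$ in the Cauchy name of $x_n$, the predicate ``$B$ contains only finitely many $x_n$'' is uniformly $\Sigma^0_2$ in the index of $B$. Standard unravelling of a $\Sigma^0_2$ predicate as the coordinate-wise limit of $\Sigma^0_1$ approximations produces a computable map sending $(x_n)$ to a convergent sequence of $\psi_-$-names of closed sets $A_k$ whose limit is the cluster set $A$; this is precisely a name of $A \in \AA_-(X)'$. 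The domain hypothesis of $\CL_X$ guarantees $A \neq \emptyset$, so one then applies $\C_X'$ to obtain a cluster point.

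For the converse $\C_X' \leqSW \CL_X$, given a jumped name $\langle q_0, q_1, \ldots\rangle$ converging coordinate-wise to a $\psi_-$-name $q$ of a non-empty closed set $A$, I would construct a sequence $(x_k)$ in $X$ with non-empty cluster set contained in $A$, then apply $\CL_X$. The rough idea is to iterate through the dense sequence $\alpha$ of $X$ and, at stage $k$, output some $\alpha(i)$ that is consistent with a ``trusted prefix'' of $q_k$---the first $s(k)$ balls, where $s(k)$ is a stabilization threshold measuring how many ball indices have already settled across recent stages. Coordinate-wise convergence $q_k \to q$ forces $s(k) \to \infty$, so that (i) any $y \notin A$ is eventually excluded by a trusted ball and therefore cannot be a cluster point of $(x_k)$, and (ii) the selection repeatedly emits candidates approximating points of $A$, yielding at least one cluster point in $A$.

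The main obstacle is arranging the selection in (ii) correctly when $A$ contains no point of the dense sequence (e.g., $A = \{\pi\} \subseteq \IR$): one cannot eventually repeat a fixed $\alpha(i)$ but must emit a sequence of $\alpha(i)$'s of increasing precision accumulating at a point of $A$, while simultaneously defeating spurious exclusions coming from unstabilized tail balls of $q_k$. The correct growth rate of $s(k)$, together with a diagonal organization of candidate pools at successive levels of precision, is where the argument actually has content. For $\K_X' \equivSW \BWT_X$, the same construction applies with $\kappa_-$ in place of $\psi_-$: the finite-cover information carried by $\kappa_-$-names guarantees that the constructed sequence has relatively compact range, putting it into $\dom(\BWT_X)$.
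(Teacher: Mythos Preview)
The paper does not give its own proof but cites \cite[Theorems~9.4 and~11.2]{BGM12} for $\C_X'\equivSW\CL_X$ and \cite{BCG+17} for $\K_X'\equivSW\BWT_X$. Your sketch for $\C_X'\equivSW\CL_X$ matches the approach of \cite{BGM12}: one shows that the cluster-set representation of closed sets is computably equivalent to $\psi_-'$. Your treatment of $\CL_X\leqSW\C_X'$ via the $\Sigma^0_2$ description of the complement of the cluster set is correct; the limit-computable enumeration is obtained by writing ``$B$ contains finitely many $x_n$'' as $\exists m\,\forall n\,R(B,m,n)$ and using the $\Pi^0_1$ inner predicate to produce a stabilizing guess at each position of the $\psi_-$-name. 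For $\C_X'\leqSW\CL_X$ your outline is right, and you are honest that you do not supply the construction; the actual argument in \cite{BGM12} does organize the search through the dense sequence at increasing precision levels, using formal ball containment (strict rational inequalities, hence c.e.) rather than literal membership, so that at each stage one can \emph{find} witnesses avoiding the currently trusted balls. That refinement is essential and not just a matter of growth rates.

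There is a genuine gap in the $\K_X'$ part. The direction $\BWT_X\leqSW\K_X'$ is not obtained by ``replacing $\psi_-$ by $\kappa_-$'' in your first argument. For $\psi_-$ you needed that ``$B$ meets only finitely many $x_n$'' is $\Sigma^0_2$; for $\kappa_-$ you need that ``the finite family $B_1,\ldots,B_k$ covers the cluster set'' is $\Sigma^0_2$ in $(x_n)$. This is \emph{not} equivalent to ``cofinitely many $x_n$ lie in $\bigcup_i B_i$'': a cluster point can lie on the boundary of $\bigcup_i B_i$ even when every term is inside (take $x_n=1/n$, $B=(0,2)$). One must instead show that the cover condition is equivalent to the existence of formally shrunk balls $B_i''$ with $\overline{B_i''}\subseteq B_i$ and an $N$ such that $x_n\in\bigcup_i\overline{B_i''}$ for all $n\geq N$, using compactness of the decreasing sequence $\overline{\{x_n:n\geq N\}}$; only then does the predicate become $\Sigma^0_2$. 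This shrinking-ball step is the additional content supplied in \cite{BCG+17}, and your sketch omits it. The direction $\K_X'\leqSW\BWT_X$ also needs one more idea beyond what you state: from a $\kappa_-'$-name one must extract, in the limit, total-boundedness information that confines the emitted points to a set with compact closure, which in a not-necessarily-complete $X$ is not automatic from total boundedness alone.
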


$\BWT_2=\CL_2$ is also known as the {\em infinite pigeonhole problem}.
Many properties of problems can be transferred to jumps. 
However, this is often not so for properties that involve compositional products.
We recall that with $f^{[n]}$ we denote the $n$--fold compositional product of $f$ with itself
and by $f^{(n)}$ the $n$--fold jump.

\begin{theorem}[Composition]
\label{thm:composition-jump}
We obtain:
\begin{enumerate}
\item $\C_\IN'*\C_\IN'\equivW\C_\IN'$.
\item $\C_{2^\IN}'*\C_{2^\IN}'\equivW\C_{2^\IN}''$ and  more generally ${\C_{2^\IN}'}^{[n]}\equivW\C_{2^\IN}^{(n)}$ for all $n\geq1$.
\item $\PC_{2^\IN}'*\PC_{2^\IN}'\equivW\PC_\IR'*\PC_\IR'\equivW\PC_\IR'$.
\end{enumerate}
\end{theorem}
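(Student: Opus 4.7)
The plan is to reduce each equivalence to the fundamental identity $f'\equivSW f\stars\lim$ (Proposition~\ref{prop:jump-cylinder}), thereby rewriting $f'*g'\equivW f*\lim*g*\lim$, and then to exploit the closure of the underlying choice problem under $*$ (Corollary~\ref{cor:choice-composition}) together with structural lemmas about how $\lim$ interacts with the particular flavour of choice at hand. Throughout, I freely use monotonicity and associativity of $*$ (Propositions~\ref{prop:mon-closure} and~\ref{prop:algebraic-properties}) and the fact that $\lim$ and $\id$ are below any pointed problem.

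For~(1), I would give a direct merging construction showing $\C_\IN'*\C_\IN'\leqW\C_\IN'$. Given two consecutive $\C_\IN'$-instances — a limit-form closed set $A_1\In\IN$ and, computably from the input, a family $A_2(n_1)\In\IN$ in limit form — I define $B:=\{\langle n_1,n_2\rangle:n_1\in A_1\wedge n_2\in A_2(n_1)\}\In\IN$ via Cantor pairing. The crucial feature of $\IN$ is that ``$\langle n_1,n_2\rangle\notin B$'' decomposes as ``$n_1\notin A_1\vee n_2\notin A_2(n_1)$,'' and because $n_1$ is an explicit natural number no uniformity over a continuum is needed; each disjunct is $\mathrm{\Sigma^0_2}$ in the limit-form input, so $B\in\AA_-(\IN)'$. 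A single call to $\C_\IN'$ on $B$ provides both witnesses, and the final output is then computable. The reverse direction is immediate since $\id\leqW\C_\IN'$.

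For~(2), I proceed by induction on $n$; the crux is the base case $\C_{2^\IN}'*\C_{2^\IN}'\equivW\C_{2^\IN}''$. The lower bound is immediate from $\lim\leqW\C_{2^\IN}'$ together with $\C_{2^\IN}''\equivW\C_{2^\IN}'*\lim$, which holds since $\C_{2^\IN}'$ is a cylinder (Propositions~\ref{prop:choice-cylinder} and~\ref{prop:jump-cylinder}). For the upper bound the key step is the swap lemma $\lim*\C_{2^\IN}\leqW\C_{2^\IN}'$: given $T\in\AA_-(2^\IN)$ and a limit-computation $\Phi$, build the tree $T'$ whose paths are pairs $(p,q)$ with $p\in[T]$ and $q=\lim\Phi(p)$; since ``$q=\lim\Phi(p)$'' is $\mathrm{\Pi^0_2}$, $T'$ belongs to $\AA_-(2^\IN\times 2^\IN)'$ which by Theorem~\ref{thm:choice-cantor} is subsumed by $\C_{2^\IN}'$, and one call returns the desired witness. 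Substituting into $\C_{2^\IN}'*\C_{2^\IN}'\equivW\C_{2^\IN}*\lim*\C_{2^\IN}*\lim$ and applying the swap in the middle gives $\leqW\C_{2^\IN}*\C_{2^\IN}*\lim*\lim\equivW\C_{2^\IN}*\lim*\lim\equivW\C_{2^\IN}''$, using closure $\C_{2^\IN}*\C_{2^\IN}\equivW\C_{2^\IN}$ from Corollary~\ref{cor:choice-composition}. The inductive step is completely analogous: write $\C_{2^\IN}^{(n)}\equivW\C_{2^\IN}*\lim^{[n]}$, reshuffle the intermediate $\lim*\C_{2^\IN}$ using the same swap, and collapse via closure.

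For~(3), I use $\PC_\IR\equivW\C_\IN*\PC_{2^\IN}$ (Theorem~\ref{thm:PCR}), hence $\PC_\IR'\equivW\C_\IN*\PC_{2^\IN}'$ after applying $\stars\lim$ and absorbing. For $\PC_\IR'*\PC_\IR'\equivW\PC_\IR'$ the non-trivial upper bound uses a Fubini-plus-level-up argument: given consecutive limit-form instances $A_1$ and $A_2(\cdot)$ of positive measure, I choose $n$ via $\C_\IN$ such that $A_{1,n}:=\{r_1\in A_1:\mu(A_2(r_1))\geq 2^{-n}\}$ has positive measure (which must exist since $\bigcup_n A_{1,n}$ exhausts $A_1$ up to measure zero). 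Then $\{(r_1,r_2):r_1\in A_{1,n},r_2\in A_2(r_1)\}\In\IR^2\cong\IR$ has measure at least $2^{-n}\mu(A_{1,n})>0$ by Fubini, is limit-form in the input, and a single call to $\PC_\IR'$ returns both witnesses. This yields $\PC_\IR'*\PC_\IR'\leqW\C_\IN*\PC_\IR'\equivW\C_\IN*\PC_\IR*\lim\equivW\PC_\IR*\lim\equivW\PC_\IR'$, invoking $\C_\IN*\PC_\IR\equivW\PC_\IR$ from Corollary~\ref{cor:choice-composition}. The first equivalence $\PC_{2^\IN}'*\PC_{2^\IN}'\equivW\PC_\IR'$ then follows: the upper bound uses $\PC_{2^\IN}'\leqW\PC_\IR'$ plus the just-established closure, and the lower bound combines $\PC_\IR'\equivW\C_\IN*\PC_{2^\IN}'$ with $\C_\IN\leqW\lim\leqW\PC_{2^\IN}'$. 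The main obstacle is the Fubini--level-up step: one must verify that ``$\mu(A_2(r_1))\geq 2^{-n}$'' is a limit-closed condition in $r_1$, so that $A_{1,n}\in\AA_-(\IR)'$ and the $\C_\IN$-call is well-posed; this amounts to the effective upper semi-computability of measure on $\psi_-$-represented closed sets in limit form.
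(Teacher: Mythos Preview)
The paper's own treatment is entirely by citation to \cite{BGM12}, \cite{BR17} and \cite{BK17}, so there is no in-paper argument to compare against; I assess your proposal directly. Your global architecture---unfold $f'\equivW f*\lim$ via Proposition~\ref{prop:jump-cylinder} and then shuffle the $\lim$ factors using closure of the underlying choice problem under $*$---is the standard one, and for part~(1) the direct pairing construction is fine.

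The substantive gap is in your proof of the swap lemma $\lim*\C_{2^\IN}\leqW\C_{2^\IN}'$ for part~(2). Your set $T'=\{(p,q):p\in[T],\,q=\lim\Phi(p)\}$ is the graph over $[T]$ of the $\Sigma^0_2$-measurable map $p\mapsto\lim\Phi(p)$, and such a graph is \emph{not closed} in general: on a compact domain a function with closed graph is continuous, so any genuinely discontinuous limit map yields a non-closed $T'$. Hence $T'\notin\AA_-(2^\IN\times 2^\IN)$ at all and cannot be fed to $\C_{2^\IN}'$, whose inputs are by definition closed sets (only the \emph{name} is jumped, not the underlying hyperspace). You can, on the convergent domain, rewrite ``$q=\lim\Phi(p)$'' as the $\Pi^0_2$ cluster-point predicate ``each $q(k)$ occurs infinitely often in $(\Phi(p)_s(k))_s$'', but a $\Pi^0_2$ class is merely $G_\delta$; the missing ingredient is that $\Pi^0_2$-choice on $2^\IN$ reduces to $\C_{2^\IN}'$, which you have not supplied and which is essentially the non-trivial content here. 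The clean fix goes through the uniform low basis theorem (Theorem~\ref{thm:low-basis}): from a $\psi_-$-name of $A$ compute a convergent sequence with limit $q=p'$ for some $p$ with $a:=H(p)\in A$; then $\lim\Phi(a)\leqT a'\leqT p'=q$ uniformly, so one application of $\lim$ suffices and in fact $\lim*\C_{2^\IN}\leqW\lim\leqW\C_{2^\IN}'$.

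For part~(3) your Fubini-with-level-up sketch is in the spirit of the Bienvenu--Kuyper argument, but the obstacle you flag is real and not resolved: for a $\psi_-'$-represented closed set the measure is only $\limsup$-computable, not upper-semicomputable, so ``$\mu(A_2(r_1))\geq 2^{-n}$'' is not obviously limit-closed in $r_1$, and producing $A_{1,n}$ as a legitimate $\AA_-(\IR)'$-instance (and verifying that the $\C_\IN$-search for $n$ is well-posed in the right representation) needs more than you indicate.
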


It is perhaps surprising that compositions behave very differently in the probabilistic case
and in the non-probabilistic case.
The difference between $\C_{2^\IN}$ and $\PC_{2^\IN}$ is also underlined
by the third statement in the following result that strengthens the negative statement of Proposition~\ref{prop:PC-C}.

\begin{theorem}[Separations]
\label{thm:jump-separations}
We obtain for all $n\in\IN$:
\begin{enumerate}
\item $\C_2^{(n+1)}\nleqW\lim^{(n)}$.
\item $\LPO^{(n)}\nleqW\C_{2^\IN}^{(n)}$.
\item $\C_{2^\IN}\nleqW\PC_{2^\IN}^{(n)}$.
\end{enumerate}
\end{theorem}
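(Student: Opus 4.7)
For (1), I note that the jumped identity $\id'$ on Baire space is strongly equivalent to $\lim$: an input in the jumped representation is just a pointwise-convergent sequence in $\IN^\IN$, and $\id'$ returns its limit, which is exactly what $\lim$ does. Iterating, $\lim^{(n)}\equivSW\id^{(n+1)}$. Assuming $\C_2^{(n+1)}\leqW\lim^{(n)}\equivW\id^{(n+1)}$, both sides are displayed as $(n+1)$-fold jumps, so I apply Theorem~\ref{thm:jump-inversion} exactly $n+1$ times: each application strips a jump from each side and replaces the accumulated oracle by its Turing jump. The end result is $\C_2\leq_{\rm W}^{\emptyset^{(n+1)}}\id$, which furnishes an $\emptyset^{(n+1)}$-continuous realizer of $\C_2$. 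Such a realizer is still a continuous function $\IN^\IN\to\IN^\IN$, since continuity is a purely topological property independent of any oracle, and hence $\C_2$ would be continuous. This contradicts the discontinuity of $\C_2\equivSW\LLPO$.

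Part (2) follows by the same technique. Assuming $\LPO^{(n)}\leqW\C_{2^\IN}^{(n)}$, $n$ applications of Theorem~\ref{thm:jump-inversion} produce $\LPO\leq_{\rm W}^{\emptyset^{(n)}}\C_{2^\IN}$. Since $\LPO\colon\IN^\IN\to\{0,1\}$ is single-valued with computable metric space codomain, the straightforward relativization of Corollary~\ref{cor:C2N-single-valued} to the oracle $\emptyset^{(n)}$ forces $\LPO$ to be $\emptyset^{(n)}$-computable, hence continuous, which contradicts the discontinuity of $\LPO$.

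For (3), jump inversion is not directly applicable, since $\C_{2^\IN}$ is not presented as a jump. My plan is to relativize the Jockusch--Soare-style argument underlying Proposition~\ref{prop:PC-C} by $n$ levels. Iterating the identity $f'\equivSW f\stars\lim$ (Proposition~\ref{prop:jump-cylinder}) together with $\lim\equivSW\id'$, I would recast $\PC_{2^\IN}^{(n)}$ as a problem of the form ``apply $\PC_{2^\IN}$ together with an $\emptyset^{(n)}$-style oracle'', so that a hypothetical reduction $\C_{2^\IN}\leqW\PC_{2^\IN}^{(n)}$ gives, for each computably presented non-empty $\mathrm\Pi^0_1$ class $P\In2^\IN$, a positive-measure $\mathrm\Pi^0_1[\emptyset^{(n)}]$ advice class $Q$ together with an $\emptyset^{(n)}$-computable map $Q\to P$. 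Applied to an appropriate $\mathrm\Pi^0_1$ class $P$ witnessing the relativization of the Jockusch--Soare theorem~\cite{JS72} to the oracle $\emptyset^{(n)}$ (analogous to the class of PA-completions used in the case $n=0$), this would contradict the measure-theoretic obstruction on positive-measure random advice producing members of $P$. The main obstacle is the first step: cleanly absorbing the $n$ applications of $\lim$ into a single $\emptyset^{(n)}$-oracle on top of one $\PC_{2^\IN}$-call rather than letting them interleave with the positive-measure advice, which will likely require a careful realizer-level argument together with the independent choice theorem (Theorem~\ref{thm:independent-choice}) to commute limits past the non-deterministic advice.
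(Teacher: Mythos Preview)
Your arguments for (1) and (2) via iterated jump inversion are correct. The identification $\lim\equivSW\id'$ (hence $\lim^{(n)}\equivSW\id^{(n+1)}$ by Proposition~\ref{prop:jump-monotone}) is right, and repeated application of Theorem~\ref{thm:jump-inversion} does yield $\C_2\leq_{\rm W}^{\emptyset^{(n+1)}}\id$ and $\LPO\leq_{\rm W}^{\emptyset^{(n)}}\C_{2^\IN}$ respectively; the relativized form of Corollary~\ref{cor:C2N-single-valued} then finishes (2), and continuity of the resulting realizer finishes (1). The paper itself only cites external references for all three items, and since Theorem~\ref{thm:jump-inversion} (from~\cite{BHK17}) postdates the references given for (1) and (2), the original arguments were necessarily different; your route is a legitimate and arguably more uniform alternative.

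For (3) your plan manufactures a non-existent obstacle. The jump modifies only the \emph{input} representation of a problem; the output side of $\PC_{2^\IN}^{(n)}$ is still $2^\IN$ with its standard representation. Thus in a hypothetical reduction $\C_{2^\IN}\leqW\PC_{2^\IN}^{(n)}$ via computable $H,K$, once a computable $\psi_-$-name $p$ of a non-empty ${\mathrm\Pi^0_1}$ class $P$ is fixed, the advice point $q\in A_p$ delivered by any realizer of $\PC_{2^\IN}^{(n)}$ is an honest element of $2^\IN$, and $q\mapsto H\langle p,q\rangle$ is a \emph{computable} (not merely $\emptyset^{(n)}$-computable) functional sending $A_p$ into $P$. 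All $n$ limits live entirely on the input side of the oracle call: they affect only how $A_p$ is \emph{presented}, not how the returned $q$ is post-processed. Consequently there is nothing to commute and no relativization is needed: the unrelativized Jockusch--Soare fact that some fixed ${\mathrm\Pi^0_1}$ class $P$ has Turing-upward closure of measure $0$ already forces $\mu(A_p)=0$, regardless of the descriptive complexity of $A_p$.
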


It follows from the first statement that $\C_2\leqW\PC_{2^\IN}\leqW\C_{2^\IN}$ all climb up the Borel hierarchy one step
with every jump. This statement even holds relative to any oracle, and hence 
$\C_2^{(n)}$ is not ${\mathrm\Sigma^0_{n+1}}$--measurable.
We obtain the following alternating hierarchies.

\begin{theorem}[Alternating hierarchies]
\label{thm:alternating-hierarchies}
For all $n\in\IN$ we obtain
\begin{enumerate}
\item $\C_2^{(n)}\lW\LPO^{(n)}\lW \C_2^{(n+1)}$,
\item $\K_\IN^{(n)}\lW\C_\IN^{(n)}\lW\K_\IN^{(n+1)}$,
\item $\C_{2^\IN}^{(n)}\lW\lim^{(n)}\lW\C_{2^\IN}^{(n+1)}$.
\end{enumerate}
Analogous statements hold with $\leqSW$ in place of $\leqW$.
\end{theorem}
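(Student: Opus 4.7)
All three chains fit the pattern $A^{(n)} \lW B^{(n)} \lW A^{(n+1)}$, with $(A,B)$ running through $(\C_2,\LPO)$, $(\K_\IN,\C_\IN)$, and $(\C_{2^\IN},\lim)$, so I treat them uniformly. The strategy is: (i) establish the two base reductions $A \leqSW B$ and $B \leqSW A'$; (ii) establish the two base separations $B \nleqW^p A$ and $A' \nleqW^p B$ uniformly in every oracle $p$; (iii) lift (i) to arbitrary $n$ by iterating Proposition \ref{prop:jump-monotone}, and lift (ii) by iterating Theorem \ref{thm:jump-inversion}. The strong-Weihrauch version is then immediate, since $\leqSW$ comes from the lifting and $\not\equivW$ implies $\not\equivSW$.

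\textbf{Base reductions.} The inner reductions $\C_2 \leqSW \LPO$, $\K_\IN \leqSW \C_\IN$, and $\C_{2^\IN} \leqSW \lim$ follow from Proposition \ref{prop:LLPO-LPO}, Corollary \ref{cor:KN}, and Theorem \ref{thm:lim}. For the outer ones: $\LPO \leqSW \C_2'$ by mapping $p$ to the sequence of $\AA_-(\{0,1\})$-names describing $\{0,1\}$ until a zero is observed in $p$ and $\{0\}$ thereafter, so that the limit is a non-empty closed set whose unique element encodes $\LPO(p)$. $\C_\IN \leqSW \K_\IN' \equivSW \BWT_\IN$ by sending an enumeration $p$ of the complement of a non-empty $A \subseteq \IN$ to the non-decreasing bounded sequence $m_k := \min\{n \in \IN : n \notin \{p(0),\ldots,p(k)\}\}$, which has compact range and converges to $\min A \in A$. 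Finally, $\lim \leqSW \C_{2^\IN}'$: since $\C_{2^\IN}$ is a cylinder (Proposition \ref{prop:choice-cylinder}), Proposition \ref{prop:jump-cylinder} gives $\C_{2^\IN}' \equivW \C_{2^\IN} * \lim$, and $\lim \equivW \id * \lim \leqW \C_{2^\IN} * \lim$ by monotonicity of $*$; the promotion to $\leqSW$ uses the cylinder-friendly $\stars$-formulation of the same identity.

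\textbf{Base separations.} The inner separations are the oracle-relativized forms of known results: $\LPO \nleqW \LLPO \equivSW \C_2$ from Proposition \ref{prop:LLPO-LPO} (the standard continuity argument at $\widehat{0} \in \dom(\LLPO)$); $\C_\IN \nleqW \K_\IN$ from Corollary \ref{cor:KN}, via countable irreducibility of the fractal $\C_\IN$ (Propositions \ref{prop:choice-fractal} and \ref{prop:fractal}) combined with the decomposition $\K_\IN \equivSW \C_2^* \equivW \bigsqcup_n \C_2^n$, which would force $\C_\IN \leqW \C_2^n$ for some $n$ against the unbounded mind-change requirements of $\C_\IN$; and $\lim \nleqW \C_{2^\IN}$ from $\C_\IN \leqSW \lim$ together with the relativized Corollary \ref{cor:C2N-separation}. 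Each argument is manifestly uniform in the oracle. The outer separations all reduce to Theorem \ref{thm:jump-separations}(1) at $n=0$, $\C_2' \nleqW \lim$: from $\C_2 \leqSW \K_\IN$ and $\C_2 \leqSW \C_{2^\IN}$, Proposition \ref{prop:jump-monotone} gives $\C_2' \leqSW \K_\IN'$ and $\C_2' \leqSW \C_{2^\IN}'$, while $\LPO \leqSW \widehat{\LPO} \equivSW \lim$ and $\C_\IN \equivSW \lim_\IN \leqSW \lim$. Thus any of $\C_2' \leqW^p \LPO$, $\K_\IN' \leqW^p \C_\IN$, or $\C_{2^\IN}' \leqW^p \lim$ would force $\C_2' \leqW^p \lim$, contradicting the relativized Theorem \ref{thm:jump-separations}(1).

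\textbf{Lifting and the main obstacle.} Iterating Proposition \ref{prop:jump-monotone} $n$ times on the base strong reductions yields $A^{(n)} \leqSW B^{(n)} \leqSW A^{(n+1)}$. For strictness, $n$ applications of Theorem \ref{thm:jump-inversion} to a hypothetical $B^{(n)} \leqW^p A^{(n)}$ produce $B \leqW^{p^{(n)}} A$, refuted by the uniform inner separation; and rewriting $A^{(n+1)} \leqW^p B^{(n)}$ as $(A^{(n)})' \leqW^p (B^{(n-1)})'$ for $n \geq 1$ and iterating peels it to $A' \leqW^{p^{(n)}} B$, refuted by the uniform outer separation (the $n=0$ case is the direct base case). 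The main obstacle is the outer base reduction $\lim \leqSW \C_{2^\IN}'$, which crucially exploits the cylinder property of $\C_{2^\IN}$ via Proposition \ref{prop:jump-cylinder}; a secondary delicacy is confirming that each base separation is truly oracle-uniform so that Theorem \ref{thm:jump-inversion} can be iterated, but this is visible by inspection because every separation argument rests on relativizing invariants (continuity, fractality, mind-change complexity, or Borel measurability).
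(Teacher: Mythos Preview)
Your overall architecture is sound, but there is a concrete error in the base reduction $\LPO \leqSW \C_2'$. Your construction produces the sequence of $\AA_-(\{0,1\})$--names ``$\{0,1\}$ until a zero is seen in $p$, then $\{0\}$''. If $p$ contains no zero the limit set is $\{0,1\}$, which has no unique element, so $\C_2'$ may output $1$ while $\LPO(p)=0$; and if $p$ does contain a zero the limit set is $\{0\}$, so the output is $0$ while $\LPO(p)=1$. The intended reduction is obtained by the obvious fix (e.g.\ via $\C_2'\equivSW\BWT_2$: emit $0$'s until a zero is seen in $p$, then $1$'s), so this is a slip rather than a structural gap.

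Granting that correction, your route differs from the paper's in how the separations are handled. The paper invokes Theorem~\ref{thm:jump-separations} \emph{directly at level $n$}: from $\C_2\leqSW\C_{2^\IN}$ and $\LPO\leqSW\lim$ one gets $\C_2^{(n)}\leqSW\C_{2^\IN}^{(n)}$ and $\LPO^{(n)}\leqSW\lim^{(n)}$ by iterated monotonicity, and then $\LPO^{(n)}\nleqW\C_{2^\IN}^{(n)}$ and $\C_2^{(n+1)}\nleqW\lim^{(n)}$ from Theorem~\ref{thm:jump-separations} immediately yield the two strictness claims (items~2 and~3 are handled by citation). You instead establish the separations only at the base level, check that they relativize, and then peel off jumps using the jump-inversion Theorem~\ref{thm:jump-inversion}. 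This buys you a uniform treatment of all three chains and makes explicit which ingredients are genuinely needed, at the cost of verifying oracle-uniformity of each base separation; the paper's approach is shorter because Theorem~\ref{thm:jump-separations} already packages the $n$--level separations, so no jump inversion is required.
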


Choice on Baire space is an example of a choice problem that is stable under jump.

\begin{theorem}[Baire space]
\label{thm:CNN}
$\C_{\IN^\IN}'\equivSW\C_{\IN^\IN}$ and $\UC_{\IN^\IN}'\equivSW\UC_{\IN^\IN}$.
\end{theorem}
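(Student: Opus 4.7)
The direction $f \leqSW f'$ is automatic by Proposition~\ref{prop:jump-monotone}, so I would focus on establishing the two nontrivial reductions $\C_{\IN^\IN}' \leqSW \C_{\IN^\IN}$ and $\UC_{\IN^\IN}' \leqSW \UC_{\IN^\IN}$ by a single uniform construction. The plan is: given a jump name of a nonempty closed set $A \subseteq \IN^\IN$, computably produce a $\psi_-$-name of an auxiliary closed set $\tilde A \subseteq \IN^\IN \times \IN^\IN$ whose first-coordinate projection is exactly $A$, and which is moreover a singleton whenever $A$ is. Projection onto the first coordinate then serves as the post-processor, and composing with the standard strong equivalence $\C_{\IN^\IN \times \IN^\IN} \equivSW \C_{\IN^\IN}$ (resp.\ its unique-valued counterpart), which comes from a computable bijection $\IN^\IN \times \IN^\IN \cong \IN^\IN$, completes both reductions.

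I would identify $\AA_-(\IN^\IN)$ with the set of trees $T \subseteq \IN^*$ via $A = [T]$, using characteristic functions $\chi_T : \IN^* \to \{0,1\}$ as names (strongly equivalent to $\psi_-$). A jump name of $A$ then decodes to a pointwise-convergent sequence $(\chi_n)_n$ of such characteristic functions with limit $\chi_T$. Define
\[\tilde A := \left\{(x,f) \in \IN^\IN \times \IN^\IN : \begin{array}{l}(\forall k)(\forall n \geq f(k))\ \chi_n(x \upharpoonright k) = 1,\\ (\forall k)\bigl(f(k) = 0 \,\vee\, \chi_{f(k) - 1}(x \upharpoonright k) = 0\bigr)\end{array}\right\}.\]
Since each $\chi_n(\sigma)$ is decidable from the input, both clauses are countable conjunctions of clopen conditions on $(x,f)$; thus $\tilde A$ is effectively $\Pi^0_1$, and a $\psi_-$-name of $\tilde A$ can be enumerated uniformly from $(\chi_n)_n$.

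For the projection and uniqueness I would let $N_k(x) := \min\{N : (\forall n \geq N)\ \chi_n(x \upharpoonright k) = 1\}$, which is defined whenever $x \in [T]$ because $\chi_n(x \upharpoonright k) \to 1$. Routinely $(x, N_\cdot(x)) \in \tilde A$ whenever $x \in [T]$, and conversely any $(x,f) \in \tilde A$ forces $\chi_n(x\upharpoonright k) \to 1$, hence $x \in [T]$, so the projection equals $[T] = A$; this already suffices for $\C_{\IN^\IN}$. The main (and essentially only nontrivial) technical point is the $\UC$ case: when $A = \{x\}$ one needs $\tilde A = \{(x, N_\cdot(x))\}$. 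The first clause forces $f(k) \geq N_k(x)$ (else some $n \in [f(k), N_k(x)-1]$ witnesses $\chi_n(x\upharpoonright k) = 0$ by minimality of $N_k(x)$), while the artificially added second clause forces $f(k) \leq N_k(x)$ (else $f(k) - 1 \geq N_k(x)$ gives $\chi_{f(k)-1}(x\upharpoonright k) = 1$, contradicting the clause). The entire design is aimed at this second clause: by recording the minimal stabilization time through the exact value $\chi_{f(k)-1} = 0$ rather than through an existential quantifier, one keeps $\tilde A$ closed while pinning down $f$ uniquely from $x$, which is precisely what preserves the uniqueness needed for $\UC$.
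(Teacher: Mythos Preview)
Your construction is correct and is precisely the direct argument the paper defers to via \cite[Theorem~9.16]{BGM12}; the paper itself gives no further details beyond remarking that the $\UC_{\IN^\IN}$ case is analogous. One small point to tidy up: a $\delta'$-name is merely a Baire-space sequence converging to a tree name, so the approximants $\chi_n$ need not themselves be $\{0,1\}$-valued, and hence $\chi_{N_k(x)-1}(x{\upharpoonright}k)\neq 1$ does not literally yield $=0$ in your second clause; either computably truncate each $\chi_n$ to $\{0,1\}$ as a preprocessing step, or replace the second clause by ``$\chi_{f(k)-1}(x{\upharpoonright}k)\neq 1$'' (still clopen), and the argument goes through verbatim.
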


The following example shows that a straightforward jump inversion theorem does
not hold in the Weihrauch lattice.

\begin{example}
\label{ex:jump-inversion}
$\lim\lW\C_{2^\IN}'\sqcup\C_\IN'$, but there is no problem $f$ with 
$f'\equivW\C_{2^\IN}'\sqcup\C_\IN'$.
\end{example}

The latter holds since $f'$ is join-irreducible and $\C_{2^\IN}'$ and $\C_\IN'$ are incomparable.
While Theorem~\ref{thm:jump-separations} shows that $\C_{2^\IN}$ has no jump of positive
choice as upper bound, this is different for $\ConC_{[0,1]}$ as the following result shows. 

\begin{proposition}[Upper bounds]
\label{prop:upper-bounds-CC-CN}
$\ConC_{[0,1]}\sqcup\C_\IN\leqW\PC_{2^\IN}'\sqcap\C_\IN'$.
\end{proposition}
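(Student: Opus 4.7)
Since $(\WW,\leqW)$ is a lattice with join $\sqcup$ and meet $\sqcap$ (Theorem~\ref{thm:lattice}), the stated reduction is equivalent to the conjunction of four pointwise reductions:
(i) $\C_\IN \leqW \C_\IN'$, (ii) $\C_\IN \leqW \PC_{2^\IN}'$, (iii) $\ConC_{[0,1]} \leqW \C_\IN'$ and (iv) $\ConC_{[0,1]} \leqW \PC_{2^\IN}'$. The plan is to establish each of these separately.

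Reduction (i) is immediate from $f \leqSW f'$ (Proposition~\ref{prop:jump-monotone}). For (ii) I would use $\C_\IN \equivW \lim_\Delta$ (Theorem~\ref{thm:CN-weak}): given a name of $A \in \AA_-(\IN)$, the number $n := \min A$ is limit-computable via an increasing natural-valued approximation $(n_k)$ read off from the enumeration of excluded naturals. Let $K$ output the convergent sequence of $\AA_-(2^\IN)$-names whose $k$-th entry names the cylinder of binary sequences starting with $n_k$ ones followed by a zero (a closed set of positive measure $2^{-n_k-1}$); its limit is the analogous cylinder for $n$, of positive measure $2^{-n-1}$. Then $\PC_{2^\IN}'$ returns a point $r$ with exactly $n$ initial ones, and $H$ extracts $n$ as the length of that run.

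For (iii) and (iv), the starting observation is that $a := \inf A \in A$ is lower semi-computable from the $\AA_-$-name of $A = [a,b]$---enumerate rationals $q$ with $[0,q]\subseteq A^c$ and take the supremum---hence limit-computable. I would adapt the encoding strategy of (ii) to each respective jump: produce a convergent sequence of names (in $\AA_-(\IN)$, respectively $\AA_-(2^\IN)$) whose limit encodes dyadic-approximation data for $a$, so that the output of the jump together with the original input $p$, preserved on the $\id$-side of $\leqW$, allows the post-processor $H$ to reconstruct a point of $A$.

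The main obstacles concern (iii) and (iv). In (iv) the singleton case $A=\{a\}$ is the delicate one: the natural image of $A$ under binary expansion has measure zero in $2^\IN$, so a naive limit set violates the domain condition of $\PC_{2^\IN}$. The construction must therefore uniformly present positive-measure cylinders along the approximating sequence while still carrying enough structural information into the limit to pinpoint $a$ exactly. In (iii) the output of $\C_\IN'$ is only a single natural, so $K$ and $H$ must be designed so that this natural, combined with the full $\AA_-$-name $p$ available on the $\id$-side, suffices to name a specific point of $A$---for instance by arranging $\C_\IN'$ to produce a stabilization index for the lower approximation of $a$, from which $H$ manufactures an exact Cauchy name from the limit data in $p$.
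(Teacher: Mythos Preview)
Your decomposition into the four pointwise reductions (i)--(iv) is exactly the route the paper takes, and your arguments for (i) and (ii) are correct: the eventually-constant approximation $n_k\nearrow\min A$ does produce a convergent sequence of $\AA_-(2^\IN)$-names of positive-measure cylinders, and the leading run of ones in any output point recovers $n$.

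The gap is in (iii). Your proposed mechanism---having $\C_\IN'$ return a ``stabilization index for the lower approximation of $a$''---does not work, because the lower approximation $a_k\nearrow a=\inf A$ simply does not stabilize when $a$ is irrational (and even for rational $a$ it need not be reached in finite time by the enumeration of the complement). A single natural number cannot serve as a modulus of Cauchy convergence either, and no bounded amount of information about the rate suffices when $A=\{a\}$ with $a$ irrational. The correct reductions in the literature (the paper cites \cite[Proposition~5.21]{BR17}) do not try to compute $\inf A$; they use $\C_\IN'\equivSW\CL_\IN$ and build a sequence in $\IN$ whose cluster points encode, together with the original input, enough data to locate a point of $A$---and arranging that this sequence \emph{always} has a cluster point (including in the irrational-singleton case) is precisely the nontrivial step you have not addressed.

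For (iv) your diagnosis of the obstacle (the singleton case yields measure-zero limit sets under the na\"ive binary-expansion map) is correct, but your fix is only a slogan. Producing positive-measure approximants whose \emph{limit} still has positive measure while simultaneously pinning down the unique point $a$ in the singleton case requires a genuine construction; the paper defers to \cite[Proposition~15.7]{BGH15a}. As written, your sketch for (iii) and (iv) does not constitute a proof.
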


This result is contrasted by $\ConC_{[0,1]}\nleqW\PC_{2^\IN}\sqcup\C_\IN$,
which holds because $\ConC_{[0,1]}$ is not reducible to any of the problems on the right-hand side and since it
is join-irreducible by Propositions~\ref{prop:choice-fractal} and \ref{prop:fractal}.

For $\frac{1}{2}\dash\WWKL$ we can improve the statement that follows from Corollary~\ref{cor:C2N-single-valued}
by the following result that can be proved with a majority vote argument.

\begin{theorem}[Single-valuedness]
\label{thm:WWKL-single-valued}
$f\leqW\frac{1}{2}\dash\WWKL^{(n)}\TO f$ computable, for all single-valued problems $f:X\to Y$ with a computable metric space $Y$
and $n\in\IN$.
\end{theorem}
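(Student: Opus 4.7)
The plan is to adapt the standard majority-vote argument that handles the case $n=0$ to the jumped version; the central observation is that the $n$ jumps affect only how hard it is to unpack the tree $T$ from the input, not whether such a $T$ exists or what its measure is, so they drop out of the argument entirely. Let $H,K$ be computable functions witnessing $f\leqW\frac{1}{2}\dash\WWKL^{(n)}$. Given a name $p$ of some $x\in\dom(f)$, the value $K(p)$ is, by the definition of the $n$-fold jump, a name in the jumped input space of some tree $T\in\dom(\frac{1}{2}\dash\WWKL)$, so in particular $\mu([T])>\frac{1}{2}$; crucially, I do not need to compute $T$ from $K(p)$, only to know that such a $T$ exists with this measure bound. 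Since the possible outputs of $\frac{1}{2}\dash\WWKL^{(n)}$ on $K(p)$ are exactly the paths in $[T]$, and the reduction must succeed for every realizer, single-valuedness of $f$ forces $H\langle p,q\rangle$ to be a $\delta_Y$-name of the same point $f(x)$ for every $q\in[T]$.

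Next I define, for each basic open ball $B=B(a,r)$ of $Y$ in a fixed enumeration, the c.e.\ open set
\[
V_B:=\{q\in 2^\IN:(\exists k)\; d(a_k,a)+2^{-k}<r,\text{ where }(a_k)_k=H\langle p,q\rangle\},
\]
which is uniformly c.e.\ in $p$ and $B$. A routine computation with the Cauchy representation gives the dichotomy that, for every $q\in[T]$, $q\in V_B$ iff $f(x)\in B$. Consequently, if $f(x)\in B$ then $[T]\In V_B$, so $\mu(V_B)\geq\mu([T])>\frac{1}{2}$; and if $f(x)\notin B$ then $V_B\cap[T]=\emptyset$, so $V_B\In 2^\IN\setminus[T]$ and $\mu(V_B)\leq 1-\mu([T])<\frac{1}{2}$. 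Since $\mu(V_B)$ is lower semicomputable (as $V_B$ is c.e.\ open), the predicate ``$\mu(V_B)>\frac{1}{2}$'' is semidecidable from $p$, and by the dichotomy it is equivalent to ``$f(x)\in B$''. Enumerating confirmed balls while arranging them into a shrinking nested sequence produces a $\delta_Y$-name of $f(x)$, showing $f$ is computable.

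The main delicate point is that $V_B$ may pick up spurious contributions from $q\notin[T]$, where $H\langle p,q\rangle$ is not constrained to name anything meaningful. This is exactly the obstacle the threshold $\frac{1}{2}$ absorbs: the noise is confined to $2^\IN\setminus[T]$, whose measure is strictly below $\frac{1}{2}$, and so it cannot push $\mu(V_B)$ above $\frac{1}{2}$ when $f(x)\notin B$. Notice that this same strategy would not survive a replacement of $\frac{1}{2}\dash\WWKL$ by $\PC_{2^\IN}$ (where only $\mu([T])>0$ is available); and notice that $n$ never enters the measure bounds, which is why the argument handles all jumps uniformly in $n$.
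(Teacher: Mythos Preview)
Your proof is correct and is precisely the majority-vote argument the paper alludes to (and which is carried out in detail in \cite[Corollary~13.3]{BGH15a}): the dichotomy on $\mu(V_B)$ relative to the threshold $\frac{1}{2}$ and the observation that the jumps never enter the measure estimate are exactly the two ingredients of the original proof. Your handling of the spurious mass outside $[T]$ is the essential point, and it is correctly absorbed by $\mu(2^\IN\setminus[T])<\frac{1}{2}$.
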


We note that $\frac{1}{2}\dash\WWKL$ cannot be replaced by $\WWKL$ in this result, since $\lim_\IN\leqW\WWKL'$
holds as a consequence of Proposition~\ref{prop:upper-bounds-CC-CN}.

\subsection{All-or-Unique Choice}

We briefly discuss all-or-unique choice in this section.
The problem $\AoUC_{[0,1]}$ is located between $\LLPO$ and $\LPO$ and related to
{\em robust division} that is defined as the problem $\RDIV:[0,1]\times[0,1]\mto[0,1]$
with $\RDIV(x,y):=\{\frac{x}{\max(x,y)}\}$ if $y\not=0$ and $\RDIV(x,y):=[0,1]$ otherwise.
We now obtain the following characterization.

\begin{proposition}[All-or-unique choice]
\label{prop:AoUC}
$\C_2\lW\AoUC_{[0,1]}\equivSW\RDIV\lW\LPO$.
\end{proposition}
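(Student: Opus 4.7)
I establish the chain $\C_2 \lW \AoUC_{[0,1]} \equivSW \RDIV \lW \LPO$ by first constructing the four reductions and then verifying the two strict separations, which are the main work. For $\C_2 \leqW \AoUC_{[0,1]}$, I translate a name of a nonempty $A \In \{0,1\}$ into a name of $B \In [0,1]$ of the required type: the moment $0 \notin A$ is witnessed in the enumeration, I start enumerating balls covering $[0,1]\setminus\{1\}$, symmetrically for $1 \notin A$, and otherwise keep $B=[0,1]$; from a point $y \in B$ I race the semi-decidable tests $y<2/3$ and $y>1/3$ (which jointly cover $[0,1]$) and output the corresponding $0$ or $1$, which always lies in $A$. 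For $\RDIV \leqSW \AoUC_{[0,1]}$ I invoke Corollary~\ref{cor:choice-zero}: the computable $g(t):=y(\max(x,y)t-x)$ vanishes identically iff $y=0$ and otherwise has $x/\max(x,y)$ as its unique zero, so $g^{-1}\{0\} \In \RDIV(x,y)$. For $\AoUC_{[0,1]} \leqSW \RDIV$ I represent $B$ as $f^{-1}\{0\}$ for computable $f \colon [0,1] \to [-1,1]$, set $y := \|f\|_\infty$ (computable, vanishing iff $B=[0,1]$), and take $x$ to be the product of $y$ with a $[0,1]$-truncated approximation to the unique zero of $f$; in the $y=0$ branch this product forces $x=0$, so $\RDIV(x,y)=[0,1]=B$, while in the $y>0$ branch it recovers the unique $z \in B$. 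Finally, $\AoUC_{[0,1]} \leqW \LPO$ holds because one $\LPO$-call decides whether a ball has been enumerated in the complement of $B$: if not, output $1/2 \in B=[0,1]$; otherwise apply the computable $\UC_{[0,1]}$ to the singleton $B$.

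For $\AoUC_{[0,1]} \nleqW \C_2$ I assume a reduction $(H,K)$, fix $p^\infty:=\widehat{0}$ as a name of $[0,1]$, and for any sequence $(z_n) \In [0,1]$ construct names $q_n$ of $\{z_n\}$ whose first $n$ positions coincide with $p^\infty$, so that $q_n \to p^\infty$ in Baire space. Continuity of $K$ forces $K(q_n) \to K(p^\infty)$, and for the reduction to succeed on every $\C_2$-realizer one needs $H(q_n,a)=z_n$ for every $a \in A_n:=K(q_n)$. A pigeonhole argument on the three possible values of $A_n$ extracts a subsequence on which $A_n$ is constant equal to some $\{a^*\}$ or to $\{0,1\}$; along it, continuity of $H$ in its first argument gives $z_n = H(q_n,a^*) \to H(p^\infty,a^*) \in [0,1]$. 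Since $H(p^\infty,0)$ and $H(p^\infty,1)$ are two specific reals determined by $(H,K)$, choosing $(z_n)$ constant equal to any real in $[0,1] \setminus \{H(p^\infty,0),H(p^\infty,1)\}$ contradicts the convergence. The main obstacle is that continuity of $K$ pins down only an initial segment of the output name $K(q_n)$, so the pigeonhole on the eventual value of $A_n$ is essential to produce a clean sub-case in which continuity of $H$ can be applied.

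For $\LPO \nleqW \AoUC_{[0,1]}$ I assume a reduction $(H,K)$ and apply it to $p^\infty:=\widehat{0}$ (so $\LPO(p^\infty)=0$) and the perturbations $p_n:=0^n 1 \widehat{0}$ (so $\LPO(p_n)=1$), which converge to $p^\infty$ in Baire space. Let $B_\infty:=K(p^\infty)$ and $B_n:=K(p_n)$. If $B_\infty=\{z_0\}$, then Baire-space convergence of names together with computability of $\UC_{[0,1]}$ forces $B_n=\{z_n\}$ for large $n$ with $z_n \to z_0$, and continuity of $H$ yields $H(p^\infty,z_0) = \lim_n H(p_n,z_n) = 1$. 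If $B_\infty=[0,1]$, I pick any fixed $y \in [0,1]$ and select valid points $y_n \in B_n$ (either $y$ itself when $B_n=[0,1]$ or the computable $\UC_{[0,1]}(B_n)$ when $B_n$ is a singleton), then pass to a subsequence with $y_{n_k} \to y$ using compactness of $[0,1]$; the same continuity argument again gives $H(p^\infty,y)=1$. In both sub-cases this contradicts $H(p^\infty,y)=\LPO(p^\infty)=0$.
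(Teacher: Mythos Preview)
Your argument is essentially correct and fully self-contained, whereas the paper simply defers to Pauly's thesis for this proposition, so there is no comparison of approaches to draw. One step in your separation $\AoUC_{[0,1]}\nleqW\C_2$ deserves a word: the continuity conclusion $H(q_{n_k},a^*)\to H(p^\infty,a^*)$ presupposes that $\langle p^\infty,\hat{a^*}\rangle\in\dom(H)$, equivalently that $a^*\in A_\infty$. This does hold---if $a^*\notin A_\infty$ then $K(p^\infty)$ enumerates a ball covering $a^*$ at some finite position $j$, and by $K(q_{n_k})\to K(p^\infty)$ the same ball appears in $K(q_{n_k})$ for large $k$, contradicting $a^*\in A_{n_k}=A^*$---but it should be said, since otherwise your claim that $H(p^\infty,0)$ and $H(p^\infty,1)$ are ``two specific reals'' is not justified (one of them may lie outside the domain of $H$). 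In the second separation, the compactness subsequence in the case $B_\infty=[0,1]$ need not converge to your originally fixed $y$, but any limit lies in $[0,1]=B_\infty$ and yields the same contradiction, so this is harmless. Finally, in the reduction $\AoUC_{[0,1]}\leqSW\RDIV$, producing an honest Cauchy name for $x=y\cdot z$ uniformly across the cases $y=0$ and $y>0$ requires dovetailing a test for $y<2^{-n}$ against the search for a stage at which the remaining closed set has diameter below $2^{-n}$; your sketch is correct but this detail is worth spelling out.
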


In some respects $\AoUC_{[0,1]}$ is closer to $\C_2$ than to $\LPO$, 
at least with respect to the following upper bounds.

\begin{theorem}[Upper bound]
\label{thm:AoUC-upper-bound}
$\AoUC_{[0,1]}\leqW\ConC_{[0,1]}\sqcap\PC_{2^\IN}$.
\end{theorem}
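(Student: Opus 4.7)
Since $\sqcap$ is the infimum in the Weihrauch lattice (Theorem~\ref{thm:lattice}), the proposition is equivalent to the conjunction of the two reductions $\AoUC_{[0,1]}\leqW\ConC_{[0,1]}$ and $\AoUC_{[0,1]}\leqW\PC_{2^\IN}$, and the plan is to establish these two separately.

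The first reduction is essentially by inclusion: every admissible input $A$ to $\AoUC_{[0,1]}$ is either a singleton $\{x\}$ or the whole interval $[0,1]$, and in both cases $A$ is a connected closed subset of $[0,1]$ and hence an admissible input to $\ConC_{[0,1]}$. The identity on inputs and outputs therefore witnesses $\AoUC_{[0,1]}\leqSW\ConC_{[0,1]}$.

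For the second reduction I would invoke the strong equivalence $\AoUC_{[0,1]}\equivSW\RDIV$ from Proposition~\ref{prop:AoUC} and show that $\RDIV$ is Las Vegas computable; by Corollary~\ref{cor:notions-computability} this is equivalent to $\RDIV\leqW\PC_{2^\IN}$. Given an input $(x,y)\in[0,1]^2$ and a random tape $r\in 2^\IN$, let $N(r)$ denote the position of the first $1$ in $r$ and set $\epsilon:=2^{-N(r)-1}$, which is defined on the measure-one set $\{r\neq\widehat 0\}$ and comes with the effective positive lower bound $\epsilon\geq 2^{-N(r)-1}$ as soon as a $1$ has been read. The proposed realizer would output
\[v:=\frac{x}{\max(x,y,\epsilon)},\]
computed as a Cauchy name using the denominator bound $\max(x,y,\epsilon)\geq\epsilon$. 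The verifier $S$ would reject $r$ precisely when both $y>0$ and $\epsilon>\max(x,y)$ have been witnessed, a $\sO{1}$ condition, so the non-rejected set $R_{(x,y)}$ is $\pO{1}$ and hence a valid input to $\PC_{2^\IN}$. For a non-rejected $r$ with $y>0$ one has $\max(x,y,\epsilon)=\max(x,y)$ and $v$ is the unique element of $\RDIV(x,y)$; for $y=0$ one has $v\in[0,1]=\RDIV(x,0)$ for every $\epsilon>0$. Finally $\mu(R_{(x,y)})\geq\max(x,y)\geq y>0$ when $y>0$, and $\mu(R_{(x,y)})=1$ when $y=0$, so the success set always has positive measure as required.

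The main technical obstacle will be handling the exceptional sequence $\widehat 0$, on which the definition of $\epsilon$ breaks down while the Las Vegas framework insists the realizer be defined on all of $R_{(x,y)}$. I plan to absorb this into the verifier by additionally flagging $r$ for rejection once it is simultaneously witnessed that the first $k$ bits of $r$ are zero and that both $x$ and $y$ exceed a positive threshold depending on $k$ (another $\sO{1}$ condition); whatever remains of the corner case $\max(x,y)=0$ is then dealt with by outputting the default value $0$, which lies in $\RDIV(0,0)=[0,1]$. Combining the two reductions via the universal property of the meet then delivers $\AoUC_{[0,1]}\leqW\ConC_{[0,1]}\sqcap\PC_{2^\IN}$.
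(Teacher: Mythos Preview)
Your overall strategy is sound and matches the paper's (which simply cites \cite[Theorem~16.3]{BGH15a}): use that $\sqcap$ is the infimum to reduce to the two separate reductions, observe that the first is a trivial domain inclusion, and establish the second by exhibiting a Las Vegas algorithm. The gap is in your handling of the advice string $\widehat 0$.

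The problem is not merely that your extra rejection clause uses ``both $x$ and $y$'' rather than $\max(x,y)$; it is that any realizer which waits for the first $1$ in $r$ and meanwhile writes a Cauchy name for a \emph{fixed} default value (here $0$) cannot later switch to an arbitrary $v\in[0,1]$. Concretely, take $x=y=c$ with $0<c<\tfrac12$, so that the correct output is $v=1$. Your rejection condition~(R1) forces $\epsilon\leq c$, hence $N(r)\geq N_0:=\lceil-\log_2 c-1\rceil\geq 1$; but after writing $q_0=\dots=q_{N_0-1}=0$ the Cauchy condition $|q_i-q_j|<2^{-j}$ forbids the tail from converging to $1$. Adding a further rejection for this inconsistency then makes the success set empty for these inputs, so the algorithm is not Las Vegas. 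No choice of threshold in your extra clause repairs this, because the obstruction is the default value, not the rejection region.

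A clean repair is to let the advice guess the \emph{answer} rather than a denominator bound: read $r$ as the binary expansion of $\rho\in[0,1]$ and begin writing a Cauchy name for $\rho$. In parallel, wait for a witness that $y>0$; such a witness simultaneously supplies a positive lower bound on $\max(x,y)$, so from that stage on $v=x/\max(x,y)$ is directly computable, and one continues the Cauchy name towards $v$, rejecting precisely when this is incompatible with what has already been written. If $y=0$, no witness ever appears and $\rho\in[0,1]=\RDIV(x,0)$ is valid for every $r$; if $y>0$ and the witness appears at stage $m$, the set of $r$ with $\rho$ close enough to $v$ to permit the switch has measure of order $2^{-m}>0$. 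This realizer is total in $r$, so the $\widehat 0$ pathology disappears entirely.
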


In the diagram in Figure~\ref{fig:choice} $\AoUC_{[0,1]}$ would be in a similar
position as $\K_\IN$, however it is incomparable to $\K_\IN$ since $\AoUC_{[0,1]}$ is countably irreducible
and $\K_\IN\nleqW\LPO$.
We continue with a number of separation results that involve $\AoUC_{[0,1]}$.

\begin{theorem}[Separation]
\label{thm:AoUC-separation}
\begin{enumerate}
\item $\XC_{[0,1]}*\AoUC_{[0,1]}\nleqW\XC_{[0,1]^n}$ for all $n\in\IN$.
\item $\C_2*\AoUC_{[0,1]}\nleqW\AoUC_{[0,1]}^*$.
\item $\C_2\times\AoUC_{[0,1]}\nleqW\ConC_{[0,1]}$.
\end{enumerate}
\end{theorem}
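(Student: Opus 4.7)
My plan handles the three separations individually, each leveraging structural tools from previous sections (compositional products, fractality, and non-idempotence). The unifying theme is that $\AoUC_{[0,1]}$, although low in the lattice, escapes the natural upper bounds one might expect once it participates in products or compositional products with other problems.

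For item~(1), I would reduce $\XC_{[0,1]}*\XC_{[0,1]}$ to $\XC_{[0,1]}*\AoUC_{[0,1]}$ and then invoke Theorem~\ref{thm:XC-composition} directly. The intermediate computation permitted by $*$ should allow the first of two convex choices to be simulated by an $\AoUC_{[0,1]}$-call combined with interstitial processing: given an interval $I\In[0,1]$ arising from the original input, construct an $\AoUC_{[0,1]}$-instance that equals $\{x\}$ precisely when $I$ collapses to a unique point $x$ and equals $[0,1]$ otherwise, so that a canonical point of $I$ can be read off either from the unique output or by direct computation. The interstitial step then prepares the second $\XC_{[0,1]}$-instance exactly as in the original problem. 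The main obstacle is uniformity across the discontinuous case split; I would handle it by letting the $\AoUC_{[0,1]}$-input adaptively commit to the singleton only once a collapse of $I$ is witnessed.

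For item~(2), my plan is to diagonalize against $\AoUC_{[0,1]}^*$. Any purported reduction yields, on each input, a finite list of independent $\AoUC_{[0,1]}$-instances together with a fixed bound on their number. I would construct a family of $\C_2*\AoUC_{[0,1]}$-inputs on which the required outer $\C_2$-answer depends nontrivially on the inner $\AoUC_{[0,1]}$-outcome; by adversarially returning the ``all'' option in each of the finitely many parallel $\AoUC_{[0,1]}$-calls, the reduction is deprived of discriminating information, so continuously varying the input flips the correct $\C_2$-bit without any compensating change in the realizer's view, giving a contradiction. The delicate step is ensuring that this adversarial strategy can be arranged uniformly in the parallel bound so that it defeats every candidate realizer at once.

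For item~(3), I would exploit that $\ConC_{[0,1]}$ is a fractal (Proposition~\ref{prop:choice-fractal}, hence countably irreducible by Proposition~\ref{prop:fractal}) and is not idempotent (Theorem~\ref{thm:CC-not-idempotent}). A hypothetical reduction $\C_2\times\AoUC_{[0,1]}\leqW\ConC_{[0,1]}$ would have to extract from a single closed connected subset of $[0,1]$, together with its chosen point, both a discrete bit for $\C_2$ and a continuous point for $\AoUC_{[0,1]}$, while remaining jointly continuous under independent variation of the two input components of the product. Since closed connected subsets of $[0,1]$ form merely a one-parameter family of intervals, this over-constrains any such reduction; the hard part will be designing an explicit adversarial family of input pairs that forces the topological contradiction into the open, most likely by varying the $\AoUC_{[0,1]}$-input along a path through both the ``unique'' and ``all'' regimes while holding the $\C_2$-component in a configuration where both bits must appear.
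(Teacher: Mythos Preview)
Your proposal has genuine gaps in items~(1) and~(3), and item~(2) is too sketchy to assess.

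For item~(1), the plan is circular relative to the paper's logical structure: immediately after this theorem the paper states that item~(1) \emph{implies} Theorem~\ref{thm:XC-composition}, so invoking the latter to establish the former reverses the intended flow. More seriously, your proposed reduction $\XC_{[0,1]}*\XC_{[0,1]}\leqW\XC_{[0,1]}*\AoUC_{[0,1]}$ does not work as described. Since $\XC_{[0,1]}=\ConC_{[0,1]}$, a generic instance is an arbitrary closed interval $I=[a,b]$. Your $\AoUC_{[0,1]}$-instance equals $\{x\}$ only when $I$ is a singleton and equals $[0,1]$ otherwise; in the non-degenerate case the output of $\AoUC_{[0,1]}$ is an arbitrary point of $[0,1]$ with no relation to $I$, and you cannot then ``read off a canonical point of $I$ by direct computation'', because finding a point of a closed interval given by negative information is precisely the non-computable problem $\ConC_{[0,1]}\equivSW\IVT$. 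The interstitial computation available in $*$ does not help, since it receives only the useless $\AoUC$-output and the original $I$.

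For item~(3), the same circularity applies: the paper derives Theorem~\ref{thm:CC-not-idempotent} \emph{from} item~(3), not conversely, and indeed non-idempotence of $\ConC_{[0,1]}$ does not by itself yield $\C_2\times\AoUC_{[0,1]}\nleqW\ConC_{[0,1]}$ (the implication runs the other way, via $\C_2\leqW\ConC_{[0,1]}$ and $\AoUC_{[0,1]}\leqW\ConC_{[0,1]}$). Your remaining sketch is essentially a restatement of the claim (``over-constrains any such reduction'') without naming the invariant that actually fails or the adversarial family that exposes it.

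The paper itself proves none of the three items; it defers each to an external source (\cite{Kih16a}, \cite{KP16a}, \cite{BGH15a}). Each of those arguments involves a bespoke combinatorial construction that is not recoverable from the heuristics you outline; in particular, item~(2) in \cite{KP16a} uses a specific game-theoretic or priority-style analysis that your diagonalization sketch only gestures at.
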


These separation results have a number of interesting consequences.
The first statement implies Theorem~\ref{thm:XC-composition}, and the third statement implies Theorem~\ref{thm:CC-not-idempotent}.
Since $\C_2\leqW\AoUC_{[0,1]}$ we can conclude the following from the second statement.

\begin{corollary}[Composition]
\label{cor:AoUC-composition}
$\AoUC_{[0,1]}^*$ is not closed under compositional product.
\end{corollary}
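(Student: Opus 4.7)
The plan is to derive the corollary directly from part~(2) of Theorem~\ref{thm:AoUC-separation} via a short contradiction argument, using monotonicity of the compositional product $*$ together with the fact that $\C_2\leqW\AoUC_{[0,1]}$ from Proposition~\ref{prop:AoUC}.

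Concretely, suppose for the sake of contradiction that $\AoUC_{[0,1]}^*$ is closed under $*$, i.e.\ that $\AoUC_{[0,1]}^*\!*\AoUC_{[0,1]}^*\leqW\AoUC_{[0,1]}^*$. First I note that $\AoUC_{[0,1]}\leqW\AoUC_{[0,1]}^*$ holds because finite parallelization is a closure operator with respect to $\leqW$ by Proposition~\ref{prop:mon-closure}. Second, by Proposition~\ref{prop:AoUC} we have $\C_2\leqW\AoUC_{[0,1]}$, hence $\C_2\leqW\AoUC_{[0,1]}^*$ by transitivity.

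Now by monotonicity of the compositional product $*$ (as remarked after Theorem~\ref{thm:compositional-product-implication}), these two reductions combine to give
\[
\C_2*\AoUC_{[0,1]}\;\leqW\;\AoUC_{[0,1]}^*\!*\AoUC_{[0,1]}^*\;\leqW\;\AoUC_{[0,1]}^*,
\]
where the second reduction uses our contradiction hypothesis. This directly contradicts the separation $\C_2*\AoUC_{[0,1]}\nleqW\AoUC_{[0,1]}^*$ provided by Theorem~\ref{thm:AoUC-separation}(2). Hence $\AoUC_{[0,1]}^*$ cannot be closed under $*$, which is the claim of the corollary.

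There is no real obstacle here, since the work has already been done in Theorem~\ref{thm:AoUC-separation}(2); the corollary is a formal consequence. The only point to check carefully is that $*$ is monotone in both arguments (so that the two separate reductions $\C_2\leqW\AoUC_{[0,1]}^*$ and $\AoUC_{[0,1]}\leqW\AoUC_{[0,1]}^*$ can be combined into a reduction of the compositional product), and this is precisely the monotonicity statement already established earlier in the paper.
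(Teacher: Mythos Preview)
Your proof is correct and follows exactly the paper's approach: the paper derives the corollary in one line from Theorem~\ref{thm:AoUC-separation}(2) together with $\C_2\leqW\AoUC_{[0,1]}$ (Proposition~\ref{prop:AoUC}), and you have simply spelled out the contradiction argument using monotonicity of $*$ and the closure property of finite parallelization.
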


Surprisingly, a composition of $\AoUC_{[0,1]}^*$ with itself yields a new problem that 
is closed under compositional product.

\begin{theorem}[Double composition]
\label{thm:AoUC-composition}
We obtain:\\
$\AoUC_{[0,1]}^**\AoUC_{[0,1]}^*\equivW\AoUC_{[0,1]}^**\AoUC_{[0,1]}^**\AoUC_{[0,1]}^*$.
\end{theorem}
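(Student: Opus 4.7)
The forward reduction $\AoUC_{[0,1]}^**\AoUC_{[0,1]}^*\leqW\AoUC_{[0,1]}^**\AoUC_{[0,1]}^**\AoUC_{[0,1]}^*$ is immediate: since every starred problem is pointed via its $0$-fold component $\id_{\{()\}}$, we have $\id\leqW\AoUC_{[0,1]}^*$, and by monotonicity of the compositional product (Proposition~\ref{prop:mon-closure} lifted to $*$) together with the identity $h*\id\equivW h$, inserting an extra $\AoUC_{[0,1]}^*$ factor in the middle cannot lose any computational content.

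For the nontrivial direction $\AoUC_{[0,1]}^**\AoUC_{[0,1]}^**\AoUC_{[0,1]}^*\leqW\AoUC_{[0,1]}^**\AoUC_{[0,1]}^*$, my plan is to use associativity from Proposition~\ref{prop:algebraic-properties} to rewrite the left side as $\AoUC_{[0,1]}^**(\AoUC_{[0,1]}^**\AoUC_{[0,1]}^*)$, and to pass to the concrete cylinder representative of Proposition~\ref{prop:compositional-product-cylinder}. Unfolded via Theorem~\ref{thm:compositional-product-implication}, any reduction target is a process that on input $x$ fires three successive packets of $\AoUC_{[0,1]}$-queries separated by computable transition maps. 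The goal is to collapse these three rounds into two.

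The heart of the argument should exploit the restricted structure of $\AoUC_{[0,1]}$: every admissible instance is either a singleton or all of $[0,1]$, so a packet of $n$ parallel $\AoUC_{[0,1]}$-queries has only $2^n$ combinatorial types (which indices are singletons and which are $[0,1]$). I would attempt the following two-packet simulation. The first packet runs the honest first-round queries \emph{together with} enough speculative queries so that, for each of the finitely many combinatorial outcomes of the first round, the entire second and third rounds can be carried out by a single further parallel packet. Concretely, for each such type one prepares in the second packet all $\AoUC_{[0,1]}$-queries that the original three-round computation would have fired under that type in its second and third rounds; once the answers return, post-processing selects the speculative thread that is consistent with the actually observed first-round type and discards the rest. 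This fits inside $\AoUC_{[0,1]}^**\AoUC_{[0,1]}^*$ because the total number of speculative queries is finite in each realization and is chosen on the basis of the input alone.

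The main obstacle is that the transition maps between rounds are continuous but not finite-valued in their \emph{continuous} content: the positions of singleton witnesses in a first-round answer live in $[0,1]^n$, and later packets may depend on them in a genuinely non-discrete way. Taming this requires separating the discrete ``singleton versus all'' data from the continuous witness data, handling the former by finite enumeration via the $^*$-parallelism and the latter through computable pre- and post-processing that reads the relevant witnesses out of the first packet's returns. Making this separation rigorous, and verifying that the speculative scheme is both continuous and correctly selects among the parallel threads, is the part of the proof I expect to be the most technically demanding and where I would concentrate most of the effort.
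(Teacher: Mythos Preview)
The paper does not supply its own argument for this theorem; it simply cites \cite[Corollary~12]{KP16a}. So there is no in-paper proof to compare against, and the question is whether your outline stands on its own.

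Your easy direction is fine. The hard direction, however, has a genuine gap that your closing paragraph correctly senses but does not close. You propose to put, for every combinatorial ``singleton versus $[0,1]$'' pattern of the first round, all of the second- \emph{and} third-round $\AoUC_{[0,1]}$-queries into a single second packet. But the third-round instances are produced by a computable map applied to the second-round \emph{answers}, and those answers are genuine points of $[0,1]$, not discrete data. Even after you also branch over the $2^m$ combinatorial patterns of the second round, the third-round instances still depend continuously on the second-round witnesses whenever a second-round component happens to be a singleton; and when a second-round component is all of $[0,1]$, the oracle may return \emph{any} point, so no finite speculation covers the possibilities. In short, the speculation trick buys you a finite case split over types, but it cannot manufacture the continuous second-round return values that the third-round instance map consumes. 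Your proposed ``separation of discrete from continuous data'' does not explain how the continuous part of the second-round answers is made available to the third-round instance map before the second packet is fired.

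The actual argument in \cite{KP16a} does not try to collapse rounds two and three directly. It proceeds via an intrinsic characterisation of the cone below $\AoUC_{[0,1]}^**\AoUC_{[0,1]}^*$ (in terms of a finite-piecewise computability notion tied to the all-or-unique structure) from which closure under one further application of $\AoUC_{[0,1]}^*$ falls out. If you want to repair your outline along its own lines, the move that actually helps is not speculation over types but exploiting that an $\AoUC_{[0,1]}$-instance is given by \emph{negative} information: one can begin writing a later-round instance as ``currently $[0,1]$'' and only commit to a singleton once enough of the preceding round's behaviour has been observed. Making this precise is exactly the nontrivial content of the Kihara--Pauly proof, and it is not captured by a finite branching over combinatorial patterns.
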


\subsubsection*{Bibliographic Remarks}

\begin{petit}
The study of choice problems in the Weihrauch lattice has been started by Gherardi and Marcone~\cite{GM09} and Brattka and Gherardi~\cite{BG11,BG11a}.
Indirectly, Weihrauch~\cite{Wei92c} already studied choice problems in form of versions of $\MLPO$ and $\LPO$; Propositions~\ref{prop:LLPO-LPO} and \ref{prop:finite-choice} 
are due to him. Theorem~\ref{thm:cardinality-products} is due to Pauly~\cite{Pau10}.
Non-deterministically computable functions have been introduced to computable analysis by
Ziegler~\cite{Zie07,Zie07a}, and the relation to choice problems has been established by Brattka, de Brecht and Pauly~\cite{BBP12},
who also started to study choice problems more systematically. Many results in this section are due to them.
The study of positive choice was initiated by Brattka and Pauly~\cite{BP10} and more systematically continued by Brattka, Gherardi and H\"olzl~\cite{BGH15a}.
This subject was independently studied by Dorais, Dzhafarov, Hirst, Mileti and Shafer~\cite{DDH+16}, who also introduced the quantitative version
of $\WWKL$ and proved Theorem~\ref{thm:quantitative-WWKL}.
Connected choice was mostly studied by Brattka, Le Roux, Miller and Pauly~\cite{BLRMP18} and convex choice by Le Roux and Pauly~\cite{LRP15a},
who also proved Theorem~\ref{thm:CN-elimination}.
The study of jumps of choice is due to Brattka, Gherardi and Marcone~\cite{BGM12}, and the statement on $\K_X$ in Theorem~\ref{thm:jumps-choice}
is due to Brattka, Cettolo, Gherardi, Marcone and Schr\"oder~\cite{BCG+17}.
The statement on positive choice in Theorem~\ref{thm:composition-jump} is due to Bienvenu and Kuyper~\cite{BK17}.
Pauly started the study of all-or-unique choice~\cite{Pau10,Pau11}, and some of the separation results in this 
regard are due to Kihara and Pauly~\cite{KP16a}. 
Theorem~\ref{thm:XC-composition} is due to Kihara \cite{Kih16a}.
\end{petit}

\section{Classifications}
\label{sec:classifications}

In this section we present results on the classification of theorems. Most of these theorems originate from analysis. 
We interpret theorems as problems as explained after Definition~\ref{def:solutions}.
For many theorems one can derive upper bounds using the following observation.

\begin{theorem}[Upper bounds]
\label{thm:upper-bounds}
Let $X,Y$ be represented spaces and $A\In X\times Y$ co-c.e.\ closed.
If 
$(\forall x\in X)(\exists y\in Y)\;(x,y)\in A$
holds, then the corresponding problem $F:X\mto Y,x\mapsto\{y\in Y:(x,y)\in A\}$ satisfies $F\leqW\C_Y$.
\end{theorem}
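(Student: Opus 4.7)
The plan is to use the categorical characterization of Weihrauch reducibility from Proposition~\ref{prop:GM09}(1), taking $V := X$, a computable $k: X \to X \times \AA_-(Y)$ that feeds the input through and attaches the section $A_x$, and a computable projection $h: X \times Y \to Y$ as the post-processor.

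First I would verify that the section map $\sigma: X \to \AA_-(Y)$, $x \mapsto A_x := \{y \in Y : (x,y) \in A\}$ is computable. Since $A$ is co-c.e.\ closed, by the general (Sierpi\'nski-space based) definition of $\psi_-$ on represented spaces we have $\chi_{(X\times Y)\setminus A} \in \CC(X \times Y, \IS)$ computably. Exponentiation in the Cartesian closed category of represented spaces then yields a computable map $X \to \CC(Y,\IS)$, $x \mapsto \chi_{Y \setminus A_x}$, where $\chi_{Y\setminus A_x}(y) = \chi_{(X\times Y)\setminus A}(x,y)$. Unwinding the definition of $\psi_-$ for $\AA_-(Y)$ in terms of $\delta_{\CC(Y,\IS)}$, this is precisely a computable realization of $\sigma$.

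Next, the standing assumption $(\forall x)(\exists y)\;(x,y)\in A$ gives $A_x \neq \emptyset$ for every $x \in X$, so $\sigma(x) \in \dom(\C_Y)$ for all $x$. Define $k: X \to X \times \AA_-(Y)$ by $k(x) := (x, \sigma(x))$ and $h: X \times Y \to Y$ by $h(x,y) := y$; both are trivially computable. For any $y \in \C_Y(\sigma(x)) = A_x$ we have $(x,y) \in A$, i.e.\ $y \in F(x)$, so
\[
h \circ (\id_X \times \C_Y) \circ k \;\prefix\; F,
\]
and Proposition~\ref{prop:GM09}(1) yields $F \leqW \C_Y$.

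The only delicate point is the first step, namely that a co-c.e.\ closed $A \In X \times Y$ has computably co-c.e.\ closed sections; this is a purely formal consequence of the Cartesian closure of represented spaces applied to the $\IS$-valued characterization of $\AA_-$, but it is worth carrying out explicitly since the other characterizations of co-c.e.\ closedness (via basic balls or zero sets of continuous real functions) are only available when $X \times Y$ is a computable metric space.
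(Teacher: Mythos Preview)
Your argument is correct and is essentially the approach the paper has in mind: the paper's own proof consists only of citations (to \cite[Lemma~8.5]{BG11a} for metric spaces and \cite[Proposition~4.2(9)]{Pau16} for general represented spaces), and what you have written out is precisely the content of the latter reference---namely that sections of closed sets are closed uniformly, obtained by currying the computable map $\chi_{(X\times Y)\setminus A}:X\times Y\to\IS$ through Cartesian closure. Your emphasis on using the $\IS$--valued characterization of $\AA_-$ rather than the ball or zero-set descriptions is exactly the right observation for the general represented-space setting.
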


In combination with Proposition~\ref{prop:choice-spaces} one can thus derive upper bounds on theorems
by exploiting topological properties of $Y$.
We essentially group our classifications according to related choice problems. 

The equivalence class of choice on natural numbers contains many theorems
that are typically proved with the help of the Baire category theorem.

\begin{theorem}[Choice on the natural numbers]
\label{thm:CN}
The following are all Weihrauch equivalent to each other:
\begin{enumerate}
\item Choice on natural numbers $\C_{\IN}$.
\item The Baire category theorem $\BCT_1$.
\item Banach's inverse mapping theorem $\BIM_{\ell_2,\ell_2}$.
\item The open mapping theorem for $\ell_2$.
\item The closed graph theorem for $\ell_2$.
\item The uniform boundedness theorem on non-singleton computable Banach spaces.
\item The Lebesgue covering lemma for $[0,1]$.
\item The partial identity from continuous function to analytic functions.
\end{enumerate}
\end{theorem}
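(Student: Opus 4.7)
The plan is to establish the eight Weihrauch equivalences by proving both $F\leqW\C_\IN$ and $\C_\IN\leqW F$ for every item $F$ on the list. Rather than chasing a specific cycle of reductions, I would organize the proof around a single uniform upper-bound technique and a small toolbox of lower-bound encodings, because the ``hard'' content is almost always the lower bound, whereas the upper bound is conceptually the same across all items.

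For the upper bounds $F\leqW\C_\IN$, I would appeal to Proposition~\ref{prop:limit-computable}(2), characterizing $\leqW\C_\IN$ as computability with finitely many mind changes. Each theorem on the list admits a natural mind-change algorithm producing a single natural number (or a rational refinement of one) that serves as witness: for $\BCT_1$, the index of a rational ball whose smaller sub-ball eventually avoids all closed nowhere dense $A_n$; for the Lebesgue covering lemma, an integer $k$ with $1/k$ a Lebesgue number, revised whenever a point is discovered that is too close to the boundary of every cover element; for the uniform boundedness theorem, a bound $n$ on $\sup_i \|T_i\|$, revised whenever some $T_i(x)$ exceeds it on the dense computable sequence of $X$; for $\BIM_{\ell_2,\ell_2}$, the open mapping theorem, and the closed graph theorem, a norm bound on the inverse / modulus of openness / norm of the map, using the standard reduction of each classical statement to the existence of such a bound together with Proposition~\ref{prop:closed} to phrase the defect condition as a co-c.e.\ closed relation; for item (8), the radius of convergence (or a lower bound on it) witnessing analyticity. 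Since each witness is a natural number computable with finitely many mind changes from the given data, Proposition~\ref{prop:limit-computable}(2) yields the reduction; in some cases, Theorem~\ref{thm:upper-bounds} together with Proposition~\ref{prop:choice-spaces}(4) gives a slicker route once one shows the output space is computably countable.

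For the lower bounds $\C_\IN\leqW F$, I would encode an instance of $\C_\IN$ — an enumeration of $\IN\setminus A$ with $A\neq\emptyset$ — into the inputs of each problem in such a way that any solution to $F$ on the constructed instance uniformly computes an element of $A$. For $\BCT_1$, I would construct closed nowhere dense subsets $B_n$ of a complete computable metric space so that $B_n$ grows precisely while $n$ is being enumerated out of $A$, forcing any Baire point to lie in a cylinder coding some $m\in A$. For $\BIM$, the open mapping theorem, and the closed graph theorem on $\ell_2$, I would use a diagonal operator $T e_n = \lambda_n e_n$ with $\lambda_n = 1$ while $n$ looks like it is in $A$ and $\lambda_n = 2^{-s}$ once stage $s$ enumerates $n$ out of $A$; in each case, any numerical information about the inverse, openness modulus, or closed-graph bound identifies an $n$ with $\lambda_n$ still close to $1$, hence an $n\in A$. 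The uniform boundedness theorem is reduced via an analogous family $T_n$ of rank-one operators; the Lebesgue covering lemma via a cover whose Lebesgue number is forced to be $\le 1/(1+\min A)$; and item (8) via continuous functions $f_n$ that are analytic iff the claimed index $n$ is really in $A$.

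The main obstacle is executing the $\ell_2$-based encodings in items (3)–(6) uniformly and \emph{computably}, without peeking at the full enumeration of $\IN\setminus A$ in advance. Specifically, one must ensure that (i) the operator constructed from the current approximation to $\IN\setminus A$ is itself computable as an element of the appropriate represented Banach space of bounded operators, (ii) revising $\lambda_n$ after $n$ has been enumerated out does not destroy bijectivity, boundedness, closure of the graph, or pointwise boundedness of the family, and (iii) the ``numerical witness'' extracted from the conclusion of the classical theorem can in fact be converted, by a single computable post-processing step, into an honest element of $A$. Once this core construction is in place, the other items are either direct or follow by a uniform reformulation, so the theorem reduces to checking these delicate $\ell_2$ codings carefully.
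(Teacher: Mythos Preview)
Your overall strategy—upper bounds via finite mind changes (Proposition~\ref{prop:limit-computable}(2)) and lower bounds via tailored encodings of a $\C_\IN$ instance—is exactly the approach of the papers cited in the survey's proof, so in spirit your plan matches what is done in the literature.

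There is, however, a concrete mismatch regarding $\BCT_1$. In this paper $\BCT_1$ is defined with input a sequence $(A_n)$ of closed sets with $X=\bigcup_n A_n$ and output an index $n$ with $A_n^\circ\neq\emptyset$; it is \emph{not} the problem of finding a point that avoids a sequence of nowhere dense sets (that is $\BCT_0$, which the paper notes is computable). Both of your sketches for $\BCT_1$ speak of ``closed nowhere dense $A_n$'' and of ``forcing any Baire point to lie in a cylinder'', i.e., you are describing $\BCT_0$-style inputs and point outputs. The correct mind-change algorithm for $\BCT_1$ guesses a pair $(n,B)$ with $B$ a basic ball and revises when evidence appears that $B\not\subseteq A_n$ (a $\Sigma^0_1$ event, since $X\setminus A_n$ is enumerated); the correct lower bound builds, from an enumeration of $\IN\setminus A$, closed sets $A_n$ covering $X$ such that $A_n^\circ\neq\emptyset$ only if $n\in A$. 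Your $\ell_2$ encodings for items (3)--(6) are the standard ones and should go through as you outline.

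A smaller issue: your lower-bound sketch for item~(8) (``$f_n$ analytic iff $n\in A$'') cannot work as stated, since only genuinely analytic functions are admissible inputs to the partial identity. The encoding must produce a single function that \emph{is} analytic, but whose analytic data (e.g., a lower bound on the radius of convergence, or the growth of Taylor coefficients) cannot be read off without effectively identifying some $n\in A$.
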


In most cases these theorems are interpreted as problems in a straightforward way.
We only provide some examples and refer the reader to the references for exact definitions.
For instance, Banach's inverse mapping theorem on computable Banach spaces $X,Y$ is formalized as 
$\BIM_{X,Y}:\In\CC(X,Y)\to\CC(Y,X),T\mapsto T^{-1}$, 
restricted to bijective, linear, bounded $T$.
We always obtain $\BIM_{X,Y}\leqW\C_\IN$, and in the case of $X=Y=\ell_2$ the theorem
actually attains the maximal complexity. For finite-dimensional $X,Y$ it is, however, computable.
Similar remarks apply in the cases of the open mapping theorem and the closed graph theorem.
The Baire category theorem and the uniform boundedness theorem are even equivalent 
to $\C_\IN$ for all complete computable metric spaces and non-singleton computable Banach spaces, respectively.

In the case of some theorems it can happen that one logical formulation of the theorem
and the contrapositive formulation carry different computational content. In such a situation it might
not always be clear, which form is more natural, and perhaps both forms have applications.
Such an example is the Baire category theorem that we can formalize at least in two ways.
By $A^\circ$ we denote the {\em interior} of the set $A$.

\begin{enumerate}
\item $\BCT_0:\In\AA_-(X)^\IN\mto X,(A_n)_{n\in\IN}\mapsto\{x\in X:x\not\in\bigcup_{n=0}^\infty A_n\}$,\\
        with $\dom(\BCT_0):=\{(A_n)_{n\in\IN}:A_n^\circ=\emptyset\}$.
\item $\BCT_1:\In\AA_-(X)^\IN\mto\IN,(A_n)_{n\in\IN}\mapsto\{n\in\IN:A_n^\circ\not=\emptyset\}$,\\
        with $\dom(\BCT_1):=\{(A_n)_{n\in\IN}:X=\bigcup_{n=0}^\infty A_n\}$.
\end{enumerate}

While $\BCT_1$ is in the equivalence class of $\C_\IN$, 
it is easy to see that $\BCT_0$ is computable.
Nevertheless the jump of $\BCT_0$ has interesting applications that we mention below. 
Similarly to the Baire category theorem, also the Heine-Borel covering theorem can be formalized in at least 
two ways. Here $\OO(X)$ denotes the set of open subsets of $X$ seen as the complements of the elements of $\AA_-(X)$, i.e., every open set is represented by an enumeration of basic open balls whose union coincides with the set.

\begin{enumerate}
\item $\HBC_0:\In\OO([0,1])^\IN\mto \IN,(U_n)_{n\in\IN}\mapsto\{k\in\IN:[0,1]\In\bigcup_{n=0}^k U_n\}$,\\
        with $\dom(\HBC_0):=\{(U_n)_{n\in\IN}:[0,1]\In\bigcup_{n=0}^\infty U_n\}$.
\item $\HBC_1:\In\OO([0,1])^\IN\mto [0,1],(U_n)_{n\in\IN}\mapsto\{x\in [0,1]:x\not\in\bigcup_{n=0}^\infty U_n\}$,\\
        with $\dom(\HBC_1):=\{(U_n)_{n\in\IN}:(\forall k)\;[0,1]\not\In\bigcup_{n=0}^k U_n\}$.
\end{enumerate}

Once again, it is easy to see that $\HBC_0$ is computable, and $\HBC_1$ is in the equivalence class
of choice on Cantor space.

\begin{theorem}[Choice on Cantor space]
\label{thm:C2N}
The following are all strongly Weihrauch equivalent to each other:
\begin{enumerate}
\item Choice on Cantor space $\C_{2^\IN}$.
\item Weak K\H{o}nig's lemma $\WKL$.
\item The Hahn-Banach theorem.
\item The Heine-Borel covering theorem $\HBC_1$.
\item The theorem of the maximum $\MAX$.
\item The Brouwer fixed point theorem $\BFT_n$ for dimension $n\geq2$.
\item The Brouwer fixed point theorem $\BFT_\infty$ for the Hilbert cube $[0,1]^\IN$.
\item Finding connectedness components of sets $A\In[0,1]^n$ for $n\geq1$.
\item The parallelization $\widehat{\IVT}$ of the intermediate value theorem.
\item Determinacy of Gale-Stewart games in $2^\IN$ with closed winning sets.
\end{enumerate}
\end{theorem}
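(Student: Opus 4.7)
The plan is to adopt a hub-and-spoke strategy with $\C_{2^\IN}$ (equivalently $\WKL$) as the centre, proving $X \equivSW \C_{2^\IN}$ for each of items 3--10. The equivalence $\WKL\equivSW\C_{2^\IN}$ is already established by the Choice on Cantor Space theorem (item 2), and the equivalences between the various versions of $\C_{2^\IN}$ (on Cantor space, the Hilbert cube, powers of $[0,1]$) from Corollary on $\C_{2^\IN}$-equivalents will let me move freely between the natural ambient spaces.

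For the upper bounds $X \leqSW \C_{2^\IN}$, the uniform Upper Bounds theorem together with Proposition on Spaces does essentially all the work: in each case the solution set (fixed point set, zero set, maximiser set, uncovered set, connectedness component, set of winning strategies) is co-c.e.\ closed in a computably compact space, and so the problem strongly Weihrauch reduces to $\C_{2^\IN}$. For the theorem of the maximum I would combine this with the fact that $\sup:\CC([0,1],\IR)\to\IR$ is computable, so that the maximiser set is a uniformly computable $\Pi^0_1$ class; for $\widehat{\IVT}$ I would use Example on idempotency/parallelizability together with Corollary $\C_X\equivSW\Z_X$ to conclude $\widehat{\IVT}\equivSW\Z_{[0,1]}\leqSW\C_{[0,1]}\equivSW\C_{2^\IN}$; for Hahn-Banach one writes the set of norm-preserving extensions as a $\Pi^0_1$ class in a computably compact ball; for Gale--Stewart determinacy, the set of winning strategies for the player who wins is a $\Pi^0_1$ subset of the computably compact space of strategies.

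For the lower bounds $\C_{2^\IN}\leqSW X$ I plan to give individual reductions from $\WKL$. The easiest case is $\HBC_1$: given a binary tree $T$, the family $\{U_\sigma: \sigma\notin T\}$ of basic open neighbourhoods in $2^\IN$ (together with the cylinders around finitely many levels) realises $[T]$ exactly as the complement, yielding $\WKL\leqSW\HBC_1$ via a computable transformation of codes. For Hahn-Banach I would implement the classical reduction that encodes the paths of $T$ as the extensions of a carefully designed sublinear functional on a codimension-one subspace of a separable Banach space. For Brouwer $\BFT_n$ with $n\geq 2$ and for connectedness components the arguments go through planar constructions: one can compute from a tree $T\in\Tr$ a continuous self-map of $[0,1]^2$, or a closed set in $[0,1]^2$, whose fixed points (respectively connectedness components) computably encode $[T]$; the Hilbert cube case $\BFT_\infty$ then follows by parallelisation together with Corollary on $\C_{2^\IN}$-equivalents. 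For $\widehat{\IVT}$ the parallelised $\IVT$ already realises arbitrary elements of $\widehat{\C_2}\equivSW\WKL$. Finally, for Gale--Stewart determinacy with closed winning sets, I would associate to each tree $T$ a game whose winning set is $[T]$ and extract a path from a winning strategy.

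The main obstacle will be the lower bound for the Brouwer fixed point theorem in dimension $2$ and, in the same spirit, the planar connectedness--components reduction, both of which require a genuine topological construction rather than a routine coding: one must manufacture a continuous self-map of $[0,1]^2$ whose fixed-point set is homeomorphic (computably) to $[T]$ while respecting the reduction's need to recover a path from any fixed point. The Hahn-Banach reduction is also technically delicate but follows known templates. Everything else reduces to bookkeeping around the algebraic properties of $\C_{2^\IN}$ established in the preceding sections.
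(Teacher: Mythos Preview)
Your hub-and-spoke plan matches the paper's approach exactly: the paper's proof is itself just a list of references, one per item, each establishing equivalence with $\C_{2^\IN}$, together with a one-line observation that $\HBC_1\equivSW\C_{[0,1]}$. So structurally you are on the same track.

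A few concrete points where your outline needs adjustment. First, you omitted the lower bound for $\MAX$ entirely from your list of reductions; this one is easy (given nonempty closed $A\subseteq[0,1]$ compute $f(x)=-d(x,A)$, whose maximisers are exactly $A$), but it should be stated. Second, $\HBC_1$ as defined in the paper lives on $[0,1]$, not on $2^\IN$; your cylinder argument belongs to $\C_{2^\IN}$ rather than to $\HBC_1$, and the clean route is the paper's: show $\HBC_1\equivSW\C_{[0,1]}$ directly (the sequence of open sets is exactly an $\AA_-$-name of the complement) and then invoke $\C_{[0,1]}\equivSW\C_{2^\IN}$. Third, for the Brouwer fixed point theorem and the connectedness-component problem the paper's cited source factors through connected choice: one proves $\BFT_n\equivSW\ConC_{[0,1]^n}$ and then appeals to the Connected Choice theorem ($\ConC_{[0,1]^n}\equivSW\C_{2^\IN}$ for $n\geq2$) already stated in this survey. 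Your proposed direct encoding of a binary tree into a self-map of $[0,1]^2$ with controlled fixed-point set is morally the same work, but organising it via $\ConC$ is how the referenced argument actually runs, and it lets you reuse the dimension-$2$ result for $\ConC$ rather than redoing a bespoke planar construction.
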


In the case of the Hahn-Banach theorem the underlying separable Banach space is part of the input information.
No space of maximal complexity is known in this case. For certain spaces (such as computable Hilbert spaces) the
Hahn-Banach theorem is computable.  
For two further theorems mentioned above we provide formalizations as problems.

\begin{enumerate}
\item $\MAX:\CC[0,1]\mto\IR,f\mapsto\{x\in [0,1]:f(x)=\max f([0,1])\}$.
\item $\BFT_n:\CC([0,1]^n,[0,1]^n)\mto[0,1]^n,f\mapsto\{x\in[0,1]^n:f(x)=x\}$.
\end{enumerate}

The Brouwer fixed point theorem of dimension $n=1$ is equivalent to the
intermediate value theorem, i.e., $\BFT_1\equivSW\IVT\equivSW\ConC_{[0,1]}$.
We note that classifications such as the one in Theorem~\ref{thm:C2N} lead to simple
proofs of classically known non-uniform results in computable analysis. We mention some examples.

\begin{corollary}[Non-uniform results]
\label{cor:C2N}
\begin{enumerate}
\item There exists an infinite binary tree without computable paths (Kleene~\cite{Kle52a}).
\item There is a computable function $f:[0,1]\to\IR$ that attains its maximum only at non-computable points $x\in[0,1]$ 
        (Lacombe~\cite[Theorems VI and VII]{Lac57} and Specker~\cite{Spe59}).
\item There is a computable function $f:[0,1]^2\to[0,1]^2$ that has no computable fixed point $x\in[0,1]^2$ (Orevkov~\cite{Ore63} and Baigger~\cite{Bai85}).
\item There is a computable sequence $(f_n)_{n\in\IN}$ of functions $f_n:[0,1]\to\IR$ with \linebreak
        ${f_n(0)\cdot f_n(1)}<0$ for all $n\in\IN$
         such that there is no computable sequence $(x_n)_{n\in\IN}$ with $f_n(x_n)=0$ (Pour-El and Richards~\cite[Example 8a]{PR89}).
\end{enumerate}
\end{corollary}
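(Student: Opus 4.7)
The plan is to observe that each of the four items asserts the failure of \emph{non-uniform computability} for a specific problem, and by Corollary~\ref{cor:invariance} this property is invariant under Weihrauch reducibility. Since Theorem~\ref{thm:C2N} shows that $\WKL$, $\MAX$, $\BFT_2$ and $\widehat{\IVT}$ all lie in the same (strong) Weihrauch degree, it suffices to establish the non-uniform result for any single one of them, and then transfer.

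First I would take item~1 (Kleene's theorem) as the starting point, whose classical construction yields a computable infinite binary tree $T$ whose infinite paths all compute $\emptyset'$, hence are non-computable. In our terminology this says that $\WKL$ is not non-uniformly computable: it has a computable input with no computable output. Equivalently, by Theorem~\ref{thm:C2N}, $\C_{2^\IN}$ is not non-uniformly computable.

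Next I would invoke the invariance of non-uniform computability stated in Corollary~\ref{cor:invariance}. Since $\MAX \equivSW \WKL$, $\BFT_2 \equivSW \WKL$ and $\widehat{\IVT} \equivSW \WKL$ by Theorem~\ref{thm:C2N}, none of these three problems is non-uniformly computable either, which is exactly what items~2, 3 and 4 assert. Concretely, for item~2, writing $\WKL \leqSW \MAX$ via Proposition~\ref{prop:GM09} supplies computable $K:\Tr \to \CC[0,1]$ and $H:[0,1] \to 2^\IN$ with $H$ sending any maximizer of $K(T)$ to a path of $T$. Taking $f := K(T)$ for the Kleene tree $T$ yields a computable function whose every maximum is non-computable, since otherwise $H$ would deliver a computable path. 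Items~3 and 4 are analogous, applied to $\BFT_2$ in $[0,1]^2$ and to $\widehat{\IVT}$ (where a computable sequence of IVT instances is produced from $T$ via the parallelization, and a hypothetical computable sequence of zeros would produce a computable path).

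There is essentially no obstacle, since all the substantive work is already contained in Theorem~\ref{thm:C2N} (for the equivalences) and Corollary~\ref{cor:invariance} (for the invariance of non-uniform computability). The only ingredient \emph{outside} this machinery is Kleene's classical construction of a computable infinite binary tree without computable paths, which bootstraps the whole chain. The point of the corollary is precisely that once that one foundational non-uniform result is in place, the Weihrauch calculus automatically delivers the theorems of Lacombe/Specker, Orevkov/Baigger and Pour-El/Richards as formal consequences.
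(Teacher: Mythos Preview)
Your proposal is correct and matches the paper's approach exactly: the paper remarks that ``once one has one of these negative results, all the others follow immediately by Theorem~\ref{thm:C2N}'', which is precisely your strategy of taking Kleene's tree as the seed and transferring via the invariance of non-uniform computability (Corollary~\ref{cor:invariance}) along the Weihrauch equivalences of Theorem~\ref{thm:C2N}. Your added unpacking of the reduction witnesses $H,K$ for item~2 is a helpful elaboration but not required.
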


Once one has one of these negative results, all the others follow immediately by Theorem~\ref{thm:C2N}.
On the other hand, also positive non-uniform results can be derived from Theorem~\ref{thm:C2N}.
For instance, every computable function $f:[0,1]^2\to[0,1]^2$ has a low fixed point.
Such non-uniform results hold analogously for other classifications presented here, but we are not going to discuss them in detail.

A theorem that is often proved with the help of the Brouwer fixed point theorem
is the Nash equilibria existence theorem. Its computational content is significantly
weaker than that of the Brouwer fixed point theorem.

\begin{theorem}[All-or-unique choice]
\label{thm:AoUC}
The following are strongly Weihrauch equivalent to each other:
\begin{enumerate}
\item The finite parallelization $\AoUC_{[0,1]}^*$ of all-or-unique choice. 
\item The Nash equilibria existence theorem $\NASH$.
\end{enumerate}
\end{theorem}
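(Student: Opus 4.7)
The plan is to establish both directions of the strong equivalence separately, namely the upper bound $\NASH\leqSW\AoUC_{[0,1]}^*$ and the lower bound $\AoUC_{[0,1]}^*\leqSW\NASH$.

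For the upper bound, I would exploit the support-enumeration structure of Nash equilibria in finite games. A normal-form game with finitely many players and finitely many pure strategies admits only finitely many candidate supports $S=(S_1,\ldots,S_n)$, and this bound is computable from the game description. For each such support, the mixed-strategy equilibria with exactly that support form the solution set of a \emph{linear} system of indifference equations (all pure strategies in a player's support must yield the same expected payoff against the opponents' mixtures) together with sign conditions on the off-support payoffs. Hence each real coordinate of the equilibrium is either the unique solution of a computable linear system (when the support is consistent), or the value $0$ coming from being off-support; inconsistent supports simply contribute no equilibrium. This is precisely the all-or-unique alternative captured by $\AoUC_{[0,1]}$, and solving the finitely many candidate supports in parallel yields the reduction to $\AoUC_{[0,1]}^*$.

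For the lower bound, the strategy is to encode an instance of $\AoUC_{[0,1]}^*$ into the Nash equilibrium problem of a single finite game. I would first show how a single instance of $\AoUC_{[0,1]}$, given by a closed set $A\In[0,1]$ of the form $\{x\}$ or $[0,1]$, can be converted into a small two-player zero-sum game whose mixed Nash equilibria project onto $A$ in a distinguished coordinate. A matching-pennies style payoff structure, with entries depending continuously and computably on the co-c.e.\ closed description of $A$, has the desired property: when $A=\{x\}$ the subgame has a unique mixed equilibrium with first-player mixing probability equal to $x$, whereas when $A=[0,1]$ every mixed strategy pair is an equilibrium. Given an input of $\AoUC_{[0,1]}^m$, I would then form the disjoint product of the $m$ subgames; since the component players and payoffs are independent, every Nash equilibrium of the product is a product of Nash equilibria of the components, and reading off the distinguished coordinate in each component delivers simultaneous solutions to all $m$ AoUC instances.

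The principal obstacle lies on the side of the lower bound: one must guarantee that \emph{every} Nash equilibrium of the constructed game projects into $A$ (so that no spurious equilibria escape the encoding) while keeping the payoffs strongly Weihrauch reducible to the co-c.e.\ closed description of $A$. A secondary subtlety in the upper bound is ruling out the use of stronger choice principles such as $\ConC_{[0,1]}$ or $\C_{2^\IN}$: this rests on the observation that, once supports are fixed, the equilibrium conditions become linear equations whose coefficients are rational in the payoff data, so solvability and uniqueness are genuinely of all-or-unique type rather than requiring comparison of arbitrary real numbers.
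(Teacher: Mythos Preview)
Your overall plan---support enumeration for $\NASH\leqSW\AoUC_{[0,1]}^*$ and a game gadget for the reverse---is indeed the shape of Pauly's original argument that $\NASH\equivW\RDIV^*$. The paper does not re-derive this: it simply cites that equivalence, combines it with $\RDIV\equivSW\AoUC_{[0,1]}$ (Proposition~\ref{prop:AoUC}) to obtain $\NASH\equivW\AoUC_{[0,1]}^*$, and then upgrades to strong equivalence by invoking separately that $\NASH$ is a cylinder.

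There are two genuine gaps in your sketch. First, in the upper bound you write that for a fixed support each coordinate is ``either the unique solution of a computable linear system \ldots\ or the value~$0$,'' and call this ``precisely the all-or-unique alternative.'' That is not where $\AoUC$ enters. The indifference system for a fixed support can be degenerate, with a proper affine subspace of solutions rather than a unique point; and even when it is uniquely solvable you must still verify the best-response inequalities for off-support strategies before you know this support yields an equilibrium. The robust-division structure appears when you solve the linear system by Cramer's rule or Gaussian elimination: each coordinate is a ratio of real quantities whose denominator may vanish, and that is exactly $\RDIV\equivSW\AoUC_{[0,1]}$. Your description skips this and also leaves unexplained how to select a valid support among the candidates without comparing real numbers---which is where most of the work in Pauly's argument lies.

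Second, you treat strong reducibility as automatic, but it is not. The argument above (and Pauly's) only gives $\equivW$; the paper needs the additional fact that $\NASH$ is a cylinder (so that $g\leqW\NASH\iff g\leqSW\NASH$ by Proposition~\ref{prop:cylinder}) to obtain $\equivSW$. Your direct reductions would have to be checked for strongness separately, and in the upper-bound direction the selection-among-supports step makes this nontrivial. Your lower-bound gadget idea (matching-pennies-type games whose unique mixed equilibrium encodes a ratio, degenerating to the trivial game when the ratio is $0/0$) is sound and is essentially how one embeds $\RDIV$ into $\NASH$.
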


Here $\NASH_{n,m}:\IR^{m\times n}\times\IR^{m\times n}\mto\IR^n\times\IR^m$ is the
map that maps a bi-matrix game $(A,B)$ to a pair of strategies that form a Nash equilibrium of $(A,B)$,
and $\NASH:=\bigsqcup_{n,m\in\IN}\NASH_{n,m}$.

The equivalence class of choice on Euclidean space contains a theorem
that we mention in the following result without further definitions. 

\begin{theorem}[Choice on Euclidean space]
\label{thm:CR-thm}
The following are Weihrauch equivalent to each other:
\begin{enumerate}
\item Choice on Euclidean space $\C_{\IR}$.
\item Frostman's lemma on the existence of measures.
\end{enumerate}
\end{theorem}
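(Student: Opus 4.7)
The plan is to verify the two reductions separately, once Frostman's lemma is formalized as a problem $\FRR$: its input is a pair $(K,s)$ consisting of a compact set $K\In[0,1]^n$ given as an element of $\KK_-$, together with an exponent $s>0$ satisfying $\mathcal{H}^s(K)>0$; its output is any nontrivial finite Borel measure $\mu$ supported on $K$ that obeys the ball condition $\mu(B(x,r))\leq r^s$ for all $x,r$.

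For the direction $\FRR\leqW\C_\IR$, first note that under the ball condition the total mass of any output measure is bounded by $(\diam K)^s$, so the space of admissible measures for a fixed input is weak-star compact. Represent finite Borel measures on $[0,1]^n$ by their values on a computable basis of sets; the bounded-mass slices form computably compact subspaces, and their countable union is a computably $\sigma$-compact computable metric space $Y$. The admissibility relation among triples $(K,s,\mu)$, specifying $\supp(\mu)\In K$, the ball condition, and a rescaled nontriviality condition $\mu(K)\geq 1$, is co-c.e.\ closed. Applying Theorem~\ref{thm:upper-bounds} yields $\FRR\leqW\C_Y$, and the $\sigma$-compact clause of Proposition~\ref{prop:choice-spaces} gives $\C_Y\leqSW\C_\IR$; composing produces $\FRR\leqW\C_\IR$.

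For the converse $\C_\IR\leqW\FRR$, use the decomposition $\C_\IR\equivSW\C_{2^\IN}\times\C_\IN$ from Theorem~\ref{thm:CR} and package both factors into a single Frostman instance. Given a binary tree $T\In 2^*$ with $[T]\neq\emptyset$ and a negative enumeration of a nonempty $A\In\IN$, construct a compact set $K=K(T,A)\In[0,1]^2$ as a disjoint union $\bigcup_{k\in\IN}K_k$ of geometrically scaled copies $K_k$ of a self-similar Cantor-type set derived from $T$, and prune $K_k$ to a lower-dimensional residue as soon as $k$ is enumerated out of $A$. The scaling factors and the exponent $s$ are tuned so that every surviving $K_k$ (for $k\in A$) has positive and uniformly controlled $\mathcal{H}^s$-measure, while the pruned residues have $\mathcal{H}^s$-measure zero. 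Then every Frostman measure for $(K,s)$ must concentrate on some $K_k$ with $k\in A$, from which one reads off the index $k$ and, by localizing mass inside $K_k$, an infinite path through $T$.

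The main obstacle is the geometric fine-tuning in the lower bound. The scaling has to be arranged so that pruning collapses $\mathcal{H}^s(K_k)$ to zero fast enough to be excluded by the ball condition near removed branches, while each surviving $K_k$ remains uniformly Frostman-compatible in $k$. Moreover, to recover the index $k$ computably rather than merely semi-decidably, one needs a computable positive lower bound on the total mass contributed by each surviving piece. Handling trees $T$ with thin $[T]$ (potentially of Hausdorff dimension $0$) requires inflating the branches of $T$ by a fixed ambient self-similar structure before placing copies, so that the reduction remains uniform in $T$. Once this geometric scaffolding is in place, the extraction of a path from the localized measure on $K_k$ reduces to a routine application of the ball condition.
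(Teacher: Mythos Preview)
The paper itself is a survey and does not prove this theorem; its ``proof'' in the appendix consists solely of a citation to Pauly and Fouch\'e~\cite[Corollary~57]{PF17}. So there is no paper argument to compare against, only the original source.

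Your upper-bound direction is the standard template and is essentially correct: the admissibility relation is co-c.e.\ closed, the mass bound makes the target space computably $\sigma$--compact, and Theorem~\ref{thm:upper-bounds} together with Proposition~\ref{prop:choice-spaces} finishes. The only point needing care is the nontriviality clause: $\mu(K)>0$ is not closed, so one has to stratify by $\mu(K)\geq 2^{-n}$ and absorb the extra $\IN$--search into the $\sigma$--compactness, which you implicitly do.

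Your lower-bound sketch is in the right spirit and you correctly flag the real obstacle: a raw encoding of $[T]$ can have Hausdorff dimension zero, so the inflation by a fixed self-similar block is essential, not optional. One point you underplay is the claim that ``extraction of a path \ldots\ reduces to a routine application of the ball condition.'' The ball condition gives only \emph{upper} bounds on $\mu$, whereas path extraction needs \emph{lower} bounds to drive a terminating search at each level. What makes this work is a pigeonhole argument: once you know a computable positive lower bound on the total mass on $K_k$ and an upper bound on the number of level-$n$ pieces, some piece must carry a computably large share, and you can detect this from below via the representation of $\mu$. This step is not hard, but it is where the construction either succeeds or fails, so it should be spelled out rather than labeled routine.
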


The Vitali covering theorem is a theorem that has even been studied in three different
logical versions. We consider $\Int:=(\IQ^2)^\IN$ as the set of sequences $\II=(I_n)_{n\in\IN}$
of rational intervals $I_n=(a,b)$. We say that $\II$ is a {\em Vitali cover} of a set $A\In\IR$ if
for every $x\in A$ and $\varepsilon>0$ there is some $n\in\IN$ with $x\in I_n$ and $\diam(I_n)<\varepsilon$.
We write $\JJ\prefix\II$ if $\JJ$ is a subsequence of $\II$ of pairwise disjoint intervals. We consider the following three formalizations
of the Vitali covering theorem:

\begin{enumerate}
\item $\VCT_0: \In\Int\mto\Int,\II\mapsto\{\JJ: \JJ\prefix\II\mbox{ with }\mu([0,1]\setminus\bigcup\JJ)=0\}$ and\linebreak
	  $\dom(\VCT_0)$ contains all $\II\in\Int$ that are Vitali covers of $[0,1]$.
\item $\VCT_1:\In\Int\mto[0,1],\II\mapsto[0,1]\setminus\bigcup\II$
	  and $\dom(\VCT_1)$ contains all $\II\in\Int$ that are Vitali covers of $\bigcup\II$ and without a $\JJ\prefix\II$ with $\mu([0,1]\setminus\bigcup\JJ)=0$.
\item $\VCT_2 :\In\Int\mto[0,1],\II\mapsto\left\{x\in[0,1]:(\exists\varepsilon>0)(\forall n)(x\not\in I_n\mbox{ or }\diam(I_n)\geq\varepsilon)\right\}$ and
        $\dom(\VCT_2)$ contains all $\II$ without a $\JJ\prefix\II$ with $\mu([0,1]\setminus\bigcup\JJ)=0$.
\end{enumerate}

It turns out that $\VCT_0$ is computable and $\VCT_1$ and $\VCT_2$ are equivalent
to different versions of positive choice. 

\begin{theorem}[Positive choice]
\label{thm:PC}
The following are all strongly Weihrauch equivalent to each other:
\begin{enumerate}
\item Positive choice on Cantor space $\PC_{2^\IN}$.
\item Weak weak K\H{o}nig's Lemma $\WWKL$.
\item The Vitali covering theorem $\VCT_1$.
\end{enumerate}
The following are strongly Weihrauch equivalent to each other:
\begin{enumerate}
\item Positive choice on Euclidean space $\PC_{\IR}$.
\item The Vitali covering theorem $\VCT_2$.
\end{enumerate}
\end{theorem}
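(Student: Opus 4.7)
The plan is to leverage Theorem~\ref{thm:PC2N}, which already provides $\WWKL \equivSW \PC_{2^\IN} \equivSW \PC_{[0,1]}$, so only $\VCT_1 \equivSW \PC_{[0,1]}$ and $\VCT_2 \equivSW \PC_{\IR}$ remain to be proved. All four resulting reductions will be strong because each consists of a single oracle call framed by computable pre- and post-processing.

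For $\VCT_1 \leqSW \PC_{[0,1]}$ I compute from $\II \in \dom(\VCT_1)$ the co-c.e.\ closed set $K := [0,1] \setminus \bigcup_n I_n$. Classical Vitali applied to $\bigcup \II$ yields a pairwise disjoint $\JJ \prefix \II$ with $\mu(\bigcup \II \setminus \bigcup \JJ) = 0$; the domain promise then forces $\mu(K) = 1 - \mu(\bigcup \II) = 1 - \mu(\bigcup \JJ) > 0$, and $\PC_{[0,1]}(K) \In K = \VCT_1(\II)$ completes the reduction. Conversely, for $\PC_{[0,1]} \leqSW \VCT_1$, given positive-measure closed $A \In [0,1]$, I enumerate all rational open intervals with closure in $[0,1] \setminus A$ as $\II$; then $\bigcup \II = [0,1] \setminus A$, so $\II$ is a Vitali cover of its union, every disjoint $\JJ \prefix \II$ satisfies $\mu([0,1] \setminus \bigcup \JJ) \geq \mu(A) > 0$, and $\VCT_1(\II) = A$.

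For $\VCT_2 \leqSW \PC_{\IR}$, I observe $\VCT_2(\II) = \bigcup_k A_k$ with $A_k := [0,1] \setminus \bigcup\{I_n : \diam(I_n) < 1/k\}$ uniformly co-c.e.\ closed and increasing in $k$. Since $\II$ Vitali-covers the set $E := [0,1] \setminus \bigcup_k A_k$ by the very definition of $A_k$, classical Vitali together with the domain promise force $\mu(\bigcup_k A_k) > 0$, and hence $\mu(A_k) > 0$ for all sufficiently large $k$. I then form the co-c.e.\ closed set $B := \bigsqcup_{k \in \IN} (A_k \times \{k\}) \In [0,1] \times \IN$ of positive product measure $\nu(B) = \sum_k 2^{-k-1} \mu(A_k) > 0$ (with geometric measure on $\IN$), embed it by the measure-preserving map $(x,k) \mapsto k + 2^{-k-1} x$ as a closed set $B' \In \bigcup_k [k, k + 2^{-k-1}]$ with $\mu(B') = \nu(B) > 0$, and apply $\PC_{\IR}(B')$; the returned $y$ uniquely determines $k$ (since the target intervals are pairwise disjoint), and then $x = 2^{k+1}(y - k) \in A_k \In \VCT_2(\II)$ is computable.

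The remaining direction $\PC_{\IR} \leqSW \VCT_2$ is the main technical obstacle. I invert the previous embedding so that the $\PC_{\IR}$-instance is presented as a positive-measure closed $B \In [0,1] \times \IN$ with slices $B_k := \{x : (x,k) \in B\}$, choose pairwise separated closed intervals $I_k := [1 - 2^{-k}, 1 - (3/4) \cdot 2^{-k}]$ accumulating at $1$ with affine bijections $\phi_k : [0,1] \to I_k$, and let $\II$ consist of: (a) all rational open intervals with closure in $I_k \setminus \phi_k(B_k)$, for each $k$; (b) all rational open intervals with closure in the gaps between consecutive $I_k$'s; (c) the zoom intervals $(1 - 1/m, 1 + 1/m)$ for $m \geq 1$, which Vitali-cover the accumulation point $1$ but contribute to any $x \in \phi_k(B_k)$ only intervals of diameter $\geq (3/2) \cdot 2^{-k}$; and (d) bridging intervals straddling each endpoint $a_k$ or $b_k$ of $I_k$, added progressively as the c.e.\ enumeration of the complement of $B_k$ exposes a safe buffer on the relevant side. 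A direct calculation then yields $\VCT_2(\II) = \bigcup_k \phi_k(B_k)$, so any returned $y$ lies in a unique $I_k$ and decodes to $(x,k) := (\phi_k^{-1}(y), k) \in B$; the $\VCT_2$ domain condition holds because a disjoint $\JJ \prefix \II$ contains at most one interval from (c), yielding $\mu(\bigcup \JJ \cap [0,1]) \leq 1 - (1/2) \nu(B) < 1$. The hard part will be keeping (d) uniform in the input, so that the overall $\II$ remains computable from $B$ and a boundary point of $I_k$ is bridged out of the output precisely when the corresponding endpoint $0$ or $1$ does not lie in $B_k$.
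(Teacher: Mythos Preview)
The paper does not give a self-contained argument here; its proof consists solely of citations to \cite{BP10}, \cite{BGH15a}, and \cite{BGHP17}. Your proposal therefore cannot be compared line by line with the paper, but it can be assessed on its own merits.

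Your treatment of $\VCT_1 \equivSW \PC_{[0,1]}$ and of $\VCT_2 \leqSW \PC_\IR$ is essentially correct. One small repair: in the reduction $\PC_{[0,1]} \leqSW \VCT_1$ you enumerate rational intervals with closure in $[0,1]\setminus A$, but this may fail to cover the endpoints $0$ or $1$ when they lie outside $A$, so that $\VCT_1(\II)$ could strictly contain $A$. Enumerating instead all rational intervals with closure in $\IR\setminus A$ fixes this at no cost.

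There is, however, a genuine gap in your argument for $\PC_\IR \leqSW \VCT_2$: your verification of the domain condition of $\VCT_2$ is incorrect. The zoom interval with $m=1$ is $(0,2)$, and the one-element disjoint subsequence $\JJ$ consisting of this single interval already gives $\mu([0,1]\setminus\bigcup\JJ)=\mu(\{0\})=0$, so your $\II$ is not in $\dom(\VCT_2)$ at all. The inequality $\mu(\bigcup\JJ\cap[0,1])\leq 1-\frac12\nu(B)$ you claim is false precisely because the single (c)-interval in $\JJ$ is allowed to meet $\bigcup_k\phi_k(B_k)$; your bound only accounts for the (a), (b), (d) intervals. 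Restricting to $m\geq M$ does not rescue the construction uniformly: the interval $(1-1/M,1+1/M)$ together with (a), (b), (d) intervals can still cover $[0,1]$ up to the measure of $\bigcup_{k:I_k\In[0,1-1/M]}\phi_k(B_k)$, and if all non-null slices $B_k$ occur only for $k$ so large that $I_k\In(1-1/M,1]$, this residual measure is zero. Since you cannot know in advance where the mass of $B$ sits, no fixed $M$ works. The underlying difficulty is that slots accumulating at a point of $[0,1]$ force you to Vitali-cover that point, and any interval doing so swallows all sufficiently late slots; some additional device guaranteeing that a fixed positive-measure portion of $[0,1]$ is met by no interval of $\II$ is needed, and your sketch does not supply one.
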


Convex choice is equivalent to the Browder-G\"ohde-Kirk fixed point theorem
that is formalized as 
$\BGK_K:\In\CC(K,K)\mto K,f\mapsto\{x\in K:f(x)=x\}$,
where $K\In H$ is compact and convex, $H$ is a computable Hilbert space
and $\dom(\BGK_K)$ consists of all non-expansive continuous maps $f:K\to K$.
More general versions of the theorem have been studied, but for simplicity
we state only this basic result.

\begin{theorem}[Convex choice]
\label{thm:XC}
Let $H$ be a computable Hilbert space and $K\In H$ convex and computably compact.
The following are Weihrauch equivalent to each other:
\begin{enumerate}
\item Convex choice $\XC_{K}$.
\item The Browder-G\"ohde-Kirk fixed point theorem $\BGK_K$ on $K$.
\end{enumerate}
\end{theorem}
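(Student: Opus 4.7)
The equivalence splits into two reductions which I handle separately. For $\BGK_K \leqW \XC_K$, given a non-expansive $f \in \CC(K,K)$, the function $g(x) := \|f(x)-x\|$ is computable in $\CC(K,\IR)$ uniformly in $f$, so Proposition~\ref{prop:closed} yields $\Fix(f) = g^{-1}\{0\}$ as an element of $\AA_-(K)$ computably in $f$. The classical Browder--G\"ohde--Kirk theorem supplies nonemptiness. Convexity of $\Fix(f)$ in the Hilbert setting comes from strict convexity of the norm: for $y,z \in \Fix(f)$ and $m = (y+z)/2$ the inequalities $\|f(m)-y\| \leq \|m-y\|$ and $\|f(m)-z\| \leq \|m-z\|$ must both be equalities (since their sum is at most $\|y-z\|$, but by the triangle inequality also at least $\|y-z\|$), and equality in the Hilbert triangle inequality forces $y-f(m)=f(m)-z$, i.e.\ $f(m)=m$. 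Iterating on dyadic rationals and using closedness gives full convexity, so $\Fix(f) \in \AA_-(K)$ is a nonempty convex input for $\XC_K$, producing a fixed point.

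For $\XC_K \leqW \BGK_K$ my natural candidate is to feed the metric projection $\pi_A:K\to K$ into $\BGK_K$: in a Hilbert space this is well-defined on any closed convex nonempty $A$, is $1$-Lipschitz (hence non-expansive), and has $\Fix(\pi_A)=A$, so $\BGK_K(\pi_A)$ returns a point of $A$ directly. The reduction therefore rests on the key lemma that $A \mapsto \pi_A$ is computable as a map $\AA_-(K) \to \CC(K,K)$ on the convex nonempty inputs. I would prove this lemma using the Hilbert parallelogram identity applied to $y \in A$ and $p := \pi_A(x)$, whose midpoint lies in $A$ by convexity: this yields $\|y-p\|^2 \leq 2(\|x-y\|^2 - d(x,A)^2)$, so any $y \in A$ with $\|x-y\|$ within $\delta$ of $d(x,A)$ is within $2\sqrt{\delta\cdot\diam(K)}$ of $p$. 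Combined with the lower semicomputability of $d(\cdot,A)$ from the co-c.e.\ description of $A$, and with the computable compactness of $K$ used to certify that compact balls are covered by finite sub-enumerations of $K \setminus A$, one extracts near-minimizers of $\|x-\cdot\|$ in $A$ from a computable dense sequence in $K$ and assembles them into a fast Cauchy name of $\pi_A(x)$ uniformly in $x$ and $A$.

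The main obstacle will be producing the effective modulus for this Cauchy name: pointwise convergence of the near-minimizers to $\pi_A(x)$ is immediate from the parallelogram estimate, but the fast-Cauchy representation requires an effective bound on how many stages of the enumeration of $K\setminus A$ suffice at each prescribed precision, uniformly in $x$. Convexity of $A$ is essential both for defining $\pi_A$ uniquely and for making the parallelogram estimate available, while the computable compactness of $K$ is essential for converting the outer co-c.e.\ description of $A$ into effective control of $d(\cdot,A)$ and hence of the selection of candidates near $A$.
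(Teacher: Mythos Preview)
Your argument for $\BGK_K\leqW\XC_K$ is fine: the fixed point set of a non-expansive self-map on a convex subset of a Hilbert space is closed and convex, and your computation of it as $g^{-1}\{0\}$ via Proposition~\ref{prop:closed} is correct.

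The reverse direction, however, has a fatal gap. Your ``key lemma'' that $A\mapsto\pi_A$ is computable as a map $\AA_-(K)\to\CC(K,K)$ on convex nonempty inputs is simply false, and this is not a matter of missing effectivity bounds but a structural obstruction. If this lemma held, then fixing any computable point $x_0\in K$ and evaluating would give a computable map $A\mapsto\pi_A(x_0)\in A$, i.e., a computable realizer of $\XC_K$. But already for $K=[0,1]$ we have $\XC_{[0,1]}\equivSW\ConC_{[0,1]}\equivSW\IVT$ (Theorem~\ref{thm:IVT}), which is not computable. Concretely: for $A=[a,b]\In[0,1]$ the $\psi_-$-name only enumerates the complement, so $d(0,A)=a$ is merely lower semicomputable and $\pi_A(0)=a$ cannot be computed. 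Your proposed extraction of ``near-minimizers of $\|x-\cdot\|$ in $A$ from a computable dense sequence in $K$'' fails precisely here: negative information lets you \emph{reject} points of the dense sequence that fall into enumerated balls of $K\setminus A$, but never lets you \emph{confirm} that a not-yet-rejected point actually lies in $A$, so no such point can be safely output.

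The paper does not give a self-contained proof but defers to Neumann~\cite{Neu15}; the construction there necessarily produces from the $\psi_-$-name of $A$ a non-expansive map whose fixed point set is (contained in) $A$ \emph{without} computing the metric projection. You will need to find such an alternative construction---for instance, one that builds the map incrementally from the enumerated balls of the complement rather than from global knowledge of $A$.
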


Another important equivalence class is that of the limit map.

\begin{theorem}[The limit]
\label{thm:lim-thm}
The following are Weihrauch equivalent to each other:
\begin{enumerate}
\item The limit map $\lim$ on Baire space (or every other rich computable metric space).
\item The monotone convergence theorem $\MCT:\In\IR^\IN\to\IR,(x_n)_{n\in\IN}\mapsto\sup_{n\in\IN}x_n$.
\item The operator of differentiation $d:\In\CC[0,1]\to\CC[0,1],f\mapsto f'$.
\item The Fr{\'e}chet-Riesz representation theorem for $\ell_2$.
\item The Radon-Nikodym theorem.
\item The parallelization $\widehat{\BIM}$ of Banach's inverse mapping theorem.
\item Finding a basis of a countable vector space.
\item Finding a connected component of a countable graph. 
\item The partial identity from infinitely differentiable functions  to Schwartz functions.
\end{enumerate}
\end{theorem}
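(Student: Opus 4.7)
The natural strategy is a hub-and-spoke proof with $\lim$ at the center: for each problem $f_i$ in items 2--9, show both $f_i\leqW\lim$ and $\lim\leqW f_i$. The upper bounds $f_i\leqW\lim$ are uniformly handled via Proposition~\ref{prop:limit-computable}: one exhibits a limit-computable realizer. For the lower bounds, the most useful reformulations of $\lim$ from Theorem~\ref{thm:lim} are $\lim\equivSW\sup\equivSW\widehat{\LPO}\equivSW\widehat{\C_\IN}$, so it suffices to simulate one of these (typically $\sup$ or $\widehat{\C_\IN}$) inside each $f_i$.

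I would organize the proof in four groups. \emph{Group A: sequence-based items} (2, 6, 7, 8). For $\MCT$, the equivalence to $\sup$ is immediate: $\sup$ restricted to bounded increasing sequences is $\MCT$, and from a general bounded sequence one takes running maxima; unbounded behavior is handled by tupling with $\C_\IN$ via Theorem~\ref{thm:lim}. For $\widehat{\BIM}$, parallelize $\BIM_{\ell_2,\ell_2}\equivW\C_\IN$ from Theorem~\ref{thm:CN} and apply monotonicity of $\;\widehat{\ }\;$ from Proposition~\ref{prop:mon-closure}, obtaining $\widehat{\BIM}\equivW\widehat{\C_\IN}\equivW\lim$. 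For the basis of a countable vector space and the connected component of a countable graph, both problems reduce to countably many independent instances of $\C_\IN$ (choose the next basis vector, decide membership in the component), so they sit inside $\widehat{\C_\IN}\equivW\lim$; conversely, each encodes $\widehat{\LPO}$ by coding sequences of $\LPO$-instances into suitable linear independence or adjacency data.

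\emph{Group B: differentiation and Schwartz functions} (3, 9). For $d\leqW\lim$, write $f'(x)=\lim_n n(f(x+1/n)-f(x))$ and use the fact that limit computation is closed under composition. For $\lim\leqW d$, construct from a given sequence $(x_n)$ a piecewise smooth $f\in\CC[0,1]$ whose derivative on a fixed sequence of points reproduces $(x_n)$ (or its limit); the standard bump-function construction works. The Schwartz identity is handled similarly: a $C^\infty$-function is Schwartz iff all weighted seminorms are finite, and computing these bounds is exactly a parallel $\sup$; conversely, encoding a given increasing sequence into a sequence of Schwartz seminorm values gives a reduction from $\sup\equivW\lim$.

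\emph{Group C: Banach-space theorems} (4, 5). For Fréchet-Riesz on $\ell_2$, given a bounded linear functional $\varphi$ one obtains the representing vector coordinate-wise as $\langle v,e_n\rangle=\varphi(e_n)$, which is computable, while the norm/convergence is a parallel limit; hence Fréchet-Riesz $\leqW\widehat{\lim_\IN}\equivW\lim$. For the converse one encodes an arbitrary $\ell_2$-sequence as the representing vector of a functional. Radon-Nikodym is the most delicate item: the upper bound uses that the density $d\nu/d\mu$ can be obtained as an $L^2$-limit of conditional expectations, which is limit computable; the lower bound encodes an arbitrary limit by constructing measures $\mu,\nu$ such that the density at a distinguished point equals the limit value. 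The main obstacle will be fixing a suitable computable measure-theoretic framework (probably via computable Borel probability measures on $[0,1]$) and verifying the encoding really lands in the domain of the problem. Once this is done, all nine items collapse to the Weihrauch degree of $\lim$ by transitivity.
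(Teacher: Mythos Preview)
Your hub-and-spoke strategy around $\lim$ is exactly right, and it matches what the paper does in spirit; since this is a survey, the paper's own proof of Theorem~\ref{thm:lim-thm} consists entirely of pointers to the original literature for each equivalence. At the architectural level your plan and the paper's coincide, and most of your item-by-item sketches are on the correct track (in particular your treatment of $\MCT$ via running maxima and $\sup$, and of $\widehat{\BIM}$ via Theorem~\ref{thm:CN} plus monotonicity of parallelization, are precisely the intended arguments).

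One place where the paper points to a cleaner route than yours: for $\lim\leqW d$ the text immediately after Theorem~\ref{thm:lim-thm} remarks that this follows directly from Theorem~\ref{thm:linear}, since differentiation is a closed unbounded linear operator on $\CC[0,1]$ with a computable sequence (polynomials) dense in its domain and with computable images. Your bump-function construction can be made to work, but one must take care to land in $\dom(d)$, i.e., to produce genuine $C^1$ functions, whereas invoking Theorem~\ref{thm:linear} sidesteps all such engineering.

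A small wobble in the Fr\'echet--Riesz paragraph: your upper-bound emphasis is slightly off. Computing the coordinates $\varphi(e_n)$ from $\varphi\in\CC(\ell_2,\IR)$ is indeed computable, but to output the representer \emph{as a point of $\ell_2$} one needs a Cauchy rate, hence essentially $\|\varphi\|$, and that is exactly where $\sup\equivW\lim$ enters. For the lower bound your description (``encode an $\ell_2$-sequence as the representing vector'') is backwards: if you already hold the vector in $\ell_2$ you can compute the functional, so nothing is gained. The correct encoding starts from a $\lim$- or $\sup$-instance and builds a \emph{functional} (given only via evaluation, hence with strictly less information than its Riesz vector) whose representer decodes the answer.
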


Of course, the Banach inverse mapping theorem can be replaced by any other problem
from Theorem~\ref{thm:CN}. 
We mention that the reduction $\lim\leqW d$ follows easily with Theorem~\ref{thm:linear}.
Several theorems also fall into the equivalence class
of the jump of choice on Cantor space. Here $\KL$ is defined as $\WKL$ but for finitely
branching trees $T\In\IN^*$.

\begin{theorem}[Jump of choice on Cantor space]
\label{thm:JC2N}
The following are all strongly Weihrauch equivalent to each other:
\begin{enumerate}
\item The jump $\C_{2^\IN}'$ of choice on Cantor space.
\item K\H{o}nig's lemma $\KL$.
\item The Bolzano-Weierstra\ss{} theorem $\BWT_\IR$ on Euclidean space.
\item The Arzel{\'a}-Ascoli theorem for functions $f:[0,1]\to[0,1]$.
\item Determinacy of Gale-Stewart games in $2^\IN$ with winning sets that are differences of open sets.
\end{enumerate}
\end{theorem}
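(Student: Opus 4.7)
The plan pivots on a master equivalence: for every rich computable metric space $X$, one has $\BWT_X\equivSW\C_{2^\IN}'$. This follows by combining $\K_X\equivSW\C_{2^\IN}$ (Theorem~\ref{thm:KX}) with strong monotonicity of the jump (Proposition~\ref{prop:jump-monotone}) to obtain $\K_X'\equivSW\C_{2^\IN}'$, and then invoking $\K_X'\equivSW\BWT_X$ (Theorem~\ref{thm:jumps-choice}). Instantiating at $X=\IR$ delivers (1)$\Leftrightarrow$(3) immediately. For the Arzel{\'a}--Ascoli theorem in (4), I would formalize it as $\BWT_K$, where $K\In C([0,1],[0,1])$ is the compact set of functions sharing a prescribed modulus of continuity (compact by classical Arzel{\'a}--Ascoli); equipped with the supremum metric and a canonical countable dense set of piecewise-linear rational approximants, $K$ is a rich computably compact computable metric space, so the master equivalence yields (1)$\Leftrightarrow$(4).

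For K{\"o}nig's lemma (2), I would prove the two directions separately. The reduction $\KL\leqSW\C_{2^\IN}'$ uses Proposition~\ref{prop:jump-cylinder} to rewrite $\C_{2^\IN}'\equivSW\C_{2^\IN}\stars\lim$: one application of $\lim$ reveals, for each node of a finitely branching tree $T\In\IN^{<\omega}$, the finite list of its children (pointwise a $\C_\IN$-task), producing an honest compact-set name of $[T]\in\KK_-(\IN^\IN)$; a single call to $\K_{\IN^\IN}\equivSW\C_{2^\IN}$ (Theorem~\ref{thm:KX}) then returns an infinite path. The converse $\C_{2^\IN}'\leqSW\KL$ is most cleanly obtained through $\C_{2^\IN}'\equivSW\CL_{2^\IN}$ (Theorem~\ref{thm:jumps-choice}): given $(p_n)_{n\in\IN}$ in $2^\IN$, I would construct a computable finitely branching tree $S\In\IN^{<\omega}$ whose node at height $k$ encodes a binary string $c_1\ldots c_k$ together with strictly increasing witnesses $a_1<\cdots<a_k$, each $a_i$ being the least index $>a_{i-1}$ at which $c_1\ldots c_i$ is a prefix of $p_{a_i}$. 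For each choice of next bit the next witness is uniquely determined (if it exists), so $S$ has at most two children per node, is computable and finitely branching, and the compactness of $2^\IN$ guarantees it is infinite; any $\KL$-path in $S$ projects to a cluster point of $(p_n)$.

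For the Gale--Stewart fragment (5), write the winning set as $W=U\setminus V$ with $U,V\In 2^\IN$ open. For $W$-determinacy $\leqSW\C_{2^\IN}'$ I would use one application of $\lim$ to evaluate, at every position $\sigma\in 2^{<\omega}$, the $\SO{2}$-predicate asserting that the winning player (whose identity is itself determined by the same $\lim$-step) still has a surviving strategy from $\sigma$; pruning the full game tree by this evaluation produces a compact subtree of $2^{<\omega}$ whose branches are exactly the winning strategies, from which $\C_{2^\IN}$ selects one. Conversely, $\C_{2^\IN}'\leqSW W$-determinacy is obtained by encoding a limit-approximated binary tree $T$ as a game in which player~I plays bits of a candidate path, the winning condition ``the candidate lies in the limit tree'' being expressible as a difference of two open sets, so that any winning strategy for player~I yields a path through $T$.

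The hard part will be the Gale--Stewart case (5): recasting the classical determinacy proof for differences of open sets into the single-pass pattern $\C_{2^\IN}\stars\lim$ requires checking that the $\SO{2}$-pruning, including the identification of the winning player, can be performed by one global $\lim$-step and assembled into a genuine \emph{compact} subtree on which $\K_{\IN^\IN}\equivSW\C_{2^\IN}$ can act, rather than merely a co-c.e.\ closed object. The remaining items either reduce immediately to the master equivalence or, in the case of $\KL$, to the combinatorial witnessing scheme above, which is routine once decidable membership in $S$ is confirmed.
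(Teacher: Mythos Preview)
The paper itself offers no argument here: its ``proof'' consists entirely of citations (\cite{BR17} for $\KL$, \cite{BGM12,BCG+17} for $\BWT_\IR$, \cite{Kre14} for Arzel{\'a}--Ascoli, \cite{LRP15} for Gale--Stewart determinacy). Your arguments for (2), (3) and (4) are sound and track the published proofs. The master equivalence via Theorems~\ref{thm:KX} and~\ref{thm:jumps-choice} is exactly how \cite{BGM12} handles $\BWT_X$, and your witness-tree for $\CL_{2^\IN}\leqSW\KL$ is correct: membership in $S$ is decidable because verifying that $a_i$ is the \emph{least} admissible witness above $a_{i-1}$ requires inspecting only the finitely many indices strictly between them.

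There is, however, a genuine gap in your treatment of (5), in the direction $\C_{2^\IN}'\leqSW\text{determinacy}$. You propose a game in which player~I alone produces a candidate path and wins iff it lies in the limit tree. Now the \emph{set} $[\lim_n T_n]$ is closed in $2^\IN$ and hence abstractly of the form $U\setminus V$ (take $U=2^\IN$), but the determinacy problem demands an \emph{explicit} pair of open sets as input. From the $\C_{2^\IN}'$-input---a sequence that merely converges to a $\psi_-$-name---you cannot compute such a pair without first applying $\lim$, which begs the question; relative to the raw input the condition ``$p\in[\lim_n T_n]$'' is genuinely $\Pi^0_2$, and a one-player encoding cannot collapse it into an explicitly presented difference of open sets. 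The published argument in \cite{LRP15} uses both players nontrivially, with player~II's moves carrying auxiliary information that brings the winning condition down to the required form; your sketch does not supply this mechanism. Your forward direction for (5) is also underspecified: it is not clear that ``the winner retains a surviving strategy from $\sigma$'' is uniformly $\Sigma^0_2$ for difference-of-open winning sets (it is for purely open or purely closed games, but the mixed case requires a separate rank analysis), so that step must be made explicit before the $\C_{2^\IN}\stars\lim$ template can be invoked.
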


A natural problem that is known to be equivalent to higher jumps of choice on Cantor space
is the parallelization of Ramsey's theorem. We summarize some results on this theorem.
$\RT^k_n:k^{[\IN]^n}\mto2^\IN$ denotes the problem that maps every coloring $c:[\IN]^n\to k$ (of
the $n$--element subsets of $\IN$ with $k$ colors) to an infinite set $H\In\IN$ that is homogeneous for $c$.

\begin{theorem}[Ramsey's theorem]
\label{thm:Ramsey}
$\C_2^{(n)}\lW\RT_k^n\lW\RT_{k+1}^n\lW\C_{2^\IN}^{(n)}$ for all $n,k\geq2$.
The reductions also hold in the case $n=1$, but the first one is not strict in this case.
\end{theorem}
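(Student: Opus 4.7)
The plan is to prove the four-link chain by splitting it into a trivial padding, two bounds to the jumps of choice, and three strictness arguments, the last being the main obstacle. The easy step is the padding reduction $\RT^n_k\leqSW\RT^n_{k+1}$: given a $k$-colouring $c:[\IN]^n\to k$, view it as a $(k+1)$-colouring $c'$ that never uses the colour $k$, so that any infinite $c'$-homogeneous set is also $c$-homogeneous, and the identity maps realise a strong reduction.

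For the lower bound $\C_2^{(n)}\leqW\RT^n_k$ I would first handle the base case $n=1$: a $2$-colouring of $\IN$ is exactly an element of $2^\IN$, from an infinite homogeneous set one reads off its constant colour, and from a cluster point in $\{0,1\}$ one enumerates the corresponding colour class, giving $\RT^1_2\equivSW\BWT_2\equivSW\K_2'\equivSW\C_2^{(1)}$ by Theorem~\ref{thm:jumps-choice}, and padding passes to $\RT^1_k$. For $n\geq 2$ I would iterate a uniform version of Jockusch's theorem: starting from an iterated-limit presentation of an instance of $\C_2^{(n)}$, I build inductively on $n$ a computable $2$-colouring of $[\IN]^n$ whose infinite homogeneous sets uniformly compute the hidden $\{0,1\}$-answer, each step converting a $\C_2^{(n-1)}$-encoding into an $\RT^{n-1}_2$-encoding by adding an outer variable that ranges over the ``limit stage''. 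For the upper bound $\RT^n_{k+1}\leqW\C_{2^\IN}^{(n)}$ I would imitate the Erd\H{o}s--Rado proof. The base case $n=1$ follows because the predicate ``$H$ is infinite and $c$-homogeneous'' is $\mathrm\Pi^0_2$ in $(c,H)$, and Theorem~\ref{thm:upper-bounds} applied to the once-jumped representation of $2^\IN$ yields $\RT^1_{k+1}\leqW\C_{2^\IN}'$. For the inductive step, a single $\C_{2^\IN}'$-call stabilises the colouring of $n$-tuples as the first coordinate varies, reducing to an $\RT^{n-1}_{k+1}$-instance; iterating produces $\RT^n_{k+1}\leqW(\C_{2^\IN}')^{[n]}$, and Theorem~\ref{thm:composition-jump} identifies this compositional power with $\C_{2^\IN}^{(n)}$.

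The hard part is the three strict separations for $n,k\geq 2$. The separation $\RT^n_k\nleqW\C_2^{(n)}$ can be obtained by exploiting the $\{0,1\}$-output nature of $\C_2^{(n)}$ against the combinatorial richness of an infinite homogeneous set, combined with cardinality-style invariants in the spirit of Proposition~\ref{prop:cardinality} after suitable cylindrification. The separation $\C_{2^\IN}^{(n)}\nleqW\RT^n_{k+1}$ is derived by using the parallelisability of $\C_{2^\IN}^{(n)}$ (Proposition~\ref{prop:jump-algebraic}) and the fact that a single $\RT^n_{k+1}$-call cannot simulate arbitrarily many independent $\C_{2^\IN}^{(n)}$-calls. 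The genuine combinatorial core of the theorem, and the main obstacle in the proof, is the ``one more colour is strictly stronger'' statement $\RT^n_{k+1}\nleqW\RT^n_k$: one must build a computable $(k+1)$-colouring $c^*$ such that for every pair of computable reductions $(H,K)$ and every solution $S$ of the $k$-colouring $K(c^*)$, the set produced by applying $H$ to $\langle c^*,S\rangle$ fails to be an infinite $c^*$-homogeneous set. Establishing this uniform colour-hierarchy diagonalisation is where the bulk of the technical effort lies; the exceptional behaviour in the case $n=1$ is then checked separately by verifying directly that $\RT^1_2\equivW\C_2^{(1)}$, which collapses only the first strictness, while the remaining separations survive at $n=1$.
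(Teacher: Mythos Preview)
Your outline of the three reductions is sound, and you correctly locate the combinatorial core in the colour hierarchy $\RT^n_{k+1}\nleqW\RT^n_k$. Two of your strictness arguments, however, do not work as stated.

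For $\RT^n_k\nleqW\C_2^{(n)}$: Proposition~\ref{prop:cardinality} only constrains \emph{strong} Weihrauch reductions, and ``suitable cylindrification'' destroys precisely the cardinality bound you would need, so this line cannot close. The intended argument is simpler and purely non-uniform: every instance of $\C_2^{(n)}$ has a computable solution (namely $0$ or $1$), whereas by Jockusch's theorem there is, for each $n\geq2$, a computable $2$-colouring of $[\IN]^n$ with no computable infinite homogeneous set. Since non-uniform computability is preserved downward under $\leqW$ (Corollary~\ref{cor:invariance}), the separation follows immediately. This also explains cleanly why the case $n=1$ is exceptional, since $\RT^1_2$ \emph{is} non-uniformly computable.

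For $\C_{2^\IN}^{(n)}\nleqW\RT^n_{k+1}$: your argument is circular. By the very parallelisability you invoke, ``arbitrarily many independent $\C_{2^\IN}^{(n)}$-calls'' collapse to a single one, so the claim that $\RT^n_{k+1}$ cannot simulate them is exactly what you are trying to prove. In fact this separation is not an independent task at all: once the upper bound $\RT^n_{k+2}\leqW\C_{2^\IN}^{(n)}$ and the colour hierarchy $\RT^n_{k+2}\nleqW\RT^n_{k+1}$ are in hand, transitivity gives $\C_{2^\IN}^{(n)}\nleqW\RT^n_{k+1}$ for free. The paper's pointer to the squashing theorem (Theorem~\ref{thm:squashing}) refers to a tighter packaging of the same dependency: $\RT^n_k$ is finitely tolerant with domain homeomorphic to $2^\IN$, so by Corollary~\ref{cor:squashing} idempotency and parallelisability coincide for it; combined with the product law $\RT^n_k\times\RT^n_k\equivW\RT^n_{k^2}$, this ties the colour hierarchy, non-idempotency of $\RT^n_k$, and the strict upper bound into a single argument rather than three separate ones.
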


This result can be proved with the help of the squashing theorem (Theorem~\ref{thm:squashing}).
Since $\C_{2^\IN}^{(n)}$ is the parallelization of $\C_2^{(n)}$ we obtain the following corollary.

\begin{corollary}[Ramsey's theorem]
\label{cor:Ramsey}
$\C_{2^\IN}^{(n)}\equivW\widehat{\RT_k^n}$ for all $n\geq1$ and $k\geq2$.
\end{corollary}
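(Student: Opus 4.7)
The plan is to apply the parallelization operator to the chain of reductions in Theorem~\ref{thm:Ramsey} and use the facts that parallelization is a closure operator (hence monotone, by Proposition~\ref{prop:mon-closure}), that jumps commute with parallelization (Proposition~\ref{prop:jump-algebraic}), and that $\C_{2^\IN}$ is parallelizable (an immediate consequence of $\C_{2^\IN}\equivSW\widehat{\C_2}$ from Theorem~\ref{thm:choice-cantor}, combined with idempotence of $\widehat{\ }$).

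Concretely, from Theorem~\ref{thm:Ramsey} we have $\C_2^{(n)}\leqW\RT_k^n\leqW\C_{2^\IN}^{(n)}$ for every $n\geq 1$ and $k\geq 2$ (with the understanding that for $n=1$ the first reduction is not strict). Applying $\widehat{\ }$, monotonicity yields
\[
  \widehat{\C_2^{(n)}}\leqW\widehat{\RT_k^n}\leqW\widehat{\C_{2^\IN}^{(n)}}.
\]
For the left end, Proposition~\ref{prop:jump-algebraic} gives $\widehat{\C_2^{(n)}}\equivSW\widehat{\C_2}^{\,(n)}$, and Theorem~\ref{thm:choice-cantor} identifies $\widehat{\C_2}\equivSW\C_{2^\IN}$, so $\widehat{\C_2^{(n)}}\equivSW\C_{2^\IN}^{(n)}$. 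For the right end, the same commutation gives $\widehat{\C_{2^\IN}^{(n)}}\equivSW\widehat{\C_{2^\IN}}^{\,(n)}$, and since $\widehat{\C_{2^\IN}}\equivSW\widehat{\widehat{\C_2}}\equivW\widehat{\C_2}\equivSW\C_{2^\IN}$ (using that $\widehat{\ }$ is a closure operator, Proposition~\ref{prop:mon-closure}), we conclude $\widehat{\C_{2^\IN}^{(n)}}\equivW\C_{2^\IN}^{(n)}$. Chaining these equivalences gives
\[
  \C_{2^\IN}^{(n)}\leqW\widehat{\RT_k^n}\leqW\C_{2^\IN}^{(n)},
\]
which is the desired equivalence.

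There is no real obstacle here; the only point to be mindful of is that the jump commutation lemma from Proposition~\ref{prop:jump-algebraic} is stated in the strong form $\widehat{f'}\equivSW\widehat{f}'$, so its $n$-fold iteration is available with $\equivSW$, which is more than enough to obtain the claimed $\equivW$. The argument is uniform in $n$ and $k$, and it remains valid in the boundary case $n=1$ because Theorem~\ref{thm:Ramsey} explicitly extends the chain of reductions there.
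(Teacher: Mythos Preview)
Your argument is correct and is exactly the approach the paper intends: the text preceding the corollary says ``Since $\C_{2^\IN}^{(n)}$ is the parallelization of $\C_2^{(n)}$ we obtain the following corollary,'' and you have spelled out precisely this. One small point of care: in the chain $\widehat{\C_{2^\IN}}\equivSW\widehat{\widehat{\C_2}}\equivW\widehat{\C_2}\equivSW\C_{2^\IN}$ you drop to $\equivW$ in the middle, and then apply the $n$-fold jump; but jumps are only monotone for $\leqSW$ (Proposition~\ref{prop:jump-monotone}), so as written that step is not justified. The fix is immediate: Proposition~\ref{prop:mon-closure} gives that $\widehat{\ }$ is a closure operator for $\leqSW$ as well, so $\widehat{\widehat{\C_2}}\equivSW\widehat{\C_2}$, the whole chain holds with $\equivSW$, and the jump then applies cleanly.
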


Higher levels of the Weihrauch lattices are not yet all too well explored. 
This is currently a topic of further research, and we mention one result along these lines.

\begin{theorem}[Choice on Baire space]
\label{thm:CNN-thm}
The following are Weihrauch equivalent to each other:
\begin{enumerate}
\item Choice on Baire space $\C_{\IN^\IN}$.
\item The perfect subtree theorem.
\end{enumerate}
\end{theorem}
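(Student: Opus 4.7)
The plan is to prove the two Weihrauch reductions $\text{PST} \leqW \C_{\IN^\IN}$ and $\C_{\IN^\IN} \leqW \text{PST}$. First I fix a formalization. The input to $\text{PST}$ is a tree $T\In\IN^*$ with $[T]$ uncountable, and the output is a perfect subtree of $T$ presented as a labeling $f:2^*\to T$ for which, for every $\sigma\in 2^*$, the strings $f(\sigma 0)$ and $f(\sigma 1)$ are incomparable proper extensions of $f(\sigma)$ in $T$. Both $T$ and $f$ are encoded as elements of $\IN^\IN$ in the natural way.

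For the upper bound $\text{PST} \leqW \C_{\IN^\IN}$ I would invoke Theorem~\ref{thm:upper-bounds}. The set of pairs $(T,f)$ for which $f$ is a valid labeling of a perfect subtree of $T$ is a co-c.e.\ closed subset of $\IN^\IN\times\IN^\IN$, since it is the countable conjunction over $\sigma\in 2^*$ of the clopen conditions ``$f(\sigma)\in T$'', ``$f(\sigma i)\in T$ properly extends $f(\sigma)$'' for $i=0,1$, and ``$f(\sigma 0)$ and $f(\sigma 1)$ are incomparable''. The classical perfect subtree theorem supplies such an $f$ whenever $[T]$ is uncountable, so Theorem~\ref{thm:upper-bounds} delivers $\text{PST}\leqSW\C_{\IN^\IN}$ with $Y=\IN^\IN$.

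For the lower bound $\C_{\IN^\IN}\leqW\text{PST}$ I take a non-empty closed $A\In\IN^\IN$ given by a tree $T_A$ with $[T_A]=A$, and form the tree $T\In\IN^*$ whose paths interleave a point of $A$ on even coordinates with an arbitrary element of $2^\IN$ on odd coordinates; this $T$ is uniformly computable from $T_A$, and $[T]$ is uncountable because $A\neq\emptyset$. Feeding $T$ to $\text{PST}$ returns a labeling $f$; since $f(0^0)\sqsubset f(0^1)\sqsubset\cdots$ is a chain in $T$ with strictly increasing lengths, its union $q:=\lim_n f(0^n)\in[T]$ is uniformly computable from $f$. The subsequence $(q(0),q(2),q(4),\ldots)$ then lies in $A$, which completes the reduction.

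The main obstacle is settling on the output representation for $\text{PST}$ that aligns with the complexity of $\C_{\IN^\IN}$. Encoding a perfect subtree by its characteristic function $\chi_S\in 2^\IN$ would make the perfect-subtree property $\Pi^0_2$ in $\chi_S$, since for each node one must assert the \emph{existence} of two incomparable extensions still lying in $S$; Theorem~\ref{thm:upper-bounds} would then not apply with $Y=2^\IN$. The labeling representation absorbs these existence quantifiers into the data itself, reducing the output-side verification to a $\Pi^0_1$ predicate on an $\IN^\IN$-valued output and so aligning precisely with $\C_{\IN^\IN}$. Once the representation is fixed, both reductions amount to routine bookkeeping.
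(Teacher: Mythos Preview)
The paper does not actually supply a proof here; the appendix says only that the result ``is based on personal communication (Marcone) and unpublished so far,'' so there is nothing in the paper to compare your argument against.

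Your proof is correct in both directions. With the labeling output the validity predicate is a countable conjunction of decidable conditions in $(T,f)$, hence co-c.e.\ closed, and the classical Cantor--Bendixson analysis furnishes the existential witness on $\dom(\text{PST})$; Theorem~\ref{thm:upper-bounds} with $Y=\IN^\IN$ then gives the upper bound. For the lower bound, interleaving $[T_A]$ with $2^\IN$ on the odd coordinates uniformly produces a tree with uncountable body from any non-empty $A$, and the even part of the leftmost branch $\bigcup_n f(0^n)$ through the returned labeling lies in $A$. Your closing remark about the output representation is exactly the crux: encoding the perfect subtree by an embedding $f:2^*\to T$ rather than by a characteristic function absorbs the splitting-existence quantifiers into the data and is what places the problem precisely at $\C_{\IN^\IN}$; this is the formalization adopted in the later published treatment of this theorem.
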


At the end of this section we demonstrate how some problems from computability theory can be classified 
in the Weihrauch lattice. We consider in particular the following (for $X\In\IN$ with at least two elements
and a standard numbering $\varphi^p$ of the computable functions on natural numbers relative to $p$):

\begin{enumerate}
\item $\DNC_X:\IN^\IN\mto\IN^\IN,p\mapsto\{q\in X^\IN:(\forall n)\;\varphi^p_n(n)\not=q(n)\}$.
\item $1\dash\GEN:2^\IN\mto2^\IN,p\mapsto\{q:q$ is $1$--generic relative to $p\}$.
\item $\MLR:2^\IN\mto2^\IN,p\mapsto\{q:q$ is Martin-L\"of random relative to $p\}$.
\item $\PA:2^\IN\mto2^\IN,p\mapsto\{q:q$ is of PA degree relative to $p\}$.
\item $\COH:(2^\IN)^\IN\mto2^\IN,(R_i)_{i\in\IN}\mapsto\{A:A$ is cohesive for $(R_i)_{i\in\IN}\}$.
\end{enumerate}

The first observation is that $\DNC_n$ is just the parallelization of $\ACC_n$.

\begin{theorem}[Diagonal non-computability]
\label{thm:DNC}
$\DNC_n\equivSW\widehat{\ACC_n}$ for all $n\geq2$ and $n=\IN$. 
\end{theorem}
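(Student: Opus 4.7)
The plan is to establish both strong reductions by direct applications of the $s$-$m$-$n$ theorem, treating finite $n$ and $n=\IN$ uniformly since the relevant inputs for $\ACC_n$ always have the form ``all of $X$'' or ``$X$ minus one element'' where $X=\{0,\ldots,n-1\}$ or $X=\IN$.

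For $\DNC_n\leqSW\widehat{\ACC_n}$, given a realizer input $p$ for $\DNC_n$ I would uniformly compute a name for the sequence $(A_i)_{i\in\IN}$ of $\ACC_n$ instances defined by $A_i:=X\setminus\{\varphi^p_i(i)\}$ if $\varphi^p_i(i)$ halts and $A_i:=X$ otherwise. Each $A_i$ is a valid $\ACC_n$ instance, and an enumeration name for the sequence is obtained by dovetailing the simulations of $\varphi^p_i(i)$, enumerating $\varphi^p_i(i)$ as the excluded element of $A_i$ if and when it converges. Any realizer for $\widehat{\ACC_n}$ then produces $q=(q(i))_{i\in\IN}$ with $q(i)\in A_i$, so $q(i)\neq\varphi^p_i(i)$ for every $i$, giving a $\DNC_n$ solution with the identity as output map.

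For $\widehat{\ACC_n}\leqSW\DNC_n$, given a name $r$ for a sequence $(A_i)_{i\in\IN}$ of $\ACC_n$ instances, I would apply the $s$-$m$-$n$ theorem to the uniform procedure that, on inputs $(i,e)$ with oracle access to $r$, scans $r$ for the element enumerated as excluded from $A_i$ (if any) and outputs it. This produces a computable sequence $(e_i)_{i\in\IN}$ of indices, \emph{independent of $r$}, together with an input $p=p(r)$ computable from $r$, such that $\varphi^p_{e_i}(e_i)$ equals the excluded element of $A_i$ when $A_i\neq X$ and diverges when $A_i=X$. Every $\DNC_n$ solution $q$ for $p$ then satisfies $q(e_i)\neq\varphi^p_{e_i}(e_i)$, hence $q(e_i)\in A_i$, and the output map returns $(q(e_i))_{i\in\IN}$, which is a realizer output for $\widehat{\ACC_n}$.

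The main point to verify is that the indices $e_i$ in the second reduction can be chosen uniformly in $i$ but independently of the input $r$; this is exactly what $s$-$m$-$n$ delivers, since the extraction procedure is uniform in $i$ and in the oracle. Both maps are explicit computable functions on Baire space, with no side-use of the original input, so both reductions are strong. The argument does not depend on the cardinality of $X$: for $n=\IN$ the same construction works since $\ACC_\IN$ and $\DNC_\IN$ have the same ``at most one excluded value'' structure, with the only difference being that the excluded value may be arbitrarily large.
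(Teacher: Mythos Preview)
Your proof is correct and follows the standard direct argument; the paper itself does not give a self-contained proof but refers to \cite[Theorem~5.2]{BHK17a} and \cite[Proposition~81]{HK14a}, where essentially the same $s$-$m$-$n$ construction appears. One tiny cosmetic point: in the direction $\DNC_n\leqSW\widehat{\ACC_n}$ for finite $n$, you should only enumerate $\varphi^p_i(i)$ into the complement of $A_i$ when it converges to a value below $n$, but this is automatic once one writes down the representation of $\AA_-(n)$ concretely.
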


The jump $\BCT_0'$ of the computable version of the Baire category theorem $\BCT_0$
is closely related to $1$--genericity and the problem ${\mathrm\Pi}^0_1\G$ of ${\mathrm\Pi^0_1}$--genericity
that we do not define here.

\begin{theorem}[Genericity]
\label{thm:genericity}
$1\dash\GEN\lW\BCT_0'\equivSW{\mathrm\Pi^0_1}\G\lW\Low$.
\end{theorem}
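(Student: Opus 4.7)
The plan is to prove the theorem by establishing its three comparisons separately, with the strong equivalence $\BCT_0'\equivSW{\mathrm\Pi^0_1}\G$ as the heart of the argument.

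For ${\mathrm\Pi^0_1}\G\leqSW\BCT_0'$, given input $p$, I would uniformly enumerate all $\Pi^0_1(p)$ classes $(C_e)_{e\in\IN}$ and pass to modified classes $\tilde C_e$, defined to equal $C_e$ if $C_e$ has empty interior and $\emptyset$ otherwise. Since nowhere-denseness of a $\Pi^0_1(p)$ class is a $\Pi^0_2(p)$ predicate, the sequence $(\tilde C_e)_e$ admits a $\Pi^0_2(p)$ description of its $\AA_-$-names, i.e., a name in the jump of $\AA_-(2^\IN)^\IN$. Feeding this to $\BCT_0'$ returns a point outside every nowhere-dense $\Pi^0_1(p)$ class, which is $\Pi^0_1(p)$-generic. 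Conversely, for $\BCT_0'\leqSW{\mathrm\Pi^0_1}\G$, given a jump-space name $p$ for a sequence $(A_n)_n$ of nowhere-dense closed sets, I would produce an oracle $r$ computable from $p$ so that each $A_n$ is recognised uniformly as a nowhere-dense $\Pi^0_1(r)$ class. Then a $\Pi^0_1(r)$-generic output of ${\mathrm\Pi^0_1}\G(r)$ automatically avoids $\bigcup_n A_n$. In both directions strong reducibility is obtained because the post-processing merely forwards the generic unchanged.

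For $1\dash\GEN\leqW\BCT_0'$, I would exploit that a real is $1$-generic precisely when it lies outside the boundary of every $\Sigma^0_1$ open set; these boundaries form a uniformly computable list of $\Pi^0_2$ nowhere-dense closed sets, which packages directly into the jump representation of $\AA_-(2^\IN)^\IN$. Strictness $\BCT_0'\nleqW 1\dash\GEN$ will follow by exhibiting, relative to some oracle, a $\Pi^0_2$ nowhere-dense closed set containing $1$-generics relative to that oracle, ruling out a uniform reduction. For ${\mathrm\Pi^0_1}\G\leqW\Low$, I would apply a uniform version of the low basis theorem: the $\Pi^0_1(p)$ class implicitly built in the first paragraph is non-empty and contains only $\Pi^0_1(p)$-generics, so extracting a limit-computable code for a member and passing to $\Low$ yields a low $\Pi^0_1(p)$-generic. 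Strictness $\Low\nleqW{\mathrm\Pi^0_1}\G$ will rely on $\Low$ being a single-valued function whose specific low output cannot be recovered from the highly multi-valued, densely realized problem ${\mathrm\Pi^0_1}\G$.

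The main obstacle is the reduction $\BCT_0'\leqSW{\mathrm\Pi^0_1}\G$: the closed sets $A_n$ supplied to $\BCT_0'$ appear as $\Pi^0_2(p)$ (via the jump representation) rather than $\Pi^0_1(p)$, so one must choose the auxiliary oracle $r$ carefully to upgrade the $\Pi^0_2$ descriptions to $\Pi^0_1(r)$ descriptions and ensure that $\Pi^0_1$-genericity relative to $r$ is strong enough to avoid all of them. This hinges on the precise definition of $\Pi^0_1$-genericity adopted in the paper and on a delicate encoding of the limit-name $p$ into an oracle $r$ that witnesses the $A_n$ as genuine $\Pi^0_1(r)$ classes.
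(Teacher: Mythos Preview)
The paper gives no proof here; it cites \cite[Corollaries~8.3, 9.7 and 10.8]{BHK18}. Your outline is broadly sensible, but the central equivalence $\BCT_0'\equivSW{\mathrm\Pi^0_1}\G$ has gaps on both sides. For ${\mathrm\Pi^0_1}\G\leqSW\BCT_0'$, the passage to $(\tilde C_e)_e$ does not yield a name in the jump of $\AA_-(2^\IN)^\IN$: a jump-$\psi_-$-name is a $p$-computable sequence converging in Baire space to a $\psi_-$-name, i.e.\ $\Delta^0_2(p)$ data, whereas the single bit ``$C_e$ is somewhere dense'' is properly $\Sigma^0_2(p)$ and in general not limit-decidable from $p$, so no $p$-computable sequence of $\psi_-$-names can stabilise on a name of $\tilde C_e$. (The usual repair is to use the boundaries $\partial C_e$ instead: these are always nowhere dense, and since the predicate $[\sigma]\subseteq C_e$ is $\Pi^0_1(p)$ and hence limit-decidable, a jump-$\psi_-$-name of $\partial C_e$ is genuinely computable from $p$.) Note also that ``outside every nowhere-dense $\Pi^0_1(p)$ class'' is \emph{weak} $1$-genericity, not the stronger ${\mathrm\Pi^0_1}$-genericity of \cite{BHK18}; under your reading one would get ${\mathrm\Pi^0_1}\G\leqW 1\dash\GEN$, contradicting the strictness in the theorem.

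For $\BCT_0'\leqSW{\mathrm\Pi^0_1}\G$, your proposal --- choose an oracle $r$ computable from the jump-name $q$ so that each $A_n$ becomes a $\Pi^0_1(r)$ class --- cannot work as written: if $r\leqT q$ then every $\Pi^0_1(r)$ class is already $\Pi^0_1(q)$, which does nothing to demote the $\Pi^0_2(q)$ descriptions of the $A_n$. You rightly flag this as the main obstacle and say it hinges on the precise definition of ${\mathrm\Pi^0_1}$-genericity that the present paper deliberately omits; but precisely because that definition (and the combinatorics that accompany it in \cite{BHK18}) are missing, this direction is left unproved by your sketch.
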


We note that all the problems from computability theory mentioned here, except $\DNC_n$, are densely realized.
Hence we can apply Proposition~\ref{prop:densely-realized}.

\begin{proposition}
\label{prop:computability-densely}
$1\dash\GEN$, $\MLR$, $\PA$, $\COH$ and $\BCT_0'$ are densely realized and hence $\ACC_\IN$ and $\C_2$ are not
Weihrauch reducible to any of them.
\end{proposition}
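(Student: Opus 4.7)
The plan is to verify that each of the five problems is densely realized and then invoke Proposition~\ref{prop:densely-realized}, noting that $\ACC_\IN$ and $\C_2$ are non-computable problems with output in a discrete total space. Since their output spaces ($\IN$ and $\{0,1\}$) carry total representations, a reduction $\ACC_\IN\leqW f$ or $\C_2\leqW f$ to any densely realized $f$ would force these to be computable, which is false. So the real work lies in establishing dense realization for each of the five problems, where in each case the output space is $2^\IN$ (resp.\ $X$ for $\BCT_0'$) with its natural total representation, and the task is to show that for every valid input $p$, the set $f\delta_X(p)$ of admissible outputs is dense in the output space.

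For $1\dash\GEN$ and $\MLR$, the set of $1$-generic (resp.\ Martin-L\"of random) reals relative to $p$ is comeager (resp.\ of measure one) in $2^\IN$, hence dense. For $\PA$, one uses that the PA degrees relative to $p$ are closed upward under Turing reducibility: if $q$ has PA degree relative to $p$ and $\sigma\in 2^{<\IN}$, then $\sigma{}^\frown q\equiv_{\mathrm T}q$ and so $\sigma{}^\frown q$ lies in $\PA(p)$ and in the cylinder $[\sigma]$; thus $\PA(p)$ meets every basic open set. For $\COH$, if $A$ is cohesive for $(R_i)_{i\in\IN}$ then every finite modification $A'$ of $A$ is still cohesive (cohesiveness depends only on tails), so replacing the first $|\sigma|$ bits of $A$ by $\sigma$ produces a cohesive set in $[\sigma]$.

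For $\BCT_0'$, recall that $\BCT_0$ sends a sequence $(A_n)_{n\in\IN}$ of closed sets with empty interior to $X\setminus\bigcup_n A_n$; by the Baire category theorem this set is comeager, hence dense, in $X$. Since the jump operation modifies only the input representation (replacing $\delta_X$ by $\delta_X\circ\lim$) and leaves the set-valued output unchanged, we have $(\BCT_0')^\r(p)=\delta_X^{-1}\circ\BCT_0\circ\delta_{\AA_-(X)^\IN}(\lim p)$, which is dense in $\dom(\delta_X)$ whenever $\lim p$ is a valid input. Hence $\BCT_0'$ inherits dense realization from $\BCT_0$.

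The main conceptual hurdle is the observation that dense realization is preserved under jumps on the input side; the density arguments themselves are standard facts from computability theory (comeagerness/measure-one of the relevant classes, upward closure of $\PA$, closure of cohesiveness under finite modifications). Having established dense realization, the separations $\ACC_\IN\nleqW f$ and $\C_2\nleqW f$ follow immediately from Proposition~\ref{prop:densely-realized}, completing the argument.
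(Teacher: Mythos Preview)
Your approach is correct and is precisely what lies behind the paper's citation-only proof: verify dense realization for each problem via the standard computability-theoretic facts you name (comeagerness of generics, measure-one of randoms, upward Turing closure of PA degrees, invariance of cohesiveness under finite changes, comeagerness of the Baire residual), and then invoke Proposition~\ref{prop:densely-realized} together with the non-computability of $\ACC_\IN$ and $\C_2$.

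One small caveat on the $\BCT_0'$ case: you assert that the output space $X$ carries a ``natural total representation,'' but the Cauchy representation of a general computable metric space is not total, so Proposition~\ref{prop:densely-realized} as stated does not directly apply. In the intended reading (and in the cited references) $\BCT_0'$ is taken over $X=\IN^\IN$ or $2^\IN$, where the identity representation is total and your argument goes through verbatim. For general $X$ one would additionally need that density in $X$ lifts to density of the preimage in $\dom(\delta_X)$, which holds for admissible representations but is a step you do not make explicit. This is a minor technical point rather than a conceptual gap.
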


This means that these problems are very different from all the theorems from analysis mentioned above
that are all above $\C_2$ in the Weihrauch lattice. 
Hence, it is interesting that some of these densely realized problems can be characterized as implications (i.e., as ``quotients'') of problems above $\C_2$.

\begin{theorem}[Randomness, Peano arithmetic, cohesiveness]
\label{thm:MLR-PA-COH}
We obtain:
\begin{enumerate}
\item $\MLR\equivW(\C_\IN\to\WWKL)\equivW(\C_\IN\to\PC_{2^\IN})$.
\item $\PA\equivW(\C_\IN'\to\WKL)\equivW(\C_\IN'\to\C_{2^\IN})$.
\item $\COH\equivW(\lim\to\KL)\equivW(\lim\to\C_{2^\IN}')$.
\end{enumerate}
\end{theorem}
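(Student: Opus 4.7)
First, I would knock out the secondary equivalences in each line. By the strong equivalences $\WWKL\equivSW\PC_{2^\IN}$, $\WKL\equivSW\C_{2^\IN}$, and $\KL\equivSW\C_{2^\IN}'$ (Theorems~\ref{thm:PC2N}, \ref{thm:choice-cantor}, and \ref{thm:JC2N} respectively), together with monotonicity of the implication operator $\to$ in its second argument, the equivalences $(\C_\IN\to\WWKL)\equivW(\C_\IN\to\PC_{2^\IN})$, $(\C_\IN'\to\WKL)\equivW(\C_\IN'\to\C_{2^\IN})$, and $(\lim\to\KL)\equivW(\lim\to\C_{2^\IN}')$ are immediate. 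So the substantive task is to prove $\MLR\equivW(\C_\IN\to\WWKL)$, $\PA\equivW(\C_\IN'\to\WKL)$, and $\COH\equivW(\lim\to\KL)$, which I would handle uniformly.

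For each triple $(M,g,f)\in\{(\MLR,\C_\IN,\WWKL),(\PA,\C_\IN',\WKL),(\COH,\lim,\KL)\}$, I would use the adjointness in Proposition~\ref{prop:composition-implication}, $f\leqW g*h\iff(g\to f)\leqW h$. This splits $M\equivW(g\to f)$ into two sub-claims: (i) a compositional-product reduction $f\leqW g*M$, which yields $(g\to f)\leqW M$ by adjointness; and (ii) a minimality claim asserting that any $h$ with $f\leqW g*h$ satisfies $M\leqW h$, which at the choice $h=(g\to f)$ supplies the reverse inequality. The forward reductions (i) rely on classical computability-theoretic constructions. For $\MLR$: Ku\v{c}era's theorem states that for any tree $T$ with $\mu([T])>0$ and any $q\in\MLR(T)$, some tail $\sigma^kq$ belongs to $[T]$; the set of good indices is a non-empty co-c.e.\ subset of $\IN$, so $\MLR$ produces $q$ and $\C_\IN$ selects $k$. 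For $\PA$: the standard Scott-set tree-pruning argument gives even $\WKL\leqSW\PA$ directly (a PA-completion $q$ relative to $T$ uniformly computes a path through $[T]$), hence $\WKL\leqW\C_\IN'*\PA$ by monotonicity. For $\COH$: one encodes a finitely branching infinite tree $T\In\IN^*$ as a sequence $(R_i)_{i\in\IN}$ in $2^\IN$ whose cohesive sets, together with one $\lim$-call to stabilize the branch selection at each level, recover an infinite path of $T$, in the style of Jockusch and Stephan.

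The main obstacle is the minimality claim (ii). Taking $M=\MLR$ for concreteness: assume $h$ satisfies $\WWKL\leqW\C_\IN*h$, and given $p$ I must produce a sequence Martin-L\"of random relative to $p$ using a single $h$-call. The natural setup is to construct the canonical $\Pi^0_1(p)$ tree $T_p$ whose paths are exactly the sequences passing the first level of the universal Martin-L\"of test relative to $p$, so that $\mu([T_p])\geq\tfrac12$, and to feed $T_p$ into the hypothesized reduction: this produces a path in $[T_p]$ via one $h$-call followed by one $\C_\IN$-call. The delicate point is to show that the $\C_\IN$-call is eliminable when the goal is merely to produce some random rather than a specific path. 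The intuition is that reducibility to $\C_\IN$ coincides with finite-mind-change computability (Proposition~\ref{prop:limit-computable}), so the $\C_\IN$-call contributes only a finite-information selection parameter; no such parameter, composed with a computable postprocessing, can upgrade a non-random $h$-output to a Martin-L\"of random, hence the randomness content must already reside in $h$'s output, and a direct computable postprocessing can extract it. Turning this heuristic into an honest uniform Weihrauch reduction $\MLR\leqW h$ is the technical heart of the proof; I expect it to proceed by a choice-elimination argument tailored to probabilistic $\Pi^0_1$-classes, in the spirit of Theorem~\ref{thm:CN-elimination} but for $\C_\IN$ applied on the output side rather than the input side. The parallel minimality arguments for $\PA$ (using that $\C_\IN'$ supplies only limit-computable data, insufficient to produce a PA-completion) and $\COH$ (where $\lim$ can only realign limit-computable corrections, so genuine cohesion must come from $h$) follow the same template, with $\COH$ being the subtlest case because cohesiveness is a genuinely infinitary property.
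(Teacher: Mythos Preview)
The paper does not prove this theorem at all; it simply cites \cite{BP18} for $\MLR$ and \cite{BHK17a} for $\PA$ and $\COH$. So there is no ``paper's own proof'' to compare against beyond those external references. Your decomposition is the right one and matches how those papers proceed: the secondary equivalences are immediate from monotonicity, and adjointness (Proposition~\ref{prop:composition-implication}) reduces each item to (i) $f\leqW g*M$ and (ii) $M\leqW(g\to f)$. Your sketches for (i) via Ku\v{c}era, the Scott basis argument, and the Jockusch--Stephan encoding are accurate.

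The genuine gap is direction (ii), which you candidly flag as ``the technical heart.'' Your heuristic --- that a $\C_\IN$-output is ``only a number'' and hence cannot manufacture randomness --- does establish that $p\oplus y$ \emph{Turing}-computes an MLR$^p$ sequence (namely $H(T_p,y,n)$ for some $n$ in the co-c.e.\ set $A_{p,y}$), but it does not give a \emph{uniform} extraction: you still need to select a correct $n$, and doing that naively just reinstates the $\C_\IN$-call. A choice-elimination principle ``on the output side'' in the style of Theorem~\ref{thm:CN-elimination} is not available in the paper, and it is not clear such a general principle holds. The proofs in \cite{BP18} and \cite{BHK17a} do not proceed by a generic elimination lemma; they exploit structural features specific to $\MLR$, $\PA$, and $\COH$ --- for instance, robustness under finite modification and tail-shift (so that one can dovetail the candidates $H(T_p,y,n)$ and still land in the target class), together with the fact that these problems are densely realized multi-valued maps whose solution sets are upward closed under the relevant notions. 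Your outline is correct as far as it goes, but (ii) requires those problem-specific arguments rather than an abstract $\C_\IN$-absorption step; as written, step (ii) is an intuition, not a proof.
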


We close this section by mentioning that one can apply results from computability theory
such as the theorem of van Lambalgen to conclude that some of the above mentioned problems are closed under composition.

\begin{proposition}
\label{prop:MLR-GEN-composition}
$\MLR$ and $1\dash\GEN$ are closed under compositional product $*$.
\end{proposition}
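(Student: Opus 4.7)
The plan is to prove $\MLR * \MLR \equivW \MLR$ (and analogously for $1\dash\GEN$). The direction $\MLR \leqW \MLR * \MLR$ is immediate from pointedness of $\MLR$ (any computable sequence lies in $\dom(\MLR)$), so $\MLR$ trivially factors through the compositional product via a constant first input. The substantive direction is $\MLR * \MLR \leqW \MLR$, and here I would work with the concrete representative $\MLR \star \MLR$ and invoke van Lambalgen's theorem: the sequence $\langle r_0, r_1\rangle$ is Martin-L\"of random relative to an oracle $x$ if and only if $r_0$ is ML-random relative to $x$ and $r_1$ is ML-random relative to $\langle r_0, x\rangle$.

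Concretely, given input $\langle p, q\rangle$ for $\MLR \star \MLR$, I would let the Weihrauch reduction apply $\MLR$ once with the paired oracle $\langle p, q\rangle$, obtaining a sequence $r$ that is ML-random relative to $\langle p, q\rangle$. Splitting $r = \langle r_0, r_1\rangle$, van Lambalgen gives that $r_0$ is ML-random relative to $\langle p, q\rangle$ (hence relative to $q$), so $r_0$ is a valid output of the first $\MLR$ call in $\MLR \star \MLR$. Next, the intermediate computation produces $\langle t_1, t_2\rangle = \Phi_p(r_0)$, where $t_1$ is passed through and $t_2$ is fed to the second $\MLR$ call. Again by van Lambalgen, $r_1$ is ML-random relative to $\langle r_0, p, q\rangle$, and since $t_2$ is computable from this oracle, $r_1$ is ML-random relative to $t_2$. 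Therefore the output $\langle t_1, r_1\rangle$ is a legitimate value of $\MLR \star \MLR\langle p, q\rangle$, and the whole construction is uniformly computable in $\langle p, q\rangle$ and $r$, giving the desired reduction.

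For $1\dash\GEN$, the identical argument goes through once one substitutes the analog of van Lambalgen's theorem for 1-genericity (due to Yu): $\langle r_0, r_1\rangle$ is 1-generic relative to $x$ iff $r_0$ is 1-generic relative to $x$ and $r_1$ is 1-generic relative to $\langle r_0, x\rangle$. The main obstacle is not the Weihrauch bookkeeping, which is essentially mechanical once the splitting idea is in place, but rather ensuring that one has the correctly relativized form of the van Lambalgen-type theorem available for each notion; for Martin-L\"of randomness this is classical, and for 1-genericity one needs to appeal to Yu's version. A subsidiary point to check is that the splitting and repackaging of sequences respects the representations used for $2^\IN$ in the definitions of $\MLR$ and $1\dash\GEN$, but this is a routine adjustment.
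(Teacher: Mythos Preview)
Your proposal is correct and matches the paper's intended approach: the paper itself does not spell out a proof but explicitly flags (in the sentence preceding the proposition) that van Lambalgen's theorem is the key ingredient, and the cited references carry out exactly the argument you sketch---split a single random (resp.\ $1$--generic) sequence relative to the joined oracle and use the relativized van Lambalgen theorem (resp.\ its analogue for $1$--genericity due to Yu) to certify the two halves. Your handling of the bookkeeping for $\MLR\star\MLR$ and the domain conditions is accurate.
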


\subsubsection*{Bibliographic Remarks}

\begin{petit}
The intermediate value theorem and theorems from functional analysis were studied by Brattka and Gherardi~\cite{BG11a,Bra09,Bra06}.
The Lebesgue covering lemma for $[0,1]$ has been classified by Brattka, Gherardi and H\"olzl~\cite{BGH15a}.
The equivalence of the Hahn-Banach theorem and weak K\H{o}nig's lemma was proved by Gherardi and Marcone~\cite{GM09,Bra08b}. 
The Brouwer fixed point theorem and the problem of finding connectedness components was studied by Brattka, Le Roux, Miller and Pauly~\cite{BLRMP18}.
The theorem of the maximum has been studied by Brattka~\cite{Bra16} and the Fr\'echet-Riesz representation theorem by Brattka and Yoshikawa~\cite{Bra16,BY06}.
The classification of the Nash equilibria existence theorem is due to Pauly~\cite{Pau10,Pau11}.
Frostman's lemma was studied by Pauly and Fouch\'e~\cite{PF17} and Vitali's covering theorem by
Brattka, Gherardi, H\"olzl and Pauly~\cite{BGHP17}.
The Browder-G\"ohde-Kirk fixed point theorem was classified by Neumann~\cite{Neu15}.
The operator of differentiation was studied by von Stein~\cite{Ste89}, and the degree of many operations on sets
that are not mentioned here were classified by Brattka and Gherardi~\cite{BG09}.
The identities to analytic and Schwartz functions have been classified by Pauly and Steinberg~\cite{PS18}.
The analysis of the Radon-Nikodym theorem is due to Hoyrup, Rojas and Weihrauch~\cite{HRW12}. 
The problems of finding a basis of a countable vector space and of finding a connected component of a countable graph
were studied by Gura, Hirst and Mummert~\cite{GHM15,HM17}.
The Bolzano-Weierstra\ss{} theorem was studied by Brattka, Gherardi and Marcone~\cite{BGM12}.
K\H{o}nig's lemma was studied by Brattka and Rakotoniaina~\cite{BR17} and Gale-Stewart games by Le Roux and Pauly~\cite{LRP15}.
Ramsey's theorem was studied in the Weihrauch lattice
by Dorais, Dzhafarov, Hirst, Mileti and Shafer~\cite{DDH+16,Dzh15,Dzh16}, by Brattka and Rakotoniaina~\cite{BR17,Rak15},
by Patey~\cite{Pat16a} and by Hirschfeldt and Jockusch~\cite{HJ16}.
The uniform content of problems for partial and linear orders that are closely related to Ramsey's theorem for pairs
was studied by Astor, Dzhafarov, Solomon and Suggs~\cite{ADSS17}.
The classification of the prefect subtree theorem was initiated by Marcone~\cite{BKM+16}, and the result mentioned here is unpublished.
The results on problems from computability theory including a systematic study of the Baire category theorem
are due to Brattka, Hendtlass and Kreuzer~\cite{BHK17a,BHK18} and Brattka and Pauly~\cite{BP18}.
The problem of diagonally non-computable functions was also studied by Higuchi and Kihara~\cite{HK14a}.
\end{petit}

\section{Relations to Other Theories}
\label{sec:relations}

In this section we discuss very briefly the relation between Weihrauch complexity and other theories
and we provide some further references.

\subsection{Linear Logic}
\label{subsec:linear-logic}

There is an apparent similarity between some algebraic operations on problems and the resource-oriented
interpretation of some logical operations in (intuitionistic) linear logic that was noticed early on. Table~\ref{tab:linear-logic} provides
a dictionary on these relations.

\begin{table}[htb]
\begin{center}
\begin{tabular}{ll}
{logical operation in linear logic\ } & {algebraic operation on problems}\\\hline
$\otimes$ multiplicative conjunction  & $\times$ product \\
$\&$ additive conjunction                & $\sqcup$ coproduct\\
$\oplus$ additive disjunction           & $\sqcap$ infimum\\
\rotatebox[origin=c]{180}{$\&$} multiplicative disjunction         & $+$ sum\\
$!$ bang		                                    & $\widehat{\ }$ parallelization, $^*$ finite parallelization
\end{tabular}
\end{center}
\caption{Linear logic versus the algebra of problems}
\label{tab:linear-logic}
\end{table}
However, it seems that other algebraic operations on problems, such as the compositional product $*$, do not have
any obvious counterpart in the standard approach to linear logic, but could be seen as a non-commutative conjunction.
There does not seem to be any straightforward interpretation
of the Weihrauch lattice as a model for (intuitionistic) linear logic. 

Several researchers have independently noticed that G\"odel's Dialectica interpretation has some formal similarity
to Weihrauch reducibility. This observation has not yet been formally exploited.

\subsection{Medvedev Lattice and Many-One and Turing Semilattices}
\label{subsec:Medvedev}

The Medvedev lattice has also been considered as a calculus of problems. Here problems are understood to
be subsets $A,B\In\IN^\IN$ of Baire space and $A$ is called {\em Medvedev reducible} to $B$, in symbols $A\leq_{\rm s} B$,
if there exists a partial computable function ${F:\In\IN^\IN\to\IN^\IN}$ with $B\In\dom(F)$ such that $F(B)\subseteq A$. 
The supremum operation of this lattice is defined by $A\oplus B:=\langle A,B\rangle$ and the infimum operation 
by $A\otimes B:=0A\cup 1B$. 

The relation between the Weihrauch lattice and the Medvedev lattice can be expressed from both perspectives:
\begin{enumerate}
\item The Medvedev lattice is a special case of the Weihrauch lattice for problems $f:\In\IN^\IN\mto\IN^\IN$ that are constant.
\item The Weihrauch lattice is a generalization of the Medvedev lattice for ``relativized" problems $A_p\In\IN^\IN$ that depend on a parameter $p\in\IN^\IN$.
 \end{enumerate}

This point of view translates into a formal embedding of the Medvedev preorder into the Weihrauch preorder (the first mentioned one in Theorem~\ref{thm:Medvedev}). 
In fact, we can embed the Medvedev lattice also order reversing into the Weihrauch lattice,
and we list both embeddings here.

\begin{theorem}[Embedding the Medvedev lattice]
\label{thm:Medvedev}
Let $A,B\In\IN^\IN$.
\begin{enumerate}
\item $c_A:\IN^\IN\mto\IN^\IN,p\mapsto A$ satisfies $A\leq_{\rm s}B\iff c_A\leqW c_B$ with $c_{A\oplus B}\equivW c_A\times c_B$ and $c_{A\otimes B}\equivW c_A\sqcap c_B$.
\item $d_A:\In\IN^\IN\to\IN^\IN,p\mapsto \widehat{0}$ with $\dom(d_A):=A$ satisfies $A\leq_{\rm s}B\iff d_B\leqW d_A$ with $d_{A\oplus B}\equivW d_A\sqcap d_B$ and $d_{A\otimes B}\equivW d_A\sqcup d_B$.
\end{enumerate}
In both cases all Weihrauch reductions and equivalences can be replaced by strong ones, in which case $\sqcup$ has to be replaced by $\boxplus$ in 2.
\end{theorem}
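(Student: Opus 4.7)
My plan is to exploit the explicit description of realizers of $c_A$ and $d_A$: realizers of $c_A$ are precisely the (not necessarily computable) functions $F: \IN^\IN \to A$, and realizers of $d_A$ are precisely the partial functions $F: \In \IN^\IN \to \IN^\IN$ with $A \In \dom(F)$ and $F|_A \equiv \widehat{0}$. The asymmetry between ``output-side constant'' and ``input-side restriction'' is what produces the monotone versus antitone behavior. Two distinguished realizers will do most of the work: for $c_B$, the constant realizer $G_b \equiv b$ attached to any $b \in B$; for $d_A$, the ``minimal'' realizer $F^*$ with $\dom(F^*) = A$ and $F^*|_A \equiv \widehat{0}$. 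The axiom of choice for Baire space guarantees these exist, and the universal quantifier over realizers in the definition of $\leqW$ allows us to plug them in freely.

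For item 1, suppose first that $c_A \leqW c_B$ via computable $H, K$. For every $b \in B$, $G_b$ realizes $c_B$, so $H\langle p, b\rangle \in A$ for all $p \in \IN^\IN$; fixing any computable $p_0$, the partial computable map $F(b) := H\langle p_0, b\rangle$ witnesses $A \leq_{\rm s} B$. Conversely, if $F$ is a computable Medvedev reduction from $A$ to $B$, then taking $K$ constant equal to $\widehat{0}$ and $H\langle p, q\rangle := F(q)$ shows $c_A \leqW c_B$; the same $H, K$ work for strong reductions after dropping the first pairing component. The algebraic identities $c_{A \oplus B} \equivSW c_A \times c_B$ and $c_{A \otimes B} \equivSW c_A \sqcap c_B$ reduce to the computable isomorphisms $\langle A, B\rangle \cong A \times B$ and $0A \cup 1B \cong (\{0\} \times A) \cup (\{1\} \times B)$, obtained by simply unfolding definitions on both sides.

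For item 2, if $d_B \leqW d_A$ via computable $H, K$, then applying the definition to the minimal realizer $F^*$ of $d_A$ yields that for every $p \in B$ the value $H\langle p, F^*(K(p))\rangle$ must equal $\widehat{0}$; in particular $F^*(K(p))$ must be defined, forcing $K(p) \in A$. Thus $K$ is a computable Medvedev reduction $B \leq_{\rm s} A$. The converse uses $K := F$ and $H\langle p, q\rangle := q$. The identity $d_{A \oplus B} \equivSW d_A \sqcap d_B$ again reduces to a pairing isomorphism, this time on the input side. The subtle point, and the main obstacle, is the $\sqcup$ versus $\boxplus$ distinction in $d_{A \otimes B}$: for $d_A \boxplus d_B$ the padded output $\overline{\IN^\IN} \times \overline{\IN^\IN}$ contains $(\widehat{0}, \widehat{0})$, which is a valid output regardless of the input side, so the post-processor may ignore the oracle answer and the strong reduction succeeds in both directions; but $d_A \sqcup d_B$ requires the tagged output $(i, \widehat{0})$, and extracting the label $i$ from the uninformative oracle answer $\widehat{0}$ of $d_{A \otimes B}$ requires inspecting the original input, which is available only for ordinary Weihrauch reductions via the $\langle \id, \cdot\rangle$ pairing, not for strong ones. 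This is precisely why $\sqcup$ must be replaced by $\boxplus$ in the strong case.
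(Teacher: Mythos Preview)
Your argument is correct and, unlike the paper's appendix (which merely cites \cite[Theorem~5.1, Proposition~5.3]{BG11}, \cite[Lemma~5.6]{HP13}, and \cite{Dzh18}), actually supplies a self-contained proof. The key idea you isolate---using the constant realizer $G_b\equiv b$ for $c_B$ and the minimal-domain realizer $F^*$ for $d_A$---is exactly the standard one.

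Two small points. First, a labeling slip in item~2: having shown $K(p)\in A$ for every $p\in B$, you conclude that ``$K$ is a computable Medvedev reduction $B\leq_{\rm s} A$''. By the paper's definition, $K(B)\subseteq A$ witnesses $A\leq_{\rm s} B$, not $B\leq_{\rm s} A$; this is precisely the direction the theorem requires, so the mathematics is right and only the label is inverted. Second, your final paragraph argues that $d_A\sqcup d_B\leqSW d_{A\otimes B}$ genuinely fails because the tag cannot be recovered from the oracle answer $\widehat{0}$. This intuition is correct (and can be made rigorous by taking, say, $A=B=\IN^\IN$ and the constant realizer $G\equiv\widehat{0}$), but strictly speaking the theorem only asserts that the strong equivalence holds with $\boxplus$ in place of $\sqcup$; it does not claim that $\sqcup$ fails strongly. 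So that part of your discussion goes slightly beyond what needs to be proved.
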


The second reverse embedding is even a lattice embedding since it preserves suprema and infima in the reverse order. 
The first embedding is also a lattice embedding if considered as an embedding into the parallelized Weihrauch degrees.
These embeddings were studied by Brattka and Gherardi~\cite{BG11}, Higuchi and Pauly~\cite{HP13} and Dzhafarov~\cite{Dzh18}.
Since the Turing degrees and the enumeration degrees can be embedded into the Medvedev lattice, it follows that they can also
be embedded into the Weihrauch lattice via the above mentioned embeddings.

We note that the Medvedev lattice has been used by Downey, Greenberg, Jock\-usch, Milans, Lewis and others~\cite{DGJM11,JL13} in order to study problems
from computability theory, such as $\MLR, 1\dash\GEN, \PA$ and $\DNC_n$ in their unrelativized form (for computable inputs).
The advantage of the Weihrauch lattice is that these problems can be studied in this lattice together with problems such as
$\WKL$ and $\WWKL$ that depend on parameters (i.e., the input tree) in an essential way.
Finally, we mention that the Muchnik lattice, which is the non-uniform counterpart of the Medvedev lattice has also been 
used to classify problems, see Simpson~\cite{Sim15} .

Also the many-one semilattice can be embedded into the Weihrauch lattice, albeit in a slightly less natural way than the Turing semilattice.
The construction starts with a non-canonical choice of two Turing incomparable points. As usual we denote {\em many-one reducibility}
between sets $A,B\In\IN$
by $\leq_{\rm m}$ and we recall that $A\oplus B:=\{2n:n\in A\}\cup\{2n+1:n\in B\}$ is the supremum with respect to many-one reducibility.

\begin{proposition}[Embedding of the many-one semilattice]
\label{prop:many-one}
Let $p,q\in\IN^\IN$ be Turing incomparable and, for $A\In\IN$, define
$m_A : \mathbb{N} \to \{p,q\}$ by $m_A(n) = p:\iff n \in A$. 
Then we obtain $A\leq_{\rm m}B\iff m_A\leqW m_B$ and $m_{A\oplus B}\equivW m_A\sqcup m_B$ for all $A,B\In\IN$,
i.e., $A\mapsto m_A$ is a join-semilattice embedding.
\end{proposition}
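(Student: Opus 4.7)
My overall plan is to treat part (1) by a direct realizer construction in one direction and a case analysis exploiting the Turing incomparability of $p$ and $q$ in the other, and to handle part (2) by writing down the two obvious strong reductions based on the parity decoding underlying $A\oplus B$.

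For the forward direction of (1), I assume $A\leq_{\rm m}B$ via a computable $g:\IN\to\IN$ with $n\in A\iff g(n)\in B$, take $K$ to be a standard realizer of $g$ sending the constant name $\widehat{n}$ of $n$ to $\widehat{g(n)}$, and set $H:=\id$. For any realizer $G$ of $m_B$ the composite equals $HGK(\widehat{n})=m_B(g(n))$, which is $p$ iff $g(n)\in B$ iff $n\in A$ iff $m_A(n)=p$; hence $m_A\leqSW m_B$ and in particular $m_A\leqW m_B$. Conversely, suppose $m_A\leqW m_B$ is witnessed by computable $H,K$. Since $\IN$ carries the discrete representation, restricting $K$ to constant sequences yields a total computable $g:\IN\to\IN$ via $g(n):=K(\widehat{n})(0)$, and the reduction condition becomes $H\langle\widehat{n},m_B(g(n))\rangle=m_A(n)$ for every $n$. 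If there were some $n$ with $n\in A$ and $g(n)\notin B$, then $H\langle\widehat{n},q\rangle=p$ would compute $p$ from the computable sequence $\widehat{n}$ and $q$, yielding $p\leqT q$; the symmetric failure would give $q\leqT p$. Both contradict the Turing incomparability of $p$ and $q$, so $n\in A\iff g(n)\in B$ and $g$ witnesses $A\leq_{\rm m}B$. This case analysis is the main obstacle, and the Turing incomparability hypothesis is exactly what makes it work.

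For part (2) I construct explicit strong reductions in both directions. To see $m_{A\oplus B}\leqSW m_A\sqcup m_B$, given input $k=2n+i$ with $i\in\{0,1\}$ I let $K$ output $(i,n)\in\IN\sqcup\IN$ and let $H$ strip the tag from the answer $(i,r)\in\{p,q\}\sqcup\{p,q\}$; correctness is immediate because $r$ is $m_A(n)$ or $m_B(n)$ exactly when $k$ is $2n$ or $2n+1$, matching $m_{A\oplus B}(k)$ by the definition of $A\oplus B$. For the converse $m_A\sqcup m_B\leqSW m_{A\oplus B}$, given $(i,n)\in\IN\sqcup\IN$ I let $K$ compute $2n+i\in\IN$ and let $H$ reattach the tag $i$ to the value in $\{p,q\}$ returned by $m_{A\oplus B}$; the two sides agree by the same definition. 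Combining these reductions yields $m_{A\oplus B}\equivSW m_A\sqcup m_B$, so the assignment $A\mapsto m_A$ preserves binary joins and is a join-semilattice embedding.
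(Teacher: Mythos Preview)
Your proof is correct and follows essentially the same approach as the paper. The paper phrases the converse of (1) slightly more abstractly---observing once and for all that any computable partial function $H:\subseteq\IN\times\{p,q\}\to\{p,q\}$ must be a restriction of the second projection (since otherwise Turing incomparability is violated), and then concluding that the Weihrauch witnesses coincide with the many-one witnesses---whereas you derive the same contradiction pointwise for the specific $H$ in the reduction; the content is identical. For part (2) the paper simply asserts $m_{A\oplus B}\equivW m_A\sqcup m_B$, while you spell out the two strong reductions explicitly, which is fine and in fact establishes the stronger $\equivSW$.
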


\subsection{Reverse Mathematics}
\label{subsec:reverse}

Reverse mathematics is a proof theoretic approach that aims to classify theorems according to axioms that are needed to prove
these theorems in second-order arithmetic~\cite{Sim09}. Many theorems from various areas of mathematics have been classified
in this approach. Most axiom systems that are used in reverse mathematics have counterparts in the Weihrauch lattice (see also Figure~\ref{fig:choice}):

\begin{itemize}
\item $\B{\mathrm\Sigma^0_n}$ (${\mathrm\Sigma^0_n}$--boundedness): $\B{\mathrm\Sigma^0_2}$ is equivalent to the {\em regularity principle} 
        $\R{\mathrm\Sigma^0_1}$ over a very weak system~\cite{HP93}, and it
        corresponds to $\K_\IN'$ by Theorem~\ref{thm:jumps-choice}. Hence $\B{\mathrm\Sigma^0_n}$ can be seen as counterpart of $\K_\IN^{(n-1)}$.
\item $\I{\mathrm\Sigma^0_n}$ (${\mathrm\Sigma^0_n}$--induction) is equivalent to the {\em least number principle} $\Low{\mathrm\Pi^0_n}$ over a very weak 
         system~\cite{HP93}.
         $\Low{\mathrm\Pi^0_1}$ directly translates into $\min^{\rm c}$ as a problem and hence $\I{\mathrm\Sigma^0_n}$ corresponds to $\C_\IN^{(n-1)}$ by Theorem~\ref{thm:CN}.
\item $\RCA_0^*$ (recursive comprehension) stands for the usual system $\RCA_0$ but with $\I{\mathrm\Sigma^0_0}$ instead of $\I{\mathrm\Sigma^0_1}$. 
        It corresponds to $\C_1$ (the computable problems).
\item $\WKL_0^*$ and $\WWKL_0^*$, by which we mean $\RCA_0^*$ plus weak K\H{o}nig's lemma and weak weak K\H{o}nig's lemma, respectively,  
         correspond directly to the problems $\WKL$ and $\WWKL$.
\item $\ACA_0$ (arithmetic comprehension) corresponds to the problems $\lim$ (and its finite compositions $\lim^{[n]}$ with $n\in\IN$).
         Sometimes, a uniform version $\ACA_0'$ of $\ACA_0$ is used~\cite{Hir15}, which corresponds to $\bigsqcup_{n\in\IN}\lim^{[n]}$.
\item $\ATR_0$ (arithmetical transfinite recursion) corresponds to $\UC_{\IN^\IN}$ and $\C_{\IN^\IN}$ (see Theorem~\ref{thm:CNN-thm}). This topic is still very much research in progress.
\end{itemize}
Counterparts in the Weih\-rauch lattice of higher systems such as ${\mathrm\Pi^1_1}\dash\C\A_0$ (${\mathrm\Pi^1_1}$--comprehension) have not yet been systematically studied.
By Theorem~\ref{thm:alternating-hierarchies} we have $\K_\IN^{(n)}\lW\C_\IN^{(n)}\lW\K_\IN^{(n+1)}$ in analogy to 
$\B{\mathrm\Sigma^0_n}\leftarrow\I{\mathrm\Sigma^0_n}\leftarrow\B{\mathrm\Sigma^0_{n+1}}$.

Reverse mathematics is based on a proof theoretic approach, whereas classifications
in the Weihrauch lattice are based on a computational approach.
Besides this we note the following distinguishing features: 
\begin{enumerate}
\item {\em Resource sensitivity}: classifications in reverse mathematics do not distinguish between a single, a finite number of consecutive applications
         or a finite number of parallel applications of a theorem, since classical logic is used (opposed to linear logic).
\item {\em Uniformity}: classifications in reverse mathematics only capture the non-uniform content of problems, i.e., the way output parameters
         depend on input parameters in the worst case. Again this is due to the usage of classical logic (opposed to intuitionistic logic).
\end{enumerate}
For instance, a number of theorems that are non-uniformly computable in the sense that there is a computable output for every
computable input are provable over $\RCA_0$ in reverse mathematics, even though they are not computable in a uniform way.
This includes the intermediate value theorem $\IVT$, the Baire category theorem $\BCT_1$ and others. 
Due to the lack of uniformity reverse mathematics can also not distinguish between theorems and their
contrapositive forms. For instance the version $\HBC_0$ of the Heine-Borel covering theorem is computable, while $\HBC_1\equivW\WKL$. 
In reverse mathematics, the Heine-Borel theorem is equivalent to $\WKL_0$ over $\RCA_0$ irrespectively of whether we consider
the analogue of $\HBC_0$ or $\HBC_1$. In other words:
classifications in reverse mathematics automatically capture the most complicated contrapositive form. 
         
It is remarkable that despite these explicable differences most classifications in the Weihrauch lattice can be seen 
as uniform and resource sensitive refinements of classifications in reverse mathematics.
This seems to confirm a ``computations as proofs'' paradigm (opposed to the well-known ``proofs as computations'' paradigm in intuitionistic logic).
         
\subsection{Constructive Reverse Mathematics}
\label{subsec:constructive}

Constructive reverse mathematics, as proposed by Ishihara~\cite{Ish06}, classifies problems in the Bishop approach
to constructive analysis that is based on intuitionistic logic.
Due to the usage of intuitionistic logic this approach is fully uniform,
but it is even less resource sensitive compared to classical reverse mathematics.
This is due to the fact that typically the {\em axiom of countable choice} can be used freely, which amounts
to a free usage of parallelization in the Weihrauch lattice. In this sense, the Weihrauch complexity approach
is closer to a hypothetical version of constructive reverse mathematics with intuitionistic linear logic. 
The classifications in constructive reverse mathematics are captured by the equivalence to certain
constructively unacceptable principles:

\begin{enumerate}
\item $\LLPO$ (the {\em lesser limited principle of omniscience}) is the theorem that corresponds to our problem $\LLPO$.
         In presence of countable choice it corresponds to $\WKL$ by Theorem~\ref{thm:choice-cantor}.
\item $\LPO$ (the {\em limited principle of omniscience}) is the theorem that corresponds to our problem $\LPO$.
        In presence of countable choice and due to the availability of composition it corresponds to $\lim^{(n)}$ with $n\in\IN$ by Theorem~\ref{thm:lim}
        (and hence to $\ACA_0$ in classical reverse mathematics).
\item $\MP$ ({\em Markov's principle}), ${\mathsf{BD}}\dash\IN$ (the {\em boundedness problem}) and some other principles that are rejected in 
        constructive analysis correspond to computable (and hence continuous) problems in the Weihrauch lattice.
\end{enumerate}

In conclusion, this means that the Weihrauch complexity approach is finer than constructive reverse mathematics in terms of resource sensitivity,
but coarser when it comes to distinctions that are based on computable principles such as $\MP$ and ${\mathsf{BD}}\dash\IN$.
In order to translate these heuristic observations into formal theorems, one needs to fix an axiomatic framework for constructive analysis.
Some results in this direction have been obtained by Kuyper~\cite{Kuy17}.

\subsection{Other Reducibilities}
\label{subsec:reducibilities}

Hirschfeldt and Jockusch~\cite{Hir15,HJ16,Sol16} have introduced a number of further reducibilities that are related to Weihrauch reducibility.
For one, there are non-uniform versions of Weihrauch reducibility and strong Weihrauch reducibility, which are called 
{\em computable reducibility} and {\em strong computable reducibility}, in symbols $\leq_{\rm c}$ and $\leq_{\rm sc}$,
as well as a reducibility $\leq_\omega$ that is based on Turing ideals.
On the other hand, they introduced a concept of {\em generalized Weihrauch reducibility} that has a built-in closure under composition.
This operation can be formalized as a closure operator $f\mapsto f^\diamond$ in the Weihrauch lattice.
Likewise, Brattka and Gherardi~\cite{BG11} and Higuchi and Pauly~\cite{HP13} studied variants of Weihrauch reducibility with a 
built-in parallelization.
These and further reducibilities allow to interpolate between Weihrauch complexity and reverse mathematics in the sense
that one can choose a reduction that captures a particular degree of uniformity and resource sensitivity (see Figure~\ref{fig:reducibility}).

\begin{figure}[htb]
\begin{center}
\begin{tikzpicture}[scale=.4,auto=left]
\useasboundingbox  rectangle (29,4.5);

\node at (6,4) {$f\leqSW g$};
\node at (13,4) {$f\leqW g$};
\node at (20,2) {$f\leq_{\rm c}g$};
\node at (13,2) {$f\leq_{\rm sc}g$};
\node at (27,2) {$f\leq_{\rm\omega} g$};
\node at (20,4) {$f\leq_{\rm gW} g$};

\node (v1) at (8,4) {};
\node (v2) at (11,4) {};
\node (v3) at (15,4) {};
\node (v4) at (18,4) {};
\node (v5) at (15,2) {};
\node (v6) at (18,2) {};
\node (v7) at (11,2) {};
\node (v8) at (22,4) {};
\node (v10) at (22,2) {};
\node (v9) at (25,2) {};
\draw [->] (v1) edge (v2);
\draw [->] (v3) edge (v4);
\draw [->] (v5) edge (v6);
\draw [->] (v1) edge (v7);
\draw [->] (v8) edge (v9);
\draw [->] (v10) edge (v9);
\draw [->] (v3) edge (v6);

\node at (2,4) {\footnotesize uniform};
\node at (8,0.5) {\footnotesize resource sensitive};
\node at (2,2) {\footnotesize non-uniform};
\node at (24.5,0.5) {\footnotesize closed under composition};
\end{tikzpicture}
\end{center}
\ \\[-0.8cm]
\caption{Implications between notions of reducibility}
\label{fig:reducibility}
\end{figure}
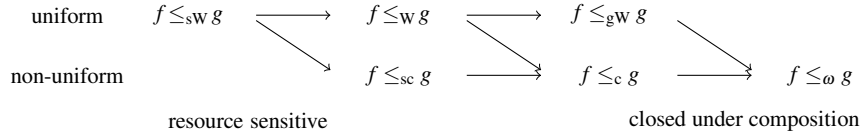

Yet another related reducibility that originates from descriptive set theory is {\em Wadge reducibility}, which is defined via
preimages. Given $A,B\In\IN^\IN$, we say that $A$ is {\em Wadge reducible} to $B$, in symbols $A\leqW B$, if there exists
a continuous function $f:\IN^\IN\to\IN^\IN$ such that $A=f^{-1}(B)$. Hence, Wadge reducibility is the (topological) analogue of 
many-one reducibility on Baire space. Weihrauch reducibility can be seen as a (computable) analogue of this reduction
for multi-valued functions.

In early work by Weihrauch~\cite{Wei92a,Wei92c} and by Hertling~\cite{Her96d} mostly
the continuous version of Weihrauch reducibility was considered.
In particular, Hertling~\cite{Her96d} completely characterized continuous
(strong) Weihrauch and Wadge degrees of certain functions with
discrete image in terms of preorders on labeled forests.
Kudinov, Selivanov and Zhukov~\cite{KSZ10} and Hertling and Selivanov~\cite{HS14a}
have studied the decidability and complexity of some initial
segments of these preorders. Such preorders and versions of Weihrauch reducibility have also been used in descriptive set theory, e.g., by Carroy~\cite{Car13a}.

\subsection{Descriptive Set Theory}
\label{subsec:descriptive}

Descriptive set theory studies the complexity of subsets of and functions between separable complete metric spaces. 
Wadge reducibility has been established as a critical tool here, and reasonable classes of subsets are typically closed downwards under Wadge reducibility. 
For functions, Weihrauch reducibility can play the analogous role. 
As demanded by Moschovakis~\cite{Mos10a}, this treatment covers both the effective and the non-effective case simultaneously, 
with the former implying the latter via relativization.

Many typical classes of functions even have complete problems under Weihrauch reducibility. 
Theorem~\ref{thm:Borel} provides an example, another one is related to $({\mathrm\Sigma^0_{n+2}},{\mathrm\Sigma^0_{n+2}})$--measurability, which is defined such that preimages of ${\mathrm\Sigma^0_{n+2}}$--sets
are ${\mathrm\Sigma^0_{n+2}}$--sets (see Pauly and de Brecht~\cite{PdB14} and Kihara~\cite{Kih15}).

\begin{theorem}[Effective $({\mathrm\Sigma^0_{n+2}},{\mathrm\Sigma^0_{n+2}})$--measurability]
\label{thm:Delta}
$f\leqW\C_\IN^{(n)}\iff f$ is effectively {$({\mathrm\Sigma^0_{n+2}},{\mathrm\Sigma^0_{n+2}})$--measurable}, for all $f:X\to Y$ on
complete computable metric spaces $X,Y$ with $n=0$ and for $f:\IN^\IN\to\IN^\IN$ with $n\in\IN$.
\end{theorem}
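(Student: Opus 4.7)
The plan is to prove the equivalence in two stages: first the base case $n=0$ by a direct argument, then lift to arbitrary $n\geq 1$ (for $f:\IN^\IN\to\IN^\IN$) using the relationship between jumps and Borel complexity. For the base case I use the strong equivalence $\C_\IN\equivSW\min^{\rm c}$ from Theorem~\ref{thm:CN-strong}, so that a reduction $f\leqW\C_\IN$ is witnessed by computable $H,K:\In\IN^\IN\to\IN^\IN$ with $\delta_Y\circ H\circ\langle\id,\min^{\rm c}\circ K\rangle\prefix f\circ\delta_X$.

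For $(\Rightarrow)$ of the base case, fix a basic open $V\In Y$ and observe that $p\in\delta_X^{-1}(f^{-1}(V))$ iff there exists $m\in\IN$ such that $\min^{\rm c}(K(p))=m$ and $\delta_Y(H\langle p,m\rangle)\in V$. For fixed $m$ the first conjunct is the conjunction of a $\Pi^0_1$-statement ($m$ never appears in the enumeration $K(p)$) with a $\Sigma^0_1$-statement (each $j<m$ does appear), while the second is uniformly $\Sigma^0_1$ in $(p,m)$ by continuity of $H$. Each disjunct is therefore uniformly $\Sigma^0_2$ in $m$, and the countable disjunction over $m$ is $\Sigma^0_2$. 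Extending via countable unions yields $f^{-1}(U)\in\Sigma^0_2(X)$ uniformly for $U\in\Sigma^0_2(Y)$, i.e.\ effective $(\Sigma^0_2,\Sigma^0_2)$-measurability. For $(\Leftarrow)$, I would invoke a uniform effective Jayne--Rogers-style decomposition theorem: an effectively $(\Sigma^0_2,\Sigma^0_2)$-measurable $f$ admits a uniform decomposition $X=\bigcup_{n\in\IN}F_n$ into co-c.e.\ closed pieces with uniformly computable restrictions $f|_{F_n}$. Let $K$ enumerate, on input $p$ (a name of $x$), the indices $n$ with $x\notin F_n$; then $\min^{\rm c}(K(p))$ is the least $m$ with $x\in F_m$, and $H\langle p,m\rangle$ evaluates the uniform realizer of $f|_{F_m}$ at $x$.

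For the inductive step, combine the base case with Proposition~\ref{prop:jump-cylinder} and Theorem~\ref{thm:Borel}. Iterating $f'\equivSW f\stars\lim$ gives that $\C_\IN^{(n)}$ is strongly equivalent to $\C_\IN$ composed $n$ times with $\lim$ via $\stars$; since $\lim$ is a cylinder, this agrees with $\C_\IN*\lim^{[n]}$ up to Weihrauch equivalence. By Theorem~\ref{thm:Borel}, $\lim^{[n]}$ has exactly the effectively $\Sigma^0_{n+1}$-measurable realizers. Thus $f\leqW\C_\IN^{(n)}$ iff $f$ factors through an effectively $\Sigma^0_{n+1}$-measurable ``input transformation'' followed by an effectively $(\Sigma^0_2,\Sigma^0_2)$-measurable map, and a standard descriptive set-theoretic calculation (composition of a $\Sigma^0_{n+1}$-measurable map with a $(\Sigma^0_2,\Sigma^0_2)$-measurable one) shows this coincides with $f$ being effectively $(\Sigma^0_{n+2},\Sigma^0_{n+2})$-measurable on Baire space.

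The main obstacle is the $(\Leftarrow)$ direction of the base case: producing a uniform, effective Jayne--Rogers decomposition on an arbitrary complete computable metric space. The classical Jayne--Rogers theorem is already delicate, and an effective uniform version must extract indices for the pieces $F_n$ and realizers for $f|_{F_n}$ directly from a code witnessing effective $(\Sigma^0_2,\Sigma^0_2)$-measurability. If this proves too heavy, an alternative is a direct Cauchy-name construction: feed $\C_\IN$ an appropriately organized list of $\Sigma^0_2$-indices for preimages $f^{-1}(B(y_k,2^{-j}))$ with $(y_k)$ a dense sequence in $Y$, using Corollary~\ref{cor:choice-composition} to collapse the composed selections into a single application of $\C_\IN$.
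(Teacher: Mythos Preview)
The paper does not prove this theorem at all; it simply cites Pauly--de Brecht for $n=0$ and Kihara for $n\geq 1$. So your proposal is attempting considerably more than what the survey provides, and it is fair to evaluate it on its own merits.

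Your $(\Rightarrow)$ argument for $n=0$ is essentially correct, though the phrase ``extending via countable unions'' is misleading: a $\SO{2}$--set is a countable union of \emph{closed} sets, not open ones, so you need to observe that the conjunct $\delta_Y H\langle p,m\rangle\in C$ is $\PO{1}$ for closed $C$, and then the disjunction over $m$ of $[\PO{1}\wedge\SO{1}\wedge\PO{1}]$ is still $\SO{2}$. Your identification of an effective Jayne--Rogers decomposition as the crux of $(\Leftarrow)$ is exactly right; this is precisely the content of the Pauly--de Brecht result the paper cites.

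The genuine gap is in your inductive step. The ``standard descriptive set-theoretic calculation'' you invoke does not give what you claim: if $g$ is $\SO{n+1}$--measurable and $h$ is $(\SO{2},\SO{2})$--measurable, then $g^{-1}(\SO{k})\In\SO{n+k}$, so $(h\circ g)^{-1}(\SO{n+2})=g^{-1}(h^{-1}(\SO{n+2}))\In g^{-1}(\SO{n+2})\In\SO{2n+2}$, which for $n\geq 1$ overshoots $\SO{n+2}$. The forward direction \emph{can} be rescued by arguing directly at level $n$ rather than via composition: for each $m\in\IN$ the set of $p$ with ``$m$ is a valid answer to $\C_\IN^{(n)}(K(p))$'' is uniformly $\PO{n+1}$, and $H\langle\cdot,m\rangle$ is computable on it, giving a piecewise-computable decomposition on a $\PO{n+1}$ cover, hence $(\SO{n+2},\SO{n+2})$--measurability. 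But the backward direction for $n\geq 1$ cannot be bootstrapped from the base case in the way you suggest: it requires a level-$n$ effective decomposition theorem (every effectively $(\SO{n+2},\SO{n+2})$--measurable $f:\IN^\IN\to\IN^\IN$ is uniformly piecewise computable on a $\PO{n+1}$ cover), which is exactly Kihara's contribution and is not a consequence of the $n=0$ case plus jump algebra. Indeed, as the paper remarks just after the theorem, the classical (non-effective) analogue for $n\geq 2$ is the open generalized Jayne--Rogers conjecture, so one should not expect this step to be routine.
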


There is a subtle but crucial issue with relativization here: 
Relativizing the theorem covers the case where the preimage map of $f$ from ${\mathrm\Sigma^0_{n+2}}$--sets to ${\mathrm\Sigma^0_{n+2}}$--sets is continuous, 
rather than merely being well-defined. For $n\in\{0,1\}$ the theorem of Jayne and Rogers~\cite{JR82} and a theorem by Semmes~\cite{Sem09} 
show that these cases are equivalent. For $n > 1$, the question whether the cases are equivalent is open and equivalent to the generalized 
conjecture of Jayne and Rogers. This is discussed in some more detail in~\cite{Pau14}.

Weihrauch complete problems for function classes correspond to game characterizations in descriptive set theory. 
A general account of the latter is provided by Motto Ros~\cite{MRos12}, and the link to Weihrauch reducibility is made by Nobrega and Pauly~\cite{NP17}.

More generally, the theory of Weihrauch degrees is closely linked to a programme to extend descriptive set theory from Polish spaces to larger classes of spaces, 
such as represented spaces. Such an endeavor was called for and started by Selivanov~\cite{Sel04a}. 
De Brecht introduced the quasi-Polish spaces~\cite{dBre13} and demonstrated that many results from descriptive set theory remain valid in this setting. 
A further extension is possible using the formalism of jump operators (de Brecht~\cite{dBre14}) or computable endofunctors (Pauly and de Brecht~\cite{PdB15}), 
both of which are closely related to each other and to Weihrauch degrees.

\subsection{Other Models of Computability}
\label{subsec:models}

We have already seen in sections~\ref{sec:jumps} and \ref{sec:choice} that other models of computability can be 
characterized in the Weihrauch lattice. This includes the classes of problems that are computable with finitely many mind changes,
limit computable, non-deterministically computable and Las Vegas computable.

There are completely different algebraic models of computability such as the Blum-Shub-Smale machines~\cite{BCSS98} (BSS machines). 
Due to their algebraic nature, the class of functions computable by these
machines lack certain completeness properties and cannot be characterized exactly in the Weihrauch lattice. 
However, some tight upper bounds have been found by Neumann and Pauly~\cite{NP18}. 

\begin{theorem}[Algebraic computation]
\label{thm:algebraic-models}
If $f:\In\IR^*\to\IR^*$ is computable on a BSS machine, then $f\leqW\C_\IN$ and there is a function $f:\In\IR^*\to\IR^*$ that is computable 
        on a BSS machine and satisfies $f\equivW\C_\IN$.
\end{theorem}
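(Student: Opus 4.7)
The plan is to prove the two claims in turn.

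For the upper bound, that every BSS-computable $f:\In\IR^*\to\IR^*$ satisfies $f\leqW\CN$, I would use the characterization of $\CN$ as computability with finitely many mind changes, which combines Proposition~\ref{prop:limit-computable} with the equivalence $\lim_\IN\equivW\CN$ from Theorem~\ref{thm:CN-strong}. Given a BSS program $P$ computing $f$, I will design a realizer that, on a Cauchy-represented input $\vec{x}$, simulates $P$ symbolically while provisionally assigning the answer ``$=0$'' to each equality test ``$y(\vec{x})=0$?'' encountered on the current branch. This choice fixes a tentative path through $P$'s decision tree and a tentative polynomial output expression, which is evaluated on the available approximations of $\vec{x}$ to feed the output Cauchy stream. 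As those approximations improve, I check for each current test whether $y(\vec{x})\neq 0$ is semi-decidably confirmable; as soon as it is, I flip that test's answer to ``$\neq 0$'', re-simulate $P$ from that point, and again provisionally set ``$=0$'' for any newly encountered tests.

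The critical observation is that only finitely many flips can occur on each input $\vec{x}$. Whenever the current simulated path and the true computation path on $\vec{x}$ diverge, the earliest point of divergence must be a test at which the simulation claims ``$=0$'' while the polynomial is actually nonzero (the opposite case is excluded, since we flip to ``$\neq 0$'' only after a semi-decidable confirmation). Eventually this divergence is resolved by a flip, and the simulated and true paths agree one step further. Since $P$ halts in finitely many steps on each valid input, the true path is finite, so the flipping process terminates after boundedly many steps, each of them altering only finitely many positions of the output Cauchy stream. This yields a realizer with finitely many mind changes and hence $f\leqW\lim_\IN\equivW\CN$.

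For the existence part, I would exhibit a concrete BSS-computable $f$ with $\CN\leqW f$. The strategy is to use a BSS program performing an input-dependent unbounded adaptive search through a sequence of equality tests, since each such test at the realizer level realizes an instance of $\LPO$ (Proposition~\ref{prop:LLPO-LPO}), and an input-controlled unbounded sequence of such queries has precisely the power of finite mind changes, i.e.\ of $\CN$ by Corollary~\ref{cor:notions-computability}. The delicate step is the reverse reduction $\CN\leqW f^\r$: from an enumeration $p$ of $\IN\setminus A$ for nonempty $A\In\IN$, one must produce, by a computable map $K$, a valid input to the realizer $f^\r$ whose output computably encodes an element of $A$. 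The concrete BSS program and the accompanying reduction $K$ are constructed in~\cite{NP18}.

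The principal obstacle is the accounting of flips in the upper bound: a priori each flip can open new sub-branches containing new equality tests which themselves could demand flipping, suggesting an unbounded cascade. The resolution is the monotone alignment with the unique, finite true computation path: since each flip strictly improves the agreement between the simulated and true paths, the total number of flips per input is bounded by the length of that true path, which is finite because the BSS machine halts.
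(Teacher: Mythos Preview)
The paper's own ``proof'' is simply a citation to Neumann and Pauly~\cite{NP18}, so your sketch actually supplies more detail than the survey does; the mind-change simulation you describe (provisionally take the closed branch at each test, flip when the open alternative is semi-decidably confirmed, and observe that each flip strictly extends the agreement with the finite true path) is exactly the standard argument behind this result.

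One small point: you phrase everything for equality tests ``$y(\vec{x})=0$?'', but the BSS model over the ordered reals of~\cite{BCSS98} uses order tests ``$y(\vec{x})\geq 0$?''. Your argument adapts without change---provisionally take the closed branch $y\geq 0$ and flip to $y<0$ when strict negativity is confirmed---but you should say so. Also, ``tentative polynomial output expression'' should be ``rational expression'' if division is allowed, and you should make explicit that while the tentative path need not terminate, the interleaved detection of the first divergence guarantees a flip before infinitely many simulation steps are wasted; your final paragraph implicitly handles this, but it is worth stating.
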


Hertling and Weihrauch~\cite{HW94,Her96b} have studied how the number of tests that are performed are related to degeneracies in computations.
Yet a further class of machines can be obtained if one allows infinite computation time of higher order.
Weihrauch computability was generalized to this context by Carl~\cite{Car16a} and Galeotti and Nobrega~\cite{GN17}.




\bibliographystyle{spmpsci_cca}
\bibliography{C:/Users/vbrattka/Dropbox/Bibliography/lit}
There is an electronic version of a bibliography on Weihrauch complexity\footnote{See {\tt http://cca-net.de/publications/weibib.php}}.

\newpage

\section{Appendix: Additional Remarks and Proofs}

In this section\footnote{This section is not supposed to be part of the published version of this survey.} we add some further references
for results that have been presented, and we provide some additional proofs that close some gaps.

\subsection*{Represented Spaces}

We motivate the way in which we have defined the completion in Definition~\ref{def:operations-representations} by the following lemma.

\begin{lemma}[Precompleteness]
\label{lem:precompleteness}
Let $(X,\delta_X)$ be a represented space and $(\overline{X},\delta_{\overline{X}})$ its completion.
\begin{enumerate}
\item $\id:X\into\overline{X},x\mapsto x$ is a computable injection, i.e.,
\begin{enumerate}
\item There is a computable $F:\In\IN^\IN\to\IN^\IN$ with $\delta_X(p)=\delta_{\overline{X}}F(p)$ for $p\in\dom(\delta_X)$.
\item There is a computable $G:\In\IN^\IN\to\IN^\IN$ with $\delta_{\overline{X}}(p)=\delta_XG(p)$ for $p\in\delta_{\overline{X}}^{-1}(X)$.
\end{enumerate}
\item For every computable function $F:\In\IN^\IN\to\IN^\IN$ there exists a total computable function 
$\overline{F}:\IN^\IN\to\IN^\IN$ such that $\delta_{\overline{X}}F(p)=\delta_{\overline{X}}\overline{F}(p)$ for all $p\in\dom(F)$.
\end{enumerate}
\end{lemma}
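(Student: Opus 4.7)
The plan is to produce three explicit computable procedures, with item~1 consisting of straightforward translations and item~2 carrying the real content of precompleteness.

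For 1(a) I would take the computable map $F(p)(n):=p(n)+1$; then $F(p)$ has no zero entries, so the operation $q\mapsto q-1$ merely subtracts one from each entry and recovers $p$, giving $\delta_{\overline{X}}F(p)=\delta_X(F(p)-1)=\delta_X(p)$ whenever $p\in\dom(\delta_X)$. For 1(b) I would define $G(p)$ by scanning $p$ from left to right and emitting $p(n)-1$ whenever $p(n)\geq 1$, skipping zero entries. When $p\in\delta_{\overline{X}}^{-1}(X)$ we have $p-1\in\dom(\delta_X)\In\IN^\IN$, so the scan produces an infinite sequence equal to $p-1$, and $\delta_XG(p)=\delta_X(p-1)=\delta_{\overline{X}}(p)$. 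Both maps are patently computable.

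For item~2 the idea is a dovetailed simulation that pads the output stream with zeros whenever the simulation of $F$ has not yet produced a fresh symbol. Concretely, $\overline{F}$ on input $p$ will maintain a pending-output buffer, initially empty; at output step $n$ it advances the simulation of $F(p)$ by one step, appends any newly produced symbol of $F$ to the buffer, and then outputs either the front element of the buffer (popping it) or, if the buffer is empty, the symbol $0$. Since each output step performs only bounded work, $\overline{F}$ is total and computable, regardless of whether $F(p)$ is defined.

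To verify correctness I would observe that for $p\in\dom(F)$ each symbol of $F(p)=v_0v_1v_2\ldots$ is eventually enqueued and subsequently dequeued in the correct order, so $\overline{F}(p)$ is exactly $F(p)$ with additional $0$'s inserted between some entries. Because the operation $q\mapsto q-1$ by definition deletes zero entries and decrements the rest, such insertions do not change the resulting word; hence $\overline{F}(p)-1=F(p)-1$ in $\IN^\IN\cup\IN^*$, and $\delta_{\overline{X}}\overline{F}(p)=\delta_{\overline{X}}F(p)$ follows immediately from the definition of $\delta_{\overline{X}}$. The conceptual heart of the argument, and the reason a precompletion can be built in the first place, is that $0$-entries in a $\delta_{\overline{X}}$-name carry no information and may be freely inserted to absorb stalling of the simulation; this is what turns a partial $F$ into a total $\overline{F}$ without disturbing the represented element. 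The only technical point needing a sentence of care is that the FIFO buffer actually drains, which holds because at most one symbol enters and one leaves per step, so every enqueued $v_i$ is eventually emitted.
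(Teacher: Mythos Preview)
Your proof is correct and follows essentially the same approach as the paper: the same maps $p\mapsto p+1$ and $p\mapsto p-1$ for item~1, and for item~2 the same idea of padding the output stream of a simulation of $F$ with harmless $0$'s to force totality, with $\overline{F}(p)-1=F(p)-1$ as the verification. Your buffer formulation is slightly more elaborate than the paper's ``write a $0$ whenever no new output symbol is produced for a fixed time,'' but the content is identical.
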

\begin{proof}
The function $F$ with $F(p):=p+1$ is computable and satisfies the claim, where $(p+1)(n):=p(n)+1$.
The function $G$ with $G(p):=p-1$ is a partial computable function that satisfies the claim.
Given an arbitrary computable $F$, there is a Turing machine that computes $F$.
We modify this Turing machine so that it computes a total function $\overline{F}$ as follows:
upon input $p$ we consecutively write the output symbols of $F(p)$ on the output tape,
but whenever for some fixed time no new output symbol is produced, then we write an additional symbol
$0$ on the output tape. This guarantees that the output is infinite and that $\overline{F}(p)-1=F(p)-1$ for all $p\in\dom(F)$.
\qed
\end{proof}

\begin{proof}[of Proposition~\ref{prop:closed}]
See \cite[Theorem~3.10, Corollary~3.14]{BP03}.
\qed
\end{proof}

\subsection*{The Weihrauch Lattice}

\begin{proof}[of Proposition~\ref{prop:GM09}]
We consider the represented spaces $(X,\delta_X)$, $(Y,\delta_Y)$, $(Z,\delta_Z)$ and $(W,\delta_W)$.
We prove 1. Let $f\leqW g$. Then there are computable functions $H,K:\In\IN^\IN\to\IN^\IN$
such that $H\langle \id,GK\rangle$ is a realizer of $f$ whenever $G$ is a realizer of $g$.
We choose $V:=\IN^\IN$. Now
$k:\In X\mto\IN^\IN\times Z$ with $k:=(\id,\delta_ZK)\circ\delta_X^{-1}$ 
and $h:\In \IN^\IN\times W\mto Y$ with $h:=\delta_Y\circ H\circ\langle\id\times\delta_W^{-1}\rangle$
are computable and we obtain
\begin{eqnarray*}
h\circ(\id\times g)\circ k &=& \delta_Y\circ H\circ\langle\id\times\delta_W^{-1}\rangle\circ(\id\times g)\circ(\id,\delta_ZK)\circ\delta_X^{-1}\\
&=& \delta_Y\circ H\langle\id,\delta_W^{-1}g\delta_ZK\rangle\circ\delta_X^{-1}\prefix f,
\end{eqnarray*}
where the last mentioned relation holds since for every $p\in\dom(\delta_W^{-1}g\delta_Z)$ and
$q\in\delta_W^{-1}g\delta_Z(p)$ there is a realizer $G\vdash g$ with $G(p)=q$, and for this realizer we have $H\langle\id,GK\rangle\vdash f$.

Let now $h:\In V\times W\mto Y$ and $k:\In X\mto V\times Z$ be computable with $h\circ (\id_V\times g)\circ k\prefix f$
for some represented space $V$
and let $H,K:\In\IN^\IN\to\IN^\IN$ be computable realizers of $h,k$, respectively. 
Let $K_1,K_2:\In\IN^\IN\to\IN^\IN$ be such that $K(p)=\langle K_1(p),K_2(p)\rangle$
and let $\pi:\IN^\IN\times\IN^\IN\to\IN^\IN,(p,q)\mapsto\langle p,q\rangle$.
Let $G\vdash g$. We note that this implies $\langle\id\times G\rangle\circ\pi^{-1}\vdash \id_V\times g$
and hence $H\circ\langle \id\times G\rangle\circ\pi^{-1}\circ K\vdash h\circ(\id_V\times g)\circ k\prefix f$.
We obtain
\[
H\circ\langle K_1\times\id\rangle\circ\pi^{-1}\circ\langle\id,G\circ K_2\rangle =H\circ\langle K_1,G\circ K_2\rangle=H\circ\langle \id\times G\rangle\circ\pi^{-1}\circ K\vdash f.
\]
Hence, $H_0:=H\circ\langle K_1\times\id\rangle\circ\pi^{-1}$ and $K_2$ are computable functions
that satisfy $H_0\langle\id,G K_2\rangle\vdash f$. The choice of $H_0$ and $K_2$ is independent of $G$
and hence they witness that $f\leqW g$ holds.

The statement 2.\ can be proved similarly. 
\qed
\end{proof}

The characterization of ordinary Weihrauch reducibility in \cite[Lemma~4.5]{GM09} is not correct,
which was noticed by Peter Hertling.
We provide a counterexample, i.e., we show that there are $f:\In X\mto Y$ and $g:\In W\mto Z$ such that $f\leqW g$ holds, 
but there is no computable $k:\In X\mto Z$ and $h:\In X\times Z\mto Y$ such that $h\circ(\id_X,gk)\prefix f$.

\begin{example}
Let $p,q\in\IN^\IN$ be Turing incomparable and let $\{0\}$ be represented in a non-standard way 
such that $p,q$ are the only two names of $0$. We represent $\{0,1\}$ in the usual way.
We now consider the functions $f:\{0\}\to\IN^\IN,0\mapsto\langle p,q\rangle$ and
$g:\{0,1\}\to\IN^\IN$ with $g(0)=p$, $g(1)=q$. Then $f\leqW g$ holds, since one can computably
distinguish $p,q$. Hence the function $K:\In\IN^\IN\to\IN^\IN$ that sends $p$ to a name of $1$
and $q$ to a name of $0$ is computable. The function $H:\In\IN^\IN\to\IN^\IN$ that maps
$\langle p,q\rangle$ and $\langle q,p\rangle$ both to $\langle p,q\rangle$ is also computable. 
The functions $K,H$ witness the reduction $f\leqW g$. On the other hand, there are no computable $h,k$
with $h\circ(\id_X,gk)\prefix f$: it is easy to check that for each of the three computable $k:\{0\}\mto\{0,1\}$ 
there is no suitable $h$.
\end{example}

\begin{proof}[of Theorem~\ref{thm:Tavana-Weihrauch}]
See~\cite{TW11}.
\qed
\end{proof}

\begin{proof}[of Proposition~\ref{prop:cylinder}]
See \cite[Proposition~3.5, Corollary~3.6]{BG11}.
\qed
\end{proof}

\begin{proof}[of Proposition~\ref{prop:mon-closure}]
For the monotonicity of $\times$ and $\sqcap$ with respect to $\leqW$ and $\leqSW$, see \cite[Propositions~3.2, 3.10]{BG11}.
For the fact that parallelization is a closure operator with respect to $\leqW$ and $\leqSW$, see \cite[Proposition~4.2]{BG11}
and for finite parallelization with respect to the topological version of $\leqW$, see \cite[Theorem~6.2]{Pau10a}.
That $\sqcup$ is monotone for the continuous version of $\leqW$ follows from \cite[Corollary~4.7]{Pau10a}.
The proofs can be transferred to the computable case, and for the monotonicity of $\sqcup$ and of finite parallelization 
the proofs can be transferred also to $\leqSW$.
The fact that $\boxplus$ is monotone with respect to $\leqSW$ follows from Theorem~\ref{thm:strong-lattice} (see below).
In \cite[Proposition~3.12]{Dzh18} it was also proved that $f\sqcup g\equivW f\boxplus g$, which implies that $\boxplus$ is monotone with respect to $\leqW$.
We still need to prove that $+$ is monotone with respect to $\leqW$ and $\leqSW$:
Let $f_i:\In X_i\mto Y_i$ and $g_i:\In Z_i\mto W_i$ be problems with
$f_1\leqW f_2$ via computable $H_1,K_1$ and $g_1\leqW g_2$ via computable $H_2,K_2$. 
We show that $f_1+f_2\leqW g_1+g_2$. Let $\overline{H_1}$ and $\overline{H_2}$ be total
computable extensions of $H_1$ and $H_2$ with respect to $\overline{Y_1}$ and $\overline{W_1}$, respectively.
We define computable $H,K$ via $K\langle p,q\rangle:=\langle K_1(p),K_2(q)\rangle$ and
$H\langle\langle p,q\rangle,\langle r,s\rangle\rangle:=\langle\overline{H_1}\langle p,r\rangle,\overline{H_2}\langle q,s\rangle\rangle$.
Then $f_1+f_2\leqW g_1+g_2$ holds via $H,K$.
Monotonicity of $+$ with respect to $\leqSW$ can be proved analogously.
\qed
\end{proof}

\begin{proof}[of Lemma~\ref{lem:realizer}]
This proof is immediate since $f:\In X\mto Y$ and $f^\r=\delta_Y^{-1}\circ f\circ\delta_X:\In\IN^\IN\mto\IN^\IN$ share exactly
the same realizers $F:\In\IN^\IN\to\IN^\IN$.
\qed
\end{proof}

\begin{proof}[of Theorem~\ref{thm:lattice}]
By \cite[Theorem~3.14]{BG11} $(\WW,\leqW)$ is an upper semilattice with infimum $\sqcap$ and
by \cite[Corollary~4.7, Theorem~4.23]{Pau10a} $(\WW,\leqW)$ is a distributive lower semilattice with supremum $\sqcup$.
Note that the last mentioned result was proved for the continuous version of $\leqW$, but in the case of finite
suprema the proof goes through for the computable version too.
\qed
\end{proof}

\begin{proof}[of Theorem~\ref{thm:strong-lattice}]
This result was proved in \cite{Dzh18}. Note that we use a completion that is defined in a slightly different
way and \cite[Proposition~3.5]{Dzh18} can be replaced by Lemma~\ref{lem:precompleteness}.
The function $e$ defined in \cite{Dzh18} corresponds to function $G$ from that lemma
and the map $F\mapsto\overline{F}$ from that lemma plays the role of $a$ from \cite{Dzh18}.
Compare the monotonicity proof for $+$ in the proof of Proposition~\ref{prop:mon-closure} above.
\qed
\end{proof}

\subsection*{Algebraic and Topological Properties}

\begin{proof}[of Example~\ref{ex:idempotent-parallelizable}]
The intermediate value theorem has been classified in \cite[Theorem~6.2]{BG11a} which yields
a classification of its parallelization (e.g.\ via \cite[Corollary~3.11]{BG11a}, Corollary~\ref{cor:choice-zero} and
Theorem~\ref{thm:choice-cantor}). We note that we have $\C_2\leqSW\IVT$ and hence
even the strong reduction $\Z_{[0,1]}\leqSW\widehat{\IVT}$ holds. The fact that $\IVT$ is
not idempotent has been proved in \cite[Theorem~93]{BLRMP18}.
\qed
\end{proof}

\begin{proof}[of Proposition~\ref{prop:idempotency-parallelizability}]
The fact that any (strongly parallelizable) problem is (strongly) idempotent follows from \cite[Proposition~4.6]{BG11}.
Since $\id_{\{0\}}= f^0\leqSW f^*$, it is clear that any $f^*$ is strongly pointed.
Since $f\times f= f^2\leqSW f^*$, it is also clear that $f$ is (strongly) idempotent if $f\equivW f^*$ (or $f\equivSW f^*$, respectively). 
Vice versa, if $f\times f\equivW f$, then there is a uniform method to show $f^n\leqW f$ for all $n\geq1$
and if $f$ is pointed, then also $f^0\leqW f$ holds. This implies $f^*\leqW f$. An analogous argument works in the strong case.
\qed
\end{proof}

\begin{proof}[of Theorem~\ref{thm:squashing}]
The squashing theorem was first formulated and proved in \cite[Theorem~2.5]{DDH+16}.
A detailed proof of the version for general problems on Baire space as stated here can be found in \cite[Theorem~6.3.3]{Rak15}.
\qed
\end{proof}

\begin{proof}[of Proposition~\ref{prop:fractal}]
This result has first been proved for ordinary Weihrauch reducibility in \cite[Lemma~5.5]{BBP12} with the
notion of a fractal still being implicit and it has been formulated similarly as here in \cite[Proposition~2.6]{BGM12}.
\qed
\end{proof}

\begin{proof}[of Proposition~\ref{prop:densely-realized}]
This result has been proved for $f:\In\IN^\IN\mto\IN^\IN$ in~\cite[Proposition~55]{BP18}.
This implies the result for general $f:\In X\mto Y$ if $Y$ has a total representation $\delta_Y$.
In this case $f^{\rm r}(p)$ is dense in $\dom(\delta_Y)$ if it is dense in $\IN^\IN$.
\qed
\end{proof}

\subsection*{Completeness, Composition and Implication}

\begin{proof}[of Theorem~\ref{thm:completeness}]
See \cite[Propositions~3.15 and 3.16]{HP13}.
\qed
\end{proof}

\begin{proof}[of Theorem~\ref{thm:compositional-product-implication}]
See \cite[Corollary~18 and Theorem~24]{BP18}.
\qed
\end{proof}

\begin{proof}[of Proposition~\ref{prop:compositional-product-cylinder}]
See \cite[Lemma~17, Corollary~18 and Proposition~29]{BP18}.
This deserves some further justification, since $\star$ was defined in a slightly different
way here compared to \cite{BP18}. However, $f\star g\leqW f*g$ is obvious, and
using the notions from \cite{BP18} for $f:\In X\mto Y$ it is not too difficult
to see that $(\id\times f)\circ g^{\rm t}_{\IN^\IN\times X}\leqW f\star g$ holds. 
\qed
\end{proof}

\begin{proof}[of Corollary~\ref{cor:strong-compositional-product}]
See \cite[Lemma~2.5]{BHK17a}.
\qed
\end{proof}

\begin{proof}[of Proposition~\ref{prop:algebraic-properties}]
See \cite[Proposition~32]{BP18} for the statements on $*$ and $\to$ and
\cite[Lemma~2.6]{BHK17a} for the statement regarding $\stars$.
\qed
\end{proof}

\begin{proof}[of Proposition~\ref{prop:order}]
$f\times g\leqW f*g$ was proved in \cite[Lemma~4.3]{BGM12}, and this implies $f\times g\leqSW f\star g$,
since $f\star g$ is a cylinder by Proposition~\ref{prop:compositional-product-cylinder}.
In \cite{BBP12} it was mentioned that $f\sqcup g\leqW f\times g$ holds for pointed $f,g$.
There are counterexamples for the non-pointed case given in the proof of \cite[Proposition~34]{BP18}.
In the proof of \cite[Proposition~3.12]{Dzh18} it is shown that $f\boxplus g\leqSW f\sqcup g$ holds.
Since $f\sqcap g$ is the infimum of $f,g$ with respect to $\leqSW$ and $f\boxplus g$ is the supremum,
it is clear that $f\sqcap g\leqSW f\boxplus g$.
The reduction $f+g\leqSW f\sqcap g$ is easy to see: we use the same input for $f\sqcap g$ as for $f+g$
and we just pair the result of $f\sqcap g$ with $\bot$ in the second or first component, depending on whether
$f\sqcap g$ yields a result for $f$ or for $g$, respectively. 
Finally, also $f\boxplus g\leqSW f\times g$ is easy to see for pointed $f, g$: depending on the instance
of $f\boxplus g$ we forward the instance to $f$ or $g$ and we evaluate the other problem among $f$ and $g$
on some arbitrary computable input. The resulting solution of $f\times g$ is a solution for $f\boxplus g$.
\qed
\end{proof}

\begin{proof}[of Proposition~\ref{prop:composition-implication}]
See \cite[Corollary~25]{BP18}.
\qed
\end{proof}

\begin{proof}[of Theorem~\ref{thm:Brouwer-Heyting}]
See \cite[Theorems~4.1 and 4.9]{HP13}.
\qed
\end{proof}

\subsection*{Limits and Jumps}

\begin{proof}[of Proposition~\ref{prop:limit-computable}]
See \cite[Theorems~7.11 and Corollary~7.15]{BBP12}.
\qed
\end{proof}

\begin{proof}[of Proposition~\ref{prop:limits}]
This follows from~\cite[Proposition~9.1]{Bra05}.
\qed
\end{proof}

\begin{proof}[of Proposition~\ref{prop:limN-closure}]
This follows, for instance, form Corollary~\ref{cor:choice-composition} and
has been proved, for instance, in \cite[Corollary~7.6]{BBP12}.
\qed
\end{proof}

\begin{proof}[of Theorem~\ref{thm:Borel}]
This follows from \cite[Proposition~9.1]{Bra05}; see the proof of \cite[Fact 2.2]{BR17} for a more detailed
reasoning.
\qed
\end{proof}

\begin{proof}[of Proposition~\ref{prop:algebraic-lim}]
Since $\lim\equivSW\widehat{\LPO}$ (e.g., by Theorem~\ref{thm:lim}), it is clear that $\lim$
is strongly parallelizable and hence, in particular, strongly idempotent. This implies that it is
a cylinder, since $\id\times\lim\leqSW\lim\times\lim\leqSW\lim$. 
That $\lim$ is a strong fractal and finitely tolerant is obvious and hence it is (strongly) countably irreducible.
\qed
\end{proof}

\begin{proof}[of Theorem~\ref{thm:lim}]
It follows from \cite[Lemma~6.3]{BG11} that $\widehat{\LPO}\equivSW\EC$
and from \cite[Corollary~3.11]{BG11a} that $\widehat{\LPO}\equivW\widehat{\C_\IN}$ holds (for a definition of $\C_\IN$ see Definition~\ref{closedchoice}).
Since all the problems are cylinders, it is clear that this can be strengthened to a strong Weihrauch equivalence
and hence $\widehat{\LPO}\equivSW\widehat{\lim_\IN}$ follows.
See \cite[Example~3.10]{BBP12} for $\lim\equivW\widehat{\C_\IN}$ and once again this extends
to a strong Weihrauch equivalence since $\lim$ is a cylinder.
It follows from \cite[Lemma~8.9]{BBP12} that $\lim\equivSW\J$.
$\sup\equivW\EC$ follows from \cite[Proposition~3.7]{BG11a} and once again
this extends to a strong Weihrauch equivalence since $\sup$ is a cylinder
(this follows since $\sup_{2^\IN}$ with respect to the lexicographic order is a cylinder
and $\sup_{2^\IN}\leqSW\sup\leqSW\lim_\IR\leqW\sup_{2^\IN}$ holds).
The equivalence $\inf\equivSW\EC$ can be proved analogously.  
\qed
\end{proof}

\begin{proof}[of Proposition~\ref{prop:jump-monotone}]
See \cite[Proposition~5.6]{BGM12}.
\qed
\end{proof}

\begin{proof}[of Theorem~\ref{thm:jump-inversion}]
See \cite[Theorem~11]{BHK17}.
\qed
\end{proof}

\begin{proof}[of Proposition~\ref{prop:jump-algebraic}]
See \cite[Proposition~5.7]{BGM12}.
\qed
\end{proof}

\begin{proof}[of Proposition~\ref{prop:tolerance-fractality}]
It is obvious that every jump $f'$ is finitely tolerant and the rest has been proved in \cite[Proposition~5.8]{BGM12}.
\qed
\end{proof}

\begin{proof}[of Proposition~\ref{prop:jump-cylinder}]
See \cite[Corollary~5.16]{BGM12}.
\qed
\end{proof}

\begin{proof}[of Corollary~\ref{cor:invariance}]
See \cite[Lemma~4.4]{BG11a}; the remaining statements follow since 
all other notions can be characterized as lower cones of some complete
problem in the Weihrauch lattice (with respect to some oracle in the case of continuity).
\qed
\end{proof}

\begin{proof}[of Proposition~\ref{prop:mind}]
See \cite[Lemma~4.4]{BG11a}.
\qed
\end{proof}

\begin{proof}[of Proposition~\ref{prop:cardinality}]
See \cite[Proposition~3.6]{BGH15a}.
\qed
\end{proof}

\begin{proof}[of Example~\ref{ex:limit}]
See \cite[Corollary~7.12]{BBP12} for $\lim_\IN\equivW\lim_\Delta$.
The other statements are obvious.
\qed
\end{proof}

\begin{proof}[of Theorem~\ref{thm:LPO}]
This result has originally been proved in \cite[Theorem~3.7]{Wei92c}.
See also \cite[Proposition~6.2]{BG11}.
\qed
\end{proof}

\begin{proof}[of Theorem~\ref{thm:linear}]
See \cite[Theorem~4.3]{Bra99}.
\qed
\end{proof}

\subsection*{Choice}

\begin{proof}[of Proposition~\ref{prop:injection-surjection}]
See \cite[Proposition~3.7, Corollary~4.3]{BBP12}.
\qed
\end{proof}

\begin{proof}[of Proposition~\ref{prop:choice-fractal}]
See \cite[Corollary~5.6]{BBP12} (and the reasoning before it), which shows that $\C_\IN,\C_{2^\IN},\C_\IR$ and $\C_{\IN^\IN}$ are fractals,
and this can be proved analogously for $\PC_\IR$ and $\PC_{2^\IN}$.
In \cite[Lemma~15.5]{BGH15a} it is shown that $\ConC_{[0,1]}$ is a total fractal, and in \cite[Observation~2]{LRP15a} it is proved that $\XC_{[0,1]^{n+1}}$
is a total fractal. The techniques used to prove totality for the last mentioned result also apply to 
$\C_{2^\IN}$ and $\PC_{2^\IN}$ (compare also \cite[Lemma~4.7]{BBP12}).  
\qed
\end{proof}

\begin{proof}[of Proposition~\ref{prop:choice-cylinder}]
It is easy to see that $\C_{2^\IN}$ and $\C_{\IN^\IN}$ are closed under $\times$ and that
the identity is strongly reducible to any of them (using singletons). Hence, $\id\times\C_X\leqSW\C_X$ for $X\in\{2^\IN,\IN^\IN\}$. That $\C_\IR$ is a cylinder follows then from Theorem~\ref{thm:CR}.
That $\C_\IN$ is not a cylinder follows from $\#\C_\IN=|\IN|$. Likewise, $\PC_{2^\IN},\PC_\IR$ and $\PC_{\IN^\IN}$ are
not cylinders as proved in \cite[Corollary~3.9]{BGH15a}.
That $\ConC_{[0,1]}$ is not a cylinder was proved in \cite[Theorem~9.5]{BLRMP18}.
\qed
\end{proof}

\begin{proof}[of Proposition~\ref{prop:choice-spaces}]
See \cite[Corollary~4.2]{BBP12} for the statements on $\C_{\IN^\IN}$ and $\C_{2^\IN}$,
see \cite[Proposition~4.8]{BBP12} for the statement on $\C_\IR$.
The statement on $\C_\IN$ follows from Proposition~\ref{prop:injection-surjection}.
Likewise, for every computable metric space $X$ with $n$ elements there is a computable surjection $\{0,...,n-1\}\to X$ and
hence $\C_X\leqSW\C_n\leqSW\K_\IN$.
\qed
\end{proof}

\begin{proof}[of Theorem~\ref{thm:non-deterministic}]
See \cite[Theorem~7.2]{BBP12}.
\qed
\end{proof}

\begin{proof}[of Corollary~\ref{cor:notions-computability}]
The first two statements are a corollary of Theorem~\ref{thm:non-deterministic}, and the statement 
on Las Vegas computability can be found in \cite[Corollary~3.4]{BGH15a}.
\qed
\end{proof}

\begin{proof}[of Theorem~\ref{thm:independent-choice}]
See \cite[Theorem~7.3]{BBP12} and \cite[Theorem~4.3]{BGH15a}.
\qed
\end{proof}

\begin{proof}[of Corollary~\ref{cor:choice-composition}]
See \cite[Corollary~7.6]{BBP12} for the statements on choice, and
see \cite[Corollary~4.5]{BGH15a} for the statements on positive choice.
The proof of Theorem~\ref{thm:independent-choice} (see \cite[Theorem~7.3]{BBP12}) goes through in the case
of unique choice too, hence one obtains the result for $\UC_{\IN^\IN}$.
\qed
\end{proof}

\begin{proof}[of Theorem~\ref{thm:CN-strong}]
See \cite[Proposition~3.8]{BGM12} for $\UC_\IN\equivSW\C_\IN\equivSW\lim_\IN$,
\cite[Corollary~4.13]{BBP12} for $\C_\IN\equivSW\C_\IQ$ and
\cite[Proposition~7.1]{BR17} for $\C_\IN\equivSW\min^{\rm c}$.
We still need to prove $\max\equivSW\C_\IN$. It is easy to see that $\max\leqSW\lim_\IN$:
given a sequence of natural numbers with an upper bound, we always repeat the maximal number
that we have seen so far. The limit of the resulting sequence is the maximum of the original sequence.
It is also easy to see that $\C_\IN\leqSW\max$: given a sequence of numbers, we generate a new
sequence where we enumerate the minimal number that we have not seen so far. The maximum
of this sequence is an element that is missing in the original sequence.
\qed
\end{proof}

\begin{proof}[of Theorem~\ref{thm:CN-weak}]
See \cite[Proposition~3.3]{BG11a} for $\CFC_\IN\equivW\C_\IN$
and \cite[Corollary~7.12]{BBP12} for $\C_\IN\equivW\lim_\Delta$.
\qed
\end{proof}

\begin{proof}[of Theorem~\ref{thm:CN-elimination}]
See \cite[Theorem~2.4]{LRP15a}.
\qed
\end{proof}

\begin{proof}[of Corollary~\ref{cor:CN-separations}]
See \cite[Proposition~4.9]{BG11a} for $\ConC_{[0,1]}\nleqW\C_\IN$
and \cite[Proposition~21]{BP10} for $\PC_{2^\IN}\nleqW\C_\IN$.
\qed
\end{proof}

\begin{proof}[of Proposition~\ref{prop:LLPO-LPO}]
See \cite[Example~3.2]{BBP12} for the statement $\C_2\equivSW\LLPO$,
see \cite[Theorem~4.2]{Wei92c} for the original proof that $\LLPO\lW\LPO$ (and \cite[Theorem~7.13]{BG11} for an alternative proof)
and see \cite[Propositions~3.3 and 4.5]{BG11a} for $\LPO\lW\C_\IN$.
\qed
\end{proof}

\begin{proof}[of Proposition~\ref{prop:finite-choice}]
See \cite[Theorem~5.4 and 4.3]{Wei92c}.
\qed
\end{proof}

\begin{proof}[of Theorem~\ref{thm:cardinality-products}]
See \cite[Theorem~32]{Pau10}.
\qed
\end{proof}

\begin{proof}[of Proposition~\ref{prop:KN}]
See \cite[Proposition~10.9]{BGM12}.
\qed
\end{proof}

\begin{proof}[of Corollary~\ref{cor:KN}]
See \cite[Corollary~10.10]{BGM12}.
\qed
\end{proof}

\begin{proof}[of Proposition~\ref{prop:min}]
$\min\leqSW\LPO^*$: given a sequence $p$ with the first element $n$, $n$ parallel applications of $\LPO$ are sufficient in order to find
out whether there is an occurrence of any of the elements $0,...,n-1$ in the sequence $p$. Once we know all the answers, we know the smallest
element in $p$ (without further access to $p$). $\LPO^*\leqSW\min$: given $n$ sequences $p_0,...,p_{n-1}$ we generate a sequence $p$ that starts
with repetitions of the number $\sum_{i=0}^{n-1}2^i$; once we find for some $i<n$ such that $p_i\not=\widehat{0}$, then we subtract $2^i$ from the number 
that we enumerate into the output sequence $p$. Given the minimum in $p$ we know all the answers to $\LPO(p_i)$ for $i<n$ without further access to the input.
\qed
\end{proof}

\begin{proof}[of Theorem~\ref{thm:choice-cantor}]
See \cite[Theorem~8.2]{BG11} for $\WKL\equivSW\widehat{\LLPO}$
and \cite[Example~3.10 and Corollary~4.5]{BBP12} for the other claims.
\qed
\end{proof}

\begin{proof}[of Corollary~\ref{cor:C2N-equiv}]
See \cite[Corollary~4.6]{BBP12} 
\qed
\end{proof}

\begin{proof}[of Theorem~\ref{thm:C2N-elimination}]
See \cite[Theorem~5.1]{BBP12}.
\qed
\end{proof}

\begin{proof}[of Corollary~\ref{cor:C2N-single-valued}]
See \cite[Corollary~8.8]{BG11}.
\qed
\end{proof}

\begin{proof}[of Corollary~\ref{cor:C2N-separation}]
See \cite[Corollary~4.10]{BG11a}.
\qed
\end{proof}

\begin{proof}[of Theorem~\ref{thm:PC2N}]
See \cite[Proposition~8.2]{BGH15a}.
\qed
\end{proof}

\begin{proof}[of Proposition~\ref{prop:PC-C}]
See \cite[Theorem~20]{BP10}, \cite[Corollary~8.5]{BGH15a} and \cite[Proposition~4.2]{DDH+16} for 
an independent proof.
\qed
\end{proof}

\begin{proof}[of Theorem~\ref{thm:quantitative-WWKL}]
See \cite[Proposition~4.7]{DDH+16} and \cite[Corollary~10.2]{BGH15a}. 
\qed
\end{proof}

\begin{proof}[of Theorem~\ref{thm:CC}]
See \cite[Theorems~6.2, 7.1, Corollary~7.2]{BLRMP18}.
\qed
\end{proof}

\begin{proof}[of Theorem~\ref{thm:IVT}]
See \cite[Theorem~6.2]{BG11a}.
\qed
\end{proof}

\begin{proof}[of Theorem~\ref{thm:CC-not-idempotent}]
See \cite[Theorem~9.3]{BLRMP18}.
\qed
\end{proof}

\begin{proof}[of Theorem~\ref{thm:convex-choice}]
See \cite[Corollary~3.31]{LRP15a}.
\qed
\end{proof}

\begin{proof}[of Theorem~\ref{thm:XC-composition}]
See \cite[Theorem~3]{Kih16a}.
\qed
\end{proof}

\begin{proof}[of Theorem~\ref{thm:KX}]
See \cite[Corollaries~10.11 and 10.14]{BGM12}.
\qed
\end{proof}

\begin{proof}[of Proposition~\ref{prop:KN-upper}]
See \cite[Proposition~9.2]{BLRMP18} for the proof of $\K_\IN\leqW\ConC_{[0,1]}$;
$\K_\IN\leqW\PC_{2^\IN}$ is easy to prove.
\qed
\end{proof}

\begin{proof}[of Theorem~\ref{thm:CR}]
See \cite[Corollary~4.9]{BBP12} for most of the claims.\linebreak
$\C_\IN\star\C_{2^\IN}\leqW\C_{\IN}\times\C_{2^\IN}$
and $\C_{2^\IN}\star\C_\IN\leqW\C_{\IN}\times\C_{2^\IN}$
follow by Theorem~\ref{thm:independent-choice}. The strong reduction is then obtained since $\C_{2^\IN}$ is a cylinder by Theorem~\ref{prop:choice-cylinder}, 
The inverse reductions follow since $f\times g\leqSW f\star g$ holds for all problems $f,g$ by Proposition~\ref{prop:order}.
\qed
\end{proof}

\begin{proof}[of Corollary~\ref{cor:CR-single}]
See \cite[Corollary~5.3]{BBP12}.
\qed
\end{proof}

\begin{proof}[of Theorem~\ref{thm:low-basis}]
See \cite[Theorem~8.7]{BBP12}.
\qed
\end{proof}

\begin{proof}[of Example~\ref{ex:CR-CN}]
See \cite[Example~40]{BP18}.
\qed
\end{proof}

\begin{proof}[of Theorem~\ref{thm:PCR}]
See \cite[Corollary~6.4, Theorem~9.3]{BGH15a}.
\qed
\end{proof}

\begin{proof}[of Corollary~\ref{cor:CR-parallelizability}]
This follows from $\C_\IN\lSW\PC_\IR\lSW\C_\IR\lSW\lim\equivSW\widehat{\C_\IN}$.
\qed
\end{proof}

\begin{proof}[of Theorem~\ref{thm:non-Ks}]
See \cite[Corollary~4.10]{BBP12}.
\qed
\end{proof}

\begin{proof}[of Theorem~\ref{thm:CNN-equiv}]
See \cite[Corollary~4.11 and 4.14]{BBP12}.
\qed
\end{proof}

\begin{proof}[of Theorem~\ref{thm:CNN-single-valued}]
See \cite[Theorem~7.7]{BBP12}. 
\qed
\end{proof}

\begin{proof}[of Proposition~\ref{prop:CNN-parallelizability}]
Given a sequence $(A_n)_{n\in\IN}$ of closed sets $A_n\In\IN^\IN$, we can compute
the closed set $A:=\langle A_0\times A_1\times A_2\times...\rangle\In\IN^\IN$. From any point
$a\in A$ we can compute a sequence $(a_n)_{n\in\IN}$ with $a_n\in A_n$ for all $n\in\IN$.
\qed
\end{proof}

\begin{proof}[of Proposition~\ref{pop:UCNN}]
It is known that $\lim\leqW\UC_{\IN^\IN}$ holds \cite{BBP12}.
Since $\UC_{\IN^\IN}$ is closed under composition, we obtain $\lim^{[n]}\leqW\UC_{\IN^\IN}$,
and since $\lim^{[n]}\lW\lim^{[n+1]}$ by  Theorem~\ref{thm:Borel}, we can conclude that $\lim^{[n]}\lW\UC_{\IN^\IN}$.
It is clear that $\UC_{\IN^\IN}\leqW\C_{\IN^\IN}$ and Kihara (personal communication) noted that the
reduction is strict: By a result of Kleene there is a computable tree $T\In\IN^*$ that has
infinite branches but no hyperarithmetic ones.
By Kreisel's basis theorem every isolated member of a computable tree $T\In\IN^*$ is hyperarithmetic.
That is $\UC_{\IN^\IN}$ always has hyperarithmetic solutions for computable inputs, but
$\C_{\IN^\IN}$ does not necessarily have such solutions. Since hyperarithmeticity is invariant
under Weihrauch reducibility, we obtain the strictness of the reduction. 
\qed
\end{proof}

\begin{proof}[of Theorem~\ref{thm:PCNN}]
See \cite[Theorem~5.2]{BK17}.
\qed
\end{proof}

\begin{proof}[of Theorem~\ref{thm:jumps-choice}]
See \cite[Theorems~9.4 and 11.2]{BGM12} and \cite{BCG+17}.
\qed
\end{proof}

\begin{proof}[of Theorem~\ref{thm:composition-jump}]
See \cite[Theorem~8.13]{BGM12} and \cite[Fact~2.3~(6)]{BR17}
for \linebreak
${\C_{2^\IN}'}^{[n]}\equivW\C_{2^\IN}^{(n)}$ and
\cite{BK17} for $\PC_{2^\IN}'*\PC_{2^\IN}'\equivW\PC_\IR'*\PC_\IR'\equivW\PC_\IR'$.
In a similar way one can prove $\C_\IN'*\C_\IN'\equivW\C_\IN'$ (unpublished result of Brattka, H\"olzl and Kuyper).
\qed
\end{proof}

\begin{proof}[of Theorem~\ref{thm:jump-separations}]
See the proof of \cite[Corollary~9.1]{BGH15a} for\linebreak
$\C_2^{(n+1)}\nleqW\lim^{(n)}$,
see \cite[Theorem~12.2]{BGM12} for $\LPO^{(n)}\nleqW\C_{2^\IN}^{(n)}$ and
see \cite[Corollary~14.9]{BGH15a} for $\C_{2^\IN}\nleqW\PC_{2^\IN}^{(n)}$.
\qed
\end{proof}

\begin{proof}[of Theorem~\ref{thm:alternating-hierarchies}]
$\C_2\leqSW\LPO\leqSW\C_2'$ is easy to see; the separation follows from Theorem~\ref{thm:jump-separations}, since $\LPO\leqSW\lim$ by Theorem~\ref{thm:lim}, on the one hand, and $\C_2\leqSW\C_{2^\IN}$ by Proposition~\ref{prop:choice-spaces}.
For the statement on $\K_\IN$ and $\C_\IN$ see \cite[Proposition~7.2]{BR17}.
For the statement on $\C_{2^\IN}$ and $\lim$ see \cite[Fact 2.3(5)]{BR17}.
\qed
\end{proof}

\begin{proof}[of Proposition~\ref{thm:CNN}]
See \cite[Theorem~9.16]{BGM12} for $\C_{\IN^\IN}'\equivSW\C_{\IN^\IN}$.
The other reduction $\UC_{\IN^\IN}'\equivSW\UC_{\IN^\IN}$ can be proved analogously. 
\qed
\end{proof}

\begin{proof}[of Proposition~\ref{prop:upper-bounds-CC-CN}]
See \cite[Proposition~15.7]{BGH15a} for $\ConC_{[0,1]}\leqW\PC_{2^\IN}'$,
see \cite[Proposition~5.21]{BR17} for $\ConC_{[0,1]}\leqW\C_\IN'$ and
see \cite[Corollary~9.7]{BGH15a} for $\C_\IN\leqW\PC_{2^\IN}$.
\qed
\end{proof}

\begin{proof}[of Theorem~\ref{thm:WWKL-single-valued}]
See \cite[Corollary~13.3]{BGH15a}.
\qed
\end{proof}

\begin{proof}[of Proposition~\ref{prop:AoUC}]
See \cite[Proposition~5.2.1.3]{Pau11}.
\qed
\end{proof}

\begin{proof}[of Theorem~\ref{thm:AoUC-upper-bound}]
See \cite[Theorem~16.3]{BGH15a}.
\qed
\end{proof}

\begin{proof}[of Theorem~\ref{thm:AoUC-separation}]
See \cite{Kih16a} for  $\XC_{[0,1]}*\AoUC_{[0,1]}\nleqW\XC_{[0,1]^n}$,
see \cite[Corollary~11]{KP16a} for $\C_2*\AoUC_{[0,1]}\nleqW\AoUC_{[0,1]}^*$,
see \cite[Proposition~17.4]{BGH15a} for $\C_2\times\AoUC_{[0,1]}\nleqW\ConC_{[0,1]}$.
\qed
\end{proof}

\begin{proof}[of Theorem~\ref{thm:AoUC-composition}]
See \cite[Corollary~12]{KP16a}.
\qed
\end{proof}

\subsection*{Classifications}

\begin{proof}[of Theorem~\ref{thm:upper-bounds}]
Meta theorems of this type have been discussed in \cite[Section 8]{BG11a} and
the proof formally follows from \cite[Lemma~8.5]{BG11a} for computable metric spaces
and from \cite[Proposition~4.2(9)]{Pau16} for the general case of represented spaces.
\qed
\end{proof}

\begin{proof}[of Theorem~\ref{thm:CN}]
See \cite[Theorem~5.2]{BG11a} for the Baire category theorem, 
\cite[Theorem~5.4]{BG11a} for Banach's inverse mapping theorem,
\cite[Theorem~5.6]{BG11a} for the open mapping theorem,
\cite[Theorem~5.8]{BG11a} for the closed graph theorem,
\cite[Theorem~5.10]{BG11a} for the uniform boundedness theorem,
\cite[Theorem~11.2]{BGH15a} for the Lebesgue covering lemma and
\cite[Theorem~4 and Figure~4]{PS18} for the partial identity from continuous functions to analytic functions.
\qed
\end{proof}

\begin{proof}[of Theorem~\ref{thm:C2N}]
See \cite{GM09} and \cite[Section~8]{BG11} for weak K\H{o}nig's lemma,
\cite{GM09} for the Hahn-Banach theorem,
\cite[Theorem~27]{Bra16} for the theorem of the maximum $\MAX$,
\cite[Theorems~6.2 and 7.1]{BLRMP18} for the Brouwer fixed point theorem,
\cite[Theorem~3.8]{BLRMP18} for finding connectedness components of sets $A\In[0,1]^n$,
\cite[Section~6]{BG11a} for the parallelization $\widehat{\IVT}$ of the intermediate value theorem and
\cite[Theorem~2]{LRP15} for Determinacy of Gale-Stewart games in $2^\IN$ with closed winning sets.
It is straightforward to see that $\HBC_1\equivW\C_{[0,1]}$ and the latter is
equivalent to $\C_{2^\IN}$ by Theorem~\ref{thm:choice-cantor}.
\qed
\end{proof}

\begin{proof}[of Theorem~\ref{thm:AoUC}]
In \cite[Corollary~40]{Pau10} it was proved that $\NASH\equivW\RDIV^*$
and by Proposition~\ref{prop:AoUC} we have $\RDIV\equivSW\AoUC_{[0,1]}$,
which implies the result for ordinary Weihrauch reducibility. In \cite[Lemma~17.1]{BGH15a}
it was shown that $\NASH$ is a cylinder, which implies that the result also holds for
strong Weihrauch reducibility.
\qed
\end{proof}

\begin{proof}[of Theorem~\ref{thm:CR-thm}]
The result on Frostman's lemma was proved in \cite[Corollary~57]{PF17}.
\qed
\end{proof}

\begin{proof}[of Theorem~\ref{thm:PC}]
Weak weak K\H{o}nig's lemma was classified in \cite{BP10} and \cite[Proposition~8.2]{BGH15a}
and Vitali's covering theorem in \cite{BGHP17}.
\qed
\end{proof}

\begin{proof}[of Theorem~\ref{thm:XC}]
The Browder-G\"ohde-Kirk fixed point theorem was studied in \cite[Theorem~5.8]{Neu15}.
\qed
\end{proof}

\begin{proof}[of Theorem~\ref{thm:lim-thm}]
The monotone convergence theorem was discussed in \cite[Fact~11.26]{BGM12} and \cite[Proposition~3.8]{BG11a},
the operator of differentiation was discussed in \cite{Ste89},
the Fr{\'e}chet-Riesz representation theorem for $\ell_2$ in \cite{Bra16},
the Radon-Nikodym theorem in \cite{HRW12},
the result on the parallelization of Banach's inverse mapping theorem follows from Theorem~\ref{thm:CN},
the result on finding a basis of a countable vector space was considered in \cite[Theorem~12]{HM17},
the result on finding a connected component of a countable graph in \cite[Theorem~6.4]{GHM15} and 
the partial identity from continuous functions on $\IR$ to Schwartz functions was studied in \cite[Theorem~7]{PS18}.
\qed
\end{proof}

\begin{proof}[of Theorem~\ref{thm:JC2N}]
K\H{o}nig's lemma has been classified in \cite[Theorem~5.13]{BR17},
the Bolzano-Weierstra\ss{} theorem $\BWT_\IR$ on Euclidean space has been studied in \cite[Corollary~11.7]{BGM12} and \cite{BCG+17},
the Arzel{\'a}-Ascoli theorem has been studied  in \cite{Kre14},
and determinacy of Gale-Stewart games in $2^\IN$ with winning sets that are differences of open sets
has been classified  in \cite[Theorem~3]{LRP15}.
\qed
\end{proof}

\begin{proof}[of Theorem~\ref{thm:Ramsey}]
See \cite[Theorems 3.5, 4.15 and 4.22]{BR17}.
\qed
\end{proof}

\begin{proof}[of Theorem~\ref{thm:CNN-thm}]
This result is based on personal communication (Marcone) and unpublished so far.
\qed
\end{proof}

\begin{proof}[of Theorem~\ref{thm:DNC}]
See \cite[Theorem~5.2]{BHK17a} and \cite[Proposition~81]{HK14a}.
\qed
\end{proof}

\begin{proof}[of Theorem~\ref{thm:genericity}]
See \cite[Corollaries 8.3, 9.7 and 10.8]{BHK18}.
\qed
\end{proof}

\begin{proof}[of Proposition~\ref{prop:computability-densely}]
See \cite{BHK17a,BHK18}.
\qed
\end{proof}

\begin{proof}[of Theorem~\ref{thm:MLR-PA-COH}]
See \cite[Proposition~58]{BP18} for the characterization of $\MLR$,
\cite[Theorem~6.7]{BHK17a} for the characterization of $\PA$,
\cite[Corollary~14.9]{BHK17a} for the characterization of $\COH$.
\qed
\end{proof}

\begin{proof}[of Theorem~\ref{prop:MLR-GEN-composition}]
See \cite[Footnote 7, Proposition~10.7]{BHK17a}, \cite[Proposition~2.7]{BK17}.
\qed
\end{proof}

\subsection*{Relations to Other Theories}

\begin{proof}[of Theorem~\ref{thm:Medvedev}]
See \cite[Theorem~5.1, Proposition~5.3]{BG11} for the statement regarding the embedding $c$.
See \cite[Lemma~5.6]{HP13} for the statement regarding the embedding $d$ into the Weihrauch lattice 
and \cite{Dzh18} regarding the embedding $d$ into the strong Weihrauch lattice.
\qed
\end{proof}

\begin{proof}[of Proposition~\ref{prop:many-one}]
Any computable partial function $H : \subseteq \mathbb{N} \times \{p,q\} \to \{p,q\}$ is a restriction of the projection to the second component, i.e.,
it satisfies $H(x,y) = y$ by the requirement that $p$ and $q$ are Turing incomparable. 
Thus, the Weihrauch reduction of the form $m_A \leqW m_B$ are precisely witnessed by computable functions 
$K:\IN\to\IN$ satisfying that $K(n) \in B\iff n \in A$. But these are exactly the witnesses for $A \leq_{\rm m} B$, hence we have an order-embedding.
To see that this is even a join-semilattice embedding, we recall that $A\oplus B$ is the supremum of $A$ and $B$ in the many-one join-semilattice 
and $m_{A\oplus B}\equivW m_A\sqcup m_B$.
\qed
\end{proof}

\begin{proof}[of Theorem~\ref{thm:Delta}]
The case $n = 0$ was provided by Pauly and de Brecht~\cite[Corollary~12]{PdB14}, the cases $n > 0$ were provided by Kihara~\cite[Theorem~1.5]{Kih15}.
\qed
\end{proof}

\begin{proof}[of Theorem~\ref{thm:algebraic-models}]
This theorem has been proved in \cite{NP18}.
\qed
\end{proof}

\end{document}